\numberwithin{equation}{section}
\def\be{{\beta}}
\def\eps{\epsilon}
\def\ka{\kappa}
\def\si{\sigma}
\def\ka{\kappa}
\def\phii{\widetilde{\varphi}}
\def\al{\alpha}
\newtheorem{theorem}{Theorem}[section]
\newtheorem{lemma}[theorem]{Lemma}
\newtheorem{proposition}[theorem]{Proposition}
\newtheorem{definition}[theorem]{Definition}
\newtheorem{remark}[theorem]{Remark}
\begin{document}

\title[Klein--Gordon systems in 3D]{Global solutions of quasilinear systems of Klein--Gordon equations in 3D}

\author{Alexandru D. Ionescu}
\address{Princeton University}
\email{aionescu@math.princeton.edu}

\author{Benoit Pausader}
\address{LAGA, Universit\'e Paris 13 (UMR 7539)}
\email{pausader@math.u-paris13.fr}

\thanks{The first author was partially supported by a Packard Fellowship and NSF
 grant DMS-1065710. The second author was partially supported by NSF grant DMS-1142293.}

\begin{abstract}
We prove small data global existence and scattering for quasilinear systems of Klein-Gordon equations with different speeds, in dimension three. As an application, we obtain a robust global stability result for the Euler-Maxwell equations for electrons.
\end{abstract}

\maketitle

\tableofcontents

\section{Introduction}\label{intro}

In this paper we consider systems of quasilinear Klein--Gordon equations with different speeds and masses in dimension three. 
Our aim is to prove that small, smooth, and localized initial data lead to global solutions, assuming only certain mild non-degeneracy conditions which are automatically satisfied in our main applications. The method we develop appears to be robust enough to deal with many situations that involve large space-time resonant sets, at least in dimension three.

We will focus on two examples which should be sufficient to illustrate the scope of our method. We first consider quasilinear 
systems of Klein-Gordon type with pointwise quadratic nonlinearities
\begin{equation}\label{SKG}
\left(\partial_{tt}-c_\sigma^2\Delta+b_\sigma^2\right)u_\sigma=F_\sigma, \quad \sigma\in\{1,\ldots,d\},
\end{equation}
satisfying a hyperbolicity condition on the quasilinear term in the nonlinearity. Variations on such systems have been proposed in \cite{Kh} to model bilayer materials. This problem also appears in \cite{Ge} as an important toy model. More specifically, this problem when the speeds are the same has received a lot of attention in low dimensions \cite{De,Ha,Su}.

Our second model case is the Euler-Maxwell system for electrons. This is a simplification of the two fluid Euler-Maxwell system, which is one of the main models in plasma physics. We refer to \cite{Bi} for some physical reference and to \cite{GeMa,Guo} for previous mathematical study of the solutions. The system describes the dynamical evolution of the functions $n_e:\mathbb{R}^3\to\mathbb{R}$, $v_e,E',B':\mathbb{R}^3\to\mathbb{R}^3$, i.e.
\begin{equation}\label{EM}
\begin{split}
&\partial_tn_e+\hbox{div}(n_ev_e)=0,\\
&\partial_tv_e+v_e\cdot\nabla v_e=-\frac{P_e}{m_e}\nabla n_e-\frac{e}{m_e}\left[E'+\frac{v_e}{c}\times B'\right],\\
&\partial_tB'+c\nabla\times E'=0,\\
&\partial_tE'-c\nabla\times B'=4\pi en_ev_e,\\
\end{split}
\end{equation}
together with the elliptic equations
\begin{equation}\label{Ell}
\hbox{div}(B')=0,\quad \hbox{div}(E')=-4\pi e(n_e-n^0).
\end{equation}
Here $e>0$ is the electron charge, $P_e$ is related to the effective electron temperature\footnote{More precisely, $k_BT_e=n^0P_e$, where $k_B$ is the Boltzmann constant.}, $m_e$ is the mass of an electron and $c$
denotes the speed of light. The two equations \eqref{Ell} are propagated by the dynamic flow, provided that they are satisfied at the initial time. In addition, we make the following irrotationality assumption which removes a non decaying component,
\begin{equation}\label{IrrAss}
B'(0)=\frac{m_ec}{e}\nabla\times v_e(0),
\end{equation}
and which is also propagated by the flow and remains valid for all times. 

In the case of the system \eqref{EM}--\eqref{IrrAss} we want to explore the stability of the equilibrium solution $(n^0_e,v^0_e,E^0,B^0)=(n^0,0,0,0)$, $n^0>0$. In the system above, we have chosen a quadratic pressure $p(n_e)=P_en_e^2/2$. This is chosen only to minimize the number of terms in the nonlinearity but does not make the system \eqref{EM} symmetric and in particular, one needs to add a cubic correction to the energy estimates.

In both cases \eqref{SKG} and \eqref{EM}-\eqref{IrrAss}, we prove that small, localized, and smooth initial data lead to global classical solutions that scatter. Below is a precise description of the main results.

\subsection{Statement of the results}

Given a real-valued vector $u=(u_1,\ldots,u_d):\mathbb{R}^3\times[0,T]\to\mathbb{R}^d$, $u\in C([0,T]:H^N_r)\cap C^1([0,T]:H^{N-1}_r)$\footnote{In the paper we let $H^N=H^N_{(m)}$ denote standard $L^2$-based Sobolev spaces of complex vector-valued functions $f:\mathbb{R}^3\to\mathbb{C}^m$, $m=1,2,\ldots$. We let $H^N_r=H^N_{r,(m)}$ denote $L^2$-based Sobolev spaces of real vector-valued functions $f:\mathbb{R}^3\to\mathbb{R}^m$, $m=1,2,\ldots$.}, for some $T\geq 0$, $d\geq 1$, and $N\geq 5$, we consider quadratic nonlinearities of the form
\begin{equation}\label{KGNon1}
F_\mu:=\sum_{j,k=1}^3\sum_{\nu=1}^dG_{\mu\nu}^{jk}\partial_j\partial_ku_\nu+Q_\mu,
\end{equation}
where, with $\partial_0:=\partial_t$,
\begin{equation}\label{KGNon2}
G_{\mu\nu}^{jk}=G_{\mu\nu}^{jk}(u,\nabla_{x,t} u):=\sum_{\sigma=1}^d\left(\sum_{l=0}^3g^{jkl}_{\mu\nu\sigma}\partial_l u_\sigma+h^{jk}_{\mu\nu\sigma}u_\sigma\right),\qquad g^{jkl}_{\mu\nu\sigma},h^{jk}_{\mu\nu\sigma}\in\mathbb{R},
\end{equation}
and $Q_\mu=Q_\mu(u,\nabla_{x,t} u)$ is an arbitrary quadratic form (with real constant coefficients) in $(u_\sigma,\partial_ku_\sigma)$, $\sigma\in\{1,\ldots,d\},k\in\{0,1,2,3\}$. We assume that $G_{\mu\nu}^{jk}$ are symmetric in both $\mu,\nu$ and $j,k$ (the latter not being a restriction in generality), i.e.
\begin{equation}\label{CondForEE}
g^{jkl}_{\mu\nu\sigma}=g^{jkl}_{\nu\mu\sigma}=g^{kjl}_{\mu\nu\sigma},\qquad h^{jk}_{\mu\nu\sigma}=h^{jk}_{\nu\mu\sigma}=h^{kj}_{\mu\nu\sigma},
\end{equation}
for all choices of $j,k,l$ and $\mu,\nu,\sigma$.

We consider general systems of Klein--Gordon equations of the form
\begin{equation*}
(\partial_t^2-c^2_\mu\Delta+b_\mu^2)u_\mu=F_\mu,\qquad\qquad\mu=1,\ldots,d,
\end{equation*}
where the coefficients $b_1,\ldots,b_d,c_1,\ldots,c_d$ satisfy the non-degeneracy conditions \eqref{bc} below and the quadratic nonlinearities $F_\mu$ are as before. Our first main theorem concerns the global stability of the equilibrium solution $u\equiv 0$. More precisely:

\begin{theorem}\label{MainThm1}
Assume $A\geq 1$, $d\geq 1$, and $b_1,\ldots,b_d,c_1,\ldots,c_d\in[1/A,A]$ satisfy the non-resonance conditions
\begin{equation}\label{bc}
\begin{split}
&|b_{\sigma_1}+b_{\sigma_2}-b_{\sigma_3}|\geq 1/A\qquad\qquad\qquad\qquad\qquad\quad\,\text{ for any }\sigma_1,\sigma_2,\sigma_3\in\{1,\ldots,d\},\\
&|c_{\sigma_1}-c_{\sigma_2}|,\,|b_{\sigma_1}-b_{\sigma_2}|\in\{0\}\cup[1/A,\infty)\qquad\qquad\,\text{ for any }\sigma_1,\sigma_2\in\{1,\ldots,d\},\\
&(c_{\sigma_1}-c_{\sigma_2})(c_{\sigma_1}^2b_{\sigma_2}-c_{\sigma_2}^2b_{\sigma_1})\geq 0\qquad\qquad\qquad\quad\,\,\text{ for any }\sigma_1,\sigma_2\in\{1,\ldots,d\}.
\end{split}
\end{equation}
We fix quadratic nonlinearities $(F_{\mu})_{\mu\in\{1,\ldots,d\}}$ as in \eqref{KGNon1}--\eqref{CondForEE}, let $N_0=10^4$, and assume that $v_0,v_1:\mathbb{R}^3\to\mathbb{R}^d$ satisfy the smallness conditions
\begin{equation}\label{maincond1}
\|v_0\|_{H^{N_0+1}_r}+\|v_1\|_{H^{N_0}_r}+\|(1-\Delta)^{1/2}v_0\|_Z+\|v_1\|_Z=\varepsilon_0\leq\overline{\varepsilon},
\end{equation}
where $\overline{\varepsilon}=\overline{\varepsilon}(d,A,F_\mu)>0$ is sufficiently small (depending only on $d$, $A$, and the constants in the definition of the nonlinearities $F_\mu$), and the $Z$ norm is defined in Definition \ref{MainDef}. 

Then there exists a unique global solution $u\in C([0,\infty):H^{N_0+1}_r)\cap C^1([0,\infty):H^{N_0}_r)$ of the system 
\begin{equation}\label{mainsystem}
(\partial_t^2-c^2_\mu\Delta+b_\mu^2)u_\mu=F_\mu,\qquad\qquad\mu=1,\ldots,d,
\end{equation}
with initial data $(u(0),\dot{u}(0))=(v_0,v_1)$. Moreover, with $\beta=1/100$,
\begin{equation}\label{mainconcl1.1}
\begin{split}
\sup_{t\in[0,\infty)}&\big[\|u(t)\|_{H^{N_0+1}_r}+\|\dot{u}(t)\|_{H^{N_0}_r}\big]\\
&+\sup_{t\in[0,\infty)}(1+t)^{1+\beta}\big[\sup_{|\rho|\leq 4}\|D_x^\rho u(t)\|_{L^\infty}+\sup_{|\rho|\leq 3}\|D_x^\rho\dot{u}(t)\|_{L^\infty}\big]\lesssim \varepsilon_0.
\end{split}
\end{equation}
\end{theorem}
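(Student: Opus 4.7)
The plan is to close a continuity/bootstrap argument combining a high-order modified-energy estimate with a dispersive control on the profile in the $Z$-norm of Definition~\ref{MainDef}. Local existence and uniqueness in $C([0,T):H^{N_0+1}_r)\cap C^1([0,T):H^{N_0}_r)$ for small data is standard for systems of the form \eqref{mainsystem}, so the theorem reduces to global \emph{a priori} bounds. Let $\Lambda_\mu(\xi):=\sqrt{c_\mu^2|\xi|^2+b_\mu^2}$, introduce the half-wave unknowns $U^\pm_\mu:=\partial_t u_\mu\pm i\Lambda_\mu u_\mu$, and the profiles $f^\pm_\mu(t):=e^{\mp it\Lambda_\mu}U^\pm_\mu(t)$. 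The bootstrap assumption on $[0,T]$ will be
\begin{equation*}
\sup_{t\in[0,T]}\Bigl[(1+t)^{-\delta}\|U(t)\|_{H^{N_0}}+\|f(t)\|_Z\Bigr]\le \varepsilon_1,
\end{equation*}
for some small $\delta,\varepsilon_1$ with $\varepsilon_0\ll\varepsilon_1\ll 1$; by the definition of $Z$ this should imply the pointwise decay $\|D_x^\rho u(t)\|_{L^\infty}\lesssim\varepsilon_1(1+t)^{-1-\beta}$ for $|\rho|\le 4$ required in \eqref{mainconcl1.1}.

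\emph{Energy estimate.} Because of the quasilinear term $G^{jk}_{\mu\nu}\partial_j\partial_k u_\nu$, a naive $H^{N_0}$ estimate loses a derivative. Using the $(\mu,\nu)$ and $(j,k)$ symmetries in \eqref{CondForEE}, I will construct a modified energy
\begin{equation*}
\mathcal{E}_{N_0}(t)=\sum_\mu\|\langle\nabla\rangle^{N_0}U^+_\mu(t)\|_{L^2}^2+\mathcal{C}(u,u,u)(t),
\end{equation*}
with a cubic correction $\mathcal{C}$ designed so that the symmetries make the top-order quasilinear contributions in $\frac{d}{dt}\mathcal{E}_{N_0}$ cancel after integration by parts. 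The remainder is a cubic form in which at most one factor carries $N_0$ derivatives; estimating the other two in $W^{4,\infty}$ by the pointwise decay above yields $|\frac{d}{dt}\mathcal{E}_{N_0}(t)|\lesssim\varepsilon_1(1+t)^{-1-\beta}\mathcal{E}_{N_0}(t)$, and Gronwall closes the Sobolev part of the bootstrap with $\mathcal{E}_{N_0}(t)^{1/2}\lesssim\varepsilon_0$.

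\emph{Dispersive estimate and main obstacle.} The heart of the proof is the $Z$-norm bootstrap. Duhamel for the profile reads
\begin{equation*}
\partial_t\widehat{f_\mu^+}(t,\xi)=\sum_{\nu,\sigma;\iota_1,\iota_2}\int_{\mathbb{R}^3}e^{it\Phi(\xi,\eta)}m(\xi,\eta)\widehat{f_\nu^{\iota_1}}(t,\xi-\eta)\widehat{f_\sigma^{\iota_2}}(t,\eta)\,d\eta,
\end{equation*}
with phase $\Phi(\xi,\eta)=\Lambda_\mu(\xi)-\iota_1\Lambda_\nu(\xi-\eta)-\iota_2\Lambda_\sigma(\eta)$ and symbols $m$ that may carry one derivative coming from the quasilinear quadratic terms. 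Three features of the hypotheses organize the analysis: (i) $|b_{\sigma_1}+b_{\sigma_2}-b_{\sigma_3}|\ge 1/A$ keeps $|\Phi|\gtrsim 1$ as $\xi,\eta\to 0$, enabling a Poincar\'e--Shatah normal-form transformation $f\mapsto f+\mathcal{B}(f,f)$ to remove the long-range quadratic nonlinearities in favour of effectively cubic ones; (ii) the separation $|c_{\sigma_i}-c_{\sigma_j}|,|b_{\sigma_i}-b_{\sigma_j}|\in\{0\}\cup[1/A,\infty)$ keeps characteristic varieties either coincident or quantitatively disjoint, controlling the geometry of $\{\Phi=0\}$; and (iii) the sign condition $(c_{\sigma_1}-c_{\sigma_2})(c_{\sigma_1}^2b_{\sigma_2}-c_{\sigma_2}^2b_{\sigma_1})\ge 0$ constrains the space-time resonant set $\{\Phi=0\}\cap\{\nabla_\eta\Phi=0\}$ so that, where it is present, the Hessian of $\Phi$ is non-degenerate and stationary-phase analysis applies.

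The dispersive bootstrap then proceeds by a fine Littlewood--Paley and angular decomposition of the bilinear integral, splitting each piece according to the relative sizes of $\Phi$, $\nabla_\eta\Phi$, and the frequencies involved: non-resonant pieces are treated by integration by parts in $t$ or in $\eta$, while the space-time-resonant contribution is estimated by a refined $L^2$--$L^\infty$ analysis built into the structure of the $Z$-norm. The main obstacle is this last piece: unlike the single-speed Klein--Gordon problem in 3D, the different speeds here create large (codimension-one rather than pointwise) space-time resonant sets, and the $Z$-norm of Definition~\ref{MainDef} must be strong enough that bilinear interactions concentrating on those sets remain in $Z$, with the derivative loss in $m$ absorbed by the gain from the normal form in (i) and the non-degeneracy from (iii). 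Once both bootstraps close, the decay in \eqref{mainconcl1.1} and scattering follow from standard Duhamel arguments using the improved $Z$-norm control.
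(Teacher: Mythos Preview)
Your overall architecture matches the paper's: a bootstrap coupling a high-order energy estimate (closed via the symmetry \eqref{CondForEE} and integrable $L^\infty$ decay) with a $Z$-norm estimate on the profiles (closed via a frequency-localized analysis of the Duhamel integral, using \eqref{bc}). A few points where your sketch diverges from the actual argument deserve comment.

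First, the slow-growth factor $(1+t)^{-\delta}$ in your bootstrap is unnecessary. The paper's quasilinear energy \eqref{abc1} (whose correction term is indeed cubic, as you say) yields \eqref{abc4}, and since the decay $(1+t)^{-1-\beta}$ is time-integrable, Gronwall gives a \emph{uniform} bound $\mathcal{E}^{\mathrm{KG}}_{N_0+1}(t)\lesssim\varepsilon_0^2$. The paper's bootstrap hypothesis in Proposition~\ref{Norm} is simply $\sup_t\|e^{it\Lambda_\sigma}U_\sigma\|_{H^{N_0}\cap Z}\le\delta_1$, with no weight.

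Second, the paper does \emph{not} perform a Poincar\'e--Shatah normal form $f\mapsto f+\mathcal{B}(f,f)$. It works directly on the Duhamel integral and estimates it in $Z$ (Propositions~\ref{reduced1}--\ref{reduced3}); integration by parts in $s$ is applied only locally, where $|\Phi|$ is bounded below. Your phrasing in (i) is therefore misleading: the condition $|b_{\sigma_1}+b_{\sigma_2}-b_{\sigma_3}|\ge 1/A$ only rules out time resonance at the origin (this is how it is used, in Lemma~\ref{desc1}), not globally, so no transformation can make the problem ``effectively cubic''.

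Third, the ``refined $L^2$--$L^\infty$ analysis built into the $Z$-norm'' you allude to is the heart of the paper and is where your sketch is thinnest. On the space-time resonant set neither integration by parts is available, and the bilinear output genuinely cannot be placed in the strong component $B^1_{k,j}$ of Definition~\ref{MainDef}. The paper's mechanism is the weak component $B^2_{k,j}$ in \eqref{sec5.4}, which trades $L^2$-weight for control of $L^1$-norms on small Fourier balls; this captures the fact that the resonant output concentrates near a sphere $\{|\xi|=r^{\sigma;\mu,\nu}\}$ (Lemma~\ref{desc4}). Lemma~\ref{BigBound7} carries out the stationary-phase estimate placing that contribution in $B^2_{k,j}$, and Lemma~\ref{BigBound8} checks that $B^2$-inputs still interact acceptably. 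Condition (iii) enters exactly here, to make $\det\nabla^2_{\eta\eta}\Phi\neq 0$ on the resonant set. Your sketch should make explicit that closing the $Z$-bootstrap requires both designing this two-component norm and verifying bilinear closure for it; without that, the space-time resonant piece is a genuine gap rather than a routine case.
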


\begin{remark}\label{remark1}
(i) The non-degeneracy condition \eqref{bc} is automatically satisfied if the masses are all equal, $b_1=\ldots=b_d$, which is the case in our main application below to the Euler--Maxwell system.

(ii) Qualitatively, our condition on the parameters is 
\begin{equation*}
\begin{split}
&b_1,\ldots,b_d,c_1,\ldots,c_d\in(0,\infty),\\
&|b_{\sigma_1}+b_{\sigma_2}-b_{\sigma_3}|\neq 0\qquad\qquad\qquad\qquad\qquad\quad\,\text{ for any }\sigma_1,\sigma_2,\sigma_3\in\{1,\ldots,d\},\\
&(c_{\sigma_1}-c_{\sigma_2})(c_{\sigma_1}^2b_{\sigma_2}-c_{\sigma_2}^2b_{\sigma_1})\geq 0\qquad\qquad\qquad\,\,\text{ for any }\sigma_1,\sigma_2\in\{1,\ldots,d\}.
\end{split}
\end{equation*}
The point of the quantitative formulation in \eqref{bc}, in terms of the large parameter $A$, is to indicate the exact dependence of the smallness parameter $\overline{\varepsilon}$ in \eqref{maincond1}. 

(iii) The condition \eqref{bc} can certainly be relaxed. We have chosen this condition mostly because it is automatically satisfied in our application to the Euler--Maxwell system, can be explained conceptually in terms of the non-degeneracy of the space-time resonant sets, see subsection \ref{comments}, and reduces the amount of technical work. However, it seems natural to raise the question of whether this condition can be eliminated completely.
\end{remark}

We turn now to the Euler--Maxwell system. Recalling the system \eqref{EM}, we make the changes of variables
\begin{equation*}
\begin{split}
&n_e(x,t)=n^0[1+n(\lambda x,\lambda t)],\\
&v_e(x,t)=v(\lambda x,\lambda t),\\
&E'(x,t)=ZE(\lambda x,\lambda t),\\
&B'(x,t)=cZB(\lambda x,\lambda t),
\end{split}
\end{equation*}
where
\begin{equation*}
\lambda:=\sqrt{\frac{4\pi e^2n^0}{m_e}},\qquad\qquad Z:=\frac{\lambda m_e}{e}=\frac{4\pi en^0}{\lambda}.
\end{equation*}
The system \eqref{EM} becomes
\begin{equation}\label{EM2}
\begin{split}
&\partial_tn+\hbox{div}((1+n)v)=0,\\
&\partial_tv+v\cdot\nabla v+T\nabla n+E+v\times B=0,\\
&\partial_tB+\nabla\times E=0,\\
&\partial_tE-c^2\nabla\times B=(1+n)v,\\
\end{split}
\end{equation}
where\footnote{$\lambda$ is often called the ``electron plasma frequency'', $Z^2$ is the density of mass, and $\sqrt{T}$ is then the thermal velocity.}
\begin{equation*}
T:=\frac{P_en^0}{m_e}>0.
\end{equation*}

For any $N\geq 4$ we define the normed space
\begin{equation}\label{basicspace}
\begin{split}
\widetilde{H}^N:=\{&(n,v,E,B):\mathbb{R}^3\to\mathbb{R}\times\mathbb{R}^3\times\mathbb{R}^3\times\mathbb{R}^3:\\
&\|(n,v,E,B)\|_{\widetilde{H}^N}:=\|n\|_{H^N_r}+\|v\|_{H^N_r}+\|E\|_{H^N_r}+\|B\|_{H^N_r}<\infty\}.
\end{split}
\end{equation}

We can now state our second main theorem.

\begin{theorem}\label{MainThm2}
Let $N_0=10^4$ and assume that $(n_0,v_0,E_0,B_0)\in {\widetilde{H}^{N_0+1}}$ satisfies
\begin{equation}\label{maincond2}
\begin{split}
&\|(n_0,v_0,E_0,B_0)\|_{\widetilde{H}^{N_0+1}}+\|(1-\Delta)^{1/2}E_0\|_Z+\|(1-\Delta)^{1/2}v_0\|_Z=\varepsilon_0\leq\overline{\varepsilon},\\
&n_0=-\hbox{div}(E_0),\qquad B_0=\nabla\times v_0,
\end{split}
\end{equation}
where $\overline{\varepsilon}=\overline{\varepsilon}(c,T)>0$ is sufficiently small, and the $Z$ norm is defined in Definition \ref{MainDef}. Then there exists a unique global solution $(n,v,E,B)\in C([0,\infty):{\widetilde{H}^{N_0+1}})$ of the system \eqref{EM2} with initial data $(n(0),v(0),E(0),B(0))=(n_0,v_0,E_0,B_0)$. Moreover,
\begin{equation}\label{mainconcl2}
n(t)=-\hbox{div}(E(t)),\qquad B(t)=\nabla\times v(t),\qquad\text{ for any }t\in[0,\infty),
\end{equation}
and, with $\beta=1/100$,
\begin{equation}\label{mainconcl2.1}
\sup_{t\in[0,\infty)}\|(n(t),v(t),E(t),B(t))\|_{\widetilde{H}^{N_0+1}}+\sup_{t\in[0,\infty)}\sup_{|\rho|\leq 4}(1+t)^{1+\beta}\big(\|D^\rho_x v(t)\|_{L^\infty}+\|D^\rho_x E(t)\|_{L^\infty}\big)\lesssim \varepsilon_0.
\end{equation}
\end{theorem}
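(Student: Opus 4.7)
The plan is to reduce the Euler--Maxwell system \eqref{EM2} to a quasilinear Klein--Gordon system of the form \eqref{mainsystem} and then invoke Theorem \ref{MainThm1}.

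First, since the constraints $n=-\mathrm{div}(E)$ and $B=\nabla\times v$ hold at $t=0$ by \eqref{maincond2} and are propagated by the flow \eqref{EM2}, I would work directly with $(v,E)$. To diagonalize the linear part, I perform the Helmholtz decomposition
\[
v=v^{cf}+v^{df},\qquad E=E^{cf}+E^{df},
\]
with $v^{cf}=-\nabla\Delta^{-1}\mathrm{div}(v)$ curl--free and $v^{df}=v-v^{cf}$ divergence--free, and similarly for $E$. Differentiating the velocity equation once more in $t$, substituting $\partial_t n$ and $\partial_t E$ from the other equations in \eqref{EM2}, and using $\nabla\times(\nabla\times W)=\nabla\mathrm{div}(W)-\Delta W$, one obtains at the linear level
\[
(\partial_t^2-T\Delta+1)v^{cf}=0,\qquad (\partial_t^2-c^2\Delta+1)v^{df}=0,
\]
and analogous equations for $E^{cf}, E^{df}$. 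The reduced system thus fits \eqref{mainsystem} with $d=2$, $b_1=b_2=1$, $c_1=\sqrt T$, and $c_2=c$. The first two lines of \eqref{bc} are trivial for $A=A(c,T)$ large (using generically $c\neq\sqrt T$), and the third reduces to $(c-\sqrt T)(c^2-T)=(c-\sqrt T)^2(c+\sqrt T)\geq 0$.

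Second, I would verify the quasilinear structure \eqref{KGNon1}--\eqref{CondForEE}. The only second--order quasilinear contributions arise from $T\nabla\mathrm{div}(nv)$ in the first substitution (with $n$ undifferentiated) and from $v\cdot\nabla\partial_t v$ appearing in $\partial_t(v\cdot\nabla v)$ after replacing $\partial_t v$ via the velocity equation. The second produces a term of the form $v^jv^k\partial_j\partial_k v$ that is manifestly symmetric in $(j,k)$ and in its two $v$--factors; the first, after re-expressing $n=-\mathrm{div}(E)$, becomes a bilinear quasilinear coupling for which the symmetric condition \eqref{CondForEE} holds only up to a cubic correction in the energy, as already flagged in the paragraph after \eqref{IrrAss}. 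All remaining contributions are semilinear quadratic forms in $(u,\nabla_{x,t}u)$ and fit into the $Q_\mu$ slot of \eqref{KGNon1}.

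Third, I would translate the initial data. Passing to first--order profiles $U^{cf}_\pm=\pm i\Lambda_1 v^{cf}+E^{cf}$ with $\Lambda_1=(1-T\Delta)^{1/2}$ (and analogously for $U^{df}_\pm$ with $\Lambda_2=(1-c^2\Delta)^{1/2}$), one has $\|U^{cf}_\pm(0)\|_Z\lesssim\|(1-\Delta)^{1/2}v_0\|_Z+\|(1-\Delta)^{1/2}E_0\|_Z$, provided the Riesz projections used in the Helmholtz decomposition are bounded on $Z$, a fact to be verified from Definition \ref{MainDef}. Theorem \ref{MainThm1} then supplies the global Klein--Gordon solution and the decay \eqref{mainconcl1.1} for $(v^{cf},v^{df},E^{cf},E^{df})$; reassembling $v=v^{cf}+v^{df}$, $E=E^{cf}+E^{df}$ and setting $n=-\mathrm{div}(E)$, $B=\nabla\times v$ recovers a solution of \eqref{EM2} with the constraints \eqref{mainconcl2} and the bounds \eqref{mainconcl2.1}. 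The main obstacle is the algebraic verification that the quasilinear symmetry \eqref{CondForEE} survives the Helmholtz decomposition with its nonlocal Riesz--type projections; the expected resolution is that the decomposition acts componentwise in the two Hodge sectors, each sector decoupling into a genuine Klein--Gordon equation with local quasilinear structure, with the residual non-symmetric terms absorbed into the cubic energy correction.
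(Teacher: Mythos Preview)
Your reduction to Theorem \ref{MainThm1} has a genuine structural gap. Theorem \ref{MainThm1} requires the nonlinearities to be of the \emph{local} form \eqref{KGNon1}--\eqref{KGNon2}: the quasilinear coefficients $G^{jk}_{\mu\nu}$ and the semilinear part $Q_\mu$ are constant-coefficient quadratic polynomials in $(u_\sigma,\partial_l u_\sigma)$. But the Helmholtz projectors $-\nabla\Delta^{-1}\mathrm{div}$ and $\mathrm{Id}+\nabla\Delta^{-1}\mathrm{div}$ are nonlocal Riesz-type operators. When you project, say, $v\cdot\nabla v$ onto the curl-free sector to obtain the right-hand side of the equation for $v^{cf}$, the result is $-\nabla\Delta^{-1}\mathrm{div}\big[(v^{cf}+v^{df})\cdot\nabla(v^{cf}+v^{df})\big]$, which is not a polynomial in $(u_\sigma,\partial_l u_\sigma)$ with constant coefficients. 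The nonlocality is on the \emph{outside} of the bilinear form and cannot be absorbed into the inputs; your ``expected resolution'' that the sectors decouple into local equations is simply false here. The same obstruction affects the quasilinear terms, so the symmetry condition \eqref{CondForEE} is not even the right question: the system after Helmholtz projection is not of the form covered by Theorem \ref{MainThm1} at all.

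The paper does not attempt this reduction. Instead it passes directly to first-order complex profiles
\[
U_1=\Lambda_1|\nabla|^{-1}\mathrm{div}(E)+i|\nabla|^{-1}\mathrm{div}(v),\qquad U_2=\Lambda_2^{-1}|\nabla|^{-1}\mathrm{curl}(E)+i|\nabla|^{-1}\mathrm{curl}(v),
\]
and computes that $(\partial_t+i\Lambda_\sigma)U_\sigma=\mathcal{N}_\sigma$ with nonlinearities of the form \eqref{nonlinearity}. The point is that the symbol class $\mathcal{M}_{d'}$ in \eqref{symb2} explicitly allows factors $q^l_n\in\mathcal{S}^{100}$, i.e.\ Calder\'on--Zygmund multipliers, which is exactly what is needed to accommodate the Riesz transforms coming from the Helmholtz decomposition. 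The paper then applies Propositions \ref{Norm0} and \ref{Norm} (not Theorem \ref{MainThm1}) for the $Z$-norm bootstrap, and closes the high-regularity estimate using the Euler--Maxwell energy of Proposition \ref{Localexistence} rather than the Klein--Gordon energy $\mathcal{E}^{\mathrm{KG}}_N$ of Proposition \ref{Localexistence0}. In short, Theorem \ref{MainThm1} and Theorem \ref{MainThm2} are proved in parallel from the same underlying machinery, not one from the other; your attempt to factor through Theorem \ref{MainThm1} fails because its hypotheses are too rigid to accept the nonlocal projections.
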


We remark that our restriction $n=-\hbox{div}(E)$, together with the assumptions on $E$, can only be satisfied if $\int_{\mathbb{R}^3}n(t)\,dx=0$, which means that we are only considering electrically neutral perturbations.

\subsection{Comments and plan of the proof}\label{comments}

\subsubsection{Previous results on systems of Klein-Gordon equations.} Systems of wave and Klein-Gordon-type equations have been studied by many authors, as they appear as natural models of physical evolutions. We also refer the reader to the introduction of \cite{Ge} for a review on previous works. 

The scalar case (or the system when all the speeds are equal and all the masses are equal) has been studied extensively. Some key developments include the work of John \cite{Joh} showing that blow-up in finite time {\it can} happen even for small smooth localized initial data of a semilinear wave equation, the introduction of the vector field method by Klainerman \cite{Kl} and of the normal form transformation by Shatah \cite{Sh}, and the understanding of the role of "null structures", starting with the works of Klainerman \cite{Kl2} and Christodoulou \cite{Ch}. Recently, a convenient general framework, which explains all of these results in the constant-coefficient case in terms of the concept of {\it{space-time resonances}}, was introduced independently by Germain-Masmoudi-Shatah \cite{GeMaSh} and Gustasfon-Nakanishi-Tsai \cite{GuNaTs}. We will get back to this later in this subsection.

The case of systems of wave equations with different speeds is well understood, both in the semilinear and the quasilinear case (see \cite{Yo} and \cite{SiTu}), provided that the nonlinearities satisfy appropriate null conditions, similar to those in the scalar case.

The case of Klein--Gordon quasilinear systems with equal speeds, $c_1=\ldots=c_d=1$, and different masses is also well understood both in dimensions two and three. For example, in \cite{De}, the authors show that if $b_{\sigma_1}+b_{\sigma_2}-b_{\sigma_3}\neq 0$ for any $\sigma_1,\sigma_2,\sigma_3$, then one has global existence and scattering in dimension two. If this condition is violated, then the same conclusion holds if the nonlinearity satisfies an appropriate null condition. We refer to \cite{Ha,Su,Ts} for related works.

As pointed out in \cite{Ge} a key new difficulty (the presence of a large set of space-time resonances) arises when the velocities are allowed to be different. In \cite{Ge}, the author studies semilinear systems of two Klein-Gordon equations when the masses are equal, $b_1=b_2$ in dimension {\it three}. Under a less explicit assumption on the velocities that covers most (but excludes some) parameters, he obtains global existence and scattering with a weak decay like $t^{-1/2}$ of the solution as $t\to \infty$.

In \cite{GeMa}, the authors study the Euler-Maxwell system for the electron \eqref{EM}-\eqref{IrrAss} in dimension {\it three} and obtain global existence and scattering  with weak decay by an elaborate iterated energy estimate. The results are conditional on $c$ and $T$ satisfying an implicit relation that holds for most (but not all) values of $T,c$.

With respect to the previous works, we remark that our result in this paper is obtained by a robust method, which yields time-integrability of the solution in $L^\infty$ and holds for {\it all} values of the velocities when the masses are equal. In addition, our smallness assumption is expressed explicitly in terms of the parameters, and the number of the derivatives $N_0$ needed is quantified (although most likely not optimal).

\subsubsection{General strategy.} Systems \eqref{SKG} and \eqref{EM} are hyperbolic systems of conservation laws and no general theory exists yet for such systems, even for the scalar case. Indeed, systems which are remarkably similar to \eqref{SKG} can be shown to have rather opposite behavior, even for small, smooth initial data, from blow up in finite time for all positive solutions of the quadratic wave equation \cite{Joh} to global existence and scattering for the quadratic scalar Klein-Gordon equation \cite{Sh}. The case for systems is even more complicated and only few partial results are known \cite{De,Ge,Ha}.

We follow and extend the analysis started in our previous work \cite{IoPa}. We refer to \cite{De2,GeMaSh,GuNaTs,Kl,Sh} for previous seminal work on dispersive quasilinear systems. The main two challenges we face are:

(i) overcoming the quasilinear nature of the nonlinearity to ensure global existence,

(ii) obtaining decay of the solution to control the asymptotic behavior. 

Fortunately, these two difficulties are complementary provided one obtains sufficiently strong control. Indeed:

(I) the loss of derivative coming from the nonlinearity is overcome by using energy estimates which allow to control high-regularity norms provided a lower-order norm remains small,

(II) the decay estimate, if implying time-integrability precisely propagates the smallness of low regularity norms globally in time. This is obtained from a delicate semilinear analysis assuming that high regularity norms remain bounded. Together, these two ingredients allow a bootstrap in time which yields both global existence and scattering.

The energy estimates come from the conservative structure of the equation and depend on delicate symmetry properties of the nonlinearity. In order to be extended globally, they demand a decay of some norm of at least $1/t$.

This decay is provided by the semilinear analysis of systems of dispersive equations. We use the Fourier transform method. After suitable algebraic manipulations, this is reduced to the study of bilinear operators of the form
\begin{equation}\label{OPT}
\widehat{T[f,g]}(\xi)=\int_{\mathbb{R}}\int_{\mathbb{R}^3}e^{it\Phi(\xi,\eta)}m(\xi,\eta)\widehat{f}(\xi-\eta,t)\widehat{g}(\eta,t) d\eta dt.
\end{equation}
As a first approximation, one may think of $f$, $g$ being smooth bump functions and $m$ being essentially a smooth cut-off, and the main challenge is to estimate efficiently the infinite time integral.  It then becomes clear that a key role is played by the properties of the function $\Phi$ and in particular by the points where it is stationary,
\begin{equation*}
\nabla_{(t,\eta)}[t\Phi(\xi,\eta)]=0.
\end{equation*}
This was already highlighted in \cite{GeMaSh} and forms the basis of the space-time resonance method. In some situations, one has no or few fully stationary points and the task is mainly to propagate enough smoothness of $\hat{f}$, $\hat{g}$ to exploit (non)-stationary phase arguments.

However, this is not the case in the models in this paper and we have to face some unavoidable ``space-time resonances''. Under some conditions that enforce non-degeneracy of the phase at critical points, we perform a robust stationary phase analysis of this case. We believe this forms the main contribution of the present work and we present it below in more details.

\subsubsection{Space-time resonant sets.}
The analysis of operators of the form \eqref{OPT} relies especially on the properties of the phase $\Phi$ which, in our case, is of the form
\begin{equation*}
\Phi=\Lambda_\sigma(\xi)\pm\Lambda_\mu(\xi-\eta)\pm\Lambda_\nu(\xi-\eta),\qquad \Lambda_\rho(\theta)=\sqrt{b_\rho^2+c_\rho^2\vert\theta\vert^2},\,\rho\in\{\sigma,\mu,\nu\}.
\end{equation*}
As in \cite{GeMaSh}, one can define the space-resonant set
\begin{equation*}
\mathcal{R}_x=\{(\xi,\eta):\,\,\nabla_\eta\Phi(\xi,\eta)=0\},
\end{equation*}
the time-resonant set
\begin{equation*}
\mathcal{R}_t=\{(\xi,\eta):\,\,\Phi(\xi,\eta)=0\},
\end{equation*}
and the set of space-time resonances
\begin{equation*}
\mathcal{R}=\mathcal{R}_x\cap \mathcal{R}_t.
\end{equation*}
The absence of {\it any} stationary point corresponds to the condition $\mathcal{R}=\emptyset$. This holds in a certain number of cases and the semilinear analysis can be carried out using integration by parts arguments either in $x$ or in $t$. It is remarkable that the simple condition $\mathcal{R}=\emptyset$ explains essentially many of the classical global regularity results, see the longer discussion in \cite{Ge}. For example the case of scalar Klein--Gordon equations corresponds to $\mathcal{R}_t=\emptyset$, in which case one can perform an integration by parts in $t$ (the normal form method \cite{Sh}).

More generally, one can sometimes adapt the integration by parts semilinear arguments even if the set $\mathcal{R}$ is nontrivial, provided that either the multiplier $m$ in \eqref{OPT} or the $\xi$ gradient $\nabla_\xi\Phi$ vanish suitably on this set. In the case of wave equations, the vanishing of $m$ corresponds precisely to Klainerman's ``null condition'' \cite{Kl2}. See also \cite{GeMaSh,GuNaTs,GeMaSh2,GuPa,IoPa,HaPuSh} for recent results exploiting these ideas.

However, it was observed by Germain \cite{Ge} that the case of Klein--Gordon systems with different speeds is genuinely different, even in the case of a system of two equations with equal masses $b_1=b_2$. In this case one cannot avoid the presence of large sets of space-time resonances and there are no natural ``null conditions''.  In general, the sets of space-time resonances take the form
\begin{equation*}
\mathcal{R}=\{(\xi,\eta)=(re,r'e):e\in\mathbb{S}^2\},
\end{equation*}
for certain values $r,r'\in (0,\infty)$ which depend on the parameters. In other words, the set $\mathcal{R}$ is a 2-dimensional manifold in $\mathbb{R}^3\times\mathbb{R}^3$, which should be thought of as the natural situation, in view of the fact that it is defined by four identities $\Phi(\xi,\eta)=\nabla_\eta\Phi(\xi,\eta)=0$. 

A partial result, which assumes certain separation conditions of the problematic frequencies, was obtained in \cite{Ge} in the semilinear case, and later extended to a quasilinear example in \cite{GeMa}. The results in \cite{Ge} and \cite{GeMa} appear to hold only for ``generic'' sets of parameters, and the required smallness of the perturbation depends implicitly on these parameters. 

Our analysis in this paper can be understood as a robust analysis of the case of non-degenerate resonances $\mathcal{R}\cap\mathcal{D}=\emptyset$, where $\mathcal{D}$ is the degenerate set
\begin{equation}\label{EquationAdded1}
\mathcal{D}=\{(\xi,\eta):\,\,\det[\nabla^2_{\eta,\eta}\Phi(\xi,\eta)]=0\}.
\end{equation}
The analysis seems to be limited to dimension three (and higher), and the method does not appear to extend easily to the two-dimensional case. It is possible, however, that this analysis can be developed further to allow for low-order degeneracy of the phase, thereby removing the condition on the parameters $b_\sigma$, $c_\sigma$ in \eqref{bc}. We note however, that this would require nontrivial change of the norms as it becomes likely that the gap in $xL^2$ integrability would increase between ``weak'' and ``strong'' norms. We note also that our conditions are sufficient to cover our main physical application.

Regarding the precise relations on the parameters in \eqref{bc}, the first condition ensures that $(0,0)$ is not time-resonant and thus this point plays little role. Note that $(0,0)$ is a specific point as all the gradients vanish there. The second condition only reflects a lack of uniformity of the estimates in terms of the gap between like parameters\footnote{As different velocities and masses approach each other, the corresponding spheres of ``space-time resonances'' go off to infinity, see \eqref{DefQ(XI)}. However a slightly more careful analysis would yield the wanted uniformity, at the expense of some clarity of the proof.}. Finally, the third condition is equivalent to asking that there are no degenerate space-time resonant points in $\mathbb{R}^3\times\mathbb{R}^3\setminus(0,0)$. We justify this at the end of this section.

The relevance of \eqref{EquationAdded1} can be illustrated by the fact that, after suitable manipulations and use of Morse lemma, the  study of operators like \eqref{OPT} can be reduced to the study of operators in standard form:
\begin{equation*}
\widehat{T[f,g]}(\xi)=\int_{\mathbb{R}}\int_{\mathbb{R}^3}e^{it\vert \eta-p(\xi)\vert^2}m(\xi,\eta)\widehat{f}(\xi-\eta,t)\widehat{g}(\eta,t) d\eta dt.
\end{equation*}
for some smooth function $p:\mathbb{R}^3\to\mathbb{R}^3$, which allows for precise estimate on the phase.

\subsubsection{Norms.} The choice of the $Z$-norms we use in the semilinear analysis, see Definition \ref{MainDef}, is very important. These norms have to satisfy at least two essential requirements:

(a) they {\it must} yield a $1/t$ decay after we apply the linear flow,

(b) they {\it must} allow for boundedness of the basic interaction bilinear operator \eqref{OPT}.

The simplest energy-type norm compatible with (a) corresponds to $x^{-(1+\varepsilon)}L^2(dx)$. This is, essentially, the ``strong norm'' $B^1_{k,j}$ in \eqref{sec5.2}
\footnote{We prefer, however, to first localize all our functions both in space and frequency. One should think of a function as composed of atoms,
\begin{equation*}
f=\sum_{k,j\in\mathbb{Z},\,k+j\geq 0}f_{k,j}=\sum_{k,j\in\mathbb{Z},\,k+j\geq 0}P_{[k-2,k+2]}(\phii_j^{(k)}\cdot P_kf),
\end{equation*}
where the atoms $f_{k,j}$ are localized essentially at frequency $\approx 2^k$ and distance $\approx 2^j$ from the origin in the physical space. Then we measure appropriately the size of each such atom, and use this to define the $Z$ norm of $f$. This point of view, which was used also in \cite{IoPa}, is convenient to deal with the main difficulty of the paper, namely estimating efficiently bilinear operators such as those in \eqref{OPT}.} and we are able to control most of the interactions in this norm. Unfortunately, certain interactions, corresponding to space-time resonances, are simply not bounded in this norm, even for inputs $f,g$ which are small smooth bump functions of scale $1$. This forces us to add another component to our space, measured in the ``weak-norm'' which has insufficient $xL^2$ integrability. This corresponds to $B^2_{k,j}$ in \eqref{sec5.2}. Fortunately, these only happen on an exceptional set of frequencies and the ``weak norm'' has an additional component that captures the essential two-dimensional nature of the support of these solutions. This smallness on the support then more than compensates for the weaker integrability and yields the all-important $1/t$ decay.

In addition, although fundamental, the gap in $L^2$-integrability between weak and strong norms is sufficiently small to allow us to treat the two norms similarly for most of the easier cases, thereby keeping the computations manageable.

\subsubsection{Condition on the parameters.}

We finish this section with simple computations showing that the condition \eqref{bc} implies the absence of degenerate space-time resonances, i.e. $\mathcal{R}\cap\mathcal{D}=\emptyset$. Let
\begin{equation*}
\begin{split}
&\Lambda_\sigma(\xi)=\sqrt{b_\sigma^2+c_\sigma^2\vert\xi\vert^2},\quad\Lambda_\mu(\xi)=\sqrt{b_\mu^2+c_\mu^2\vert\xi\vert^2},\quad\Lambda_\nu(\xi)=\sqrt{b_\nu^2+c_\nu^2\vert\xi\vert^2},\\
&\Phi(\xi,\eta)=\Lambda_\sigma(\xi)-\eps_1\Lambda_\mu(\xi-\eta)-\eps_1\epsilon\Lambda_\nu(\eta),\qquad\eps_1,\epsilon\in\{-1,1\}.
\end{split}
\end{equation*}
Clearly, $\Phi(0,0)=b_\sigma\pm b_\mu\pm b_\nu$ and therefore, the first equation in \eqref{bc} forces $(\xi,\eta)=(0,0)$ to not be time-resonant. Moreover, clearly any point of the form $(\xi,\eta)=(\xi,0)$, $\xi\in\mathbb{R}^3\setminus\{0\}$, cannot be space-resonant.

We show now that $(\xi,\eta)$ cannot be a degenerate space-time resonant point, provided that \eqref{bc} holds and $\eta\neq 0$. We may assume, without loss of generality, that 
\begin{equation}\label{bnbn0}
c_\mu\geq c_\nu\qquad \text{ and }\qquad b_\nu c_\mu^2\geq b_\mu c_\nu^2. 
\end{equation}
The relation
\begin{equation*}
(\nabla_\eta\Phi)(\xi,\eta)=0
\end{equation*}
is satisfied if and only if $\xi=q(\eta)$, where
\begin{equation}\label{DefQ(XI)}
q(\eta)=\Big[1+\epsilon\frac{b_\mu c_\nu^2}{(b_\nu^2c_\mu^4+c_\mu^4c_\nu^2|\eta|^2-c_\nu^4c_\mu^2|\eta|^2)^{1/2}}\Big]\eta.
\end{equation}

Clearly, $r=\vert q(\eta)\vert$ depends only on $s=\vert\eta\vert$ and
\begin{equation}\label{bnbn}
\frac{dr}{ds}=1+\epsilon\frac{ b_\mu c_\nu^2b_\nu^2c_\mu^4}{(b_\nu^2c_\mu^4+c_\mu^4c_\nu^2s^2-c_\nu^4c_\mu^2s^2)^{3/2}}.
\end{equation}
We claim now that 
\begin{equation}\label{bnbn2}
\frac{dr}{ds}>0\qquad \text{ if }s\in(0,\infty)\text{ and }(q(\eta),\eta)\in\mathcal{R}_t.
\end{equation}
Indeed, this is clear from \eqref{bnbn} if $\eps=1$ or if $\eps=-1$ and either $b_\nu c_\mu^2> b_\mu c_\nu^2$ or $c_\mu>c_\nu$. In the remaining case $\epsilon=-1$, $c_\mu=c_\nu$, $b_\mu=b_\nu$, we have $q(\eta)=0$, so $\Phi(q(\eta),\eta)=\Lambda_\sigma(0)\neq 0$, therefore $(q(\eta),\eta)\notin\mathcal{R}_t$. The conclusion \eqref{bnbn2} follows.

Finally, we show that 
\begin{equation}\label{bnbn3}
\det[(\nabla^2_{\eta,\eta}\Phi)(q(\eta),\eta)]\neq 0\qquad \text{ if }\eta\in\mathbb{R}^3\setminus\{0\}\text{ and }(q(\eta),\eta)\in\mathcal{R}_t.
\end{equation}
Letting $\Xi(\xi,\eta):=(\nabla_\eta\Phi)(\xi,\eta)$, we start from the defining identity $\Xi(q(\eta),\eta)=0$ and differentiate with respect to $\eta$. It follows that
\begin{equation*}
\frac{d\Xi}{d\eta}(q(\eta),\eta)=-\frac{d\Xi}{d\xi}(q(\eta),\eta)\cdot \frac{dq}{d\eta}(\eta).
\end{equation*}
It follows from \eqref{DefQ(XI)} and \eqref{bnbn2} that $\det(\partial q/\partial\eta)\neq 0$. Moreover, using the definition, $\det(\partial\Xi/\partial\xi)=\det(\nabla^2_{\eta,\xi}\Phi)\neq 0$, and the conclusion \eqref{bnbn3} follows.
\medskip

The rest of the paper is organized as follows: in Section \ref{MainSec1}, we prove Theorem \ref{MainThm1} and Theorem \ref{MainThm2} relying on a decay assumption. This is then proved in Section \ref{ContProof} and Section \ref{normproof} where we prove respectively the continuity of the $Z$-norm that captures the decay and a bootstrap result that gives global control of this norm assuming global bounds on high-order energy. Finally, in Section \ref{technical}, we provide some needed technical estimates and we study the relevant sets associated to our phases.

\section{Reductions and proofs of the main theorems}\label{MainSec}

\subsection{Local existence results}\label{MainSec1}

In this subsection we state and prove suitable local regularity results for our equations.

We start with quasilinear systems of Klein--Gordon equations. For $\sigma\in\{1,\ldots,d\}$ assume that $b_\sigma,c_\sigma\in[1/A,A]$ and $F_\sigma$ are nonlinearities as in \eqref{KGNon1}--\eqref{CondForEE}. For $N\geq 4$ and $u\in C([0,T]:H^N_r)\cap C^1([0,T]:H^{N-1}_r)$ we define the higher order energies
\begin{equation}\label{abc1}
\begin{split}
\mathcal{E}_N^{\rm{KG}}(t):=\sum_{|\rho|\leq N-1}\Big\{&\int_{\mathbb{R}^3}\sum_{\sigma=1}^d\Big[(\partial_tD^\rho_xu_{\sigma})^2+b_{\sigma}^2(D^\rho_xu_{\sigma})^2+\sum_{j=1}^3c_{\sigma}^2(\partial_jD^\rho_xu_{\sigma})^2\Big]\,dx\\
+&\int_{\mathbb{R}^3}\sum_{\mu,\nu=1}^d\sum_{j,k=1}^3G_{\mu\nu}^{jk}(u,\nabla_{x,t}u)\partial_jD^\rho_xu_\mu\partial_kD^\rho_xu_\nu\,dx\Big\}.
\end{split}
\end{equation}
The following proposition is our first local regularity result:

\begin{proposition}\label{Localexistence0}
(i) There is $\delta_0>0$ such that if 
\begin{equation}\label{abc2}
\|v_0\|_{H^4_r}+\|v_1\|_{H^3_r}\leq\delta_0
\end{equation}
then there is a unique solution $u=(u_1,\ldots,u_d)\in C([0,1]:H^4_r)\cap C^1([0,1]:H^3_r)$ of the system
\begin{equation}\label{abc3}
(\partial_t^2-c^2_\mu\Delta+b_\mu^2)u_\mu=F_\mu,\qquad\qquad\mu=1,\ldots,d,
\end{equation}
with $(u(0),\dot{u}(0))=(v_0,v_1)$. Moreover,
\begin{equation*}
\sup_{t\in[0,1]}\|u(t)\|_{H_r^4}+\sup_{t\in[0,1]}\|\dot{u}(t)\|_{H_r^3}\lesssim \|v_0\|_{H^4_r}+\|v_1\|_{H^3_r}.
\end{equation*}

(ii) If $N\geq 4$ and $(v_0,v_1)\in H^{N}_r\times H^{N-1}_r$ satisfies \eqref{abc2}, then $u\in C([0,1]:H^N_r)\cap C^1([0,1]:H^{N-1}_r)$, and
\begin{equation}\label{abc4}
\mathcal{E}^{\rm{KG}}_N(t')-\mathcal{E}^{\rm{KG}}_N(t)\lesssim \int_{t}^{t'}\mathcal{E}^{\rm{KG}}_N(s)\cdot \big[\sum_{|\rho|\leq 2}\|D^\rho_xu(s)\|_{L^\infty}+\sum_{|\rho|\leq 1}\|D^\rho_x\dot{u}(s)\|_{L^\infty}\big]\,ds,
\end{equation}
for any $t\leq t'\in[0,1]$.
\end{proposition}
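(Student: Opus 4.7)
For part (i), my plan is a standard quasilinear Picard iteration. Given $u^{(n)}$, define $u^{(n+1)}$ as the solution of the linear system with frozen coefficients
\begin{equation*}
(\partial_t^2-c_\mu^2\Delta+b_\mu^2)u_\mu^{(n+1)}-G^{jk}_{\mu\nu}\bigl(u^{(n)},\nabla_{x,t}u^{(n)}\bigr)\partial_j\partial_k u_\nu^{(n+1)}=Q_\mu\bigl(u^{(n)},\nabla_{x,t}u^{(n)}\bigr),
\end{equation*}
with data $(v_0,v_1)$. Under \eqref{abc2}, Sobolev embedding in $\mathbb{R}^3$ gives $\|G\|_{L^\infty}\lesssim\delta_0$, so the effective principal operator stays uniformly hyperbolic, and the symmetries \eqref{CondForEE} make its first-order reduction symmetric hyperbolic. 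Running the energy estimate of part (ii) at level $N=4$ on each iterate yields uniform bounds in $C([0,1]:H^4_r)\cap C^1([0,1]:H^3_r)$; convergence of $\{u^{(n)}\}$ in the lower-regularity norm $C([0,1]:H^3_r)\cap C^1([0,1]:H^2_r)$, as well as uniqueness in part (i), follow from the analogous estimate applied to the difference $u^{(n+1)}-u^{(n)}$.

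For part (ii), I apply $D_x^\rho$ with $|\rho|\leq N-1$ to \eqref{abc3} and obtain
\begin{equation*}
(\partial_t^2-c_\mu^2\Delta+b_\mu^2)D_x^\rho u_\mu = G^{jk}_{\mu\nu}\partial_j\partial_k D_x^\rho u_\nu + \bigl[D_x^\rho, G^{jk}_{\mu\nu}\bigr]\partial_j\partial_k u_\nu + D_x^\rho Q_\mu.
\end{equation*}
Multiplying by $2\partial_tD_x^\rho u_\mu$, summing over $\mu$ and $\rho$, and integrating in $x$, the left-hand side is exactly the time derivative of the quadratic part of $\mathcal{E}_N^{\rm{KG}}$. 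For the leading quasilinear term on the right, integration by parts in $\partial_k$ combined with the symmetries $G^{jk}_{\mu\nu}=G^{kj}_{\mu\nu}=G^{jk}_{\nu\mu}$ from \eqref{CondForEE} gives
\begin{equation*}
2\int G^{jk}_{\mu\nu}\partial_j\partial_k D_x^\rho u_\nu\,\partial_t D_x^\rho u_\mu\,dx = -\frac{d}{dt}\int G^{jk}_{\mu\nu}\partial_j D_x^\rho u_\mu\,\partial_k D_x^\rho u_\nu\,dx + R_\rho(t),
\end{equation*}
with $R_\rho(t)$ collecting terms that carry one derivative of $G$ against two factors of $\partial_{x,t}D_x^\rho u$. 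The first term on the right exactly cancels the time derivative of the quasilinear correction in \eqref{abc1}, so that $\frac{d}{dt}\mathcal{E}_N^{\rm{KG}}(t)$ reduces to $\sum_\rho R_\rho(t)$ together with the contributions of $[D_x^\rho,G^{jk}_{\mu\nu}]\partial_j\partial_k u_\nu$ and $D_x^\rho Q_\mu$ paired against $\partial_t D_x^\rho u_\mu$.

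The main technical obstacle is the commutator $[D_x^\rho, G^{jk}_{\mu\nu}]\partial_j\partial_k u_\nu$, which formally places $N+1$ derivatives on $u$. Here I invoke the standard Kato--Ponce commutator estimate
\begin{equation*}
\bigl\|[D_x^\rho, G]\partial^2 u\bigr\|_{L^2} \lesssim \|\nabla G\|_{L^\infty}\|u\|_{H^N_r} + \|G\|_{H^{N-1}_r}\|\partial^2 u\|_{L^\infty},
\end{equation*}
combined with the linearity of $G$ in $(u,\nabla_{x,t}u)$ from \eqref{KGNon2}, which yields $\|G\|_{H^{N-1}_r}\lesssim (\mathcal{E}_N^{\rm{KG}}(s))^{1/2}$ and bounds $\|\nabla G\|_{L^\infty}$ by the $L^\infty$ sum in \eqref{abc4}. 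Pairing with $\|\partial_t D_x^\rho u_\mu\|_{L^2}\lesssim (\mathcal{E}_N^{\rm{KG}}(s))^{1/2}$ and using Cauchy--Schwarz, this contribution is controlled by $\mathcal{E}_N^{\rm{KG}}(s)$ times the $L^\infty$ sum in \eqref{abc4}. The remainder $R_\rho(t)$ and the quadratic term $\int D_x^\rho Q_\mu\cdot\partial_t D_x^\rho u_\mu\,dx$ are handled analogously via tame product estimates, the latter without needing any cancellation. Summing over $|\rho|\leq N-1$ and integrating in $s\in[t,t']$ gives \eqref{abc4}; the inclusion $u\in C([0,1]:H^N_r)\cap C^1([0,1]:H^{N-1}_r)$ then follows by approximating the data by smooth functions, using part (i), and passing to the limit via \eqref{abc4} and uniqueness.
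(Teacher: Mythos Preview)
Your proposal is correct and follows essentially the same route as the paper. The paper invokes Kato's general theory of quasilinear symmetric hyperbolic systems for part (i) and the propagation of regularity, rather than writing out the Picard iteration, but the content is the same; for the energy inequality \eqref{abc4} the paper likewise differentiates the full energy \eqref{abc1}, uses the symmetry \eqref{CondForEE} together with integration by parts for the quasilinear cancellation you describe, and controls the commutator and semilinear remainders via the tame product estimate \eqref{sobo}, which is exactly the Kato--Ponce-type bound you invoke.
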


We remark that the non-resonance condition \eqref{bc} is not needed in this local regularity result. On the other hand, the symmetry conditions \eqref{CondForEE} on the quasilinear components of the nonlinearities are important. 

\begin{proof}[Proof of Proposition \ref{Localexistence0}] The local existence claim in part (i) and the propagation of regularity claim in part (ii) are standard consequences of the general local existence theory of quasilinear symmetric hyperbolic systems, see Theorem II and Theorem III in \cite{Ka}. To prove the estimate \eqref{abc4}, we use the equations \eqref{abc3} and the definitions to estimate
\begin{equation}\label{abc7}
\begin{split}
\Big|\frac{d}{dt}\mathcal{E}_N^{\rm{KG}}(t)\Big|&\leq \sum_{|\rho|\leq N-1}\Big|\int_{\mathbb{R}^3}\sum_{\sigma=1}^d2(\partial_tD^\rho_xu_{\sigma})\cdot D^\rho_xF_\sigma\,dx+\int_{\mathbb{R}^3}\sum_{\sigma,\nu=1}^d\sum_{j,k=1}^32G_{\sigma\nu}^{jk}\cdot \partial_t\partial_jD^\rho_xu_\sigma\cdot \partial_kD^\rho_xu_\nu\,dx\Big|\\
&+\sum_{|\rho|\leq N-1}\Big|\int_{\mathbb{R}^3}\sum_{\mu,\nu=1}^d\sum_{j,k=1}^3\partial_tG_{\mu\nu}^{jk}\cdot \partial_jD^\rho_xu_\mu\cdot \partial_kD^\rho_xu_\nu\,dx\Big|.
\end{split}
\end{equation}

We will use the standard bound
\begin{equation}\label{sobo}
\|D^\rho_xf\cdot D^{\rho'}_xg\|_{L^2}\lesssim \|\nabla_x f\|_{L^\infty}\|g\|_{H^M}+\|\nabla_x g\|_{L^\infty}\|f\|_{H^M},
\end{equation}
provided that $|\rho|+|\rho'|\leq M+1$, $M\geq 1$, and $|\rho|,|\rho'|\geq 1$. For any multi-index $\rho$ with $|\rho|\leq N-1$ we estimate, as long as $\|u\|_{H^4}+\|\dot{u}\|_{H^3}\leq 1$,
\begin{equation*}
\Big|\int_{\mathbb{R}^3}\sum_{\mu,\nu=1}^d\sum_{j,k=1}^3\partial_tG_{\mu\nu}^{jk}\cdot \partial_jD^\rho_xu_\mu\cdot \partial_kD^\rho_xu_\nu\,dx\Big|\lesssim \|u\|_{H^N}^2\cdot \big[\sum_{|\alpha|\leq 2}\|D^\alpha_xu\|_{L^\infty}+\sum_{|\alpha|\leq 1}\|D^\alpha_x\dot{u}\|_{L^\infty}\big],
\end{equation*}
and, using also \eqref{sobo},
\begin{equation*}
\Big|\int_{\mathbb{R}^3}\sum_{\sigma=1}^d2(\partial_tD^\rho_xu_{\sigma})\cdot D^\rho_xQ_\sigma\,dx\Big|\lesssim \big[\|u\|_{H^N}^2+\|\dot{u}\|_{H^{N-1}}^2\big]\cdot \big[\sum_{|\alpha|\leq 2}\|D^\alpha_xu\|_{L^\infty}+\sum_{|\alpha|\leq 1}\|D^\alpha_x\dot{u}\|_{L^\infty}\big].
\end{equation*}
Moreover, for any $j,k\in\{1,2,3\}$ and $\sigma,\nu\in\{1,\ldots,d\}$ we estimate, using \eqref{sobo},
\begin{equation*}
\begin{split}
\Big|\int_{\mathbb{R}^3}2\partial_tD^\rho_xu_{\sigma}&\cdot [D^\rho_x(G^{jk}_{\sigma\nu}\cdot\partial_j\partial_ku_\nu)-G^{jk}_{\sigma\nu}\cdot D^\rho_x\partial_j\partial_ku_\nu]\,dx\Big|\\
&\lesssim \big[\|u\|_{H^N}^2+\|\dot{u}\|_{H^{N-1}}^2\big]\cdot \big[\sum_{|\alpha|\leq 2}\|D^\alpha_xu\|_{L^\infty}+\sum_{|\alpha|\leq 1}\|D^\alpha_x\dot{u}\|_{L^\infty}\big],
\end{split}
\end{equation*}
and
\begin{equation*}
\begin{split}
\Big|\int_{\mathbb{R}^3}2\partial_tD^\rho_xu_{\sigma}&\cdot G^{jk}_{\sigma\nu}\cdot D^\rho_x\partial_j\partial_ku_\nu\,dx+\int_{\mathbb{R}^3}2G_{\sigma\nu}^{jk}\cdot \partial_t\partial_jD^\rho_xu_\sigma\cdot \partial_kD^\rho_xu_\nu\,dx\Big|\\
&\lesssim \big[\|u\|_{H^N}^2+\|\dot{u}\|_{H^{N-1}}^2\big]\cdot \big[\sum_{|\alpha|\leq 2}\|D^\alpha_xu\|_{L^\infty}+\sum_{|\alpha|\leq 1}\|D^\alpha_x\dot{u}\|_{L^\infty}\big].
\end{split}
\end{equation*}
Therefore, using \eqref{abc7}, 
\begin{equation*}
\begin{split}
\Big|\frac{d}{dt}\mathcal{E}_N^{\rm{KG}}(t)\Big|\lesssim \big[\|u(t)\|_{H^N}^2+\|\dot{u}(t)\|_{H^{N-1}}^2\big]\cdot \big[\sum_{|\alpha|\leq 2}\|D^\alpha_xu(u)\|_{L^\infty}+\sum_{|\alpha|\leq 1}\|D^\alpha_x\dot{u}(t)\|_{L^\infty}\big],
\end{split}
\end{equation*}
for any $t\in[0,1]$. We notice that $\|u(t)\|_{H^N}^2+\|\dot{u}(t)\|_{H^{N-1}}^2\approx \mathcal{E}_N^{\rm{KG}}(t)$, provided that $\|u\|_{H^4}+\|\dot{u}\|_{H^3}\ll 1$. The desired estimate \eqref{abc4} follows.
\end{proof}

We consider now the Euler--Maxwell system. Recalling the definition \eqref{basicspace}, for any $(n,v,E,B)\in\widetilde{H}^N$ we define
\begin{equation}\label{pla0}
\mathcal{E}_N:=\sum_{|\rho|\leq N}\int_{\mathbb{R}^3}\big[T|D_x^\rho n|^2+(1+n)|D_x^\rho v|^2+|D_x^\rho E|^2+c^2|D_x^\rho B|^2\big]\,dx,
\end{equation}
and
\begin{equation}\label{pla0.5}
\|(n,v,E,B)\|_{Z'}:=\|\nabla n\|_{L^\infty}+\|v\|_{L^\infty}+\|\nabla v\|_{L^\infty}+\|\nabla E\|_{L^\infty}+\Vert B\Vert_{L^\infty}+\|\nabla B\|_{L^\infty}.
\end{equation}
The following proposition is our second local regularity result:

\begin{proposition}\label{Localexistence}
(i) There is $\delta_0>0$ such that if 
\begin{equation}\label{pla1}
\|(n_0,v_0,E_0,B_0)\|_{\widetilde{H}^4}\leq\delta_0
\end{equation}
then there is a unique solution $(n,v,E,B)\in C([0,1]:\widetilde{H}^4)$ of the system
\begin{equation}\label{EM4}
\begin{split}
&\partial_tn+\hbox{div}((1+n)v)=0,\\
&\partial_tv+v\cdot\nabla v+T\nabla n+E+v\times B=0,\\
&\partial_tB+\nabla\times E=0,\\
&\partial_tE-c^2\nabla\times B-(1+n)v=0,\\
\end{split}
\end{equation}
with $(n(0),v(0),E(0),B(0))=(n_0,v_0,E_0,B_0)$. Moreover,
\begin{equation*}
\sup_{t\in[0,1]}\|(n(t),v(t),E(t),B(t))\|_{\widetilde{H}^4}\lesssim \|(n_0,v_0,E_0,B_0)\|_{\widetilde{H}^4}.
\end{equation*}

(ii) If $N\geq 4$ and $(n_0,v_0,E_0,B_0)\in\widetilde{H}^N$ satisfies \eqref{pla1}, then $(n,v,E,B)\in C([0,1]:\widetilde{H}^N)$, and
\begin{equation}\label{pla2}
\mathcal{E}_N(t')-\mathcal{E}_N(t)\lesssim \int_{t}^{t'}\mathcal{E}_N(s)\cdot \|(n,v,E,B)(s)\|_{Z'}\,ds.
\end{equation}
for any $t\leq t'\in[0,1]$.

(iii) If $(n_0,v_0,E_0,B_0)\in\widetilde{H}^4$ satisfies \eqref{pla1}, and, in addition,
\begin{equation*}
\hbox{div}(E_0)+n_0=0,\qquad B_0-\nabla\times v_0=0,
\end{equation*}
then, for any $t\in[0,1]$,
\begin{equation}\label{pla5}
\hbox{div}(E)(t)+n(t)=0,\qquad B(t)-(\nabla\times v)(t)=0.
\end{equation}
\end{proposition}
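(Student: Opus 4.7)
The plan is to deduce (i) and the qualitative part of (ii) from Kato's theory \cite{Ka} for quasilinear symmetric hyperbolic systems, exactly as in Proposition \ref{Localexistence0}. Setting $U=(n,v,E,B)$, one rewrites \eqref{EM4} in the form $A_0(U)\partial_tU+\sum_{j=1}^3A_j(U)\partial_jU=F(U)$ with the block-diagonal weight $A_0=\mathrm{diag}(T,(1+n)I_3,I_3,c^2I_3)$ and symmetric $A_j$; the non-self-adjoint coupling $(T\nabla n,\,(1+n)\nabla\cdot v)$ in the first two equations is precisely symmetrized by this choice of weights. Sobolev embedding applied to the smallness hypothesis $\|n_0\|_{H^4}\leq\delta_0$ keeps $|n(t)|\leq 1/2$ on a short time interval, so $A_0$ is uniformly positive definite, which yields (i) together with propagation of $\widetilde H^N$ regularity.

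The substantive content of (ii) is the inequality \eqref{pla2}. To prove it I would apply $D^\rho_x$ with $|\rho|\leq N$ to each equation of \eqref{EM4}, pair the resulting equations with $TD^\rho_xn$, $(1+n)D^\rho_xv$, $D^\rho_xE$ and $c^2D^\rho_xB$ respectively, sum and integrate by parts. Four groups of top-order terms need to be tracked: (a) the density-pressure cross terms $\int T(1+n)[D^\rho_xn\,\nabla\cdot D^\rho_xv+D^\rho_xv\cdot\nabla D^\rho_xn]$ collapse after integration by parts to $-\int T(\nabla n)\cdot D^\rho_xv\,D^\rho_xn$, controlled by $\|\nabla n\|_{L^\infty}\mathcal E_N$; (b) the Maxwell pair $\int c^2(D^\rho_xE\cdot\nabla\times D^\rho_xB-D^\rho_xB\cdot\nabla\times D^\rho_xE)$ vanishes by integration by parts; (c) the current coupling $\int D^\rho_xE\cdot D^\rho_x((1+n)v)-\int(1+n)D^\rho_xv\cdot D^\rho_xE$ reduces to a Moser-type commutator; (d) the Lorentz term $\int(1+n)D^\rho_xv\cdot D^\rho_x(v\times B)$ has its principal part killed by the scalar triple product identity $D^\rho_xv\cdot(D^\rho_xv\times B)=0$, the remainder being bounded by $(\|v\|_{L^\infty}+\|B\|_{L^\infty})\mathcal E_N$. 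The transport contributions $-\int TD^\rho_xn(v\cdot\nabla)D^\rho_xn-\int(1+n)D^\rho_xv\cdot(v\cdot\nabla)D^\rho_xv$ combine with the extra term $\int\partial_t(1+n)|D^\rho_xv|^2=-\int\hbox{div}((1+n)v)|D^\rho_xv|^2$ produced by the $n$-weighting in $\mathcal E_N$ to leave only $\|\nabla v\|_{L^\infty}$-controlled remainders. All remaining commutators are estimated by the Moser-type inequality \eqref{sobo}, and \eqref{pla2} follows.

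For part (iii), the divergence constraint is immediate: since $\hbox{div}(\nabla\times B)=0$, taking $\hbox{div}$ of the $E$-equation and comparing with the continuity equation yields $\partial_t(\hbox{div}(E)+n)=0$, hence $\hbox{div}(E)+n\equiv 0$. For the curl constraint, set $W:=\nabla\times v-B$; using the identity $v\cdot\nabla v=\nabla(|v|^2/2)-v\times(\nabla\times v)$, the vanishing $\nabla\times\nabla n=0$, and $\partial_tB=-\nabla\times E$, one obtains
\begin{equation*}
\partial_tW=\nabla\times(v\times W).
\end{equation*}
This is a first-order linear transport-type equation for $W$ whose coefficients depend on the already-constructed $v$; combined with the conservation of $\hbox{div}(B)=0$ (automatic from $\partial_tB=-\nabla\times E$ and $\hbox{div}(B_0)=\hbox{div}(\nabla\times v_0)=0$), expanding $\nabla\times(v\times W)=(W\cdot\nabla)v-W(\nabla\cdot v)-(v\cdot\nabla)W$ and running a Gronwall argument on $\|W(t)\|_{L^2}$ gives $W\equiv 0$.

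The main obstacle is the bookkeeping in (ii): because the quadratic pressure $p(n)=P_en^2/2$ does not put \eqref{EM2} into naturally symmetric form, the $(1+n)$-weighting in $\mathcal E_N$ is indispensable and generates several top-order terms, in particular the interplay between (a) and the transport/$\partial_t(1+n)$ contributions, which must cancel exactly. Writing out these cancellations cleanly so that only $\|(n,v,E,B)\|_{Z'}$-weighted remainders survive is the technical heart of the argument and is precisely the \emph{cubic correction to the energy estimates} mentioned in the introduction.
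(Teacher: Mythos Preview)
Your proposal is correct and follows essentially the same approach as the paper: the same symmetrizer $A_0=\mathrm{diag}(T,(1+n)I_3,I_3,c^2I_3)$ to invoke Kato's theory for (i), the same weighted energy computation with the same top-order cancellations for (ii), and the same propagation argument for the constraints in (iii). The only cosmetic difference is that for the curl constraint you derive $\partial_tW=\nabla\times(v\times W)$ via the vector identity $v\cdot\nabla v=\nabla(|v|^2/2)-v\times(\nabla\times v)$, whereas the paper reaches the identical transport equation for $Y=-W$ by a direct component-wise calculation; the two are equivalent.
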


\begin{proof}[Proof of Proposition \ref{Localexistence}] We multiply each equation by a suitable factor and rewrite the system \eqref{EM4} as a symmetric hyperbolic system,
\begin{equation*}
\begin{split}
&T\partial_tn+T\sum_{k=1}^3v_k\partial_kn+T(1+n)\sum_{k=1}^3\partial_kv_k=0,\\
&(1+n)\partial_tv_j+T(1+n)\partial_jn+(1+n)\sum_{k=1}^3v_k\partial_kv_j=-(1+n)E_j-(1+n)\sum_{k,m=1}^3\in_{jmk}v_mB_k,\\
&c^2\partial_tB_j+c^2\sum_{k,m=1}^3\in_{jmk}\partial_mE_k=0,\\
&\partial_tE_j-c^2\sum_{k,m=1}^3\in_{jmk}\partial_mB_k=(1+n)v_j.
\end{split}
\end{equation*}
Then we apply Theorem II and Theorem III in \cite{Ka} to prove the local existence claim in part (i) and the propagation of regularity claim in part (ii). 

To verify the energy inequality \eqref{pla2} we let, for $P=D^\rho_x$, $|\rho|\leq N$,
\begin{equation*}
\mathcal{E}'_P:=\int_{\mathbb{R}^3}\big[T|P n|^2+(1+n)|P v|^2+|P E|^2+c^2|P B|^2\big]\,dx,
\end{equation*}
Then we calculate
\begin{equation*}
\begin{split}
&\frac{d}{dt}\mathcal{E}'_P=I_P+II_P+III_P+IV_P,\\
&I_P:=\int_{\mathbb{R}^3}2TP n\cdot P\partial_tn\,dx,\\
&II_P:=\sum_{j=1}^3\int_{\mathbb{R}^3}\partial_tn\cdot Pv_j\cdot Pv_j\,dx,\\
&III_P:=\sum_{j=1}^3\int_{\mathbb{R}^3}2(1+n)\cdot Pv_j\cdot P\partial_tv_j\,dx,\\
&IV_P:=\sum_{j=1}^3\int_{\mathbb{R}^3}2P E_j\cdot P\partial_tE_j\,dx+\sum_{j=1}^3\int_{\mathbb{R}^3}2c^2PB_j\cdot P\partial_tB_j\,dx.
\end{split}
\end{equation*}
Then we estimate, using the equations and the general bound \eqref{sobo},
\begin{equation*}
\Big|I_P+2T\sum_{k=1}^3\int_{\mathbb{R}^3}Pn\cdot (1+n)\cdot P\partial_kv_k\,dx\Big|\lesssim \|(n,v,E,B)\|_{\widetilde{H}^N}^2\cdot \|(n,v,E,B)\|_{Z'},
\end{equation*}
\begin{equation*}
\Big|II_P\Big|\lesssim \|(n,v,E,B)\|_{\widetilde{H}^N}^2\cdot \|(n,v,E,B)\|_{Z'},
\end{equation*}
\begin{equation*}
\Big|III_P+2T\sum_{j=1}^3\int_{\mathbb{R}^3}P\partial_jn\cdot (1+n)\cdot Pv_j\,dx+2\sum_{j=1}^3\int_{\mathbb{R}^3}PE_j\cdot Pv_j\cdot (1+n)\,dx\Big|\lesssim \|(n,v,E,B)\|_{\widetilde{H}^N}^2\cdot \|(n,v,E,B)\|_{Z'},
\end{equation*}
\begin{equation*}
\Big|IV_P-2\sum_{j=1}^3\int_{\mathbb{R}^3}PE_j\cdot Pv_j\cdot (1+n)\,dx\Big|\lesssim \|(n,v,E,B)\|_{\widetilde{H}^N}^2\cdot \|(n,v,E,B)\|_{Z'}.
\end{equation*}
Therefore
\begin{equation*}
\Big|\frac{d}{dt}\mathcal{E}'_P\Big|\lesssim \|(n,v,E,B)\|_{\widetilde{H}^N}^2\cdot \|(n,v,E,B)\|_{Z'},
\end{equation*}
and the bound \eqref{pla2} follows since $\mathcal{E}_N=\sum_{P=D^\rho_x,\,|\rho|\leq N}\mathcal{E}'_P\approx \|(n,v,E,B)\|_{\widetilde{H}^N}^2$.

Finally, to verify that the identities \eqref{pla5} are propagated by the flow, we let
\begin{equation*}
X:=n+\hbox{div}(E),\qquad Y:=B-\nabla\times v.
\end{equation*}
Using the equations in \eqref{EM4} we calculate
\begin{equation*}
\partial_tX=\partial_tn+\sum_{j=1}^3\partial_j\partial_tE_j=-\sum_{j=1}^3\partial_j[(1+n)v_j]+\sum_{j=1}^3\partial_j[(1+n)v_j]=0,
\end{equation*}
therefore $X\equiv 0$. Moreover
\begin{equation*}
\partial_t\big(\sum_{k=1}^3\partial_kB_k\big)=0,
\end{equation*}
therefore
\begin{equation*}
\sum_{k=1}^3\partial_kB_k\equiv 0,\qquad \sum_{k=1}^3\partial_kY_k\equiv 0.
\end{equation*}
In addition, for any $m,n\in\{1,2,3\}$,
\begin{equation*}
\partial_mv_n-\partial_nv_m=\sum_{j=1}^3\in_{jmn}(B_j-Y_j).
\end{equation*}
Finally we calculate, for $i\in\{1,2,3\}$,
\begin{equation*}
\begin{split}
\partial_tY_i&=\partial_tB_i-\sum_{j,k=1}^3\in_{ijk}\partial_j\partial_tv_k\\
&=-\sum_{j,k=1}^3\in_{ijk}\partial_jE_k+\sum_{j,k=1}^3\in_{ijk}\partial_j\big[T\partial_kn+E_k+\sum_{l=1}^3v_l\partial_lv_k+\sum_{l,m=1}^3\in_{klm}v_lB_m\big]\\
&=\sum_{j,k,l=1}^3\in_{ijk}(\partial_jv_l\partial_lv_k+v_l\partial_j\partial_lv_k)+\sum_{j,k,l,m=1}^3\in_{ijk}\in_{klm}\partial_j(v_lB_m)\\
&=\sum_{j,k,l=1}^3\in_{ijk}\partial_jv_l(\partial_lv_k-\partial_kv_l)+\sum_{j,k,l=1}^3\in_{ijk}v_l\partial_l\partial_jv_k+\sum_{j,l,m=1}^3(\delta_{il}\delta_{jm}-\delta_{jl}\delta_{im})\partial_j(v_lB_m)\\
&=\sum_{l=1}^3\big[(B_i-Y_i)\partial_lv_l-\partial_lv_i(B_l-Y_l)+v_l\partial_l(B_i-Y_i)\big]+\sum_{j=1}^3\big[B_j\partial_jv_i +v_i\partial_jB_j-B_i\partial_jv_j-v_j\partial_jB_i\big]\\
&=\sum_{l=1}^3\big[-Y_i\partial_lv_l+Y_l\partial_lv_i-v_l\partial_lY_i\big].
\end{split}
\end{equation*}
Therefore, using energy estimates, $Y\equiv 0$ as desired.
\end{proof}

\subsection{Definitions, function spaces, and the main propositions}\label{MainSec2}

We fix $\varphi:\mathbb{R}\to[0,1]$ an even smooth function supported in $[-8/5,8/5]$ and equal to $1$ in $[-5/4,5/4]$. Let
\begin{equation*}
\varphi_k(x):=\varphi(|x|/2^k)-\varphi(|x|/2^{k-1})\qquad\text{ for any }k\in\mathbb{Z},\,x\in\mathbb{R}^3,\qquad \varphi_I:=\sum_{m\in I\cap\mathbb{Z}}\varphi_m\text{ for any }I\subseteq\mathbb{R}.
\end{equation*}
Let
\begin{equation*}
\mathcal{J}:=\{(k,j)\in\mathbb{Z}\times\mathbb{Z}_+:\,k+j\geq 0\}.
\end{equation*}
For any $(k,j)\in\mathcal{J}$ let
\begin{equation*}
\phii^{(k)}_j(x):=
\begin{cases}
\varphi_{(-\infty,-k]}(x)\quad&\text{ if }k+j=0\text{ and }k\leq 0,\\
\varphi_{(-\infty,0]}(x)\quad&\text{ if }j=0\text{ and }k\geq 0,\\
\varphi_j(x)\quad&\text{ if }k+j\geq 1\text{ and }j\geq 1.
\end{cases}
\end{equation*}
and notice that, for any $k\in\mathbb{Z}$ fixed,
\begin{equation*}
\sum_{j\geq-\min(k,0)}\phii^{(k)}_j=1.
\end{equation*}
For any interval $I\subseteq\mathbb{R}$ let
\begin{equation*}
\phii^{(k)}_I(x):=\sum_{j\in I,\,(k,j)\in\mathcal{J}}\phii^{(k)}_j(x).
\end{equation*}

Let $P_k$, $k\in\mathbb{Z}$, denote the operator on $\mathbb{R}^3$ defined by the Fourier multiplier $\xi\to \varphi_k(\xi)$. Similarly, for any $I\subseteq \mathbb{R}$ let $P_I$ denote the operator on $\mathbb{R}^3$
 defined by the Fourier multiplier $\xi\to \varphi_I(\xi)$. For any $k\in\mathbb{Z}$ let
\begin{equation}\label{sets}
\begin{split}
&\mathcal{X}_k^1:=\{(k_1,k_2)\in\mathbb{Z}\times\mathbb{Z}:|\max(k_1,k_2)-k|\leq 8\},\\
&\mathcal{X}_k^2:=\{(k_1,k_2)\in\mathbb{Z}\times\mathbb{Z}:\max(k_1,k_2)-k\geq 8\text{ and }|k_1-k_2|\leq 8\},\\
&\mathcal{X}_k:=\mathcal{X}_k^1\cup \mathcal{X}_k^2.
\end{split}
\end{equation}

For integers $n\geq 1$ let
\begin{equation}\label{symb}
\mathcal{S}^{n}:=\{q:\mathbb{R}^3\to\mathbb{C}:\|q\|_{\mathcal{S}^{n}}:=\sup_{\xi\in\mathbb{R}^3\setminus\{0\}}\sup_{|\rho|\leq n}|\xi|^{|\rho|}|D^\rho_\xi q(\xi)|<\infty\},
\end{equation}
denote classes of symbols satisfying differential inequalities of the H\"{o}rmander--Michlin type. An operator $Q$ will be called a {\it{normalized Calderon--Zygmund operator}} if
\begin{equation}\label{CZ}
\widehat{Q f}(\xi)=q(\xi)\cdot \widehat{f}(\xi),\qquad \text{ for some }q\in\mathcal{S}^{100},\,\|q\|_{\mathcal{S}^{100}}\leq 1.
\end{equation}
For any integer $d'\geq 1$ let
\begin{equation}\label{symb2}
\begin{split}
&\mathcal{M}_{d'}:=\big\{m:\mathbb{R}^3\times\mathbb{R}^3\to\mathbb{C}:\,\,m(\xi,\eta)=\sum_{l=1}^{d'}m^l(\xi,\eta)\cdot q_1^l(\xi)\cdot q_2^l(\xi-\eta)\cdot q_3^l(\eta),\\
&\sup_{n\in\{1,2,3\}}\|q_n^l\|_{\mathcal{S}^{100}}\leq 1,\,\,m^l\in\{(1+|\xi|^2)^{1/2},\,(1+|\eta|^2)^{1/2},\,(1+|\xi-\eta|^2)^{1/2}\}\text{ for any }l=1,\ldots,d'\big\}.
\end{split}
\end{equation}

\begin{definition}\label{MainDef}
Let
\begin{equation}\label{sec5.6}
\beta:=1/100,\qquad \al:=\beta/2, \qquad \gamma:=11/8.
\end{equation}
We define
\begin{equation}\label{sec5}
Z:=\{f\in L^2(\mathbb{R}^3):\,\|f\|_{Z}:=\sup_{(k,j)\in\mathcal{J}}\|\phii^{(k)}_j(x)\cdot P_kf(x)\|_{B_{k,j}}<\infty\},
\end{equation}
where, with $\widetilde{k}:=\min(k,0)$ and $k_+:=\max(k,0)$,
\begin{equation}\label{sec5.2}
\|g\|_{B_{k,j}}:=\inf_{g=g_1+g_2}\big[\|g_1\|_{B^1_{k,j}}+\|g_2\|_{B^2_{k,j}}\big],
\end{equation}
\begin{equation}\label{sec5.3}
\|h\|_{B^1_{k,j}}:=(2^{\alpha k}+2^{10k})\big[2^{(1+\beta)j}\|h\|_{L^2}+2^{(1/2-\beta)\widetilde{k}}\|\widehat{h}\|_{L^\infty}\big],
\end{equation}
and
\begin{equation}\label{sec5.4}
\begin{split}
\|h\|_{B^2_{k,j}}:=(2^{\alpha k}+2^{10k})\big[&2^{-2\beta\widetilde{k}}2^{(1-\beta)j}\|h\|_{L^2}+2^{(1/2-\beta)\widetilde{k}}\|\widehat{h}\|_{L^\infty}\\
&+2^{(\gamma-\beta-1/2)\widetilde{k}}2^{2k_+}2^{\gamma j}\sup_{R\in[2^{-j},2^k],\,\xi_0\in\mathbb{R}^3}R^{-2}\|\widehat{h}\|_{L^1(B(\xi_0,R))}\big].
\end{split}
\end{equation}
\end{definition}

In order to properly understand the $Z$ norm, one should keep in mind that the $B^1_{k,j}$ is the easiest norm that one would want to use and in particular its $x$-integrability of the $L^2$-norm is sufficient to obtain the needed $1/t$ decay after we apply the linear flow. However, the $B^2_{k,j}$ is forced upon us by the presence of space-time resonances. It has slightly too weak decay, but this is compensated for by the last term that captures the two-dimensional property of the support.

The weak component $B^2_{k,j}$ is important only at middle frequencies $|k|\lesssim 1$, where one has the more friendly expression
\begin{equation}\label{sec5.40001}
\begin{split}
\|h\|_{B^1_{k,j}}&\approx 2^{(1+\beta)j}\|h\|_{L^2}+\|\widehat{h}\|_{L^\infty},\\
\|h\|_{B^2_{k,j}}&\approx 2^{(1-\beta)j}\|h\|_{L^2}+\|\widehat{h}\|_{L^\infty}+2^{\gamma j}\sup_{R\in[2^{-j},1],\,\xi_0\in\mathbb{R}^3}R^{-2}\|\widehat{h}\|_{L^1(B(\xi_0,R))}.
\end{split}
\end{equation}
One should think of $j$ as very large; the $B^2_{k,j}$ norm is relevant to measure functions that have thin, essentially $2$-dimensional Fourier support. 

Finally, the weights in $k$ in \eqref{sec5.3}-\eqref{sec5.4} are chosen so as to give \eqref{sec5.40001} when $k=0$ and so that, at the uncertainty principle $k+j=0$, all norms should be comparable for a bump function.

\medskip

The definition above shows that if $\|f\|_Z\leq 1$ then, for any $(k,j)\in\mathcal{J}$ one can decompose
\begin{equation}\label{sec5.8}
\widetilde{\varphi}^{(k)}_{j}\cdot P_kf=(2^{\alpha k}+2^{10k})^{-1}(g+h),
\end{equation}
where{\footnote{The support condition \eqref{sec5.81} can easily be achieved by starting with a decomposition 
$\widetilde{\varphi}^{(k)}_{j}\cdot P_kf=(2^{\alpha k}+2^{10k})^{-1}(g'+h')$ that minimizes the $B_{k,j}$ norm up to a constant, and then 
redefining $g:=g'\cdot \widetilde{\varphi}^{(k)}_{[j-1,j+1]}$ and $h:=h'\cdot \widetilde{\varphi}^{(k)}_{[j-1,j+1]}$, see the proof of Lemma \ref{tech3}.}}
\begin{equation}\label{sec5.81}
g=g\cdot \widetilde{\varphi}^{(k)}_{[j-2,j+2]},\qquad h=h\cdot \widetilde{\varphi}^{(k)}_{[j-2,j+2]},
\end{equation}
and
\begin{equation}\label{sec5.815}
\begin{split}
&2^{(1+\be)j}\|g\|_{L^2}+2^{(1/2-\beta)\widetilde{k}}\|\widehat{g}\|_{L^\infty}\lesssim 1,\\
&2^{-2\beta\widetilde{k}}2^{(1-\be)j}\|h\|_{L^2}+2^{(1/2-\beta)\widetilde{k}}\|\widehat{h}\|_{L^\infty}+
2^{(\gamma-\beta-1/2)\widetilde{k}}2^{2k_+}2^{\gamma j}\sup_{R\in[2^{-j},2^k],\,\xi_0\in\mathbb{R}^3}R^{-2}\|\widehat{h}\|_{L^1(B(\xi_0,R))}
\lesssim 1.
\end{split}
\end{equation}
In some of the easier estimates we will often use the weaker bound, obtained by setting $R=2^k$, 
\begin{equation}\label{sec5.82}
\begin{split}
&2^{(1+\be)j}\|g\|_{L^2}+2^{(1/2-\beta)\widetilde{k}}\|\widehat{g}\|_{L^\infty}\lesssim 1,\\
&2^{-2\beta\widetilde{k}}2^{(1-\be)j}\|h\|_{L^2}+2^{(1/2-\beta)\widetilde{k}}\|\widehat{h}\|_{L^\infty}
+2^{(\gamma-\beta-5/2)\widetilde{k}}2^{\gamma j}\|\widehat{h}\|_{L^1}\lesssim 1.
\end{split}
\end{equation}

As before, assume $A\geq 1$ is a (large number), $d\geq 1$ is a fixed integer, and $b_1,\ldots,b_d,c_1,\ldots,c_d\in(0,\infty)$ are positive real numbers with the properties
\begin{equation}\label{abcond}
b_1,\ldots,b_d,c_1,\ldots,c_d\in[1/A,A]
\end{equation}
and, see \eqref{bc},
\begin{equation}\label{abcond2}
\begin{split}
&|b_{\sigma_1}+b_{\sigma_2}-b_{\sigma_3}|\geq 1/A\qquad\qquad\qquad\qquad\qquad\quad\,\text{ for any }\sigma_1,\sigma_2,\sigma_3\in\{1,\ldots,d\},\\
&|c_{\sigma_1}-c_{\sigma_2}|,\,|b_{\sigma_1}-b_{\sigma_2}|\in\{0\}\cup[1/A,\infty)\qquad\qquad\text{ for any }\sigma_1,\sigma_2\in\{1,\ldots,d\},\\
&(c_{\sigma_1}-c_{\sigma_2})(c_{\sigma_1}^2b_{\sigma_2}-c_{\sigma_2}^2b_{\sigma_1})\geq 0\qquad\qquad\qquad\quad\,\,\text{ for any }\sigma_1,\sigma_2\in\{1,\ldots,d\}.
\end{split}
\end{equation}
Let $\Lambda_\sigma:\mathbb{R}^3\to[0,\infty)$, $\sigma=1,\ldots,d$,
\begin{equation}\label{lambdas}
\Lambda_\sigma(\xi):=(b_\sigma^2+c_\sigma^2|\xi|^2)^{1/2}.
\end{equation}
Let
\begin{equation}\label{Is}
\mathcal{I}_d:=\{(1+),\ldots,(d+),(1-),\ldots,(d-)\}.
\end{equation}
Assume $D=D(d,A,d')$ is a sufficiently large fixed constant.

Given $U=(U_1,\ldots,U_d)\in C([0,T]:H^N)$, for some $T\geq 1$ and $N\geq 4$, we are considering quadratic nonlinearities of the form
\begin{equation}\label{nonlinearity}
\widehat{\mathcal{N}_\sigma}(\xi,t)=\sum_{\mu,\nu\in\mathcal{I}_d}\int_{\mathbb{R}^3}m_{\sigma;\mu,\nu}(\xi,\eta)\widehat{U_\mu}(\xi-\eta,t)
\widehat{U_\nu}(\eta,t)\,d\eta,\qquad\sigma=1,\ldots,d,
\end{equation}
for symbols $m_{\sigma;\mu,\nu}\in\mathcal{M}_{d'}$, where $U_{\sigma+}:=U_{\sigma},U_{\sigma-}:=\overline{U_\sigma}$, $\sigma\in\{1,\ldots,d\}$.

We claim first that smooth solutions of suitable systems that start with data in the space $Z$ remain in the space $Z$, in a continuous way. More precisely:

\begin{proposition}\label{Norm0}
Assume $N_0=10^4$, $T_0\geq 1$, and $U=(U_1,\ldots,U_d)\in C([0,T_0]:H^{N_0})$ is a solution of the system of equations
\begin{equation}\label{maineq2}
(\partial_t+i\Lambda_\sigma)U_\sigma=\mathcal{N}_\sigma,\qquad\sigma=1,\ldots,d,
\end{equation}
where $\mathcal{N}_\sigma$ are defined as in \eqref{nonlinearity}. Assume that, for some $t_0\in[0,T_0]$,
\begin{equation}\label{data1}
e^{it_0\Lambda_\sigma}U_\sigma(t_0)\in Z,\qquad \qquad\sigma=1,\ldots,d.
\end{equation}
Then there is 
\begin{equation*}
\tau=\tau\Big(T_0,\sup_{\sigma\in\{1,\ldots,d\}}\|e^{it_0\Lambda_\sigma}U_\sigma(t_0)\|_Z,\sup_{\sigma\in\{1,\ldots,d\}}\sup_{t\in[0,T_0]}\|U_\sigma(t)\|_{H^{N_0}}\Big)>0
\end{equation*}
such that
\begin{equation}\label{data2}
\sup_{t\in [0,T_0]\cap[t_0,t_0+\tau]}\sup_{\sigma=1,\ldots,d}\|e^{it\Lambda_\sigma}U_\sigma(t)\|_{Z}\leq 2\sup_{\sigma\in\{1,\ldots,d\}}\|e^{it_0\Lambda_\sigma}U_\sigma(t_0)\|_Z,
\end{equation}
and the mapping $t\to e^{it\Lambda_\sigma}U_\sigma(t)$ is continuous from $[0,T_0]\cap[t_0,t_0+\tau]$ to $Z$, for any $\sigma\in\{1,\ldots,d\}$.
\end{proposition}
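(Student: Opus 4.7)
The plan is to pass to the profile $V_\sigma(t):=e^{it\Lambda_\sigma}U_\sigma(t)$. Then \eqref{maineq2} becomes $\partial_t V_\sigma=e^{it\Lambda_\sigma}\mathcal{N}_\sigma$, so by the fundamental theorem of calculus
$$V_\sigma(t)-V_\sigma(t_0)=\int_{t_0}^{t}e^{is\Lambda_\sigma}\mathcal{N}_\sigma(s)\,ds,\qquad t\in[0,T_0].$$
Both conclusions of the proposition will follow once we prove a pointwise-in-$s$ bilinear bound of the form
$$\|e^{is\Lambda_\sigma}\mathcal{N}_\sigma(s)\|_Z\le C_\ast\Big(1+\sup_{\mu\in\{1,\ldots,d\}}\|V_\mu(s)\|_Z\Big)^2,\qquad s\in[0,T_0],$$
with $C_\ast=C_\ast(T_0,d,A,\sup_{s}\|U(s)\|_{H^{N_0}})$. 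Indeed, integrating in $s$ and running a Picard-type bootstrap on $M(t):=\sup_{s\in[t_0,t]}\max_\mu\|V_\mu(s)\|_Z$ yields \eqref{data2} on a short interval $[t_0,t_0+\tau]$; and since the same bound gives $\|V_\sigma(t)-V_\sigma(t')\|_Z\lesssim |t-t'|$, the Lipschitz (hence continuous) dependence of $t\mapsto V_\sigma(t)\in Z$ follows for free.

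To establish the bilinear bound I would use the profile representation
$$\widehat{e^{is\Lambda_\sigma}\mathcal{N}_\sigma}(\xi,s)=\sum_{\mu,\nu\in\mathcal{I}_d}\int_{\mathbb{R}^3}e^{is\Phi_{\sigma\mu\nu}(\xi,\eta)}m_{\sigma;\mu,\nu}(\xi,\eta)\widehat{V_\mu}(\xi-\eta,s)\widehat{V_\nu}(\eta,s)\,d\eta,$$
with $\Phi_{\sigma\mu\nu}(\xi,\eta)=\Lambda_\sigma(\xi)\pm\Lambda_\mu(\xi-\eta)\pm\Lambda_\nu(\eta)$. Since only a short-time estimate is sought, the oscillation from $e^{is\Phi}$ need not be exploited and the argument reduces to a convolution estimate. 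For each $(k,j)\in\mathcal{J}$ I would decompose $V_\mu$ and $V_\nu$ into atoms $V_{\mu,k_1,j_1}$ and $V_{\nu,k_2,j_2}$ as in \eqref{sec5.8}--\eqref{sec5.815}, observe that the frequency-support condition confines the sum to $(k_1,k_2)\in\mathcal{X}_k$ from \eqref{sets}, and that the physical-space cutoff $\widetilde{\varphi}^{(k)}_j$ combined with the essentially local character of $P_k$ (kernel decaying faster than any polynomial at scale $2^{-k}$) forces $\max(j_1,j_2)\ge j-O(1)$ up to rapidly decaying tails. The three pieces of the $B^1_{k,j}$ and $B^2_{k,j}$ norms defined in \eqref{sec5.3}--\eqref{sec5.4} are then controlled by elementary bilinear estimates on atoms: the weighted $L^2$ pieces by a H\"older estimate placing one factor in weighted $L^2$ (via the $Z$-norm of the appropriate atom) and the other in $L^\infty$ (via Bernstein and the $H^{N_0}$ bound); the Fourier $L^\infty$ pieces by the convolution identity together with $\|\widehat{V_{\mu,k_1,j_1}}\|_{L^1}\lesssim 2^{3j_1/2}\|V_{\mu,k_1,j_1}\|_{L^2}$ from \eqref{sec5.82}; and the $L^1$-on-small-balls piece of $B^2_{k,j}$ by the same convolution identity, placing the appropriate atom's Fourier transform in $L^1(B(\xi_0,R))$ and the other in $L^\infty$.

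The main obstacle will be organizing the resulting quadruple sum in the atom parameters $(k_i,j_i)$ so that, uniformly in $(k,j)\in\mathcal{J}$, the weights $(2^{\alpha k}+2^{10k})\cdot 2^{(1+\beta)j}$ appearing in $B^1_{k,j}$ and $(2^{\alpha k}+2^{10k})\cdot 2^{-2\beta\widetilde{k}}2^{(1-\beta)j}$ appearing in $B^2_{k,j}$ are recovered from the weights of the input atoms plus the Bernstein-type gains; the explicit choices $\beta=1/100$, $\alpha=\beta/2$, $\gamma=11/8$ in \eqref{sec5.6} are precisely what render all the resulting dyadic sums geometric. Notably, the non-resonance conditions \eqref{abcond2} on $b_\sigma,c_\sigma$ play no role at this step, since neither integration by parts in $\eta$ nor in $s$ is invoked; this is consistent with Proposition \ref{Norm0} being a purely local-in-time statement whose dispersive content is trivial.
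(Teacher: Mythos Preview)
Your plan matches the paper's approach: pass to profiles $V_\sigma$, apply Duhamel, decompose the inputs into atoms, and use that an output localized near $|x|\approx 2^j$ forces $\max(j_1,j_2)\ge j-O_{T_0}(1)$. There is, however, a genuine gap in the bootstrap step. You propose to run a continuity argument on $M(t):=\sup_{s\in[t_0,t]}\max_\mu\|V_\mu(s)\|_Z$, but for $s>t_0$ you have no a priori reason that $V_\mu(s)\in Z$ (the $Z$ norm is \emph{not} controlled by the $H^{N_0}$ norm), so $M(t)$ may well be $+\infty$ and the inequality $M(t)\le M_0+C|t-t_0|(1+M(t))^2$ is vacuous. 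The paper's remedy is to introduce truncated norms
\[
\|f\|_{Z_J}:=\sup_{(k,j)\in\mathcal{J}}2^{\min(0,2J-2j)}\|\widetilde{\varphi}^{(k)}_j\cdot P_kf\|_{B_{k,j}},
\]
which satisfy both $\|f\|_{Z_J}\le\|f\|_Z$ and $\|f\|_{Z_J}\lesssim_J\|f\|_{H^{N_0}}$, hence are finite along the flow. The bilinear estimate \eqref{cu1} is then proved in $Z_J$ \emph{uniformly in $J$}, the bootstrap is run in each $Z_J$, and one lets $J\to\infty$ at the end. This device is essential and is missing from your sketch.

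Two smaller points. The localization $\max(j_1,j_2)\ge j-O(1)$ does not follow from the locality of $P_k$ alone: the bilinear kernel carries the factor $e^{is\Phi^{\sigma;\mu,\nu}}$, whose $\xi$- and $\eta$-gradients are of size $O(s)\le O(T_0)$, and one must integrate by parts in $\xi,\eta$ and absorb these into the constant (the paper does this explicitly via the kernel $K(x,y_1,y_2)$). And since $\|\cdot\|_{B_{k,j}}\le\|\cdot\|_{B^1_{k,j}}$ by definition of the infimum, it suffices---as the paper does---to bound only the $B^1_{k,j}$ norm; handling the $B^2_{k,j}$ component separately is unnecessary here.
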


The key proposition is the following bootstrap estimate:

\begin{proposition}\label{Norm}
Assume $N_0=10^4$, $T_0\geq 0$, and $U=(U_1,\ldots,U_d)\in C([0,T_0]:H^{N_0})$ is a solution of the system of equations
\begin{equation}\label{maineq}
(\partial_t+i\Lambda_\sigma)U_\sigma=\mathcal{N}_\sigma,\qquad\sigma=1,\ldots,d,
\end{equation}
where $\mathcal{N}_\sigma$ are defined as in \eqref{nonlinearity} and the coefficients $b_\sigma, c_\sigma$ verify \eqref{abcond}--\eqref{abcond2}. Assume that
\begin{equation}\label{sec2}
\sup_{t\in [0,T_0]}\sup_{\sigma=1,\ldots,d}\|e^{it\Lambda_\sigma}U_\sigma(t)\|_{H^{N_0}\cap Z}\leq\delta_1\leq 1.
\end{equation}
Then
\begin{equation}\label{sec3}
\sup_{t\in [0,T_0]}\sup_{\sigma=1,\ldots,d}\|e^{it\Lambda_\sigma}U_\sigma(t)-U_\sigma(0)\|_{Z}\lesssim \delta_1^2,
\end{equation}
where the implicit constant in \eqref{sec3} may depend only on the constants $A$, $d$, and $d'$.
\end{proposition}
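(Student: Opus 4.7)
The plan is to work with the profiles $V_\sigma(t) := e^{it\Lambda_\sigma} U_\sigma(t)$, for which the system \eqref{maineq} together with Duhamel's formula gives
\begin{equation*}
\widehat{V_\sigma}(\xi, t) - \widehat{V_\sigma}(\xi, 0) = \sum_{\mu,\nu \in \mathcal{I}_d} \int_0^t \int_{\mathbb{R}^3} e^{is\Phi(\xi,\eta)} m_{\sigma;\mu,\nu}(\xi,\eta) \widehat{V_\mu}(\xi-\eta, s)\widehat{V_\nu}(\eta, s)\, d\eta\, ds,
\end{equation*}
with phase $\Phi(\xi,\eta) = \Lambda_\sigma(\xi) - \epsilon_1 \Lambda_{|\mu|}(\xi-\eta) - \epsilon_2 \Lambda_{|\nu|}(\eta)$, $\epsilon_1,\epsilon_2 \in \{\pm 1\}$. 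To control the left-hand side in $Z$, I must estimate, for every $(k,j)\in\mathcal{J}$, the atom $\phii^{(k)}_j \cdot P_k[V_\sigma(t) - V_\sigma(0)]$ in the $B_{k,j}$ norm, which by definition splits as the sum of a strong piece controlled in $B^1_{k,j}$ and a weak piece controlled in $B^2_{k,j}$.

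I first decompose the inputs atomically, using \eqref{sec5.8}--\eqref{sec5.815}: write $V_\mu = \sum_{k_1,j_1} f_{\mu,k_1,j_1}$ with $f_{\mu,k_1,j_1} = P_{[k_1-2,k_1+2]}(\phii^{(k_1)}_{j_1} P_{k_1} V_\mu)$, and similarly for $V_\nu$; the hypothesis \eqref{sec2} provides pointwise estimates on each atom in both the $B^1$ and $B^2$ senses. The output is localized by $P_k$ and cut off by $\phii^{(k)}_j$. Standard reductions confine the relevant frequency triples to $(k_1,k_2)\in\mathcal{X}_k$. I also dyadically split the time integral into pieces $s\sim 2^m$, $0\le m\le \log_2 t$, and handle each piece separately. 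The three $B_{k,j}$ components (weighted $L^2$, $L^\infty$ of the Fourier transform, and the ball-averaged $L^1$ bound in $B^2_{k,j}$) will each be bounded in turn by Plancherel, H\"older, and direct volume estimates.

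The case analysis is organized by the relative sizes of $|\Phi|$ and $|\nabla_\eta \Phi|$ on the support of the localized integrand. When $|\Phi| \gtrsim 2^{-\beta m}$ uniformly, a single integration by parts in $s$ (the Shatah normal form) converts the $ds$-integral into a purely bilinear term at the endpoints plus a cubic correction, which is more than controlled by the cubic smallness $\delta_1^3$. When instead $|\nabla_\eta \Phi| \gtrsim 2^{-\beta \max(j,j_1,j_2)}$, integration by parts in $\eta$ produces a factor $2^{-j}$ which, combined with the localization $\phii^{(k)}_j$ and the $2^{(1+\beta)j_1}$ weights from \eqref{sec5.815}, closes the $B^1_{k,j}$ bound. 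All tension is therefore concentrated at the space-time resonant set $\mathcal{R}$, where both integrations by parts fail simultaneously.

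The hardest step, and the one that motivates the entire architecture of the $Z$ norm, is the neighborhood of $\mathcal{R}$. Here I would exploit the non-degeneracy $\mathcal{R}\cap\mathcal{D}=\emptyset$ established in \eqref{bnbn3}: since $\det \nabla^2_{\eta\eta}\Phi$ is bounded away from zero on $\mathcal{R}$, a Morse-lemma change of variables reduces the $\eta$-integral locally to the standard form $\int e^{is|\eta-p(\xi)|^2} \widetilde{m}(\xi,\eta) \widehat{V_\mu}(\xi-\eta)\widehat{V_\nu}(\eta)\, d\eta$ for a smooth $p:\mathbb{R}^3\to\mathbb{R}^3$. Stationary phase then produces an output whose Fourier transform concentrates in an $O(s^{-1/2})$-thin shell around a $2$-sphere in $\xi$, i.e.\ on a set of essentially dimension $2$ rather than $3$. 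The last term in \eqref{sec5.4} is tailored exactly for this: the $R^{-2}\|\widehat{h}\|_{L^1(B(\xi_0,R))}$ averaging detects the thickness of the shell, and the $2^{2\beta j}$ loss in the $L^2$ weight relative to $B^1_{k,j}$ is compensated by the resulting two-dimensional gain, so that the $m$-sum converges. Carrying out this case analysis uniformly over $(k,j)$ and over all input parameters $(k_1,j_1,k_2,j_2)$, with implicit constants depending only on $A$, $d$, and $d'$, yields the quadratic bound \eqref{sec3}.
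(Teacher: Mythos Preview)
Your outline tracks the paper's strategy at the coarsest level: profiles, Duhamel, atomic decomposition of inputs and outputs, dyadic time slicing, and a dichotomy between integration by parts in $s$ (normal forms) and in $\eta$ (space non-resonance), with the residual space-time resonant contribution placed in the weak $B^2_{k,j}$ component. That much is correct and is exactly how the paper proceeds.

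There are, however, two genuine gaps.

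\textbf{First}, your normal-form step is incomplete. Integrating by parts in $s$ produces boundary terms plus an integral involving $\partial_s\widehat{V_\mu}$. You call this a ``cubic correction controlled by $\delta_1^3$,'' but size alone is not the issue: to close the estimate you need $\partial_s f^\mu_{k',j'}$ to decay like $(1+s)^{-1-\beta}$ in $L^2$ and like $(1+s)^{-1-\beta/10}$ in $L^\infty$ of the Fourier transform. This does not follow from the hypothesis \eqref{sec2}; it requires re-inserting the equation and proving a separate lemma (the paper's Lemma~\ref{ders}), whose $L^\infty$ part already uses the structure of $\Xi^{\mu,\nu}$ and a localization near $p^{\mu,\nu}(\xi)$. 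Without it the $s$-integration-by-parts argument does not close.

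\textbf{Second}, and more seriously, your treatment of the space-time resonant region only covers the easy half. The clean stationary-phase picture you describe---output concentrated on an $O(s^{-1/2})$ shell, captured by the $R^{-2}\|\widehat h\|_{L^1(B(\xi_0,R))}$ term---is valid only when both inputs are essentially smooth at scale $s^{-1/2}$, i.e.\ when $\max(j_1,j_2)\le (1/2-\beta^2)m$. This is the paper's Lemma~\ref{BigBound7}. When $\max(j_1,j_2)\ge (1/2-\beta^2)m$ the input Fourier transforms are too rough for stationary phase to gain the full $s^{-3/2}$, and the argument you sketch fails. The paper handles this range (Lemma~\ref{BigBound8}) by an additional $L^2$ orthogonality decomposition: one tiles $\xi$-space and $s$ into boxes of size $2^{j''-m}$ and $2^{j''}$ respectively (with $j''\approx\max(j_1,j_2)$), proves almost-orthogonality of the pieces $G_{v,n}$ via the lower bound $|(\Psi^{\sigma;\mu,\nu})'|\gtrsim 1$ from Lemma~\ref{desc4}, and then estimates each piece using a localized bilinear bound \eqref{cle40} together with the refined $L^1$-on-balls information from $B^2_{k_i,j_i}$ on the inputs. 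This orthogonality step is the technical heart of the proof in the large-$j_i$ regime and is absent from your proposal.
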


We prove the easier Proposition \ref{Norm0} in section \ref{ContProof} and we prove the harder Proposition \ref{Norm} in sections \ref{normproof} and \ref{technical}. In the rest of this section we show how to use these propositions and the local theory to complete the proofs of Theorem \ref{MainThm1} and Theorem \ref{MainThm2}.

\subsection{Proof of Theorem \ref{MainThm1}}\label{MainSec4} We prove now Theorem \ref{MainThm1}, as a consequence of Proposition \ref{Localexistence0}, Proposition \ref{Norm0}, and Proposition \ref{Norm}. Indeed, assume that we start with data $(v_0,v_1)$ as in \eqref{maincond1}, where $\overline{\varepsilon}$ is taken sufficiently small. Using Proposition \ref{Localexistence0} there is $T_1\geq 1$ and a unique solution $u\in C([0,T_1]:H^{N_0+1}_r)\cap C^1([0,T_1]:H^{N_0}_r)$ of the system \eqref{abc3}, with
\begin{equation}\label{kim1}
\sup_{t\in[0,T_1]}\|u(t)\|_{H^{N_0+1}_r}+\sup_{t\in[0,T_1]}\|\dot{u}(t)\|_{H^{N_0}_r}\leq\varepsilon_0^{3/4}.
\end{equation}
For $\sigma\in \{1,\ldots,d\}$ let
\begin{equation}\label{kim1.5}
U_\sigma(t):=\dot{u}_\sigma(t)-i\Lambda_\sigma u_\sigma,
\end{equation}
where, as in \eqref{lambdas}, $\Lambda_\sigma=(b_\sigma^2-c_\sigma^2\Delta)^{1/2}$. Then $U_\sigma\in C([0,T_1]:H^{N_0})$ for any $\sigma\in\{1,\ldots,d\}$, and 
\begin{equation}\label{kim2}
u_\sigma=-\Lambda_\sigma^{-1}\Im U_\sigma,\qquad \dot{u}_\sigma=\Re U_\sigma.
\end{equation}
Using these definitions we calculate
\begin{equation*}
(\partial_t+i\Lambda_\sigma)U_\sigma=(\partial_t^2+b_\sigma^2-c_\sigma^2\Delta)u_\sigma=\sum_{j,k=1}^3\sum_{\nu=1}^dG_{\sigma\nu}^{jk}(u,\nabla_{x,t} u)\partial_j\partial_ku_\nu+Q_\sigma(u,\nabla_{x,t} u),
\end{equation*}
see \eqref{KGNon1}. Using the formulas in \eqref{kim2}, it is easy to see that this is a system of the form
\begin{equation*}
(\partial_t+i\Lambda_\sigma)U_\sigma=\mathcal{N}_\sigma,\qquad \sigma\in\{1,\ldots,d\},
\end{equation*}
where the nonlinearities $\mathcal{N}_\sigma$ can be expressed in terms of the functions $U_\sigma$ as in \eqref{nonlinearity}. Therefore we can apply the results in Proposition \ref{Norm0} and Proposition \eqref{Norm}. 

Using the definition \eqref{kim1.5} and Lemma \ref{tech3}, it follows that $U\in C([0,T_1]:H^{N_0})$ and
\begin{equation}\label{kim6}
\sup_{t\in[0,T_1]}\|U(t)\|_{H^{N_0}}\lesssim \varepsilon_0^{3/4},\qquad\sup_{\sigma\in\{1,\ldots,d\}}\|U_\sigma(0)\|_{Z}\lesssim \varepsilon_0.
\end{equation}
Let $T_2$ denote the largest number in $(0,T_1]$ with the property that
\begin{equation*}
\sup_{t\in[0,T_2)}\sup_{\sigma\in\{1,\ldots,d\}}\|e^{it\Lambda_\sigma}U_\sigma(t)\|_{Z}\leq \varepsilon_0^{3/4}.
\end{equation*}
Such a $T_2\in(0,T_1]$ exists, in view of \eqref{kim6} and Proposition \ref{Norm0}. We apply now Proposition \ref{Norm} on the intervals $[0,T_2(1-1/n)]$, $n=2,3,\ldots$, with $\delta_1\approx \varepsilon_0^{3/4}$. It follows that 
\begin{equation*}
\sup_{t\in[0,T_2)}\sup_{\sigma\in\{1,\ldots,d\}}\|e^{it\Lambda_\sigma}U_\sigma(t)\|_{Z}\lesssim \varepsilon_0.
\end{equation*}
Using again Proposition \ref{Norm0} it follows that $T_2=T_1$ and 
\begin{equation}\label{kim7}
\sup_{t\in[0,T_1]}\sup_{\sigma\in\{1,\ldots,d\}}\|e^{it\Lambda_\sigma}U_\sigma(t)\|_{Z}\lesssim \varepsilon_0.
\end{equation}

Using the formulas in \eqref{kim2}, and the bounds \eqref{kim7} and \eqref{ok10} it follows that
\begin{equation}\label{kim8}
\sup_{t\in[0,T_1]}\big[(1+t)^{1+\beta}\big(\sup_{|\rho|\leq 4}\|D^\rho_x u(t)\|_{L^\infty}+\sup_{|\rho|\leq 3}\|D^\rho_x \dot{u}(t)\|_{L^\infty}\big)\big]\lesssim \varepsilon_0.
\end{equation}
Therefore, using the energy estimate \eqref{abc4}, it follows that
\begin{equation*}
\sup_{t\in[0,T_1]}\mathcal{E}^{\mathrm{KG}}_{N_0+1}(t)\lesssim \varepsilon_0.
\end{equation*}
As a consequence, if the solution $u$ satisfies the bound \eqref{kim1} on some interval $[0,T_1]$, then it has to satisfy the stronger bound
\begin{equation*}
\sup_{t\in[0,T_1]}\|u(t)\|_{H^{N_0+1}_r}+\sup_{t\in[0,T_1]}\|\dot{u}(t)\|_{H^{N_0}_r}\lesssim \varepsilon_0.
\end{equation*}
Therefore the solution can be extended globally, and the desired bound \eqref{mainconcl1.1} follows using also \eqref{kim8}. This completes the proof of Theorem \ref{MainThm1}.

\subsection{Proof of Theorem \ref{MainThm2}}\label{MainSec5} As before, Theorem \ref{MainThm2} is a consequence of Proposition \ref{Localexistence}, Proposition \ref{Norm0}, and Proposition \ref{Norm}. Indeed, assume that we start with data $(n_0,v_0,E_0,B_0)$ as in \eqref{maincond2}, where $\overline{\varepsilon}$ is taken sufficiently small. Using Proposition \ref{Localexistence} there is $T_1\geq 1$ and a unique solution $(n,v,E,B)\in C([0,T_1]:\widetilde{H}^{N_0+1})$ of the system \eqref{EM4}, with $(n(0),v(0),E(0),B(0))=(n_0,v_0,E_0,B_0)$,
\begin{equation}\label{ki0}
n(t)=-\hbox{div}(E)(t),\qquad B(t)=(\nabla\times v)(t),\qquad t\in[0,T_1],
\end{equation}
and
\begin{equation}\label{ki1}
\sup_{t\in[0,T_1]}\|(n(t),v(t),E(t),B(t))\|_{\widetilde{H}^{N_0+1}}\leq\varepsilon_0^{3/4}.
\end{equation}
Given the restriction \eqref{ki0}, the system \eqref{EM4} can be written in an equivalent way, in terms only of the vectors $v$ and $E$,
\begin{equation}\label{EM5}
\begin{split}
&\partial_t v_j=-E_j+\sum_{k=1}^3T\partial_j\partial_kE_k-\sum_{k=1}^3v_k\partial_jv_k,\\
&\partial_t E_j=v_j-c^2\Delta v_j+\sum_{k=1}^3c^2\partial_k\partial_jv_k-\sum_{k=1}^3v_j\partial_kE_k,\\
&n=-\sum_{k=1}^3\partial_kE_k,\qquad B_j=\sum_{k,l=1}^3\in_{jkl}\partial_kv_l.
\end{split}
\end{equation}
Let
\begin{equation}\label{ki1.5}
\begin{split}
&U_1:=\Lambda_1|\nabla|^{-1}\hbox{div}(E)+i|\nabla|^{-1}\hbox{div}(v),\\
&U_2:=\Lambda_2^{-1}|\nabla|^{-1}\hbox{curl}(E)+i|\nabla|^{-1}\hbox{curl}(v),
\end{split}
\end{equation}
where
\begin{equation*}
\Lambda_1:=\sqrt{1-T\Delta},\qquad \Lambda_2:=\sqrt{1-c^2\Delta}.
\end{equation*}
Then $U_1,U_2\in C([0,T_1]:H^{N_0})$ and 
\begin{equation}\label{ki2}
\begin{split}
&\hbox{div}(E)=\Lambda_1^{-1}|\nabla|(\Re U_1),\qquad \hbox{curl}(E)=\Lambda_2|\nabla|(\Re U_2),\qquad \hbox{div}(v)=|\nabla|(\Im U_1),\qquad \hbox{curl}(v)=|\nabla|(\Im U_2),\\
&v_j=-R_j(\Im U_1)+\sum_{m,n=1}^3\in_{jmn}(R_m(\Im U_{2,n})),\qquad E_j=-R_j\Lambda_1^{-1}(\Re U_1)+\sum_{m,n=1}^3\in_{jmn}(\Lambda_2 R_m(\Re U_{2,n})).
\end{split}
\end{equation}
Using these definitions we calculate
\begin{equation*}
\begin{split}
(\partial_t+i\Lambda_1)U_1&=i\Lambda_1^2|\nabla|^{-1}(\hbox{div}(E))-\Lambda_1|\nabla|^{-1}(\hbox{div}(v))\\
&+\Lambda_1|\nabla|^{-1}\big[\hbox{div}(v)-\sum_{j,k=1}^3\partial_j(v_j\partial_kE_k)\big]+i|\nabla|^{-1}\big[(-1+T\Delta)(\hbox{div}(E))-\frac{1}{2}\Delta(|v|^2)\big]\\
&=-\sum_{j=1}^3\Lambda_1R_j(v_j\hbox{div}(E))+\frac{i}{2}\sum_{j=1}^3|\nabla|(v_j^2),
\end{split}
\end{equation*}
and
\begin{equation*}
\begin{split}
(\partial_t+i\Lambda_2)U_{2,j}&=i|\nabla|^{-1}\Big[\sum_{m,n=1}^3\in_{jmn}\partial_mE_n\Big]-\Lambda_2|\nabla|^{-1}\Big[\sum_{m,n=1}^3\in_{jmn}\partial_mv_n\Big]\\
&+\Lambda_2^{-1}|\nabla|^{-1}\Big[\sum_{m,n=1}^3\in_{jmn}\partial_m\big[(1-c^2\Delta) v_n-v_n\hbox{div}(E)\big]\Big]-i|\nabla|^{-1}\Big[\sum_{m,n=1}^3\in_{jmn}\partial_mE_n\Big]\\
&=-\sum_{m,n=1}^3\in_{jmn}\Lambda_2^{-1}R_m\big[v_n\hbox{div}(E)\big].
\end{split}
\end{equation*}
Using the formulas in \eqref{ki2}, it is easy to see that the functions $U_1,U_{2,j}$, $j\in\{1,2,3\}$ satisfy the system of equations
\begin{equation*}
(\partial_t+i\Lambda_1)U_1=\mathcal{N}_1,\qquad (\partial_t+i\Lambda_2)U_{2,j}=\mathcal{N}_{2,j},\qquad j\in\{1,2,3\},
\end{equation*}
where the nonlinearities $\mathcal{N}_1, \mathcal{N}_{2,j}$ can be expressed in terms of the functions $U_1,U_{2,j}$ as in \eqref{nonlinearity}. Therefore we can apply the results in Proposition \ref{Norm0} and Proposition \eqref{Norm}.

We can now proceed as in the previous subsection. Using the definition \eqref{ki1.5} and Lemma \ref{tech3}, it follows that $U_1,U_2\in C([0,T_1]:H^{N_0})$ and
\begin{equation}\label{ki6}
\sup_{t\in[0,T_1]}\big(\|U_1(t)\|_{H^{N_0}}+\|U_2(t)\|_{H^{N_0}}\big)\lesssim \varepsilon_0^{3/4},\qquad \|U_1(0)\|_{Z}+\|U_2(0)\|_{Z}\lesssim \varepsilon_0.
\end{equation}
Let $T_2$ denote the largest number in $(0,T_1]$ with the property that
\begin{equation*}
\sup_{t\in[0,T_2)}\big[\|e^{it\Lambda_1}U_1(t)\|_{Z}+\|e^{it\Lambda_2}U_2(t)\|_{Z}\big]\leq \varepsilon_0^{3/4}.
\end{equation*}
Such a $T_2\in(0,T_1]$ exists, in view of \eqref{ki6} and Proposition \ref{Norm0}. We apply now Proposition \ref{Norm} on the intervals $[0,T_2(1-1/n)]$, $n=2,3,\ldots$, with $\delta_1\approx \varepsilon_0^{3/4}$. It follows that 
\begin{equation*}
\sup_{t\in[0,T_2)}\big[\|e^{it\Lambda_1}U_1(t)\|_{Z}+\|e^{it\Lambda_2}U_2(t)\|_{Z}\big]\lesssim \varepsilon_0.
\end{equation*}
Using again Proposition \ref{Norm0} it follows that $T_2=T_1$ and 
\begin{equation}\label{ki7}
\sup_{t\in[0,T_1]}\big[\|e^{it\Lambda_1}U_1(t)\|_{Z}+\|e^{it\Lambda_2}U_2(t)\|_{Z}\big]\lesssim \varepsilon_0.
\end{equation}

Using the formulas in the second line of \eqref{ki2}, and the bounds \eqref{ki7} and \eqref{ok10} it follows that
\begin{equation}\label{ki8}
\sup_{t\in[0,T_1]}\sup_{|\rho|\leq 4}\big[(1+t)^{1+\beta}\big(\|D^\rho v(t)\|_{L^\infty}+\|D^\rho E(t)\|_{L^\infty}\big)\big]\lesssim \varepsilon_0.
\end{equation}
Recalling the definition \eqref{pla0.5} and the restriction \eqref{ki0}, it follows that
\begin{equation*}
\sup_{t\in[0,T_1]}\big[(1+t)^{1+\beta}\|(n,v,E,B)(t)\|_{Z'}\big]\lesssim \varepsilon_0.
\end{equation*}
Therefore, using the energy estimate \eqref{pla2}, it follows that
\begin{equation*}
\sup_{t\in[0,T_1]}\mathcal{E}_{N_0+1}(t)\lesssim \varepsilon_0.
\end{equation*}
As a consequence, if the solution $(n,v,E,B)$ satisfies the bound \eqref{ki1} on some interval $[0,T_1]$, then it has to satisfy the stronger bound
\begin{equation*}
\sup_{t\in[0,T_1]}\|(n(t),v(t),E(t),B(t))\|_{\widetilde{H}^{N_0+1}}\lesssim \varepsilon_0.
\end{equation*}
Therefore the solution can be extended globally, and the desired bound \eqref{mainconcl2.1} follows using also \eqref{ki8}. This completes the proof of Theorem \ref{MainThm2}.

\section{Proof of Proposition \ref{Norm0}}\label{ContProof} 

In this section we prove Proposition \ref{Norm0}. For simplicity of notation, in this section we let $\widetilde{C}$ denote constants that may depend only on $T_0$, $\sup_{\sigma\in\{1,\ldots,d\}}\|e^{it_0\Lambda_\sigma}U_\sigma(t_0)\|_Z$, $\sup_{\sigma\in\{1,\ldots,d\}}\sup_{t\in[0,T_0]}\|U_\sigma(t)\|_{H^{N_0}}$, and the basic constant $A,d,d'$.

For any integer $J\geq 0$ and $f\in H^{N_0}$ we define
\begin{equation}\label{cu0}
\|f\|_{Z_J}:=\sup_{(k,j)\in\mathcal{J}}2^{\min(0,2J-2j)}\|\phii^{(k)}_j(x)\cdot P_kf(x)\|_{B_{k,j}},
\end{equation}
compare with Definition \ref{MainDef}, and notice that
\begin{equation*}
 \|f\|_{Z_J}\leq\|f\|_{Z},\qquad \|f\|_{Z_J}\lesssim_J\|f\|_{H^{N_0}}.
\end{equation*}
We will show that if $t\leq t'\in [0,T_0]\cap [t_0,t_0+1]$ and $J\in\mathbb{Z}_+$ then
\begin{equation}\label{cu1}
 \sup_{\sigma\in\{1,\ldots,d\}}\|e^{it'\Lambda_\sigma}U_\sigma(t')-e^{it\Lambda_\sigma}U_\sigma(t)\|_{Z_J}\leq \widetilde{C} |t'-t|(1+\sup_{s\in[t,t']}\sup_{\sigma\in\{1,\ldots,d\}}\|e^{is\Lambda_\sigma}U_\sigma(s)\|_{Z_J})^2.
\end{equation}
Assuming \eqref{cu1}, it follows easily that 
\begin{equation*}
\begin{split}
&\sup_{\sigma\in\{1,\ldots,d\}}\sup_{t\in[0,T]\cap[t_0,t_0+\tau]}\|e^{it\Lambda_\sigma}U_\sigma(t)\|_{Z_J}\leq \widetilde{C},\\
&\|e^{it'\Lambda_\sigma}U_\sigma(t')-e^{it\Lambda_\sigma}U_\sigma(t)\|_{Z_J}\leq \widetilde{C}|t'-t|, \qquad \text{ for any }t,t'\in[0,T]\cap[t_0,t_0+\tau],\,\sigma\in\{1,\ldots,d\},
\end{split}
\end{equation*}
uniformly in $J$, provided that $\tau\leq\widetilde{C}^{-1}$ is sufficiently small. The desired conclusions follow by letting $J\to\infty$.

It remains to prove \eqref{cu1}. The equations \eqref{maineq2} and \eqref{nonlinearity} give
\begin{equation}\label{norm2.0}
[\partial_t+i\Lambda_\sigma(\xi)]\widehat{U_{\sigma+}}(\xi,t)=\sum_{\mu,\nu\in\mathcal{I}_d}\int_{\mathbb{R}^3}m_{\sigma;\mu,\nu}(\xi,\eta)\widehat{U_\mu}(\xi-\eta,t)\widehat{U_\nu}(\eta,t)\,d\eta,
\end{equation}
for $\sigma=1,\ldots,d$. Letting
\begin{equation*}
V_{\sigma+}(t):=e^{it\Lambda_\sigma}U_{\sigma+}(t),\qquad V_{\sigma-}(t):=e^{-it\Lambda_\sigma}U_{\sigma-}(t)=\overline{V_{\sigma+}(t)},\qquad\sigma=1,\ldots,d,
\end{equation*}
and
\begin{equation*}
\widetilde{\Lambda}_{\sigma+}:=+\Lambda_\sigma,\qquad \widetilde{\Lambda}_{\sigma-}:=-\Lambda_\sigma,\qquad\sigma=1,\ldots,d,
\end{equation*}
the equations \eqref{norm2.0} are equivalent to
\begin{equation*}
\frac{d}{dt}[\widehat{V_{\sigma+}}(\xi,t)]=\sum_{\mu,\nu\in\mathcal{I}_d}\int_{\mathbb{R}^3}e^{it[\Lambda_\sigma(\xi)-\widetilde{\Lambda}_{\mu}(\xi-\eta)-\widetilde{\Lambda}_{\nu}(\eta)]}m_{\sigma;\mu,\nu}(\xi,\eta)\widehat{V_\mu}(\xi-\eta,t)\widehat{V_\nu}(\eta,t)\,d\eta.
\end{equation*}
Therefore, for any $t\leq t'\in[0,T_0]$ and $\sigma=1,\ldots,d$,
\begin{equation}\label{norm4.1}
\begin{split}
\widehat{V_{\sigma+}}(\xi,t')-\widehat{V_{\sigma+}}(\xi,t)&=\sum_{\mu,\nu\in\mathcal{I}_d}\int_t^{t'}\int_{\mathbb{R}^3}
e^{is[\Lambda_\sigma(\xi)-\widetilde{\Lambda}_{\mu}(\xi-\eta)-\widetilde{\Lambda}_{\nu}(\eta)]}m_{\sigma;\mu,\nu}(\xi,\eta)
\widehat{V_\mu}(\xi-\eta,s)\widehat{V_\nu}(\eta,s)\,d\eta ds\\
&=\sum_{\mu,\nu\in\mathcal{I}_d}\int_t^{t'}Q_s^{\sigma;\mu,\nu}(V_\mu(s),V_\nu(s))ds,
\end{split}
\end{equation}
where
\begin{equation}\label{norm4.0}
\mathcal{F}[Q_s^{\sigma;\mu,\nu}(f,g)](\xi):=\int_{\mathbb{R}^3} e^{is[\Lambda_\sigma(\xi)-\widetilde{\Lambda}_{\mu}(\xi-\eta)-\widetilde{\Lambda}_{\nu}(\eta)]}m_{\sigma;\mu,\nu}(\xi,\eta)
\widehat{f}(\xi-\eta)\widehat{g}(\eta)\,d\eta.
\end{equation}

The desired bound \eqref{cu1} is equivalent to proving that
\begin{equation*}
\sup_{\sigma\in\{1,\ldots,d\}}\|V_{\sigma+}(t')-V_{\sigma+}(t)\|_{Z_J}\leq \widetilde{C} |t'-t|(1+\sup_{s\in[t,t']}\sup_{\sigma\in\{1,\ldots,d\}}\|V_{\sigma+}(s)\|_{Z_J})^2.
\end{equation*}
Using the formulas \eqref{norm4.1}--\eqref{norm4.0} and Definition \ref{MainDef}, it suffices to prove the uniform bound
\begin{equation}\label{cu2}
2^{\min(0,2J-2j)}\|\phii^{(k)}_j\cdot P_kQ_s^{\sigma;\mu,\nu}(V_\mu(s),V_\nu(s))\|_{B^1_{k,j}}\leq \widetilde{C} (1+\sup_{\sigma\in\{1,\ldots,d\}}\|V_{\sigma+}(s)\|_{Z_J})^2,
\end{equation}
for any fixed $(k,j)\in \mathcal{J}$, $s\in[0,T_0]$, $\sigma\in\{1,\ldots,d\}$, and $\mu,\nu\in\mathcal{I}_d$.

Using just the definition \eqref{norm4.0} we estimate easily the $L^\infty$ part of the $B^1_{k,j}$ norm: if $k\leq 0$ then
\begin{equation*}
\big\|\mathcal{F}\big[P_kQ_s^{\sigma;\mu,\nu}(V_\mu(s),V_\nu(s))\big]\big\|_{L^\infty}\lesssim\|(1+|\eta|)\widehat{V_\mu(s)}(\eta)\|_{L^2}\|(1+|\eta|)\widehat{V_\nu(s)}(\eta)\|_{L^2}\leq \widetilde{C}.
\end{equation*}
Similarly, if $k\geq 0$ then
\begin{equation*}
\begin{split}
2^{50 k}&\big\|\mathcal{F}\big[P_kQ_s^{\sigma;\mu,\nu}(V_\mu(s),V_\nu(s))\big]\big\|_{L^\infty}\\
&\lesssim 2^{15k}\Big[\|\mathcal{F}[P_{\leq k}V_\mu(s)]\|_{L^2} \|\mathcal{F}[P_{[k-4,k+4]}V_\nu(s)]\|_{L^2}+ \|\mathcal{F}[P_{[k-4,k+4]}V_\mu(s)]\|_{L^2} \|\mathcal{F}[P_{\leq k}V_\nu(s)]\|_{L^2}\\
&+\sum_{|k_1-k_2|\leq 4,\,k_1\geq k-6}(1+2^{k_1})\|\widehat{P_{k_1}V_\mu(s)}\|_{L^2}\cdot(1+2^{k_2})\|\widehat{P_{k_2}V_\nu(s)}\|_{L^2}\Big]\\
&\leq \widetilde{C}.
\end{split}
\end{equation*}
Therefore, letting $B:=1+\sup_{\sigma\in\{1,\ldots,d\}}\|V_{\sigma+}(s)\|_{Z_J}$, for \eqref{cu2} it remains to prove the uniform bound
\begin{equation}\label{cu3}
2^{\min(0,2J-2j)}(2^{\alpha k}+2^{10k})2^{(1+\beta)j}\|\phii^{(k)}_j\cdot P_kQ_s^{\sigma;\mu,\nu}(V_\mu(s),V_\nu(s))\|_{L^2}\leq \widetilde{C} B^2,
\end{equation}
for any fixed $(k,j)\in \mathcal{J}$, $s\in[0,T_0]$, $\sigma\in\{1,\ldots,d\}$, and $\mu,\nu\in\mathcal{I}_d$.

The desired $L^2$ bound \eqref{cu3} follows easily from the $L^\infty$ bounds proved earlier unless
\begin{equation}\label{cu4}
j\geq \widetilde{C}+\max(20k,-5k/4).
\end{equation}
Decomposing
\begin{equation*}
V_\mu(s)=\sum_{k_1\in\mathbb{Z}}P_{k_1}(V_\mu(s)),\qquad V_\nu(s)=\sum_{k_2\in\mathbb{Z}}P_{k_2}(V_\nu(s))
\end{equation*}
for \eqref{cu3} it suffices to prove that
\begin{equation}\label{cu5}
2^{\min(0,2J-2j)}(2^{\alpha k}+2^{10k})2^{(1+\beta)j}\sum_{(k_1,k_2)\in\mathcal{X}_k}\|\phii^{(k)}_j\cdot P_kQ_s^{\sigma;\mu,\nu}(P_{k_1}V_\mu(s),P_{k_2}V_\nu(s))\|_{L^2}\leq \widetilde{C} B^2,
\end{equation}
for any fixed $(k,j)\in \mathcal{J}$ satisfying \eqref{cu4}, $s\in[0,T_0]$, $\sigma\in\{1,\ldots,d\}$, and $\mu,\nu\in\mathcal{I}_d$.

Using first the simple bound
\begin{equation*}
\begin{split}
\|\mathcal{F}[P_k&Q_s^{\sigma;\mu,\nu}(P_{k_1}V_\mu(s),P_{k_2}V_\nu(s))]\|_{L^2}\\
&\lesssim (1+2^{\max(k_1,k_2)})\min\big[\|\widehat{P_{k_1}V_\mu(s)}\|_{L^2}\|\widehat{P_{k_2}V_\nu(s)}\|_{L^1},\|\widehat{P_{k_1}V_\mu(s)}\|_{L^1}\|\widehat{P_{k_2}V_\nu(s)}\|_{L^2}\big]\\
&\lesssim (1+2^{\max(k_1,k_2)})2^{3\min(k_1,k_2)/2}\|\widehat{P_{k_1}V_\mu(s)}\|_{L^2}\|\widehat{P_{k_2}V_\nu(s)}\|_{L^2},
\end{split}
\end{equation*}
we estimate 
\begin{equation*}
(2^{\alpha k}+2^{10k})2^{(1+\beta)j}\sum_{(k_1,k_2)\in\mathcal{X}_k,\,\min(k_1,k_2)\leq-4j/5}\|P_kQ_s^{\sigma;\mu,\nu}(P_{k_1}V_\mu(s),P_{k_2}V_\nu(s))\|_{L^2}\leq \widetilde{C},
\end{equation*}
and
\begin{equation*}
(2^{\alpha k}+2^{10k})2^{(1+\beta)j}\sum_{(k_1,k_2)\in\mathcal{X}_k,\,\max(k_1,k_2)\geq j/20}\|P_kQ_s^{\sigma;\mu,\nu}(P_{k_1}V_\mu(s),P_{k_2}V_\nu(s))\|_{L^2}\leq \widetilde{C}.
\end{equation*}
Therefore, for \eqref{cu5} it suffices to prove the uniform bound
\begin{equation}\label{cu6}
\begin{split}
2^{\min(0,2J-2j)}(2^{\alpha k}+2^{10k})&2^{(1+\beta)j}\sum_{(k_1,k_2)\in\mathcal{X}_k,\,-4j/5\leq k_1\leq k_2\leq j/20}\\
&\|\phii^{(k)}_j\cdot P_kQ_s^{\sigma;\mu,\nu}(P_{k_1}V_\mu(s),P_{k_2}V_\nu(s))\|_{L^2}\leq \widetilde{C} B^2,
\end{split}
\end{equation}
for any fixed $(k,j)\in \mathcal{J}$ satisfying \eqref{cu4}, $s\in[0,T_0]$, $\sigma\in\{1,\ldots,d\}$, and $\mu,\nu\in\mathcal{I}_d$.

To prove \eqref{cu6} we further decompose
\begin{equation*}
\begin{split}
&P_{k_1}V_\mu(s)=\sum_{j_1\geq\max(-k_1,0)}P_{[k_1-2,k_1+2]}[\phii_{j_1}^{(k_1)}\cdot P_{k_1}(V_\mu(s))]=\sum_{j_1\geq\max(-k_1,0)}P_{[k_1-2,k_1+2]}(g_{k_1,j_1}),\\
&P_{k_2}V_\nu(s)=\sum_{j_2\geq\max(-k_2,0)}P_{[k_2-2,k_2+2]}[\phii_{j_2}^{(k_2)}\cdot P_{k_2}(V_\nu(s))]=\sum_{j_2\geq\max(-k_2,0)}P_{[k_2-2,k_2+2]}(g_{k_2,j_2}).
\end{split}
\end{equation*}
Then we rewrite, using the definitions,
\begin{equation*} P_kQ_s^{\sigma;\mu,\nu}(P_{[k_1-2,k_1+2]}(g_{k_1,j_1}),P_{[k_2-2,k_2+2]}(g_{k_2,j_2}))(x)=\int_{\mathbb{R}^3\times\mathbb{R}^3}K(x,y_1,y_2)g_{k_1,j_1}(y_1)g_{k_2,j_2}(y_2)\,dy_1dy_2,
\end{equation*}
where 
\begin{equation*}
\begin{split}
K(x,y_1,y_2):=C\int_{\mathbb{R}^3\times\mathbb{R}^3}&e^{i[(x-y_1)\cdot\xi+(y_1-y_2)\cdot\eta]}e^{is[\Lambda_\sigma(\xi)-\widetilde{\Lambda}_{\mu}(\xi-\eta)-\widetilde{\Lambda}_{\nu}(\eta)]}\\
&\times m_{\sigma;\mu,\nu}(\xi,\eta)\varphi_{[k_1-2,k_1+2]}(\xi-\eta)\varphi_{[k_2-2,k_2+2]}(\eta)\varphi_k(\xi)\,d\xi d\eta.
\end{split}
\end{equation*}
Recall that $k,k_1,k_2\in[-4j/5,j/20]$ and $j\geq\widetilde{C}$. Therefore we can integrate by parts in $\xi$ or $\eta$ to conclude that
\begin{equation*}
\text{ if }|x-y_1|+|y_1-y_2|\geq 2^{j-10} \text{ then }|K(x,y_1,y_2)|\leq\widetilde{C}(|x-y_1|+|y_1-y_2|)^{-10}.
\end{equation*}
Therefore, the contributions of the functions $g_{k_1,j_1}$ and $g_{k_2,j_2}$ corresponding to $|j_1-j|+|j_2-j|\geq 10$ are easily bounded,
\begin{equation*}
\begin{split}
(2^{\alpha k}+2^{10k})&2^{(1+\beta)j}\sum_{(k_1,k_2)\in\mathcal{X}_k,\,-4j/5\leq k_1,k_2\leq j/20}\\
&\sum_{|j_1-j|+|j_2-j|\geq 10}\|\phii^{(k)}_j\cdot P_kQ_s^{\sigma;\mu,\nu}(P_{[k_1-2,k_1+2]}(g_{k_1,j_1}),P_{[k_2-2,k_2+2]}(g_{k_2,j_2}))\|_{L^2}\leq \widetilde{C}.
\end{split}
\end{equation*}
Finally, for \eqref{cu6} it remains to prove the uniform bound
\begin{equation}\label{cu10}
\begin{split}
2^{\min(0,2J-2j)}&(2^{\alpha k}+2^{10k})2^{(1+\beta)j}\sum_{(k_1,k_2)\in\mathcal{X}_k,\,-4j/5\leq k_1\leq k_2\leq j/20}\\
&\|P_kQ_s^{\sigma;\mu,\nu}(P_{[k_1-2,k_1+2]}(g_{k_1,j_1}),P_{[k_2-2,k_2+2]}(g_{k_2,j_2}))\|_{L^2}\leq \widetilde{C} B^2,
\end{split}
\end{equation}
for any fixed $(k,j)\in \mathcal{J}$ satisfying \eqref{cu4}, $j_1,j_2\in[j-10,j+10]$, $s\in[0,T_0]$, $\sigma\in\{1,\ldots,d\}$, and $\mu,\nu\in\mathcal{I}_d$.

Using the definition \eqref{cu0}, 
\begin{equation*}
\|g_{k_1,j_1}\|_{B_{k_1,j_1}}+\|g_{k_2,j_2}\|_{B_{k_2,j_2}}\lesssim B2^{-\min(0,2J-2j)}
\end{equation*}
for any $k_1,k_2\in [-4j/5,j/20]$ and $j_1,j_2\in[j-10,j+10]$. Therefore, using \eqref{sec5.82},
\begin{equation*}
\|\mathcal{F}(P_{[k_1-2,k_1+2]}(g_{k_1,j_1}))\|_{L^1}\lesssim B2^{-\min(0,2J-2j)}\cdot (2^{\alpha k_1}+2^{10k_1})^{-1}2^{3k_1/2}2^{-(1+\beta)j_1}.
\end{equation*}
Since
\begin{equation*}
\|\widehat{g_{k_2,j_2}}\|_{L^2}\leq\widetilde{C}(1+2^{k_2})^{-N_0},
\end{equation*}
we can estimate, for $k_1\leq k_2\in[-4j/5,j/20]$ and $j_1,j_2\in[j-10,j+10]$,
\begin{equation*}
\begin{split}
\|P_kQ_s^{\sigma;\mu,\nu}&(P_{[k_1-2,k_1+2]}(g_{k_1,j_1}),P_{[k_2-2,k_2+2]}(g_{k_2,j_2}))\|_{L^2}\\
&\lesssim (2^{k_2}+1)\|\mathcal{F}(P_{[k_1-2,k_1+2]}(g_{k_1,j_1}))\|_{L^1}\|\mathcal{F}(P_{[k_2-2,k_2+2]}(g_{k_2,j_2}))\|_{L^2}\\
&\leq\widetilde{C}B2^{-\min(0,2J-2j)}\cdot (2^{\alpha k_1}+2^{10k_1})^{-1}2^{3k_1/2}2^{-(1+\beta)j}\cdot(1+2^{k_2})^{-(N_0-1)}.
\end{split}
\end{equation*}
Therefore the left-hand side of \eqref{cu10} is dominated by
\begin{equation*}
(2^{\alpha k}+2^{10k})\sum_{(k_1,k_2)\in\mathcal{X}_k,\,k_1\leq k_2}\widetilde{C}B(2^{\alpha k_1}+2^{10k_1})^{-1}2^{3k_1/2}(1+2^{k_2})^{-(N_0-1)}\lesssim \widetilde{C}B,
\end{equation*}
as desired. This completes the proof of the proposition.

\section{Proof of Proposition \ref{Norm}}\label{normproof}

In this section we prove Proposition \ref{Norm}, in several stages. We derive first several new formulas describing the solutions $U_\sigma$. 

\subsection{Renormalizations} We will use the definition and the notation introduced in subsection \ref{MainSec2}. The equations \eqref{maineq} and \eqref{nonlinearity} give
\begin{equation}\label{norm2}
[\partial_t+i\Lambda_\sigma(\xi)]\widehat{U_{\sigma+}}(\xi,t)=\sum_{\mu,\nu\in\mathcal{I}_d}\int_{\mathbb{R}^3}m_{\sigma;\mu,\nu}(\xi,\eta)\widehat{U_\mu}(\xi-\eta,t)\widehat{U_\nu}(\eta,t)\,d\eta,
\end{equation}
for $\sigma=1,\ldots,d$. Letting
\begin{equation*}
V_{\sigma+}(t):=e^{it\Lambda_\sigma}U_{\sigma+}(t),\qquad V_{\sigma-}(t):=e^{-it\Lambda_\sigma}U_{\sigma-}(t)=\overline{V_{\sigma+}(t)},\qquad\sigma=1,\ldots,d,
\end{equation*}
and
\begin{equation*}
\widetilde{\Lambda}_{\sigma+}:=+\Lambda_\sigma,\qquad \widetilde{\Lambda}_{\sigma-}:=-\Lambda_\sigma,\qquad\sigma=1,\ldots,d,
\end{equation*}
the equations \eqref{norm2} are equivalent to
\begin{equation}\label{norm3}
\frac{d}{dt}[\widehat{V_{\sigma+}}(\xi,t)]=\sum_{\mu,\nu\in\mathcal{I}_d}\int_{\mathbb{R}^3}e^{it[\Lambda_\sigma(\xi)-\widetilde{\Lambda}_{\mu}(\xi-\eta)-\widetilde{\Lambda}_{\nu}(\eta)]}m_{\sigma;\mu,\nu}(\xi,\eta)\widehat{V_\mu}(\xi-\eta,t)\widehat{V_\nu}(\eta,t)\,d\eta.
\end{equation}
Therefore, for any $t\in[0,T_0]$ and $\sigma=1,\ldots,d$,
\begin{equation}\label{norm4}
\widehat{V_{\sigma+}}(\xi,t)-\widehat{V_{\sigma+}}(\xi,0)=\sum_{\mu,\nu\in\mathcal{I}_d}\int_0^t\int_{\mathbb{R}^3}
e^{is[\Lambda_\sigma(\xi)-\widetilde{\Lambda}_{\mu}(\xi-\eta)-\widetilde{\Lambda}_{\nu}(\eta)]}m_{\sigma;\mu,\nu}(\xi,\eta)
\widehat{V_\mu}(\xi-\eta,s)\widehat{V_\nu}(\eta,s)\,d\eta ds.
\end{equation}

The desired bound \eqref{sec3} is equivalent to proving that
\begin{equation}\label{nh1}
\|V_{\sigma+}(t)-V_{\sigma+}(0)\|_Z\lesssim \delta_1^2,
\end{equation}
for any $t\in[0,T_0]$ and any $\sigma\in\{1,\ldots,d\}$. Given $t\in[0,T_0]$, we fix a suitable decomposition of the function $\mathbf{1}_{[0,t]}$, i.e. we fix functions $q_0,\ldots,q_{L+1}:\mathbb{R}\to[0,1]$, $|L-\log_2(2+t)|\leq 2$, with the properties
\begin{equation}\label{nh2}
\begin{split}
&\sum_{m=0}^{L+1}q_l(s)=\mathbf{1}_{[0,t]}(s),\qquad \mathrm{supp}\,q_0\subseteq [0,2], \qquad \mathrm{supp}\,q_{L+1}\subseteq [t-2,t],\qquad\mathrm{supp}\,q_m\subseteq [2^{m-1},2^{m+1}],\\
&q_m\in C^1(\mathbb{R})\qquad\text{ and }\qquad \int_0^t|q'_m(s)|\,ds\lesssim 1\qquad\text{ for }m=1,\ldots,L.
\end{split}
\end{equation}

Recall the assumption $m_{\sigma;\mu,\nu}\in\mathcal{M}_{d'}$ and the definition \eqref{symb2}. Using also Lemma \ref{tech3} and the formula \eqref{norm4}, for \eqref{nh1} it suffices to prove the following proposition.

\begin{proposition}\label{reduced1} Assume $t\in[0,T_0]$ is fixed and define the functions $q_m$ as in \eqref{nh2}. For any $\sigma\in\{1,\ldots,d\}$, $\mu,\nu\in\mathcal{I}_d$ we define the bilinear operators $T_{m}^{\sigma;\mu,\nu}$ by 
\begin{equation}\label{nh5}
\mathcal{F}\big[T_{m}^{\sigma;\mu,\nu}(f,g)\big](\xi):=\int_{\mathbb{R}}\int_{\mathbb{R}^3}e^{is[\Lambda_\sigma(\xi)-\widetilde{\Lambda}_{\mu}(\xi-\eta)-\widetilde{\Lambda}_{\nu}(\eta)]}q_m(s)\cdot \widehat{f}(\xi-\eta,s)\widehat{g}(\eta,s)\,d\eta ds.
\end{equation}
Assume that
\begin{equation}\label{nh4.5}
f_\mu:=\delta_1^{-1}Q_\mu V_\mu,\qquad \text{ for some normalized Calderon--Zygmund operator }Q_\mu
\end{equation}
for any $\mu\in\mathcal{I}_d$, and decompose
\begin{equation}\label{ok20}
f_\mu=\sum_{k'\in\mathbb{Z}}\sum_{j'\geq \max(-k',0)}P_{[k'-2,k'+2]}(\phii_{j'}^{(k')}\cdot P_{k'}f_\mu)=\sum_{(k',j')\in\mathcal{J}}f_{k',j'}^\mu.
\end{equation}
Then
\begin{equation}\label{nh3}
\sum_{(k_1,j_1),(k_2,j_2)\in\mathcal{J}}(1+2^{k_1}+2^{k_2})\big\|\widetilde{\varphi}^{(k)}_j\cdot P_kT_{m}^{\sigma;\mu,\nu}(f^\mu_{k_1,j_1},f^\nu_{k_2,j_2})\big\|_{B_{k,j}}\lesssim 2^{-\beta^4 m}
\end{equation}
for any fixed
\begin{equation}\label{nh4}
\sigma\in\{1,\ldots,d\},\quad\mu,\nu\in\mathcal{I}_d,\quad (k,j)\in \mathcal{J},\quad m\in\{0,\ldots,L+1\},
\end{equation}
\end{proposition}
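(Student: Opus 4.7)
My plan is to prove \eqref{nh3} by a careful dyadic decomposition in the output scales $(k,j)$ and the input scales $(k_i,j_i)$, reducing for each block to a bilinear estimate of the form
\begin{equation*}
\bigl\|\widetilde{\varphi}_j^{(k)}\cdot P_k T_m^{\sigma;\mu,\nu}(f^\mu_{k_1,j_1},f^\nu_{k_2,j_2})\bigr\|_{B_{k,j}}\lesssim 2^{-\beta^4 m}\cdot 2^{-\delta(|j-j_1|+|j-j_2|+|k-\max(k_1,k_2)|)}
\end{equation*}
for some small $\delta>0$ depending only on $\be$, which will allow summation over atoms. I will split the output into the two pieces of the $B_{k,j}$ norm; most interactions are placed in the strong component $B^1_{k,j}$, but interactions supported near the space-time resonant set $\mathcal{R}$ must be measured in $B^2_{k,j}$, using the thin-support term in \eqref{sec5.4}. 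Throughout, $(k_1,k_2)\in\mathcal{X}_k$ because of the frequency localization of the output.

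\textbf{Easy cases.} First I would dispatch the frequency extremes. When $\max(k_1,k_2)\geq \eps m$ we exploit the $H^{N_0}$ bound \eqref{sec2}: since $f_\mu=\delta_1^{-1}Q_\mu V_\mu\in L^\infty_tH^{N_0}$, bilinear H\"older-type estimates give gain $2^{-(N_0-10)\max(k_1,k_2)}$, more than enough. When $\min(k_1,k_2)\leq -2m$, the $L^1_\xi$ bound from \eqref{sec5.82} contributes $2^{3\min(k_i,0)/2}$ which closes the estimate. The regimes where $\max(j_1,j_2)\gg m$ are handled by integration by parts in $\xi$, exactly as in Section \ref{ContProof}: on the time window $\mathrm{supp}\,q_m\subseteq[2^{m-1},2^{m+1}]$ the effective spatial scale of each factor is $\lesssim 2^m$, so kernels concentrate and the corresponding contribution is summable with rapid decay in $\max(j_1,j_2)-m$. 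The remaining regime is the core one: $|k|,|k_1|,|k_2|\lesssim m$, $j_1,j_2\in[m-C,m+C]$, and $j$ unrestricted.

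\textbf{Main case and space-time resonances.} Here I partition the $(\xi,\eta)$ support by the size of $\Phi$ and of $\nabla_\eta\Phi$ using cutoffs at scales $2^{-\alpha m}$ and $2^{-\alpha m/2}$. On the time-non-resonant piece $|\Phi|\gtrsim 2^{-\alpha m}$, one integration by parts in $s$ (Shatah's normal form, using $q_m\in C^1$ with $\int|q_m'|\lesssim 1$) removes a factor $2^m$; the resulting symbol remains a bounded multiplier up to a loss $2^{\alpha m}$, and the output fits into $B^1_{k,j}$. On the space-non-resonant piece $|\Phi|\lesssim 2^{-\alpha m}$ but $|\nabla_\eta\Phi|\gtrsim 2^{-\alpha m/2}$, integration by parts in $\eta$ localizes the kernel to scale $\ll 2^m$, again giving an $L^2\times L^\infty$ estimate that closes in $B^1_{k,j}$. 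The residual piece is the doubly small region, concentrated in an $O(2^{-\alpha m})$-neighborhood of $\mathcal{R}$. Because \eqref{bc} forces $\mathcal{R}\cap\mathcal{D}=\emptyset$, as verified in Subsection \ref{comments}, the Hessian $\nabla^2_{\eta\eta}\Phi$ is invertible on this region, and Morse's lemma lets me reduce locally to the standard phase $e^{is|\eta-p(\xi)|^2}$ mentioned in the introduction. On this model operator one obtains a stationary-phase gain $2^{-3m/2}$, and the resulting output has Fourier support in an $O(2^{-m/2})$ tube around the surface $\{\xi=p^{-1}(\mathrm{const})\}$, i.e.\ essentially two-dimensional support with thickness $R\sim 2^{-m/2}$; this is precisely what the third term in \eqref{sec5.4} is designed to measure, so the estimate closes in $B^2_{k,j}$.

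\textbf{Main obstacle.} The genuine difficulty is the quantitative version of the last step: one must track the thin-support contribution through the Morse change of variables and verify that the weight $2^{\gamma j}R^{-2}$ with $R\sim 2^{-j/2}$ in \eqref{sec5.4} exactly absorbs the loss from cutting off near $\mathcal{R}$ while still leaving the $2^{-\beta^4 m}$ gain. The bookkeeping is delicate because the output localization $j$ can exceed $m/2$ (so the kernel is not fully concentrated on the time scale), requiring an additional commutation between the spatial cutoff $\widetilde{\varphi}^{(k)}_j$ and the oscillatory integral; here one chooses the intermediate IBP thresholds $2^{-\alpha m}$ so that $\alpha$ is strictly less than the gap between $B^1_{k,j}$ and $B^2_{k,j}$ weights, i.e.\ less than $\be$, which is why $\beta^4$ (rather than $\beta$) appears in the final exponent. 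All other summations over $(k_1,j_1),(k_2,j_2)$ are then geometric, yielding \eqref{nh3}.
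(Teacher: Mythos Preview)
Your outline has the right high-level shape (dispatch frequency extremes, then split by size of $\Phi$ and $\nabla_\eta\Phi$, use normal forms and stationary phase near $\mathcal{R}$), but there are two concrete gaps that would prevent the argument from closing.

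\textbf{Wrong identification of the core case.} You claim the residual hard regime is ``$j_1,j_2\in[m-C,m+C]$''. In fact that case is comparatively easy: when $\max(j_1,j_2)$ is close to $m$ the $Z$-norm bounds \eqref{sec5.82} already give enough decay (this is Lemma~\ref{BigBound5} in the paper). The genuinely hard case is the opposite one, $\max(j_1,j_2)\leq (1-\beta/10)m$ with $j$ close to $m$ and $k,k_1,k_2$ bounded (Proposition~\ref{reduced3}): here the inputs are essentially smooth bump functions and the norms give you nothing for free, so the entire gain must come from oscillation. Within this regime the paper still distinguishes $\max(j_1,j_2)\leq m(1/2-\beta^2)$ (pure stationary phase, Lemma~\ref{BigBound7}) from $\max(j_1,j_2)\geq m(1/2-\beta^2)$ (Lemma~\ref{BigBound8}), and the latter needs an additional $L^2$ orthogonality decomposition into boxes of size $2^{j''-m}$ in $\xi$ and intervals of length $2^{j''}$ in $s$; your sketch has no analogue of this and the $L^2$ estimate would not close without it.

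\textbf{Missing control on $\partial_s f^\mu_{k',j'}$.} Your normal form step (``integration by parts in $s$ removes a factor $2^m$'') produces boundary terms and a term with $\partial_s\widehat{f^\mu_{k',j'}}$. These are \emph{not} automatically controlled by the $Z$-norm of $f_\mu$: one has to go back to the equation \eqref{norm3} and prove separately that $\|\partial_s f^\mu_{k',j'}(s)\|_{L^2}\lesssim (1+s)^{-1-\beta}$ and $\|\partial_s\widehat{f^\mu_{k',j'}}(s)\|_{L^\infty}\lesssim (1+s)^{-1-\beta/10}$ (this is Lemma~\ref{ders}). The $L^\infty$ bound in particular is nontrivial and itself requires a small stationary-phase argument. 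Without it, the normal form gains nothing. You should flag this as a required auxiliary estimate before invoking integration by parts in $s$.

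Finally, two smaller points: the reduction to bounded frequencies $k,k_1,k_2\in[-D/10,D]$ is not just ``$H^{N_0}$ for high, $L^1_\xi$ for low'' --- one also needs the structural fact that there are no resonances at $0$ or $\infty$ (Lemma~\ref{desc1}, using the first line of \eqref{bc}); and your description of the $B^2_{k,j}$ step should replace the Morse-lemma heuristic by the concrete level-set bound for $\Psi^{\sigma;\mu,\nu}(|\xi|)=\Phi^{\sigma;\mu,\nu}(\xi,p^{\mu,\nu}(\xi))$ from Lemma~\ref{desc4}, which is what actually gives the thin support in $\xi$.
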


It follows from the definition that
\begin{equation}\label{nh9.2}
\begin{split}
&T^{\sigma;\mu,\nu}_m(f,g)=\int_\mathbb{R}q_m(s)\widetilde{T}^{\sigma;\mu,\nu}_s(f(s),g(s))\,ds,\\
&\mathcal{F}\big[\widetilde{T}^{\sigma;\mu,\nu}_s(f',g')\big](\xi):=\int_{\mathbb{R}^3}e^{is[\Lambda_\sigma(\xi)-\widetilde{\Lambda}_{\mu}(\xi-\eta)-\widetilde{\Lambda}_{\nu}(\eta)]}\cdot \widehat{f'}(\xi-\eta)\widehat{g'}(\eta)\,d\eta.
\end{split}
\end{equation}

For $\sigma\in\{1,\ldots,d\}$ and $\mu,\nu\in\mathcal{I}_d$, we define the smooth functions $\Phi^{\sigma;\mu,\nu}:\mathbb{R}^3\times\mathbb{R}^3\to\mathbb{R}$ and $\Xi^{\mu,\nu}:\mathbb{R}^3\times\mathbb{R}^3\to\mathbb{R}^3$,
\begin{equation}\label{ra2}
\begin{split}
\Phi^{\sigma;\mu,\nu}(\xi,\eta):=\Lambda_\sigma(\xi)-\widetilde{\Lambda}_{\mu}(\xi-\eta)-\widetilde{\Lambda}_{\nu}(\eta),\qquad \Xi^{\mu,\nu}(\xi,\eta):=(\nabla_\eta\Phi^{\sigma;\mu,\nu})(\xi,\eta).
\end{split}
\end{equation}
Many of the bounds needed in the proof of of Proposition \ref{reduced1} rely on having a good understanding of the functions $\Phi^{\sigma;\mu,\nu}$ and $\Xi^{\mu,\nu}$. The relevant properties are proved in subsection \ref{ra}.

In view of Lemma \ref{tech3} and the main hypothesis \eqref{sec2}, we have
\begin{equation}\label{nh5.5}
\sup_{t\in[0,T_0]}\|f_\mu(t)\|_{H^{N_0}\cap Z}\lesssim 1.
\end{equation}
for functions $f_\mu$ defined as in \eqref{nh5}. Letting
\begin{equation}\label{nh5.6}
Ef^\mu_{k',j'}(s):=e^{-is\widetilde{\Lambda}_\mu}f^\mu_{k',j'}(s),
\end{equation}
it follows from Lemma \ref{tech1.5} that for any $\mu\in\mathcal{I}_d$ and $s\in[0,T_0]$,
\begin{equation}\label{nh9}
\begin{split}
&\sum_{j'\geq \max(-k',0)}(\|Ef^\mu_{k',j'}(s)\|_{L^2}+\|f^\mu_{k',j'}(s)\|_{L^2})\lesssim \min(2^{-(N_0-1)k'},2^{(1+\beta-\alpha)k'}),\\
&\sum_{j'\geq \max(-k',0)}\|Ef^\mu_{k',j'}(s)\|_{L^\infty}\lesssim \min(2^{-6k'},2^{(1/2-\beta-\alpha)k'})(1+s)^{-1-\beta},\\
&\sup_{\xi\in\mathbb{R}^3}\big|D^\rho_\xi\widehat{f^\mu_{k',j'}}(\xi,s)\big|\lesssim_{|\rho|} (2^{\alpha k'}+2^{10k'})^{-1}\cdot 2^{-(1/2-\beta)\widetilde{k'}}2^{|\rho|j'}.
\end{split}
\end{equation}
Sometimes, we will also need the more precise bound
\begin{equation}\label{nh9.1}
\|Ef^\mu_{k',j'}(s)\|_{L^2}+\|f^\mu_{k',j'}(s)\|_{L^2}\lesssim (2^{\alpha k'}+2^{10k'})^{-1}2^{2\beta\widetilde{k'}}2^{-(1-\be)j'}\qquad\text{ for any }(k',j')\in\mathcal{J}.
\end{equation}

In addition to the bounds \eqref{nh5.5}--\eqref{nh9.1}, we will also need bounds on the derivatives $(\partial_sf^\mu_{k',j'})(s)$, in order to be able to integrate by parts in $s$. More precisely:

\begin{lemma}\label{ders}
(i) With $f^\mu_{k',j'}(s)$ as in \eqref{nh4.5} and \eqref{ok20}, for any $s\in[0,T_0]$, $\mu\in\mathcal{I}_d$, and $(k',j')\in\mathcal{J}$,
\begin{equation}\label{ok50}
\|(\partial_sf^\mu_{k',j'})(s)\|_{L^2}\lesssim \min[(1+s)^{-1-\beta},2^{3k'/2}]\cdot\min [1,2^{-(N_0-5)k'}],
\end{equation}

(ii) In addition, for any $\mu\in\mathcal{I}_d$, $(k',j')\in\mathcal{J}$ with $k'\in [-D/2,3D/2]$, and $s\in[0,T_0]$,
\begin{equation}\label{nxc1}
 \Vert (\partial_s\widehat{f^\mu_{k',j'}})(s)\Vert_{L^\infty}\lesssim (1+s)^{-1-\beta/10}.
 \end{equation}
\end{lemma}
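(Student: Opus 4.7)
The plan is to use the evolution equation to express the $s$-derivative of the nonlinear profile bilinearly in the profiles themselves, and then apply the $Z$-norm bounds from \eqref{nh9} together with the non-degeneracy analysis of subsection \ref{ra}. Since $V_\mu=e^{is\widetilde{\Lambda}_\mu}U_\mu$ with $(\partial_s+i\widetilde{\Lambda}_\mu)U_\mu=\mathcal{N}_\mu$, one has $\partial_s V_\mu=e^{is\widetilde{\Lambda}_\mu}\mathcal{N}_\mu$. The Fourier projections, the CZ operator $Q_\mu$, the unitary group $e^{is\widetilde{\Lambda}_\mu}$, and multiplication by $\phii^{(k')}_{j'}$ carry no $s$-dependence, so differentiating the decomposition in \eqref{ok20} yields
\begin{equation*}
\partial_s f^\mu_{k',j'}(s) = \delta_1^{-1}\, P_{[k'-2,k'+2]}\bigl(\phii^{(k')}_{j'}\cdot Q_\mu P_{k'} e^{is\widetilde{\Lambda}_\mu}\mathcal{N}_\mu(s)\bigr).
\end{equation*}
Since $V_\mu$ and $f_\mu$ differ by a bounded CZ operator times $\delta_1\leq 1$, the task reduces to estimating suitable norms of the frequency-localized nonlinearity $P_{k'}\mathcal{N}_\mu(s)$ in terms of the profile bounds in \eqref{nh9}.

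For part (i), spatial multiplication by $\phii^{(k')}_{j'}$, the Littlewood--Paley projections, $Q_\mu$, and $e^{is\widetilde{\Lambda}_\mu}$ are all bounded on $L^2$, so it suffices to bound $\|P_{k'}\mathcal{N}_\mu(s)\|_{L^2}$. Dyadic decomposition of the inputs restricts the sum to pairs $(k_1,k_2)\in\mathcal{X}_{k'}$, see \eqref{sets}, and H\"older in $L^\infty\times L^2$ (together with its mirror) applied to the bilinear multiplier with symbol $m\in\mathcal{M}_{d'}$ yields
\begin{equation*}
\|P_{k'}\mathcal{N}_\mu(s)\|_{L^2}\lesssim\sum_{(k_1,k_2)\in\mathcal{X}_{k'}}(1+2^{\max(k_1,k_2)})\|P_{k_1}V_{\mu_1}\|_{L^\infty}\|P_{k_2}V_{\mu_2}\|_{L^2}.
\end{equation*}
Inserting the bounds $\|P_{k_1}V_{\mu_1}\|_{L^\infty}\lesssim (1+s)^{-1-\beta}\min(2^{-6k_1},2^{(1/2-\beta-\alpha)k_1})$ and $\|P_{k_2}V_{\mu_2}\|_{L^2}\lesssim\min(2^{-(N_0-1)k_2},2^{(1+\beta-\alpha)k_2})$ from \eqref{nh9} and summing over $(k_1,k_2)\in\mathcal{X}_{k'}$ yields the desired bound $(1+s)^{-1-\beta}\min(1,2^{-(N_0-5)k'})$. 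The alternative bound $\min[\cdots,2^{3k'/2}]$ at low frequencies follows from Bernstein's inequality combined with the trivial pointwise bound $\|\widehat{\mathcal{N}_\mu}\|_{L^\infty}\lesssim\|\widehat{V_{\mu_1}}\|_{L^2}\|\widehat{V_{\mu_2}}\|_{L^2}\lesssim 1$.

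For part (ii), spatial multiplication by $\phii^{(k')}_{j'}$ is convolution on the Fourier side with a kernel of bounded $L^1$ norm, while $Q_\mu$ and $e^{is\widetilde{\Lambda}_\mu}$ act pointwise on the Fourier side, so it suffices to bound $\|\varphi_{[k'-2,k'+2]}\cdot q_\mu\cdot\widehat{\mathcal{N}_\mu}(s)\|_{L^\infty}$. Using $\widehat{U_\mu}(\eta)=e^{-is\widetilde{\Lambda}_\mu(\eta)}\widehat{V_\mu}(\eta)$, the Fourier representation of $\mathcal{N}_\mu$ becomes the oscillatory integral
\begin{equation*}
\widehat{\mathcal{N}_\sigma}(\xi,s) = \sum_{\mu_1,\mu_2}e^{-is\Lambda_\sigma(\xi)}\int e^{is\Phi^{\sigma;\mu_1,\mu_2}(\xi,\eta)}m(\xi,\eta)\widehat{V_{\mu_1}}(\xi-\eta,s)\widehat{V_{\mu_2}}(\eta,s)\,d\eta.
\end{equation*}
For fixed $\xi$ with $|\xi|\sim 2^{k'}$ in the middle range, I would split the $\eta$-integration into (a) a small neighborhood of the critical set $\{\nabla_\eta\Phi^{\sigma;\mu_1,\mu_2}(\xi,\cdot)=0\}$, where the local $L^\infty$ bound $\|\widehat{V_{\mu_i}}\|_{L^\infty}\lesssim 1$ coming from the $\|\widehat h\|_{L^\infty}$ component of the $B_{k,j}$ norm yields a contribution controlled by the measure of the neighborhood, and (b) the complement, where integration by parts in $\eta$ against $e^{is\Phi}$ produces decay in $s$. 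Hypothesis \eqref{bc}, via the computation leading to \eqref{bnbn3}, ensures $\det[\nabla^2_\eta\Phi]\neq 0$ on the critical set for $\eta\neq 0$, so standard non-degenerate stationary phase in three dimensions delivers $s^{-3/2}$ decay, more than enough for $(1+s)^{-1-\beta/10}$. The $j'$-dependent losses from $\eta$-derivatives of $\widehat{V_{\mu_i}}$ (third line of \eqref{nh9}) are absorbed by restricting the stationary-phase argument to moderate dyadic pieces $j_1,j_2$ of the input decomposition \eqref{ok20} and handling large $j_1,j_2$ trivially via support considerations; the small gap between $\beta$ and $\beta/10$ absorbs the resulting logarithmic accumulation.

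The main obstacle is part (ii): unlike part (i), where a direct H\"older-type bilinear estimate combined with the dispersive decay of \eqref{nh9} suffices, the pointwise $L^\infty_\xi$ bound on $\widehat{\mathcal{N}_\mu}$ genuinely requires stationary-phase analysis at the critical points of the phase $\Phi^{\sigma;\mu_1,\mu_2}$ in $\eta$, and the non-resonance hypothesis \eqref{bc} enters in a crucial way through the non-degeneracy of $\nabla^2_\eta\Phi$ established in subsection \ref{ra}.
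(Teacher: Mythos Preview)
Your proof of part (i) is correct and matches the paper's argument essentially line for line: both reduce to $\|P_{k'}\mathcal{N}_\mu\|_{L^2}$ via the bilinear decomposition \eqref{ok51}, then sum the dyadic pieces using the $L^\infty\times L^2$ bounds \eqref{ok52}, with the alternative $2^{3k'/2}$ coming from $\|\widehat{\mathcal{N}_\mu}\|_{L^\infty}\lesssim 1$.

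For part (ii) your overall architecture is the paper's: split near/far from $\{\Xi^{\mu,\nu}=0\}$, integrate by parts on the far piece via Lemma~\ref{tech5}, and bound the near piece by volume. But there is a quantitative gap in your near-critical estimate. Using only $\|\widehat{V_{\mu_i}}\|_{L^\infty}\lesssim 1$ times the measure $\delta^3$ of the neighborhood forces $\delta\lesssim s^{-1/3-}$ to reach $(1+s)^{-1-\beta/10}$; integration by parts on the complement, on the other hand, needs $s\delta\gg 2^{\max(j_1,j_2)}$. These two constraints are compatible only when $2^{\max(j_1,j_2)}\lesssim s^{2/3}$, leaving the range $s^{2/3}\lesssim 2^{\max(j_1,j_2)}\lesssim s$ uncovered. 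In that range neither your $L^\infty\times\mathrm{volume}$ bound nor a plain $L^2\times L^2$ Cauchy--Schwarz closes, and ``support considerations'' does not name a mechanism; invoking ``standard non-degenerate stationary phase'' does not help either, since the amplitude $\widehat{g^1}\cdot\widehat{g^2}$ has $C^N$ norm growing like $2^{N\max(j_1,j_2)}$, so the stationary-phase remainder is only small when $2^{\max(j_1,j_2)}\lesssim s^{1/2+}$.

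The paper fills this gap by exploiting the $Z$-norm more fully. On the near-critical ball it uses Cauchy--Schwarz against the indicator together with the $L^2$ decay $\|g^2_{k_2,j_2}\|_{L^2}\lesssim 2^{-(1+\beta)j_2}$, replacing your $\delta^3$ by $\min(2^{-(1+\beta)j_2}\delta^{3/2},\delta^3)$; this closes for all $2^{j_2}\lesssim s^{1-\beta/6}$. For the upper range $2^{j_2}\gtrsim s^{1-\beta/6}$ it dispenses with oscillation entirely and uses the $L^1$ bound $\|\widehat{g^2_{k_2,j_2}}\|_{L^1}\lesssim 2^{-(1+\beta)j_2}$ from \eqref{sec5.82}, interpolated against $L^2\times L^2$ to absorb the possible smallness of $k_1$. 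One minor correction: the localization $|\eta-p^{\mu,\nu}(\xi)|\lesssim\delta$ you need is supplied by Lemma~\ref{desc2}/Remark~\ref{tik100} (unique zero of $\Xi^{\mu,\nu}(\xi,\cdot)$ with linear sublevel control), not directly by the Hessian computation \eqref{bnbn3}, though both rest on the third line of \eqref{bc}.
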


\begin{proof}[Proof of Lemma \ref{ders}] (i) We may assume that $\mu=(\sigma+)$ for some $\sigma\in\{1,\ldots,d\}$, and use formula \eqref{norm3}. It follows that
\begin{equation}\label{ok51}
\begin{split}
&\|(\partial_sf^{(\sigma+)}_{k',j'})(s)\|_{L^2}\lesssim \delta_1^{-1}\sum_{\mu,\nu\in\mathcal{I}_d}\Big\|\varphi_{k'}(\xi)\int_{\mathbb{R}^3}e^{-is[\widetilde{\Lambda}_{\mu}(\xi-\eta)+\widetilde{\Lambda}_{\nu}(\eta)]}m_{\sigma;\mu,\nu}(\xi,\eta)\widehat{V_\mu}(\xi-\eta,s)\widehat{V_\nu}(\eta,s)\,d\eta\Big\|_{L^2_\xi}\\
&\lesssim \delta_1^{-1}\sum_{\mu,\nu\in\mathcal{I}_d}\sum_{(k_1,k_2)\in \mathcal{X}_{k'}}\Big\|\varphi_{k'}(\xi)\int_{\mathbb{R}^3}e^{-is[\widetilde{\Lambda}_{\mu}(\xi-\eta)+\widetilde{\Lambda}_{\nu}(\eta)]}m_{\sigma;\mu,\nu}(\xi,\eta)\widehat{P_{k_1}V_\mu}(\xi-\eta,s)\widehat{P_{k_2}V_\nu}(\eta,s)\,d\eta\Big\|_{L^2_\xi}.
\end{split}
\end{equation}
The main assumption \eqref{sec2} shows that 
\begin{equation*}
\|V_\mu(s)\|_{Z\cap H^{N_0}}\lesssim \delta_1,
\end{equation*}
for any $s\in [0,t]$ and $\mu\in\mathcal{I}_d$. Therefore, using \eqref{ok9}--\eqref{ok10},
\begin{equation}\label{ok52}
\begin{split}
&\|P_{k''}V_\mu(s)\|_{L^2}\lesssim \delta_1\min(2^{(1+\beta-\alpha)k''},2^{-N_0k''}),\\
&\|e^{-is\widetilde{\Lambda}_\mu}P_{k''}V_\mu(s)\|_{L^\infty}\lesssim \delta_1\min(2^{(1/2-\beta-\alpha)k''},2^{-6k''})(1+s)^{-1-\beta},
\end{split}
\end{equation}
for any $s\in [0,T_0]$, $\mu\in\mathcal{I}_d$, and $k''\in\mathbb{Z}$.

Using \eqref{ok51}, \eqref{ok52}, and the definition of the space $\mathcal{M}_{d'}$ in \eqref{symb2},
\begin{equation*}
\begin{split}
&\|(\partial_sf^{(\sigma+)}_{k',j'})(s)\|_{L^2}\\
&\lesssim \delta_1\sum_{(k_1,k_2)\in \mathcal{X}_{k'},\,k_1\leq k_2} \min(2^{(1+\beta-\alpha)k_2},2^{-(N_0-2)k_2})\cdot \min(2^{(1/2-\beta-\alpha)k_1},2^{-6k_1})(1+s)^{-1-\beta}\\
&\lesssim (1+s)^{-1-\beta}\min(1,2^{-(N_0-5)k'}).
\end{split}
\end{equation*}
Moreover, if $k'\leq 0$, then we can estimate, using again \eqref{ok51}, \eqref{ok52}, and the definition \eqref{symb2},
\begin{equation*}
\begin{split}
&\|(\partial_sf^{(\sigma+)}_{k',j'})(s)\|_{L^2}\\
&\lesssim \delta_1^{-1}\sum_{\mu,\nu\in\mathcal{I}_d}\sum_{(k_1,k_2)\in \mathcal{X}_{k'}}2^{3k'/2}\Big\|\int_{\mathbb{R}^3}e^{-is[\widetilde{\Lambda}_{\mu}(\xi-\eta)+\widetilde{\Lambda}_{\nu}(\eta)]}m_{\sigma;\mu,\nu}(\xi,\eta)\widehat{P_{k_1}V_\mu}(\xi-\eta,s)\widehat{P_{k_2}V_\nu}(\eta,s)\,d\eta\Big\|_{L^\infty_\xi}\\
&\lesssim \delta_12^{3k'/2}\sum_{(k_1,k_2)\in \mathcal{X}_{k'}}\min(2^{(1+\beta-\alpha)k_1},2^{-(N_0-2)k_1})\cdot \min(2^{(1+\beta-\alpha)k_2},2^{-(N_0-2)k_2})\\
&\lesssim 2^{3k'/2}.
\end{split}
\end{equation*}
The desired bound \eqref{ok50} follows.

To prove (ii) it suffices to prove that
\begin{equation*}
\Vert (\partial_s\widehat{P_{k'}V_{(\sigma+)}})(s)\Vert_{L^\infty}\lesssim \delta_1(1+s)^{-1-\beta/10}.
\end{equation*}
Using \eqref{norm3} it suffices to prove that
\begin{equation*}
\Big|\phi_{k'}(\xi)\int_{\mathbb{R}^3}e^{is[\Lambda_\sigma(\xi)-\widetilde{\Lambda}_{\mu}(\xi-\eta)-\widetilde{\Lambda}_{\nu}(\eta)]}m_{\sigma;\mu,\nu}(\xi,\eta)\widehat{V_\mu}(\xi-\eta,s)\widehat{V_\nu}(\eta,s)\,d\eta\Big|\lesssim \delta_1(1+s)^{-1-\beta/10},
\end{equation*}
for any $\xi\in\mathbb{R}^3$, $\mu,\nu\in\mathcal{I}_d$, $\sigma\in\{1,\ldots,d\}$, and $s\in[0,T_0]$. Recall that $\|V_\mu(s)\|_{Z\cap H^{N_0}}\lesssim \delta_1$, see \eqref{sec2}. Using the definition of the space $\mathcal{M}_{d'}$ in \eqref{symb2} and Lemma \ref{tech3}, it suffices to prove that if 
\begin{equation}\label{nxc2}
\|g_1\|_{Z\cap H^{N_0}}+\|g_2\|_{Z\cap H^{N_0}}\leq 1,
\end{equation}
and we decompose
\begin{equation*}
g_i=\sum_{(k_i,j_i)\in\mathcal{J}}g^i_{k_i,j_i},\qquad g^i_{k_i,j_i}:=P_{[k_i-2,k_i+2]}(\phii^{(k_i)}_{j_i}\cdot P_{k_i}g_i),\qquad i=1,2,
\end{equation*}
then
\begin{equation}\label{nxc3}
\sum_{(k_1,j_1),(k_2,j_2)\in\mathcal{J}}2^{\max(k_1,k_2)}\Big|\phi_{k'}(\xi)\int_{\mathbb{R}^3}e^{is[\Lambda_\sigma(\xi)-\widetilde{\Lambda}_{\mu}(\xi-\eta)-\widetilde{\Lambda}_{\nu}(\eta)]}\widehat{g^1_{k_1,j_1}}(\xi-\eta)\widehat{g^2_{k_2,j_2}}(\eta)\,d\eta\Big|\lesssim (1+s)^{-1-\beta/10},
\end{equation}
for any $\xi\in\mathbb{R}^3$, $\mu,\nu\in\mathcal{I}_d$, $\sigma\in\{1,\ldots,d\}$, $s\in\mathbb{R}$, and $k'\in\mathbb{Z}\cap [-D/2,3D/2]$.

We use first only the $L^2$ bounds
\begin{equation}\label{nxc3.5}
\|g^1_{k_1,j_1}\|_{L^2}\lesssim \min(2^{-N_0 k_1},2^{(2\beta-\alpha)\widetilde{k_1}}2^{-(1-\beta)j_1}), \qquad \|g^2_{k_2,j_2}\|_{L^2}\lesssim \min(2^{-N_0 k_2},2^{(2\beta-\alpha)\widetilde{k_2}}2^{-(1-\beta)j_2}),
\end{equation}
see \eqref{nxc2} and \eqref{mk15.5}, and estimate easily
\begin{equation*}
\sum_{((k_1,j_1),(k_2,j_2))\in J_1}2^{\max(k_1,k_2)}\Big|\phi_{k'}(\xi)\int_{\mathbb{R}^3}e^{is[\Lambda_\sigma(\xi)-\widetilde{\Lambda}_{\mu}(\xi-\eta)-\widetilde{\Lambda}_{\nu}(\eta)]}\widehat{g^1_{k_1,j_1}}(\xi-\eta)\widehat{g^2_{k_2,j_2}}(\eta)\,d\eta\Big|\lesssim (1+s)^{-1-\beta/10},
\end{equation*}
where
\begin{equation*}
J_1:=\{((k_1,j_1),(k_2,j_2))\in\mathcal{J}\times\mathcal{J}: (k_1,k_2)\in\mathcal{X}_{k'},\,2^{\max(k_1,k_2)}\geq (1+s)^{2/N_0}\text{ or }2^{\max(j_1,j_2)}\geq (1+s)^{1+4\beta}\}.
\end{equation*}
Also, the full bound \eqref{nxc3} follows easily if $s\leq 2^{D^2}$. We let 
\begin{equation*}
J_2:=\{((k_1,j_1),(k_2,j_2))\in\mathcal{J}\times\mathcal{J}: (k_1,k_2)\in\mathcal{X}_{k'},\,2^{\max(k_1,k_2)}\leq (1+s)^{2/N_0}\text{ and }2^{\max(j_1,j_2)}\leq (1+s)^{1+4\beta}\},
\end{equation*}
and notice that $J_2$ has at most $C\ln(2+s)^4$ elements. Therefore, for \eqref{nxc3} it suffices to prove that
\begin{equation}\label{nxc4}
\Big|\phi_{k'}(\xi)\int_{\mathbb{R}^3}e^{is[\Lambda_\sigma(\xi)-\widetilde{\Lambda}_{\mu}(\xi-\eta)-\widetilde{\Lambda}_{\nu}(\eta)]}\widehat{g^1_{k_1,j_1}}(\xi-\eta)\widehat{g^2_{k_2,j_2}}(\eta)\,d\eta\Big|\lesssim 2^{-\max(k_1,k_2)}s^{-1-\beta/9},
\end{equation}
for any $\xi\in\mathbb{R}^3$, $\mu,\nu\in\mathcal{I}_d$, $\sigma\in\{1,\ldots,d\}$, $s\geq 2^{D^2}$, $k'\in\mathbb{Z}\cap [-D/2,3D/2]$, and any $((k_1,j_1),(k_2,j_2))\in J_2$.

Without loss of generality, in proving \eqref{nxc4} we may assume that $j_1\leq j_2$. Assume first that
\begin{equation}\label{nxc5}
2^{j_2}\geq 2^{-D^2}(1+s)^{1-\beta/6}.
\end{equation}
Then, using \eqref{sec5.8}, \eqref{sec5.82} and the assumption \eqref{nxc2}, we have
\begin{equation*}
\|\widehat{g^2_{k_2,j_2}}\|_{L^1}\lesssim 2^{-(1+\beta)j_2}2^{3k_2/2}(2^{\alpha k_2}+2^{10k_2})^{-1}.
\end{equation*}
Using \eqref{mk15.55}, $\|\widehat{g^1_{k_1,j_1}}\|_{L^\infty}\lesssim 2^{-\widetilde{k_1}/2}$. Using also \eqref{nxc3.5} we estimate the left-hand side of \eqref{nxc4} by
\begin{equation*}
\begin{split}
C\min(\|\widehat{g^1_{k_1,j_1}}\|_{L^\infty}\|\widehat{g^2_{k_2,j_2}}\|_{L^1},\|\widehat{g^1_{k_1,j_1}}\|_{L^2}\|\widehat{g^2_{k_2,j_2}}\|_{L^2})&\lesssim \min(2^{-\widetilde{k_1}/2}2^{-(1+\beta)j_2},2^{\widetilde{k_1}(1+\beta-\alpha)}2^{-(1-\beta)j_2})\\
&\lesssim 2^{-(1+\beta/3)j_2}.
\end{split}
\end{equation*}
The desired bound \eqref{nxc4} follows if we assume \eqref{nxc5}.

%XXXXXXXXXX
Finally it remains to prove \eqref{nxc4} assuming that
\begin{equation}\label{nxc6}
j_1\leq j_2,\qquad 2^{j_2}\leq 2^{-D^2}(1+s)^{1-\beta/6}.
\end{equation}
In this case we would like to integrate by parts in $\eta$ to estimate the integral in \eqref{nxc4}. Let
\begin{equation*}
K=(1+s)^{\beta^2}\left[2^{j_2}+(1+s)^\frac{1}{2}\right],\quad \delta=K(1+s)^{-1},\quad \epsilon=\min(2^{-j_2},(1+s)^{-1/2}).
\end{equation*}
Recalling the definition \eqref{ra2}, using the bounds \eqref{ln1} and \eqref{mk15.55},
\begin{equation}\label{nxc7}
\Big|\int_{\mathbb{R}^3}[1-\varphi_{\leq 0}(\delta^{-1}\Xi^{\mu,\nu}(\xi,\eta))]e^{is[\Lambda_\sigma(\xi)-\widetilde{\Lambda}_{\mu}(\xi-\eta)-\widetilde{\Lambda}_{\nu}(\eta)]}\widehat{g^1_{k_1,j_1}}(\xi-\eta)\widehat{g^2_{k_2,j_2}}(\eta)\,d\eta\Big|\lesssim (1+s)^{-2}.
\end{equation}
Moreover, using \eqref{tik102}  (since $k^\prime\ge -D/2$, the last formula in \eqref{jb2} shows that the integral below is nontrivial only if $\min(k_1,k_2)\geq -D$)
\begin{equation}\label{nxc8}
\begin{split}
\Big|\phi_{k'}(\xi)\int_{\mathbb{R}^3}\varphi_{\leq 0}(\delta^{-1}\Xi^{\mu,\nu}(\xi,\eta))e^{is[\Lambda_\sigma(\xi)-\widetilde{\Lambda}_{\mu}(\xi-\eta)-\widetilde{\Lambda}_{\nu}(\eta)]}\widehat{g^1_{k_1,j_1}}(\xi-\eta)\widehat{g^2_{k_2,j_2}}(\eta)\,d\eta\Big|&\\
\lesssim \int_{\mathbb{R}^3}\mathbf{1}_{[0,C2^{4\max(k_1,k_2)}\delta]}(\eta-p^{\mu,\nu}(\xi))|\widehat{g^1_{k_1,j_1}}(\xi-\eta)|\,|\widehat{g^2_{k_2,j_2}}(\eta)|\,d\eta&.
\end{split}
\end{equation}
Using \eqref{sec5.8}, \eqref{sec5.815}, and \eqref{nxc2}, and recalling that we may assume that $\min(k_1,k_2)\geq -D$, we have
\begin{equation*}
\begin{split}
&\|\mathbf{1}_{[0,C2^{4\max(k_1,k_2)}\delta]}(\eta-p^{\mu,\nu}(\xi))\cdot \widehat{g^2_{k_2,j_2}}(\eta)\|_{L^1_\eta}\\
&\lesssim (2^{\alpha k_2}+2^{10k_2})^{-1}\min\left[2^{-(1+\beta)j_2}\cdot \delta^{3/2}2^{6\max(k_1,k_2)},\delta^32^{12\max(k_1,k_2)}\right].
\end{split}
\end{equation*}
Using \eqref{mk15.55} , we have $\|\widehat{g^1_{k_1,j_1}}\|_{L^\infty}\lesssim 2^{-10k_1}$. Therefore, we may estimate the right-hand side of \eqref{nxc8} by
\begin{equation*}
C\min(2^{-(1+\beta)j_2}\kappa^{3/2},2^{2\max(k_1,k_2)}\delta^3)\lesssim (1+s)^{-1-\beta}.
\end{equation*}
The desired bound \eqref{nxc4} follows, using also \eqref{nxc7} and the definition of the set $J_2$.
%XXXXXXXXXXXXX
\end{proof}

\subsection{Proof of Proposition \ref{reduced1}} We will prove the key bound \eqref{nh3} in several steps. 
The main ingredients in the proof are the estimates \eqref{nh5.5}--\eqref{ok50} above. 

This proof constitutes the heart of the analysis. We proceed in three different times. Decomposing the solutions into atoms decomposes each interaction into a myriad of different ``elementary interactions''. The purpose of the first simplification is to get rid of most of the easier cases so as to only focus on the fewer that really affect the outcome. This reduces matters to proving Proposition \ref{reduced2} below, after which it suffices to bound each iteration independently in a uniform way, see \eqref{ok60}. In a second time, we reduce matters further to the core of the difficulty in Proposition \ref{reduced3}. This is done in Lemma \ref{BigBound3}, Lemma \ref{BigBound4} and Lemma \ref{BigBound5} by using in various ways the finite speed of propagation which morally forces the time to be the largest parameter in all the relevant interactions, and in Lemma \ref{BigBound6} and Lemma \ref{BigBound6.5} which use the absence of (time) resonances at $(0,0)$ or at infinity provided by the first condition in \eqref{bc}. The proof of Proposition \ref{reduced3} is harder and we postpone an explanation of its ingredients to after its statement.

In this subsection we start by considering some of the easier cases, and reduce matters to proving Proposition \ref{reduced2} below. 
In all the cases analyzed in this subsection we can in fact control the stronger norm $B^1_{k,j}$, see Definition \ref{MainDef}, instead of the required $B_{k,j}$ norm.

\begin{lemma}\label{BigBound1}
With $D=D(d,A,d')$ sufficiently large as in subsection \ref{MainSec2}, the estimate 
\begin{equation}\label{nh7.5}
\sum_{(k_1,j_1),(k_2,j_2)\in\mathcal{J}}(1+2^{k_1}+2^{k_2})\big\|\widetilde{\varphi}^{(k)}_j\cdot P_kT_{m}^{\sigma;\mu,\nu}(f^\mu_{k_1,j_1},f^\nu_{k_2,j_2})\big\|_{B^1_{k,j}}\lesssim 2^{-\beta^4 m}
\end{equation}
holds if
\begin{equation}\label{nh7}
j\leq \beta m/2+N'_0k_++D^2,\qquad\text{ where }\qquad N'_0:=2N_0/3-10.
\end{equation}
\end{lemma}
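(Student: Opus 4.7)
The plan is to prove the $B^1_{k,j}$-bound by crude bilinear estimates which suffice precisely because the hypothesis $j \leq \beta m/2 + N_0' k_+ + D^2$ keeps $j$ small enough to be absorbed by dispersive decay and high-frequency smallness. First I would exploit the factorization
\begin{equation*}
\widetilde{T}^{\sigma;\mu,\nu}_s(f,g) = e^{is\Lambda_\sigma}\bigl[(Ef)\cdot (Eg)\bigr],
\end{equation*}
so that $T_m^{\sigma;\mu,\nu}(f^\mu_{k_1,j_1},f^\nu_{k_2,j_2}) = \int q_m(s)\,e^{is\Lambda_\sigma}\bigl[(Ef^\mu_{k_1,j_1}(s))(Ef^\nu_{k_2,j_2}(s))\bigr]\,ds$. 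Applying H\"older's inequality $\|uv\|_{L^2}\le\|u\|_{L^\infty}\|v\|_{L^2}$, together with the dispersive bound on $\|Ef^\mu_{k_1,j_1}(s)\|_{L^\infty}$ and the $L^2$ bound on $f^\nu_{k_2,j_2}(s)$ from \eqref{nh9}, and using $\int|q_m(s)|(1+s)^{-1-\beta}\,ds\lesssim 2^{-\beta m}$ (with the convention that this is $\lesssim 1$ when $m=0$, which is harmless since $2^{-\beta^4 m}=1$), one obtains after summing the $j_1,j_2$ indices (which \eqref{nh9} already sums):
\begin{equation*}
\sum_{j_1,j_2}\|T_m^{\sigma;\mu,\nu}(f^\mu_{k_1,j_1},f^\nu_{k_2,j_2})\|_{L^2}\lesssim 2^{-\beta m}\,\min(2^{-6k_1},2^{(1/2-\beta-\alpha)k_1})\cdot\min(2^{-(N_0-1)k_2},2^{(1+\beta-\alpha)k_2}),
\end{equation*}
where we assumed $k_1\le k_2$ (the roles are symmetric).

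Second, for the $\|\hat h\|_{L^\infty}$ component of the $B^1_{k,j}$ norm, since $h=\widetilde{\varphi}_j^{(k)}\cdot P_k T_m(\cdot)$ is spatially supported where $\widetilde{\varphi}_j^{(k)}$ lives (volume $\lesssim 2^{3j}$), the bound
\begin{equation*}
\|\hat h\|_{L^\infty}\le\|h\|_{L^1}\le\|\widetilde{\varphi}_j^{(k)}\|_{L^2}\|P_k T_m\|_{L^2}\lesssim 2^{3j/2}\|T_m\|_{L^2}
\end{equation*}
reduces the Fourier-$L^\infty$ part to the already-obtained $L^2$ estimate. Both components of $B^1_{k,j}$ are thus controlled by $(2^{\alpha k}+2^{10k})\bigl[2^{(1+\beta)j}+2^{(1/2-\beta)\widetilde k}\,2^{3j/2}\bigr]\|T_m\|_{L^2}$, which is $\lesssim(2^{\alpha k}+2^{10k})\,2^{3j/2}\|T_m\|_{L^2}$ since $\widetilde k+j\ge 0$.

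Third, I would perform the remaining sum over $(k_1,k_2)\in\mathcal{X}_k$ (forced by the output frequency localization), splitting $\mathcal{X}_k=\mathcal{X}_k^1\cup\mathcal{X}_k^2$. Each of the two minima appearing in the $L^2$ bound is geometrically summable in the respective regime $k'\le 0$ and $k'\ge 0$: the low-output case $k\le 0$ yields a polynomial in $k$ multiplying $2^{\alpha k}$ and the resulting sum converges; the high-output case $k\ge 0$ is dominated by the high-frequency decay $2^{-(N_0-1)k_2}$ which overwhelms the $2^{10k}$-weight. Invoking the hypothesis $j\le\beta m/2+N_0' k_+ + D^2$ with $N_0'=2N_0/3-10$, the factor $2^{3j/2}$ contributes at most $2^{(3/2)N_0' k_+}$ in $k_+$ and $2^{(3\beta/4) m}$ in $m$, both strictly dominated (in the $k_+$ direction by $2^{-(N_0-1)k_+}\cdot 2^{10k_+}$, since $(3/2)N_0'+10<N_0-1$; in the $m$ direction by $2^{-\beta m}$ since $3\beta/4<\beta$). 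Combining, the net bound is $\lesssim 2^{-\beta m(1-\beta)/2}$ multiplied by a bounded sum in $k$, which is comfortably smaller than the required $2^{-\beta^4 m}$ because $\beta^4\ll\beta(1-\beta)/2$.

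The ``main obstacle'' in this particular lemma is really only bookkeeping: all sums converge robustly thanks to the enormous $N_0$ and tiny $\beta$, and the $2^{3j/2}$-loss from the crude Cauchy--Schwarz reduction of $\|\hat h\|_{L^\infty}$ to $L^2$ is precisely absorbed by the smallness of $j$ enforced in \eqref{nh7}. The genuine difficulty, namely the presence of space-time resonances that prevent such brute-force integration in $s$, is pushed to the complementary regime $j>\beta m/2+N_0' k_+ + D^2$ handled by the subsequent propositions.
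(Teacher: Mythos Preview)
Your proposal is correct and follows essentially the same approach as the paper: reduce the $B^1_{k,j}$ norm to an $L^2$ bound via the crude estimate $\|\widehat h\|_{L^\infty}\lesssim 2^{3j/2}\|h\|_{L^2}$, then control $\|P_kT_m\|_{L^2}$ by writing $\widetilde T_s$ as a product $e^{is\Lambda_\sigma}[(Ef)(Eg)]$ and applying $L^2\times L^\infty$ with the dispersive and Sobolev bounds \eqref{nh9}. The paper packages the first step as the single observation $\|\widetilde\varphi^{(k)}_j\cdot P_kh\|_{B^1_{k,j}}\lesssim (2^{\alpha k}+2^{10k})2^{3j/2}2^{(1/2-\beta)\widetilde k}\|\widetilde\varphi^{(k)}_j\cdot P_kh\|_{L^2}$ and records the summed $L^2$ estimate as $\sum_{(k_1,k_2)\in\mathcal X_k}(1+2^{k_1}+2^{k_2})\|P_kT_m\|_{L^2}\lesssim 2^{-(N_0-4)k_+}2^{-\beta m}$, but the substance is the same as yours.
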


\begin{proof}[Proof of Lemma \ref{BigBound1}] We observe that, in view of Definition \ref{MainDef},
\begin{equation}\label{ok1}
\|\phii^{(k)}_j\cdot P_kh\|_{B_{k,j}}\lesssim (2^{\alpha k}+2^{10 k})\cdot 2^{3j/2}2^{(1/2-\beta)\widetilde{k}}\|\phii^{(k)}_j\cdot P_kh\|_{L^2}.
\end{equation}
Therefore, it suffices to prove that
\begin{equation}\label{nh8}
\sum_{(k_1,j_1),(k_2,j_2)\in\mathcal{J}}(1+2^{k_1}+2^{k_2})(2^{\alpha k}+2^{10 k})2^{3j/2}2^{(1/2-\beta)\widetilde{k}}\big\|P_kT_{m}^{\sigma;\mu,\nu}(f_{k_1,j_1}^\mu,f_{k_2,j_2}^\nu)\big\|_{L^2}\lesssim 2^{-\beta^4 m}.
\end{equation}

Recalling the definition \eqref{nh5.6}, it is easy to see that
\begin{equation*}
\mathcal{F}\big[P_kT_{m}^{\sigma;\mu,\nu}(f_{k_1,j_1}^\mu,f_{k_2,j_2}^\nu)\big](\xi)=\int_{\mathbb{R}}\int_{\mathbb{R}^3}\varphi_k(\xi)e^{is\Lambda_\sigma(\xi)}q_m(s)\widehat{Ef_{k_1,j_1}^\mu}(\xi-\eta,s)\widehat{Ef^\nu_{k_2,j_2}}(\eta,s)\,d\eta ds.
\end{equation*}
Therefore, using \eqref{mk6},
\begin{equation}\label{nh9.5}
\begin{split}
&\big\|P_kT_{m}^{\sigma;\mu,\nu}(f_{k_1,j_1}^\mu,f_{k_2,j_2}^\nu)\big\|_{L^2}\\
&\lesssim \min\Big(\int_{\mathbb{R}}q_m(s)\|Ef_{k_1,j_1}^\mu(s)\|_{L^2}\|Ef_{k_2,j_2}^\nu(s)\|_{L^\infty}\,ds,\int_{\mathbb{R}}q_m(s)\|Ef_{k_1,j_1}^\mu(s)\|_{L^\infty}\|Ef_{k_2,j_2}^\nu(s)\|_{L^2}\,ds\Big).
\end{split}
\end{equation}
Therefore, using \eqref{nh9} and recalling the properties of the functions $q_m$ (see \eqref{nh2}), 
\begin{equation}\label{nh10}
\sum_{(k_1,k_2)\in\mathcal{X}_k,\,(k_1,j_1),(k_2,j_2)\in\mathcal{J}}(1+2^{k_1}+2^{k_2})\big\|P_kT_{m}^{\sigma;\mu,\nu}(f_{k_1,j_1}^\mu,f_{k_2,j_2}^\nu)\big\|_{L^2}\lesssim 2^{-(N_0-4)k_+}2^{-\beta m}.
\end{equation}

It follows that the left-hand side of \eqref{nh8} is dominated by
\begin{equation*}
2^{-\beta m}2^{(1/2-\beta+\alpha)k}2^{3j/2}
\end{equation*}
when $k\leq 0$, and by
\begin{equation*}
2^{-(N_0-15)k}2^{-\beta m}2^{3j/2}
\end{equation*}
when $k\geq 0$. The bound \eqref{nh8} follows if $j\leq \beta m/2+(2N_0/3-10)k_++D^2$, as desired.
\end{proof}

\begin{lemma}\label{BigBound2}
Assume that
\begin{equation}\label{nh30}
j\geq \beta m/2+N'_0k_++D^2.
\end{equation}
Then, with the same notation as before,
\begin{equation}\label{nh31}
\sum_{(k_1,j_1),(k_2,j_2)\in\mathcal{J},\,\,\max(k_1,k_2)\geq j/N'_0}(1+2^{k_1}+2^{k_2})\big\|\widetilde{\varphi}^{(k)}_j\cdot P_kT_{m}^{\sigma;\mu,\nu}(f_{k_1,j_1}^\mu,f_{k_2,j_2}^\nu)\big\|_{B^1_{k,j}}\lesssim 2^{-\beta^4 m},
\end{equation}
\begin{equation}\label{nh30.7}
\sum_{(k_1,j_1),(k_2,j_2)\in\mathcal{J},\,\min(k_1,k_2)\leq -10j}(1+2^{k_1}+2^{k_2})\big\|\widetilde{\varphi}^{(k)}_j\cdot P_kT_{m}^{\sigma;\mu,\nu}(f_{k_1,j_1}^\mu,f_{k_2,j_2}^\nu)\big\|_{B^1_{k,j}}\lesssim 2^{-\beta^4 m},
\end{equation}
and
\begin{equation}\label{nh30.8}
\sum_{(k_1,j_1),(k_2,j_2)\in\mathcal{J},\,\max(j_1,j_2)\geq 10j}(1+2^{k_1}+2^{k_2})\big\|\widetilde{\varphi}^{(k)}_j\cdot P_kT_{m}^{\sigma;\mu,\nu}(f_{k_1,j_1}^\mu,f_{k_2,j_2}^\nu)\big\|_{B^1_{k,j}}\lesssim 2^{-\beta^4 m}.
\end{equation}
\end{lemma}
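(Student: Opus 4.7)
The strategy follows the same scheme as Lemma \ref{BigBound1}: since $B_{k,j}\leq B^1_{k,j}$, it suffices to bound the $B^1_{k,j}$ norm, and we dominate it by the crude estimate \eqref{ok1},
\[
\|\phii^{(k)}_j\cdot P_k h\|_{B^1_{k,j}}\lesssim (2^{\alpha k}+2^{10k})\cdot 2^{3j/2}\cdot 2^{(1/2-\beta)\widetilde{k}}\,\|\phii^{(k)}_j\cdot P_k h\|_{L^2}.
\]
Then we control the $L^2$ norm of $P_kT_{m}^{\sigma;\mu,\nu}(f^\mu_{k_1,j_1},f^\nu_{k_2,j_2})$ via \eqref{nh9.5}, combining one $L^2$ dispersive bound with one $L^\infty$ dispersive bound from \eqref{nh9} (or the sharper $L^2$ slice bound \eqref{nh9.1}). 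Integration of the $L^\infty$ decay $(1+s)^{-1-\beta}$ against $q_m$ yields the universal factor $2^{-\beta m}$. In each of the three regimes, this factor together with extra smallness peculiar to the regime will dominate the weight $2^{3j/2}(2^{\alpha k}+2^{10 k})2^{(1/2-\beta)\widetilde{k}}(1+2^{k_1}+2^{k_2})$ with room to spare, yielding the required $2^{-\beta^4 m}$.

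For \eqref{nh31}, let $K=\max(k_1,k_2)\geq j/N'_0$. Since $(k_1,k_2)\in\mathcal{X}_k$ we have $k\leq K+8$, hence the prefactor is at most $C\,2^{11K}\cdot 2^{3j/2}$. Put $L^2$ on the high-frequency input and $L^\infty$ on the other, giving $\sum_{j_2}\|Ef^\nu_{k_2,j_2}\|_{L^2}\lesssim 2^{-(N_0-1)K}$ from \eqref{nh9}. The resulting bound $2^{11K-(N_0-1)K}\cdot 2^{3j/2}\cdot 2^{-\beta m}$ is summable over $K\geq j/N'_0$, and with $N'_0=2N_0/3-10$ and $N_0=10^4$ the exponent $3/2-(N_0-12)/N'_0$ is strictly negative, yielding a geometric decay in $j$ on top of $2^{-\beta m}$. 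Since $j\geq \beta m/2$, this easily beats $2^{-\beta^4 m}$.

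For \eqref{nh30.7}, $\min(k_1,k_2)\leq -10j$. The $\mathcal{X}_k^2$ branch is ruled out because $|k_1-k_2|\leq 8$ is incompatible with $\max(k_1,k_2)\geq k+8$ when $j\geq D^2$ is large; thus we are in $\mathcal{X}_k^1$ with $k\approx \max(k_1,k_2)$ and, say, $k_1\leq -10j$. Using \eqref{nh9}, $\sum_{j_1}\|Ef^\mu_{k_1,j_1}\|_{L^\infty}\lesssim 2^{(1/2-\beta-\alpha)k_1}(1+s)^{-1-\beta}$, which produces a smallness factor $2^{(1/2-\beta-\alpha)\cdot (-10j)}\lesssim 2^{-4j}$. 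This alone crushes the $2^{3j/2}$ prefactor, the polynomial $k_2$-weights being harmless since $k_2\approx k$ is controlled by the remaining exponents and by the $L^2$-bound $\sum_{j_2}\|Ef^\nu_{k_2,j_2}\|_{L^2}\lesssim \min(2^{-(N_0-1)k_2},2^{(1+\beta-\alpha)k_2})$.

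For \eqref{nh30.8}, without loss of generality $j_1\geq 10j$. Here the key is the sharper per-slice $L^2$ bound \eqref{nh9.1},
\[
\|f^\mu_{k_1,j_1}\|_{L^2}\lesssim (2^{\alpha k_1}+2^{10k_1})^{-1}\, 2^{2\beta \widetilde{k_1}}\, 2^{-(1-\beta)j_1},
\]
summed over $j_1\geq 10j$ giving $2^{-(1-\beta)\cdot 10j}\lesssim 2^{-9j}$. Pair this with the $L^\infty$-summed dispersive bound $\sum_{j_2}\|Ef^\nu_{k_2,j_2}\|_{L^\infty}\lesssim 2^{(1/2-\beta-\alpha)\widetilde{k_2}}2^{-6k_{2+}}(1+s)^{-1-\beta}$ and integrate in $s$ to get $2^{-\beta m}$. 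The overall bound is $\lesssim 2^{-9j+3j/2}\cdot 2^{-\beta m}=2^{-15j/2}\cdot 2^{-\beta m}$, which is vastly smaller than $2^{-\beta^4 m}$. The symmetric case $j_2\geq 10j$ is identical. The only subtlety in all three cases is bookkeeping of the $(2^{\alpha k}+2^{10 k})$, $2^{(1/2-\beta)\widetilde{k}}$, and $(1+2^{k_1}+2^{k_2})$ weights against the geometry of $\mathcal{X}_k$; this is routine because each regime provides a strictly dominant source of smallness.
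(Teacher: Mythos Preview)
Your proposal is correct and follows essentially the same approach as the paper: reduce to $L^2$ via \eqref{ok1}, then use \eqref{nh9.5} together with the bounds \eqref{nh9} (and \eqref{nh9.1} for the third estimate) to extract enough smallness in each regime to beat the $2^{3j/2}$ weight.

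Two minor remarks. For \eqref{nh31} the paper first observes that the hypothesis $j\geq N'_0k_++D^2$ forces $\max(k_1,k_2)\geq j/N'_0>k+8$, so one is automatically in $\mathcal{X}_k^2$ and hence $|k_1-k_2|\leq 8$; this lets both inputs be treated as high-frequency. Your argument works without this observation since you simply sum the $L^\infty$ bound over the low-frequency input, which is harmless. For \eqref{nh30.7}, your sentence ruling out $\mathcal{X}_k^2$ is garbled: the conditions $|k_1-k_2|\leq 8$ and $\max(k_1,k_2)\geq k+8$ are not by themselves incompatible. The correct reasoning is that $\min(k_1,k_2)\leq -10j$ together with $|k_1-k_2|\leq 8$ forces $\max(k_1,k_2)\leq -10j+8$, while $k+j\geq 0$ and $\max(k_1,k_2)\geq k+8$ give $\max(k_1,k_2)\geq -j+8$, a contradiction for $j\geq 1$. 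Your conclusion is right; only the justification needs to be stated accurately.
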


\begin{proof}[Proof of Lemma \ref{BigBound2}] Notice that if $(k_1,k_2)\in\mathcal{X}_k$, $\max(k_1,k_2)\geq j/N'_0$, and $j\geq N'_0k_++D^2$ (see \eqref{nh30}) then $|k_1-k_2|\leq 4$. Therefore, using \eqref{ok1}, \eqref{nh9}, and \eqref{nh9.5}, left-hand side of \eqref{nh31} is dominated by
\begin{equation*}
\begin{split}
\sum_{(k_1,j_1),(k_2,j_2)\in\mathcal{J},\,\,\max(k_1,k_2)\geq j/N'_0}&2^{\max(k_1,k_2,0)}(2^{\alpha k}+2^{10 k})2^{3j/2}2^{(1/2-\beta)\widetilde{k}}\big\|P_kT_{m}^{\sigma;\mu,\nu}(f_{k_1,j_1}^\mu,f_{k_2,j_2}^\nu)\big\|_{L^2}\\
&\lesssim 2^{-\beta m}2^{-N_0j/(2N_0/3-10)}\cdot(2^{\alpha k}+2^{10 k})2^{3j/2}2^{(1/2-\beta)\widetilde{k}},
\end{split}
\end{equation*}
which clearly suffices, in view of \eqref{nh30}. Similarly, the left-hand side of \eqref{nh30.7} is dominated by
\begin{equation*}
\begin{split}
\sum_{(k_1,j_1),(k_2,j_2)\in\mathcal{J},\,\,\min(k_1,k_2)\leq -10j}(1+2^{k_1}+2^{k_2})&(2^{\alpha k}+2^{10 k})\cdot 2^{3j/2}2^{(1/2-\beta)\widetilde{k}}\big\|P_kT_{m}^{\sigma;\mu,\nu}(f_{k_1,j_1}^\mu,f_{k_2,j_2}^\nu)\big\|_{L^2}\\
&\lesssim 2^{-\beta m}2^{-3j}\cdot(2^{\alpha k}+2^{10 k})2^{3j/2}2^{(1/2-\beta)\widetilde{k}},
\end{split}
\end{equation*}
which clearly suffices. Finally, using the more precise bound \eqref{nh9.1}, the left-hand side of \eqref{nh30.8} is dominated by
\begin{equation*}
\begin{split}
\sum_{(k_1,j_1),(k_2,j_2)\in\mathcal{J},\,\,\max(j_1,j_2)\geq 10j}(1+2^{k_1}+2^{k_2})&(2^{\alpha k}+2^{10 k})\cdot 2^{3j/2}2^{(1/2-\beta)\widetilde{k}}\big\|P_kT_{m}^{\sigma;\mu,\nu}(f_{k_1,j_1}^\mu,f_{k_2,j_2}^\nu)\big\|_{L^2}\\
&\lesssim 2^{-\beta m}2^{-3j}\cdot(2^{\alpha k}+2^{10 k})2^{3j/2}2^{(1/2-\beta)\widetilde{k}},
\end{split}
\end{equation*}
which clearly suffices.
\end{proof}

We examine the conclusions of Lemma \ref{BigBound1} and Lemma \ref{BigBound2}, and notice that Proposition \ref{reduced1} follows from Proposition \ref{reduced2} below.

\begin{proposition}\label{reduced2}
With the same notation as in Proposition \ref{reduced1}, we have
\begin{equation}\label{ok60}
(1+2^{k_1}+2^{k_2})\big\|\widetilde{\varphi}^{(k)}_j\cdot P_kT_{m}^{\sigma;\mu,\nu}(f_{k_1,j_1}^\mu,f_{k_2,j_2}^\nu)\big\|_{B_{k,j}}\lesssim 2^{-\beta^4 (m+j)},
\end{equation}
for any fixed $\mu,\nu\in\mathcal{I}_d$, $(k,j),(k_1,j_1),(k_2,j_2)\in\mathcal{J}$, and $m\in[0,L+1]\cap\mathbb{Z}$, satisfying
\begin{equation}\label{ok61}
j\geq \beta m/2+N'_0k_++D^2,\qquad -10j\leq k_1,\,k_2\leq j/N'_0,\qquad \max(j_1,j_2)\leq 10j.
\end{equation}
\end{proposition}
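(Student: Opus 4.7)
The plan is to estimate the $B_{k,j}$ norm of the bilinear output by decomposing the integral defining $T_m^{\sigma;\mu,\nu}$ according to the size of the phase $\Phi^{\sigma;\mu,\nu}$ and of its $\eta$-gradient $\Xi^{\mu,\nu} = \nabla_\eta\Phi^{\sigma;\mu,\nu}$, and by treating each region with the appropriate integration by parts. First, I would use \eqref{nh9.5} together with \eqref{nh9} to dispose of the $\|\widehat{\cdot}\|_{L^\infty}$ component of both $B^1_{k,j}$ and $B^2_{k,j}$: the product of one $L^2$ and one $L^\infty$ bound, integrated against $q_m$, already produces the gain $2^{-\beta m}$ (plus acceptable powers of $2^{k_\pm}$), so the only nontrivial objects are the $L^2$-weighted pieces $2^{(1+\beta)j}\|\cdot\|_{L^2}$, $2^{(1-\beta)j}\|\cdot\|_{L^2}$, and the refined two-dimensional bound $2^{\gamma j}R^{-2}\|\widehat{\cdot}\|_{L^1(B(\xi_0,R))}$ appearing in \eqref{sec5.4}.

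Next, I would introduce two cutoffs $\kappa$ and $\delta$ isolating $\{|\Phi^{\sigma;\mu,\nu}|\leq \kappa\}$ and $\{|\Xi^{\mu,\nu}|\leq\delta\}$. On the complement $|\Phi|\gtrsim \kappa$, integration by parts in $s$ (using the fact that $q_m$ has $L^1$-bounded derivative and, crucially, the bounds \eqref{ok50} and \eqref{nxc1} for $\partial_s\widehat{f^\mu_{k',j'}}$ proved in Lemma \ref{ders}) gives a factor $\kappa^{-1}\cdot(1+s)^{-1-\beta/10}$. On the complement $|\Xi^{\mu,\nu}|\gtrsim \delta$, since the atoms $f^\mu_{k_1,j_1},f^\nu_{k_2,j_2}$ are localized in physical space at scale $\lesssim 2^{10j}$ (by the last hypothesis of \eqref{ok61}) and we may use the pointwise derivative bounds on $\widehat{f^\mu_{k_1,j_1}}$ from the last line of \eqref{nh9}, repeated integration by parts in $\eta$ yields a rapidly decaying factor $(\delta\cdot 2^m)^{-N}\cdot 2^{10jN}$. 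Choosing $\kappa = 2^{-m+\beta^3 m}$ and $\delta = 2^{-m+j+\beta^3 m}$ makes both remainders negligible, reducing matters to the region $|\Phi|\leq\kappa$, $|\Xi^{\mu,\nu}|\leq\delta$, which is a neighborhood of the space-time resonant set $\mathcal R$.

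On this remaining region I would apply the analysis of the phase from subsection \ref{ra}: by the third line of \eqref{bc} (and \eqref{bnbn3}), $\mathcal R$ is disjoint from the degenerate set $\mathcal D$, so $\det\nabla^2_{\eta\eta}\Phi$ is uniformly nondegenerate there; by the first line of \eqref{bc} and \eqref{DefQ(XI)}, the resonant curve $\eta = q(\eta)-\xi$ avoids $\eta=0$ and remains at bounded distance from $\infty$ for $\xi$ in the relevant range (Lemma \ref{BigBound6} / \ref{BigBound6.5} in the skeleton). Morse-type coordinates around a point of $\mathcal R$ convert the $\eta$-integral into a standard stationary phase integral in $\mathbb R^3$, showing that the output $\mathcal F[P_k T_m(\cdots)](\xi)$ is supported, up to rapidly decaying error, in an $O(\delta)$-neighborhood of the two-dimensional manifold $\mathcal R\cap\{\xi\}$ and has $L^\infty$ size $\lesssim s^{-3/2}\cdot\|\widehat{f^\mu}\|_{L^\infty}\|\widehat{f^\nu}\|_{L^\infty}$. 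The crucial point is then that this output, being concentrated on a $2$-dimensional set with thickness $\delta$, is measured by the $R^{-2}\|\widehat{\cdot}\|_{L^1(B(\xi_0,R))}$ piece of $B^2_{k,j}$ with $R\sim\delta$, turning the $\delta^3$ volume loss into a $\delta$ gain; together with the $L^2$ weight $2^{(1-\beta)j}$ and $s\sim 2^m$ this absorbs the factor $2^{(1-\beta)j}$ against $s^{-3/2}\cdot\delta$ and leaves the decay $2^{-\beta^4(m+j)}$ demanded by \eqref{ok60}.

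The main obstacle will be the last step: making the stationary-phase analysis uniform in $k_1,k_2$ over the full ranges allowed by \eqref{ok61} (including the borderline cases $\max(k_1,k_2)\approx j/N'_0$ and $\min(k_1,k_2)\approx -10j$), and organizing the resulting estimate so that it fits both the $B^1$ and the $B^2$ pieces of the norm. This requires separate sub-lemmas (the analogues of Lemmas \ref{BigBound3}--\ref{BigBound6.5}) showing a ``finite speed of propagation'' constraint — namely, that the geometric constraint $j_1,j_2\leq 10j$ combined with the output's spatial localization at scale $2^j$ forces any non-negligible contribution to have $m\approx j$ up to an acceptable loss — and it is at this borderline $j\approx m$, where neither integration by parts in $s$ nor in $\eta$ wins, that the nondegeneracy of the Hessian of $\Phi$ on $\mathcal R$ (hence the hypothesis \eqref{bc}) is genuinely used. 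The precise book-keeping for atoms near $\eta=0$ (handled by the first line of \eqref{bc}) and near the parameter-symmetric sets where $c_\mu=c_\nu$ and $b_\mu=b_\nu$ (handled by \eqref{bnbn2}) are the other places where care is needed, but these reduce to the core case by elementary reductions.
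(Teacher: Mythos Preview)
Your proposal has a genuine gap in the second paragraph. You claim that on the region $|\Xi^{\mu,\nu}|\gtrsim\delta$, integration by parts in $\eta$ produces a factor $(\delta\cdot 2^m)^{-N}\cdot 2^{10jN}$, and that the choice $\delta=2^{-m+j+\beta^3 m}$ makes this negligible. But with that choice $\delta\cdot 2^m=2^{j+\beta^3 m}$, so the factor is $2^{N(9j-\beta^3 m)}$; since \eqref{ok61} forces $j\geq \beta m/2+D^2$, we have $9j\geq (9\beta/2)m\gg\beta^3 m$ and the factor \emph{grows} with $N$. The assumption $\max(j_1,j_2)\leq 10j$ is simply far too weak for the $\eta$-integration by parts to beat the derivative losses $2^{|\rho|j_1},2^{|\rho|j_2}$ from the last line of \eqref{nh9}. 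The same problem undermines the stationary-phase step in your third paragraph: with inputs carrying up to $2^{10j}$ derivative weights, no uniform Morse normal form is available at the scale you need.

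The paper's route is to perform the reductions in the opposite order from what you sketch. One first uses an integration by parts in $\xi$ (not $\eta$) to dispose of the case $j\geq m+\max(\widetilde{k_1},\widetilde{k_2})+D$ (Lemma~\ref{BigBound3}): in that regime $|x|\approx 2^j$ dominates $s|\nabla_\xi\Phi|\lesssim 2^m$, so the $\xi$-phase is nonstationary and one gains arbitrarily many powers of $2^{-j}$ against losses of only $2^{j_1}\leq 2^{(1-\beta^2)j}$ (after a separate argument for $\min(j_1,j_2)\geq(1-\beta^2)j$). Only after this does one know $j\leq m+D$, and then a further reduction (Lemma~\ref{BigBound5}) using the dispersive bounds \eqref{mk15.6}--\eqref{mk15.65} eliminates the case $\max(j_1,j_2)\geq(1-\beta/10)m$. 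With \emph{both} constraints $j\leq m+D$ and $\max(j_1,j_2)\leq(1-\beta/10)m$ in hand, the $\eta$-integration by parts finally becomes effective (derivative cost $2^{(1-\beta/10)m}$, phase gain $|s\,\Xi|\gtrsim 2^{(1-\beta/11)m}$), and Lemmas~\ref{BigBound6}--\ref{BigBound6.5} reduce to the core Proposition~\ref{reduced3}. Your ``finite speed of propagation'' sub-lemmas are thus not a uniformity cleanup at the end but the essential first step; without them the $\Phi/\Xi$ decomposition you propose is vacuous. A smaller point: your first paragraph's claim that the $\|\widehat{\cdot}\|_{L^\infty}$ component of the $B_{k,j}$ norm falls out of \eqref{nh9.5}+\eqref{nh9} is also too quick; that bound is $L^2$, and the direct $L^\infty$ estimate $2^m\|\widehat{f_{k_1,j_1}^\mu}\|_{L^2}\|\widehat{f_{k_2,j_2}^\nu}\|_{L^2}\lesssim 2^m 2^{-(1-\beta)(j_1+j_2)}$ has no decay when $j_1,j_2$ are small---the paper needs integration by parts in $s$ and the refined bounds of Lemma~\ref{ders} here as well.
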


\subsection{Proof of Proposition \ref{reduced2}} In this subsection we will show that proving Proposition \ref{reduced2} can be further reduced 
to proving Proposition \ref{reduced3} below. The arguments are more complicated than before, and we need to examine our bilinear operators 
more carefully; however, in all cases discussed in this subsection we can still control the stronger $B^1_{k,j}$ norms. 

We notice that we are looking to prove the bound \eqref{ok60} for {\it{fixed}} $k,j,k_1,j_1,k_2,j_2,m$. 
We will consider several cases, depending on the relative sizes of these parameters. 

\begin{lemma}\label{BigBound3}
The bound \eqref{ok60} holds provided that \eqref{ok61} holds and, in addition,
\begin{equation}\label{nh32.5}
j\geq \max(m+\max(\widetilde{k_1},\widetilde{k_2})+D,-k(1+\beta^2)+D).
\end{equation}
\end{lemma}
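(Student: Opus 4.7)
Plan:

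It suffices to bound the stronger $B^1_{k,j}$-norm of the output (and in the hardest subregime, its $B^2_{k,j}$-part), since $B_{k,j}\leq\min(B^1_{k,j},B^2_{k,j})$. The driving principle throughout is finite speed of propagation for Klein--Gordon: on any frequency shell $|\xi|\sim 2^{k'}$ one has $|\nabla\widetilde\Lambda_\rho(\xi)|\lesssim A\cdot 2^{\widetilde{k'}}$, so over time $|s|\lesssim 2^m$ the propagator $e^{\pm is\widetilde\Lambda_\rho}$ transports mass by at most $\lesssim A\cdot 2^{\widetilde{k'}+m}$, which by the first half of \eqref{nh32.5} is $\leq 2^{j-D+O(1)}\ll 2^j$.

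For the Fourier-$L^\infty$ component of the $B^1_{k,j}$-norm, I use $\|\mathcal{F}[\widetilde\varphi^{(k)}_j]\|_{L^1}\lesssim 1$ to reduce to $\|\mathcal{F}[P_kT^{\sigma;\mu,\nu}_m(f^\mu_{k_1,j_1},f^\nu_{k_2,j_2})]\|_{L^\infty}$, which is $j$-independent. Cauchy--Schwarz in the $\eta$-integral combined with $|\mathrm{supp}\,q_m|\lesssim 2^m$ bounds this by $2^m\cdot\sup_s\|f^\mu_{k_1,j_1}(s)\|_{L^2}\cdot\sup_s\|f^\nu_{k_2,j_2}(s)\|_{L^2}$, which is controlled via \eqref{nh9.1}. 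The second condition $j\geq -k(1+\beta^2)+D$ in \eqref{nh32.5} is precisely what allows the weights $(2^{\alpha k}+2^{10k})2^{(1/2-\beta)\widetilde k}$ of the $B^1_{k,j}$-norm to be absorbed.

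For the $L^2$-part I represent the bilinear operator through its kernel
\begin{equation*}
K(x,y_1,y_2,s)=C\,q_m(s)\!\!\int\!\!\int e^{i[(x-y_1)\cdot\zeta+(x-y_2)\cdot\eta+s\Phi(\zeta+\eta,\eta)]}\varphi_k(\zeta+\eta)\varphi_{[k_1-2,k_1+2]}(\zeta)\varphi_{[k_2-2,k_2+2]}(\eta)\,d\zeta\,d\eta.
\end{equation*}
The gradients of the phase in $\zeta$ and $\eta$ are $(x-y_1)+s[\nabla\Lambda_\sigma(\zeta+\eta)-\nabla\widetilde\Lambda_\mu(\zeta)]$ and $(x-y_2)+s[\nabla\Lambda_\sigma(\zeta+\eta)-\nabla\widetilde\Lambda_\nu(\eta)]$; the bracketed terms are each $\lesssim A\cdot 2^{\max(\widetilde k,\widetilde{k_1},\widetilde{k_2})}$, so multiplied by $|s|\lesssim 2^m$ they are $\lesssim 2^{j-D+O(1)}$ by \eqref{nh32.5}. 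Repeated integration by parts in $(\zeta,\eta)$ then yields $|K(x,y_1,y_2,s)|\lesssim_N(|x-y_1|+|x-y_2|)^{-N}$ whenever $\min(|x-y_1|,|x-y_2|)\gtrsim 2^{j-D/4}$; together with $|x|\sim 2^j$, this handles the subcase $\max(j_1,j_2)\leq j-D/2$ with smallness of arbitrarily high polynomial order in $j$.

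The main obstacle is the complementary subregime $\max(j_1,j_2)\geq j-D/2$, where one atom genuinely overlaps the target annulus $|x|\sim 2^j$. Here I would instead target the weaker $B^2_{k,j}$-norm of the output, whose $L^2$-weight $2^{-2\beta\widetilde k}2^{(1-\beta)j}$ leaves a $2^{-2\beta j}$ margin relative to $B^1_{k,j}$. Bounding $\|\widetilde\varphi^{(k)}_jP_kT^{\sigma;\mu,\nu}_m(f^\mu_{k_1,j_1},f^\nu_{k_2,j_2})\|_{L^2}\leq\int q_m(s)\|Ef^\mu_{k_1,j_1}(s)\|_{L^\infty}\|f^\nu_{k_2,j_2}(s)\|_{L^2}\,ds$ using the dispersive bound $\|Ef^\mu_{k_1,j_1}(s)\|_{L^\infty}\lesssim(1+s)^{-1-\beta}$ from \eqref{nh9} and the $B^1$-component of the decomposition \eqref{sec5.8} of the large-$j$ atom (which yields $\lesssim 2^{-(1+\beta)j_2}$) gives a total bound $\lesssim 2^{-2\beta j-\beta m}$, comfortably beating $2^{-\beta^4(m+j)}$ since $\beta\gg\beta^4$. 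The residual $B^2$-component of the large atom, enjoying only $\|\cdot\|_{L^2}\lesssim 2^{-(1-\beta)j_2}$, is treated by exploiting the two-dimensional Fourier-support structure encoded in the last term of \eqref{sec5.4}, which converts the volume shrinkage of the concentrated $\widehat h$-support into the missing decay.
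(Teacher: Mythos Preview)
Your finite-speed-of-propagation intuition is right, but the case split you build on it is the wrong one, and this creates a genuine gap. You split according to whether $\max(j_1,j_2)\leq j-D/2$ or not; the paper splits according to whether $\min(j_1,j_2)\leq(1-\beta^2)j$ or not. The difference is decisive in the mixed regime where one atom is close ($j_2\geq j-D/2$, say) and the other is far ($j_1\leq(1-\beta^2)j$). This falls into your Case B, and both of your proposed arguments break there.

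First, your Fourier-$L^\infty$ bound via Cauchy--Schwarz gives only $2^m\|f^\mu_{k_1,j_1}\|_{L^2}\|f^\nu_{k_2,j_2}\|_{L^2}$, which for small $j_1$ is of order $2^m\cdot 2^{-(1-\beta)j_2}\gtrsim 2^{\beta j}$, not $\lesssim 2^{-\beta^4(m+j)}$. Second, and more seriously, your proposal to place the $h$-contribution in the $B^2_{k,j}$ norm of the \emph{output} cannot work: that norm carries the component $2^{\gamma j}\sup_{R,\xi_0}R^{-2}\|\widehat{\cdot}\|_{L^1(B(\xi_0,R))}$ with $\gamma=11/8$, and controlling it requires the output's Fourier transform to be concentrated near a two-dimensional set. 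The thin support of the \emph{input} $\widehat{h}^\nu$ does not survive convolution with a generic $\widehat{f}^\mu_{k_1,j_1}$; a direct computation of $R^{-2}\|\widehat{T}\|_{L^1(B(\xi_0,R))}$ using only $\|\widehat{h}^\nu\|_{L^1}\lesssim 2^{-\gamma j_2}$ and $\|\widehat{f}^\mu\|_{L^1(B)}\lesssim R^3\|\widehat{f}^\mu\|_{L^\infty}$ leaves a factor $2^{\gamma j}\cdot 2^m\cdot 2^{-\gamma j_2}$ which does not decay.

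The paper stays entirely in $B^1_{k,j}$. In its Case 1 ($\min(j_1,j_2)\leq(1-\beta^2)j$) it integrates by parts in $\xi$ alone---not in $(\zeta,\eta)$ through a kernel---using only the Fourier smoothness $|D^\rho_\xi\widehat{f^\mu_{k_1,j_1}}|\lesssim 2^{|\rho|j_1}$ of the \emph{far} atom; the close atom $\widehat{f^\nu_{k_2,j_2}}(\eta)$ sits outside the $\xi$-integral and need not be smooth. This yields the pointwise bound $|\widetilde\varphi^{(k)}_j(x)\cdot P_kT^{\sigma;\mu,\nu}_m(\cdots)(x)|\lesssim 2^{-10j}$ and settles both the $L^2$ and the $L^\infty$ pieces at once. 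In Case 2 ($\min(j_1,j_2)\geq(1-\beta^2)j$), both atoms are close; the paper decomposes \emph{each} into $g+h$ and bounds all four interactions by $\|\widehat{a}\|_{L^1}\|\widehat{b}\|_{L^2}$ estimates, where the $L^1$ control of the weak pieces comes from $\|\widehat{h}\|_{L^1}\lesssim 2^{-\gamma j_\ell}$. Since both $j_1,j_2\gtrsim j$ here, this suffices for $B^1_{k,j}$.
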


\begin{proof}[Proof of Lemma \ref{BigBound3}] Using definition \eqref{sec5.3}, it suffices to prove that
\begin{equation}\label{ok30}
\begin{split}
&(1+2^{k_1}+2^{k_2})(2^{\alpha k}+2^{10k})\cdot 2^{(1+\beta)j}\big\|\widetilde{\varphi}^{(k)}_j\cdot P_kT_{m}^{\sigma;\mu,\nu}(f_{k_1,j_1}^\mu,f_{k_2,j_2}^\nu)\big\|_{L^2}\\
&+(1+2^{k_1}+2^{k_2})(2^{\alpha k}+2^{10k})\cdot 2^{(1/2-\beta)\widetilde{k}}\big\|\mathcal{F}[\widetilde{\varphi}^{(k)}_j\cdot P_kT_{m}^{\sigma;\mu,\nu}(f_{k_1,j_1}^\mu,f_{k_2,j_2}^\nu)]\big\|_{L^\infty}\lesssim 2^{-\beta^4 (m+j)}.
\end{split}
\end{equation}

Assume first that
\begin{equation}\label{ok31}
\min(j_1,j_2)\leq (1-\beta^2)j.
\end{equation}
By symmetry, we may assume that $j_1\leq (1-\beta^2)j$ and write
\begin{equation*}
\begin{split}
&\widetilde{\varphi}^{(k)}_j(x)\cdot P_kT_{m}^{\sigma;\mu,\nu}(f_{k_1,j_1}^\mu,f_{k_2,j_2}^\nu)(x)\\
&=c\widetilde{\varphi}^{(k)}_j(x)\int_{\mathbb{R}^3}\int_{\mathbb{R}}\int_{\mathbb{R}^3}\varphi_k(\xi)e^{ix\cdot\xi}e^{is[\Lambda_\sigma(\xi)-\widetilde{\Lambda}_{\mu}(\xi-\eta)-\widetilde{\Lambda}_{\nu}(\eta)]}q_m(s)\cdot \widehat{f_{k_1,j_1}^\mu}(\xi-\eta,s)\widehat{f_{k_2,j_2}^\nu}(\eta,s)\,d\eta ds d\xi.
\end{split}
\end{equation*}
We examine the integral in $\xi$ in the formula above. We recall the assumptions \eqref{ok61}, \eqref{nh32.5}, and \eqref{ok31}, and the last bound in \eqref{nh9}. Notice that, using only the assumption \eqref{nh32.5} and the definition \eqref{lambdas},
\begin{equation*}
\Big|\nabla_\xi\big[x\cdot\xi+s[\Lambda_\sigma(\xi)-\widetilde{\Lambda}_{\mu}(\xi-\eta)-\widetilde{\Lambda}_{\nu}(\eta)]\big]\Big|\geq |x|-s\big|\nabla_\xi[\Lambda_\sigma(\xi)-\widetilde{\Lambda}_{\mu}(\xi-\eta)\big]\big|\geq 2^{j-10},
\end{equation*}
as long as $|\xi|+|\xi-\eta|\leq 2^{\max(k_1,k_2)+10}$.
We apply Lemma \ref{tech5} (with $K\approx 2^j$, $\eps\approx 2^{-j_1}$) to conclude that
\begin{equation*}
\big|\widetilde{\varphi}^{(k)}_j(x)\cdot P_kT_{m}^{\sigma;\mu,\nu}(f_{k_1,j_1}^\mu,f_{k_2,j_2}^\nu)(x)\big|\lesssim 2^{-10j}|\widetilde{\varphi}^{(k)}_j(x)|,
\end{equation*}
and the desired bounds \eqref{ok30} follow easily.

Assume now that 
\begin{equation}\label{ok33}
\min(j_1,j_2)\geq (1-\beta^2)j.
\end{equation}
By symmetry, we may assume that $k_1\leq k_2$. We prove first the bound on the second term in the left-hand side of \eqref{ok30}: using \eqref{nh9.1} we estimate
\begin{equation*}
\begin{split}
&(1+2^{k_1}+2^{k_2})(2^{\alpha k}+2^{10k})\cdot 2^{(1/2-\beta)\widetilde{k}}\|\mathcal{F}[\widetilde{\varphi}^{(k)}_j\cdot P_kT_{m}^{\sigma;\mu,\nu}(f_{k_1,j_1}^\mu,f_{k_2,j_2}^\nu)]\big\|_{L^\infty}\\
&\lesssim (2^{k_2}+1)(2^{\alpha k}+2^{10k})2^{(1/2-\beta)\widetilde{k}}\cdot 2^m\sup_{s\in[2^{m-1},2^{m+1}]}\|f_{k_1,j_1}^\mu(s)\|_{L^2}\|f_{k_2,j_2}^\nu(s)\|_{L^2}\\
&\lesssim (2^{k_2}+1)(2^{\alpha k}+2^{10k})2^{(1/2-\beta)\widetilde{k}}2^{j-\widetilde{k_2}}\cdot(2^{\alpha k_1}+2^{10k_1})^{-1}2^{2\beta\widetilde{k_1}}2^{-(1-\be)j_1}\cdot(2^{\alpha k_2}+2^{10k_2})^{-1}2^{2\beta\widetilde{k_2}}2^{-(1-\be)j_2}\\
&\lesssim (2^{k_2}+1)2^j2^{-(1/2+\beta)\widetilde{k_2}}\cdot 2^{-\alpha k_1}\min(2^{(1+\beta)k_1},2^{-(1-\beta-\beta^2)j})\cdot 2^{-(1-\be-\beta^2)j}.
\end{split}
\end{equation*}
This suffices to prove the desired bound in \eqref{ok30}, as it can be easily seen by considering the cases $k_1\leq -j$ and $k_1\geq -j$.

Some more care is needed to prove the bound on the first term in the left-hand side of \eqref{ok30}. We recall that
\begin{equation*}
f_{k_1,j_1}^\mu=P_{[k_1-2,k_1+2]}(\phii_{j_1}^{(k_1)}\cdot P_{k_1}f_\mu)\quad\text{ and }\quad f_{k_2,j_2}^\nu=P_{[k_2-2,k_2+2]}(\phii_{j_2}^{(k_2)}\cdot P_{k_2}f_\nu).
\end{equation*}
Since $\|\phii_{j_1}^{(k_1)}\cdot P_{k_1}f_\mu(s)\|_{B_{k_1,j_1}}+\|\phii_{j_2}^{(k_2)}\cdot P_{k_2}f_\mu(s)\|_{B_{k_2,j_2}}\lesssim 1$, see \eqref{nh5.5}, we use \eqref{sec5.8}--\eqref{sec5.82} to decompose
\begin{equation}\label{ok36}
\begin{split}
&\phii^{(k_1)}_{j_1}\cdot P_{k_1}f_\mu(s)=(2^{\alpha k_1}+2^{10k_1})^{-1}[g_{k_1,j_1}^\mu(s)+h_{k_1,j_1}^\mu(s)],\\
&g_{k_1,j_1}^\mu(s)=g_{k_1,j_1}^\mu(s)\cdot \widetilde{\varphi}^{(k_1)}_{[j_1-2,j_1+2]},\qquad h_{k_1,j_1}^\mu(s)=h_{k_1,j_1}^\mu(s)\cdot \widetilde{\varphi}^{(k_1)}_{[j_1-2,j_1+2]},\\
&2^{(1+\be)j_1}\|g_{k_1,j_1}^\mu(s)\|_{L^2}+2^{(1/2-\beta)\widetilde{k_1}}\|\widehat{g_{k_1,j_1}^\mu}(s)\|_{L^\infty}\lesssim 1,\\
&2^{-2\beta\widetilde{k_1}}2^{(1-\be)j_1}\|h_{k_1,j_1}^\mu(s)\|_{L^2}+2^{(1/2-\beta)\widetilde{k_1}}\|\widehat{h_{k_1,j_1}^\mu}(s)\|_{L^\infty}+2^{(\gamma-\beta-5/2)\widetilde{k_1}}2^{\gamma j_1}\|\widehat{h_{k_1,j_1}^\mu}(s)\|_{L^1}\lesssim 1,
\end{split}
\end{equation}
and
\begin{equation}\label{ok37}
\begin{split}
&\phii^{(k_2)}_{j_2}\cdot P_{k_2}f_\nu(s)=(2^{\alpha k_2}+2^{10k_2})^{-1}[g_{k_2,j_2}^\nu(s)+h_{k_2,j_2}^\nu(s)],\\
&g_{k_2,j_2}^\nu(s)=g_{k_2,j_2}^\nu(s)\cdot \widetilde{\varphi}^{(k_2)}_{[j_2-2,j_2+2]},\qquad h_{k_2,j_2}^\nu(s)=h_{k_2,j_2}^\nu(s)\cdot \widetilde{\varphi}^{(k_2)}_{[j_2-2,j_2+2]},\\
&2^{(1+\be)j_2}\|g_{k_2,j_2}^\nu(s)\|_{L^2}+2^{(1/2-\beta)\widetilde{k_2}}\|\widehat{g_{k_2,j_2}^\nu}(s)\|_{L^\infty}\lesssim 1,\\
&2^{-2\beta\widetilde{k_2}}2^{(1-\be)j_2}\|h_{k_2,j_2}^\nu(s)\|_{L^2}+2^{(1/2-\beta)\widetilde{k_2}}\|\widehat{h_{k_2,j_2}^\nu}(s)\|_{L^\infty}+2^{(\gamma-\beta-5/2)\widetilde{k_2}}2^{\gamma j_2}\|\widehat{h_{k_2,j_2}^\nu}(s)\|_{L^1}\lesssim 1.
\end{split}
\end{equation}
Using these decompositions and recalling the definition \eqref{nh9.2}, to prove the desired bound on the first term in the left-hand side of \eqref{ok30}, it suffices to prove that for any $s\in[2^{m-1},2^{m+1}]$
\begin{equation}\label{ok38}
\begin{split}
(1+2^{k_1}+2^{k_2})&(2^{\alpha k}+2^{10k})2^{(1+\beta)j}\cdot(2^{\alpha k_1}+2^{10k_1})^{-1}(2^{\alpha k_2}+2^{10k_2})^{-1}2^m\\
\Big[&\big\|\widetilde{\varphi}^{(k)}_j\cdot P_k\widetilde{T}_{s}^{\sigma;\mu,\nu}(P_{[k_1-2,k_1+2]}g_{k_1,j_1}^\mu(s),P_{[k_2-2,k_2+2]}g_{k_2,j_2}^\nu(s))\big\|_{L^2}\\
+&\big\|\widetilde{\varphi}^{(k)}_j\cdot P_k\widetilde{T}_{s}^{\sigma;\mu,\nu}(P_{[k_1-2,k_1+2]}g_{k_1,j_1}^\mu(s),P_{[k_2-2,k_2+2]}h_{k_2,j_2}^\nu(s))\big\|_{L^2}\\
+&\big\|\widetilde{\varphi}^{(k)}_j\cdot P_k\widetilde{T}_{s}^{\sigma;\mu,\nu}(P_{[k_1-2,k_1+2]}h_{k_1,j_1}^\mu(s),P_{[k_2-2,k_2+2]}g_{k_2,j_2}^\nu(s))\big\|_{L^2}\\
+&\big\|\widetilde{\varphi}^{(k)}_j\cdot P_k\widetilde{T}_{s}^{\sigma;\mu,\nu}(P_{[k_1-2,k_1+2]}h_{k_1,j_1}^\mu(s),P_{[k_2-2,k_2+2]}h_{k_2,j_2}^\nu(s))\big\|_{L^2}\Big]\lesssim 2^{-\beta^4(m+j)}.
\end{split}
\end{equation}

Recall that we assumed $k_1\leq k_2$; therefore we may also assume that $k\leq k_2+4$. Using \eqref{ok36}--\eqref{ok37} and recalling \eqref{ok33}, we estimate
\begin{equation*}
\begin{split}
\big\|P_k\widetilde{T}_{s}^{\sigma;\mu,\nu}(P_{[k_1-2,k_1+2]}g_{k_1,j_1}^\mu(s),P_{[k_2-2,k_2+2]}g_{k_2,j_2}^\nu(s))\big\|_{L^2}&\lesssim \|\mathcal{F}(P_{[k_1-2,k_1+2]}g_{k_1,j_1}^\mu)(s)\|_{L^1}\|g_{k_2,j_2}^\nu(s)\|_{L^2}\\
&\lesssim 2^{3k_1/2}2^{-(1+\beta)j_1}2^{-(1+\beta)j_2}\\
&\lesssim 2^{3k_1/2}2^{-(2+2\beta)(1-\beta^2)j},
\end{split}
\end{equation*}
\begin{equation*}
\begin{split}
\big\|P_k\widetilde{T}_{s}^{\sigma;\mu,\nu}(P_{[k_1-2,k_1+2]}h_{k_1,j_1}^\mu(s),P_{[k_2-2,k_2+2]}h_{k_2,j_2}^\nu(s))\big\|_{L^2}&\lesssim \|\widehat{h_{k_1,j_1}^\mu}(s)\|_{L^1}\|\widehat{h_{k_2,j_2}^\nu}(s)\|_{L^2}\\
&\lesssim 2^{-\gamma j_1}2^{(5/2+\beta-\gamma)\widetilde{k_1}}2^{-(1-\beta)j_2}2^{2\beta\widetilde{k_2}}\\
&\lesssim 2^{(3/2-2\beta)\widetilde{k_1}}2^{2\beta\widetilde{k_2}}2^{-(2+2\beta)(1-\beta^2)j},
\end{split}
\end{equation*}
\begin{equation*}
\begin{split}
\big\|P_k\widetilde{T}_{s}^{\sigma;\mu,\nu}(P_{[k_1-2,k_1+2]}h_{k_1,j_1}^\mu(s),P_{[k_2-2,k_2+2]}g_{k_2,j_2}^\nu(s)\big\|_{L^2}&\lesssim \|\widehat{h_{k_1,j_1}^\mu}(s)\|_{L^1}\|\widehat{g_{k_2,j_2}^\nu}(s)\|_{L^2}\\
&\lesssim 2^{-\gamma j_1}2^{(5/2+\beta-\gamma)\widetilde{k_1}}2^{-(1+\beta)j_2}\\
&\lesssim 2^{3\widetilde{k_1}/2}2^{-(2+2\beta)(1-\beta^2)j},
\end{split}
\end{equation*}
and
\begin{equation*}
\begin{split}
\big\|P_k\widetilde{T}_{s}^{\sigma;\mu,\nu}&(P_{[k_1-2,k_1+2]}g_{k_1,j_1}^\mu(s),P_{[k_2-2,k_2+2]}h_{k_2,j_2}^\nu(s))\big\|_{L^2}\\
&\lesssim \min\big(2^{3k_1/2}\|\widehat{g_{k_1,j_1}^\mu}(s)\|_{L^2}\|\widehat{h_{k_2,j_2}^\nu}(s)\|_{L^2},\|\widehat{g_{k_1,j_1}^\mu}(s)\|_{L^2}\|\widehat{h_{k_2,j_2}^\nu}(s)\|_{L^1}\big)\\
&\lesssim 2^{-(1+\beta)j_1}\min\big(2^{-(1-\beta)j_2}2^{2\beta\widetilde{k_2}}2^{3k_1/2},2^{-\gamma j_2}2^{(5/2+\beta-\gamma)\widetilde{k_2}}\big)\\
&\lesssim 2^{-(1+\beta)j_1}2^{-(1+\beta)j_2}2^{3\widetilde{k_2}/2}\min\big(2^{2\beta(j_2+\widetilde{k_2})}2^{3(k_1-\widetilde{k_2})/2},2^{(1+\beta-\gamma)(j_2+\widetilde{k_2})}\big)\\
&\lesssim 2^{-(2+2\beta)(1-\beta^2)j}2^{3k_1/4}2^{3\widetilde{k_2}/4}.
\end{split}
\end{equation*}
Therefore, since $2^m\lesssim 2^{j-\widetilde{k_2}}$ and $(2^{\alpha k}+2^{10k})(2^{\alpha k_2}+2^{10k_2})^{-1}\lesssim 1$, the left-hand side of \eqref{ok38} is dominated by
\begin{equation*}
\begin{split}
(1+2^{k_1}+2^{k_2})&2^{(1+\beta)j}\cdot(2^{\alpha k_1}+2^{10k_1})^{-1}2^{j-\widetilde{k_2}}\cdot 2^{-(2+2\beta)(1-\beta^2)j}(2^{3k_1/2}+2^{3k_1/4}2^{3\widetilde{k_2}/4})\\
&\lesssim 2^{-2\beta j/3}(2^{k_2}+1),
\end{split}
\end{equation*}
which suffices since $2^{k_2}\lesssim 2^{j/N'_0}$. This completes the proof of the lemma.
\end{proof}

\begin{lemma}\label{BigBound4}
The bound \eqref{ok60} holds provided that \eqref{ok61} holds and, in addition,
\begin{equation}\label{ok40}
m+\max(\widetilde{k_1},\widetilde{k_2})+D\leq j\leq -k(1+\beta^2)+D.
\end{equation}
\end{lemma}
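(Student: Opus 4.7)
The plan is to reproduce the two-case strategy of the proof of Lemma \ref{BigBound3} essentially verbatim. The hypothesis $j \geq m + \max(\widetilde{k_1}, \widetilde{k_2}) + D$ in \eqref{ok40} is identical to the finite-speed-of-propagation condition used there, and it is the only ingredient that makes integration by parts in $\xi$ effective. The complementary condition $j \leq -k(1+\beta^2) + D$ simply places the output at low frequency ($k$ very negative); it makes the $B^1_{k,j}$ weights $(2^{\alpha k}+2^{10k})$ and $2^{(1/2-\beta)\widetilde{k}}$ small and hence the target norm \emph{weaker}, so once again it suffices to control the stronger $B^1_{k,j}$ quantity appearing in \eqref{ok30}, without invoking the $B^2_{k,j}$ refinement.

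First I would split into two subcases according to $\min(j_1,j_2)$ versus $(1-\beta^2)j$. In the regime $\min(j_1,j_2)\leq(1-\beta^2)j$, by symmetry assume $j_1\leq(1-\beta^2)j$, and apply Lemma \ref{tech5} with $K\approx 2^j$ and $\varepsilon\approx 2^{-j_1}$. The phase gradient estimate
\begin{equation*}
\big|\nabla_\xi[x\cdot\xi+s\Phi^{\sigma;\mu,\nu}(\xi,\eta)]\big|\geq |x|-s\,|\nabla_\xi\Lambda_\sigma(\xi)-\nabla_\xi\widetilde{\Lambda}_\mu(\xi-\eta)|\gtrsim 2^{j-10}
\end{equation*}
holds on the relevant support because $|\nabla_\xi\Lambda_\sigma(\xi)|\lesssim 2^{\widetilde{k}}$ and $|\nabla_\xi\widetilde{\Lambda}_\mu(\xi-\eta)|\lesssim 2^{\widetilde{k_1}}$ for bounded frequencies. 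This produces a pointwise bound $\lesssim 2^{-10j}|\widetilde\varphi^{(k)}_j(x)|$ which dominates any admissible $B^1_{k,j}$ weight by an enormous margin. In the regime $\min(j_1,j_2)\geq(1-\beta^2)j$, I would insert the $g_{k_i,j_i}+h_{k_i,j_i}$ decomposition \eqref{ok36}--\eqref{ok37} and estimate each of the four resulting bilinear products via an $L^1$ or $L^\infty$ Fourier norm paired with an $L^2$ norm, exactly as in the second half of the proof of Lemma \ref{BigBound3}. The time factor $2^m\lesssim 2^{j-\max(\widetilde{k_1},\widetilde{k_2})-D}$ comes from \eqref{ok40}, and the combined output weight is controlled by a finite power of $2^D$ using $j\le -k(1+\beta^2)+D$.

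The main obstacle is the high-high to low regime $k_1\approx k_2\gg k$, where $\widetilde{k_1},\widetilde{k_2}$ can approach $0$ and make the finite-speed-of-propagation hypothesis $j\geq m+\max(\widetilde{k_1},\widetilde{k_2})+D$ tight while the output frequency $2^k$ remains genuinely small. There the required decay must come from combining the small output weight $2^{\alpha k}$, the Hausdorff--Young gain $2^{3\min(k_1,k_2)/2}$ at low input frequency, and the sharp $L^2$ gains $2^{-(1-\beta)j_i}2^{2\beta\widetilde{k_i}}$ from the $h$-parts. The net loss $2^{\alpha k+(1+\beta)j}$ in the output weight, bounded by a power of $2^D$ via $j\le -k(1+\beta^2)+D$, should be absorbed by the product decay $2^{-(2+2\beta)(1-\beta^2)j}$ of the direct bilinear estimates, in direct parallel with the corresponding subcase of Lemma \ref{BigBound3}.
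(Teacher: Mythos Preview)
Your Case~1 argument has a genuine gap. Integration by parts in $\xi$ via Lemma~\ref{tech5} requires not only the phase gradient lower bound (which you correctly verify) but also control on the amplitude $g(\xi)=\varphi_k(\xi)\widehat{f^\mu_{k_1,j_1}}(\xi-\eta,s)$: one needs $\varepsilon^{|\rho|}\|D^\rho_\xi g\|_{L^1}$ bounded. Each $\xi$-derivative falling on the output cutoff $\varphi_k(\xi)$ costs $2^{-k}$, so in effect the usable $\varepsilon$ is $2^{-\max(j_1,-k)}$ and the net gain is $(K\varepsilon)^{-N}\approx 2^{-N(j-\max(j_1,-k))}$. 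In Lemma~\ref{BigBound3} the hypothesis $j\geq -k(1+\beta^2)+D$ forces $j+k\geq |k|\beta^2+D$, hence $j-\max(j_1,-k)\gtrsim\beta^2 j$ in both sub-cases $j_1\gtrless -k$, and the argument closes. In Lemma~\ref{BigBound4} the hypothesis is reversed, $j\leq -k(1+\beta^2)+D$, so $j+k$ may be as small as $0$ (indeed $(k,j)\in\mathcal{J}$ only gives $j+k\geq 0$). When $j+k=0$ and, say, $k_1=k_2=0$, $j_1=0$ (a perfectly admissible configuration), one has $j-\max(j_1,-k)=j-j=0$ and Lemma~\ref{tech5} yields no decay at all. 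Your remark that the condition $j\leq -k(1+\beta^2)+D$ ``makes the target norm weaker'' is correct for the $L^\infty$-component but false for the $L^2$-component of $B^1_{k,j}$: the weight $(2^{\alpha k}+2^{10k})2^{(1+\beta)j}\approx 2^{(\alpha-1-\beta)k}$ blows up as $k\to -\infty$.

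The paper's proof takes a completely different route that exploits precisely the smallness of $j+k$. It first uses the elementary bound
\[
\|\widetilde{\varphi}^{(k)}_j\cdot P_kh\|_{B_{k,j}}\lesssim (2^{\alpha k}+2^{10k})\,2^{(1+\beta)j}\,2^{3k/2}\,\|\widehat{P_kh}\|_{L^\infty},
\]
whose prefactor is $\approx 2^{(1/2+\alpha-\beta)k}$, genuinely small for $k\ll 0$, reducing matters to an $L^\infty_\xi$ estimate. That estimate is then obtained by a normal-form argument: integrate by parts in $s$ on the region $|\Phi^{\sigma;\mu,\nu}|\gtrsim 2^{-D}$ (using Lemma~\ref{ders}), and on the complementary resonant region integrate by parts in $\eta$ (not $\xi$) to localize to small $|\Xi^{\mu,\nu}|$, after which Lemma~\ref{desc1}~(i) shows the remaining set is empty because $|\xi|\approx 2^k$ is too small to support a space-time resonance. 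None of these ingredients appears in your outline.
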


\begin{proof}[Proof of Lemma \ref{BigBound4}]
In view of the restrictions \eqref{ok40} and \eqref{ok61}, we may assume that $k\leq -D^2/2$. Using the definition, it is easy to see that
\begin{equation*}
\|\widetilde{\varphi}^{(k)}_j\cdot P_kh\|_{B_{k,j}}\lesssim (2^{\alpha k}+2^{10 k})2^{(1+\beta)j}2^{3k/2}\|\widehat{P_kh}\|_{L^\infty}.
\end{equation*}
Therefore, it suffices to prove that
\begin{equation}\label{ok43}
(1+2^{k_1}+2^{k_2})2^{\alpha k}2^{(1+\beta)j}2^{3k/2}\big\|\mathcal{F}P_kT_{m}^{\sigma;\mu,\nu}(f_{k_1,j_1}^\mu,f_{k_2,j_2}^\nu)\big\|_{L^\infty}\lesssim 2^{-\beta^4 (m+j)}.
\end{equation}

Recall the definition 
\begin{equation}\label{ok40.5}
\mathcal{F}P_kT_{m}^{\sigma;\mu,\nu}(f_{k_1,j_1}^\mu,f_{k_2,j_2}^\nu)(\xi)=\varphi_k(\xi)\int_{\mathbb{R}}\int_{\mathbb{R}^3}e^{is\Phi^{\sigma;\mu,\nu}(\xi,\eta)}q_m(s)\widehat{f_{k_1,j_1}^\mu}(\xi-\eta,s)\widehat{f_{k_2,j_2}^\nu}(\eta,s)\,d\eta ds,
\end{equation}
where
\begin{equation}\label{ok40.6}
\Phi^{\sigma;\mu,\nu}(\xi,\eta)=\Lambda_\sigma(\xi)-\widetilde{\Lambda}_{\mu}(\xi-\eta)-\widetilde{\Lambda}_{\nu}(\eta).
\end{equation}
Using \eqref{nh9.1} and recalling that $\alpha\leq 2\beta$, it follows that
\begin{equation*}
\begin{split}
\big\|\mathcal{F}P_kT_{m}^{\sigma;\mu,\nu}&(f_{k_1,j_1}^\mu,f_{k_2,j_2}^\nu)\big\|_{L^\infty}\lesssim \int_{\mathbb{R}}q_m(s)\|f_{k_1,j_1}^\mu(s)\|_{L^2}\|f_{k_2,j_2}^\nu(s)\|_{L^2}\,ds\\
&\lesssim \|q_m\|_{L^1(\mathbb{R})}(2^{\alpha k_1}+2^{10k_1})^{-1}2^{2\beta\widetilde{k_1}}2^{-(1-\beta)j_1}\cdot (2^{\alpha k_2}+2^{10k_2})^{-1}2^{2\beta\widetilde{k_2}}2^{-(1-\beta)j_2}\\
&\lesssim \|q_m\|_{L^1(\mathbb{R})}\min(1,2^{-5k_1})2^{-(1-\beta)j_1}\cdot \min(1,2^{-5k_2})2^{-(1-\beta)j_2}.
\end{split}
\end{equation*}
Recalling the definitions \eqref{sec5.6} and the assumptions, the desired bound \eqref{ok43} follows if
\begin{equation*}
m=L+1\,\,\,\text{ or }\,\,\,m\leq (1-\beta)(j_1+j_2)-(1/2-\beta)k. 
\end{equation*}

It remains to prove the bound \eqref{ok43} in the case
\begin{equation}\label{ok45}
m\in[1,L]\cap\mathbb{Z}\,\,\,\text{ and }\,\,\,m\geq -(1/2-\beta)k+(1-\beta)(j_1+j_2).
\end{equation}
Since $j_1+k_1\geq 0$, $j_2+k_2\geq 0$, and $k\leq -D^2/2$, the conditions \eqref{ok40} and \eqref{ok45} show that $k_1,k_2\geq k+10$. In particular, we may assume that $|k_1-k_2|\leq 10$. Using also \eqref{ok40}, for \eqref{ok43} it suffices to prove that, assuming \eqref{ok45},
\begin{equation}\label{ok46}
(1+2^{k_2})\big\|\mathcal{F}P_kT_{m}^{\sigma;\mu,\nu}(f_{k_1,j_1}^\mu,f_{k_2,j_2}^\nu)\big\|_{L^\infty}\lesssim 2^{-k(1/2+\alpha-\beta-2\beta^2)}.
\end{equation}

To prove \eqref{ok46} we would like to integrate by parts in $\eta$ and $s$ in the formula \eqref{ok40.5}. Recall the definitions \eqref{ok40.5} and \eqref{ok40.6}, and decompose
\begin{equation*}
\begin{split}
&\mathcal{F}P_kT_{m}^{\sigma;\mu,\nu}(f_{k_1,j_1}^\mu,f_{k_2,j_2}^\nu)(\xi)=G(\xi)+H(\xi),\\
&G(\xi):=\varphi_k(\xi)\int_{\mathbb{R}}\int_{\mathbb{R}^3}e^{is\Phi^{\sigma;\mu,\nu}(\xi,\eta)}\varphi(2^D\Phi^{\sigma;\mu,\nu}(\xi,\eta))q_m(s)\widehat{f_{k_1,j_1}^\mu}(\xi-\eta,s)\widehat{f_{k_2,j_2}^\nu}(\eta,s)\,d\eta ds,\\
&H(\xi):=\varphi_k(\xi)\int_{\mathbb{R}}\int_{\mathbb{R}^3}e^{is\Phi^{\sigma;\mu,\nu}(\xi,\eta)}[1-\varphi(2^D\Phi^{\sigma;\mu,\nu}(\xi,\eta))]q_m(s)\widehat{f_{k_1,j_1}^\mu}(\xi-\eta,s)\widehat{f_{k_2,j_2}^\nu}(\eta,s)\,d\eta ds.
\end{split}
\end{equation*}
The function $H$ can be estimated using integration by parts in $s$, Lemma \ref{ders}, the assumptions \eqref{nh2}, and the bounds \eqref{nh9.1}. Indeed,
\begin{equation*}
\begin{split}
|H(\xi)|&\lesssim \sup_{s\in [2^{m-1},2^{m+1}]}\big[\big\|\widehat{f_{k_1,j_1}^\mu}(s)\big\|_{L^2}\big\|\widehat{f_{k_2,j_2}^\nu}(s)\big\|_{L^2}\\
&+2^m\big\|(\partial_s\widehat{f_{k_1,j_1}^\mu})(s)\big\|_{L^2}\big\|\widehat{f_{k_2,j_2}^\nu}(s)\big\|_{L^2}+2^m\big\|\widehat{f_{k_1,j_1}^\mu}(s)\big\|_{L^2}\big\|(\partial_s\widehat{f_{k_2,j_2}^\nu})(s)\big\|_{L^2}\big]\\
&\lesssim \min(1,2^{-(N_0-5)k_2}).
\end{split}
\end{equation*}
Therefore, for \eqref{ok46} it suffices to prove that
\begin{equation}\label{ok48}
(1+2^{k_2})\big\|G\big\|_{L^\infty}\lesssim 2^{-k(1/2+\alpha-\beta-2\beta^2)}.
\end{equation}

Recalling the definitions \eqref{lambdas} and \eqref{ra2},
\begin{equation}\label{ok47}
\Xi^{\mu,\nu}(\xi,\eta)=(\nabla_\eta\Phi^{\sigma;\mu,\nu})(\xi,\eta)=-\iota_1\frac{c^2_{\sigma_1}(\eta-\xi)}{(b^2_{\sigma_1}+c^2_{\sigma_1}|\eta-\xi|^2)^{1/2}}
-\iota_2\frac{c^2_{\sigma_2}\eta}{(b^2_{\sigma_2}+c^2_{\sigma_2}|\eta|^2)^{1/2}},
\end{equation}
where
\begin{equation*}
\mu=(\sigma_1\iota_1),\qquad \mu=(\sigma_2\iota_2),\qquad \sigma_1,\sigma_2\in\{1,\ldots,d\},\qquad \iota_1,\iota_2\in\{+,-\}.
\end{equation*}
In view of the first assumption in \eqref{abcond}, we may assume that
\begin{equation}\label{ok49}
k_1,k_2\geq -D/10,
\end{equation}
since otherwise $G=0$. For $l\in\mathbb{Z}$ let
\begin{equation}\label{ba1}
\begin{split}
G_{\leq l}(\xi):=\varphi_k(\xi)&\int_{\mathbb{R}}\int_{\mathbb{R}^3}\varphi_{(-\infty,l]}(\Xi^{\mu,\nu}(\xi,\eta))\cdot  e^{is\Phi^{\sigma;\mu,\nu}(\xi,\eta)}\\
&\varphi(2^D\Phi^{\sigma;\mu,\nu}(\xi,\eta))q_m(s)\widehat{f_{k_1,j_1}^\mu}(\xi-\eta,s)\widehat{f_{k_2,j_2}^\nu}(\eta,s)\,d\eta ds.
\end{split}
\end{equation}
Let $G_l:=G_{\leq l}-G_{\leq l-1}$. In proving \eqref{ok48} we may assume that $j_1\leq j_2$. If $l\geq \max(j_2,m/2)-(1-\beta^2)m$ then we integrate by parts in $\eta$, using Lemma \ref{tech5} with $K\approx 2^{m+l}$ and $\eps\approx 2^{-j_2}$. Using also the last bound in \eqref{nh9} and recalling that $k_1,k_2\geq -D/10$, it follows that
\begin{equation}\label{ba2}
\sum_{l\geq l_0+1}\|G_l\|_{L^\infty}\lesssim 2^{-5k_2},\qquad\text{ where }l_0=\lfloor \max(j_2,m/2)-m+\beta^2m\rfloor.
\end{equation}
It remains to estimate $\|G_{\leq l_0}\|_{L^\infty}$. It follows from Lemma \ref{desc1} that $G_{\leq l_0}\equiv 0$, provided that
\begin{equation*}
2^{l_0+k_2}\leq 2^{-D/10}.
\end{equation*}
This last inequality is an easy algebraic consequence of the assumptions \eqref{ok61}, \eqref{ok40}, and \eqref{ok45}, which completes the proof of the lemma.
\end{proof}

\begin{lemma}\label{BigBound5}
The bound \eqref{ok60} holds provided that \eqref{ok61} holds and, in addition,
\begin{equation}\label{ok110}
j\leq m+\max(\widetilde{k_1},\widetilde{k_2})+D\quad\text{ and }\quad \max(j_1,j_2)\geq (1-\beta/10)(m+\max(\widetilde{k_1},\widetilde{k_2})).
\end{equation}
\end{lemma}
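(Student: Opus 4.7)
The plan is to exploit the smallness of $\|f^\nu_{k_2,j_2}\|_{L^2}$ (or $\|f^\mu_{k_1,j_1}\|_{L^2}$) coming from the large $j_i$ via the $Z$-norm bounds, and to pair it with the dispersive $L^\infty$ bound on the other factor. By the bilinear symmetry, I may assume $j_2 \geq j_1$, so
\[
j_2 \geq (1-\beta/10)(m + \max(\widetilde{k_1}, \widetilde{k_2})).
\]
I will decompose $f^\nu_{k_2,j_2}$ using \eqref{ok37} as
$\phii^{(k_2)}_{j_2}\cdot P_{k_2}f_\nu(s) = (2^{\alpha k_2}+2^{10k_2})^{-1}[g^\nu_{k_2,j_2}(s)+h^\nu_{k_2,j_2}(s)]$, giving a splitting of $P_k T^{\sigma;\mu,\nu}_m(f^\mu_{k_1,j_1},f^\nu_{k_2,j_2})$ into a ``strong'' and a ``weak'' piece, and measure each in $B^1_{k,j}$, resp.\ $B^2_{k,j}$.

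For the $g^\nu$-piece measured in $B^1_{k,j}$, I will estimate the $L^2$ component by
\[
\|P_kT^{\sigma;\mu,\nu}_m(f^\mu_{k_1,j_1},P_{[k_2-2,k_2+2]}g^\nu)\|_{L^2}\lesssim 2^m \sup_s\|Ef^\mu_{k_1,j_1}(s)\|_{L^\infty}\|g^\nu(s)\|_{L^2},
\]
and combine $\|Ef^\mu_{k_1,j_1}\|_{L^\infty}\lesssim \min(2^{-6k_1},2^{(1/2-\beta-\alpha)k_1})\cdot 2^{-(1+\beta)m}$ from \eqref{nh9} with $\|g^\nu\|_{L^2}\lesssim 2^{-(1+\beta)j_2}$ from \eqref{ok37}. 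After including the $B^1$-weight $2^{(1+\beta)j}$ and the prefactors, the critical exponent is $(1+\beta)(j-j_2)-\beta m$, and the two hypotheses of the lemma force this quantity to be of order $-(9\beta/10)(m+\max(\widetilde{k_1},\widetilde{k_2}))$, which is more than the required gain $2^{-\beta^4(m+j)}$. For the $L^\infty$-Fourier component I will use $\|\widehat{P_k T}\|_{L^\infty}\lesssim 2^m\|\widehat{f^\mu_{k_1,j_1}}\|_{L^\infty}\|\widehat{g^\nu}\|_{L^1}$, where $\|\widehat{g^\nu}\|_{L^1}\lesssim 2^{3k_2/2}\|g^\nu\|_{L^2}\lesssim 2^{3k_2/2}2^{-(1+\beta)j_2}$, again yielding sufficient decay.

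For the $h^\nu$-piece, I measure in $B^2_{k,j}$, whose more forgiving $L^2$-weight $2^{(1-\beta)j}$ exactly matches the weaker decay $\|h^\nu\|_{L^2}\lesssim 2^{2\beta\widetilde{k_2}}2^{-(1-\beta)j_2}$. A parallel computation on the critical exponent $(1-\beta)(j-j_2)-\beta m$ again produces a gain of order $2^{-(9\beta/10)m}$. The $L^\infty$-Fourier part of $B^2$ is handled with $\|\widehat{P_k T(f^\mu,h^\nu)}\|_{L^\infty}\lesssim 2^m \|\widehat{f^\mu}\|_{L^\infty}\|\widehat{h^\nu}\|_{L^1}$, using $\|\widehat{h^\nu}\|_{L^1}\lesssim 2^{-\gamma j_2}2^{(5/2+\beta-\gamma)\widetilde{k_2}}$ from the weak decomposition.

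The main obstacle is the third, small-ball $L^1$-Fourier component of $B^2_{k,j}$. To control it I will localize in $\xi$:
\[
\|\widehat{P_kT(f^\mu,h^\nu)}\|_{L^1(B(\xi_0,R))}\lesssim \int q_m(s)\int |\widehat{h^\nu}(\eta,s)|\cdot \|\widehat{f^\mu_{k_1,j_1}}(\cdot,s)\|_{L^1(B(\xi_0-\eta,R))}\,d\eta\,ds,
\]
and then use the small-ball $L^1$-bound for $\widehat{f^\mu_{k_1,j_1}}$ supplied by the $B_{k_1,j_1}$-norm of $f^\mu$ (via \eqref{sec5.815}), together with $\|\widehat{h^\nu}\|_{L^1}$ or $\|\widehat{h^\nu}\|_{L^\infty}$ depending on which input is the large-$j$ one. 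The requisite gain of $2^{\gamma j}$ is produced by the factor $2^{-\gamma j_2}$ and the relation $j\leq m+\max(\widetilde{k_1},\widetilde{k_2})+D$; if the large-$j$ input is instead $f^\mu$, the same argument applies with the roles reversed, using the small-ball structure of $\widehat{h^\mu_{k_1,j_1}}$. Verifying this last piece is the most delicate part of the proof, as the finite-speed-of-propagation hypothesis $j\leq m+\max(\widetilde{k_1},\widetilde{k_2})+D$ is needed to prevent losses from spatial localization.
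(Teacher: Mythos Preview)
Your proposal has a genuine gap in the small-ball component of the $B^2_{k,j}$ norm for the $h^\nu$-piece. After you bound by absolute values (discarding the phase $e^{is\Phi}$) and apply Fubini, you get
\[
2^{\gamma j}R^{-2}\|\widehat{P_kT(f^\mu,h^\nu)}\|_{L^1(B(\xi_0,R))}
\lesssim 2^{\gamma j}\cdot 2^m\cdot \Big(R^{-2}\sup_{\xi_0'}\|\widehat{f^\mu_{k_1,j_1}}\|_{L^1(B(\xi_0',R))}\Big)\cdot\|\widehat{h^\nu}\|_{L^1}.
\]
The factor $2^{-\gamma j_2}$ from $\|\widehat{h^\nu}\|_{L^1}$ only cancels $2^{\gamma j}$ (since $j-j_2\lesssim (\beta/10)m$), leaving the full $2^m$ from the time integral uncontrolled. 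The small-ball bound you invoke from \eqref{sec5.815} for $\widehat{f^\mu_{k_1,j_1}}$ yields at best $R^{-2}\|\widehat{f^\mu_{k_1,j_1}}\|_{L^1(B)}\lesssim 2^{-\gamma j_1}$ on the $h$-component of the input; but the hypotheses allow $j_1$ to be $O(1)$ (only $\max(j_1,j_2)$ is forced large), so there is no mechanism to absorb $2^m$. Swapping the roles, i.e.\ putting the small-ball on $\widehat{h^\nu}$ and the $L^1$ on $\widehat{f^\mu_{k_1,j_1}}$, runs into exactly the same obstruction. Since you have thrown away all oscillation there is nothing left to integrate by parts against.

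The paper avoids this entirely by measuring the \emph{whole} output in $B^1_{k,j}$, never $B^2_{k,j}$. The $L^1$-Fourier bound $\|\widehat{h^\nu}\|_{L^1}\lesssim 2^{-\gamma j_2}$ from the weak norm is used, but only as an input to an $L^2$ estimate on the output (via $\|\widehat{f}\ast\widehat{g}\|_{L^2}\le\|\widehat{f}\|_{L^1}\|\widehat{g}\|_{L^2}$), so the relevant weight is $2^{(1+\beta)j}$ rather than $2^{\gamma j}$, and $\gamma>1+\beta$ furnishes the extra decay. The paper's argument also requires structure you have not used: a separate elementary treatment of the very-low-frequency case $\min(k_1,k_2)\le -5m/6$; the refined $j$-dependent dispersive bounds \eqref{mk15.6}--\eqref{mk15.65} when $\min(j_1,j_2)\le m(1-9\beta)$; a full $g+h$ decomposition of \emph{both} inputs when both $j_1,j_2$ are large; and, for the $L^\infty$-Fourier part of $B^1_{k,j}$, a further case split including integration by parts in $s$ when all frequencies are $\lesssim 2^{-D/10}$.
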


\begin{proof}[Proof of Lemma \ref{BigBound5}] Using definition \eqref{sec5.3}, it suffices to prove that
\begin{equation}\label{ok112}
\begin{split}
&(1+2^{k_1}+2^{k_2})(2^{\alpha k}+2^{10k})\cdot 2^{(1+\beta)j}\big\|\widetilde{\varphi}^{(k)}_j\cdot P_kT_{m}^{\sigma;\mu,\nu}(f_{k_1,j_1}^\mu,f_{k_2,j_2}^\nu)\big\|_{L^2}\\
&+(1+2^{k_1}+2^{k_2})(2^{\alpha k}+2^{10k})\cdot 2^{(1/2-\beta)\widetilde{k}}\big\|\mathcal{F}[\widetilde{\varphi}^{(k)}_j\cdot P_kT_{m}^{\sigma;\mu,\nu}(f_{k_1,j_1}^\mu,f_{k_2,j_2}^\nu)]\big\|_{L^\infty}\lesssim 2^{-\beta^4 (m+j)}.
\end{split}
\end{equation}
By symmetry, we may assume $k_1\leq k_2$.

We prove first the bounds \eqref{ok112} in the case
\begin{equation}\label{ba6}
k_1\leq-5m/6.
\end{equation}
Using \eqref{nh9}, for any $s\in[0,t]$,
\begin{equation*}
\|\widehat{f_{k_1,j_1}^\mu}(s)\|_{L^1}\lesssim 2^{3k_1}\|\widehat{f_{k_1,j_1}^\mu}(s)\|_{L^\infty}\lesssim 2^{(5/2-\alpha+\beta)k_1}.
\end{equation*}
Therefore, using \eqref{nh9} again, it follows that
\begin{equation*}
\begin{split}
\big\|\mathcal{F}[T_{m}^{\sigma;\mu,\nu}(f_{k_1,j_1}^\mu,f_{k_2,j_2}^\nu)]\big\|_{L^2}&\lesssim 2^m\sup_{s\in[2^{m-1},2^{m+1}]}\|\widehat{f_{k_1,j_1}^\mu}(s)\|_{L^1}\|\widehat{f_{k_2,j_2}^\nu}(s)\|_{L^2}\\
&\lesssim 2^m2^{(5/2-\alpha+\beta)k_1}\min(2^{-(N_0-1)k_2},2^{(1+\beta-\alpha)k_2})
\end{split}
\end{equation*}
and
\begin{equation}\label{ba6.5}
\begin{split}
\big\|\mathcal{F}[T_{m}^{\sigma;\mu,\nu}(f_{k_1,j_1}^\mu,f_{k_2,j_2}^\nu)\big\|_{L^\infty}&\lesssim 2^m\sup_{s\in[2^{m-1},2^{m+1}]}\|\widehat{f_{k_1,j_1}^\mu}(s)\|_{L^1}\|\widehat{f_{k_2,j_2}^\nu}(s)\|_{L^\infty}\\
&\lesssim 2^m2^{(5/2-\alpha+\beta)k_1}\cdot(2^{\alpha k_2}+2^{10k_2})^{-1}2^{-(1/2-\beta)\widetilde{k_2}}.
\end{split}
\end{equation}
Therefore, recalling \eqref{ba6}, if $k\leq 0$ then the left-hand side of \eqref{ok112} is dominated by
\begin{equation*}
C2^{(2+\beta)m}2^{(5/2-\alpha+\beta)k_1}\lesssim 2^{(-1/12+5\al/6+\be/6)m},
\end{equation*}
which suffices. Similarly, if $k\geq 0$ then the left-hand side of \eqref{ok112} is dominated by
\begin{equation*}
C2^{(2+\beta)m}2^{(5/2-\alpha+\beta)k_1}2^{-(N_0-15)k}+C2^{2k_2}2^m2^{(5/2-\alpha+\beta)k_1}\lesssim 2^{-10k}2^{(-1/12+5\al/6+\be/6)m},
\end{equation*}
which also suffices.

To prove the bound \eqref{ok112} when $-5m/6\leq k_1\leq k_2$ we decompose, as in \eqref{ok36}--\eqref{ok37}, for any $s\in[2^{m-1},2^{m+1}]$,
\begin{equation}\label{ok113}
\begin{split}
&\phii^{(k_1)}_{j_1}\cdot P_{k_1}f_\mu(s)=(2^{\alpha k_1}+2^{10k_1})^{-1}[g_{k_1,j_1}^\mu(s)+h_{k_1,j_1}^\mu(s)],\\
&g_{k_1,j_1}^\mu(s)=g_{k_1,j_1}^\mu(s)\cdot \widetilde{\varphi}^{(k_1)}_{[j_1-2,j_1+2]},\qquad h_{k_1,j_1}^\mu(s)=h_{k_1,j_1}^\mu(s)\cdot \widetilde{\varphi}^{(k_1)}_{[j_1-2,j_1+2]},\\
&2^{(1+\be)j_1}\|g_{k_1,j_1}^\mu(s)\|_{L^2}+2^{(1/2-\beta)\widetilde{k_1}}\|\widehat{g_{k_1,j_1}^\mu}(s)\|_{L^\infty}\lesssim 1,\\
&2^{-2\beta\widetilde{k_1}}2^{(1-\be)j_1}\|h_{k_1,j_1}^\mu(s)\|_{L^2}+2^{(1/2-\beta)\widetilde{k_1}}\|\widehat{h_{k_1,j_1}^\mu}(s)\|_{L^\infty}+2^{(\gamma-\beta-5/2)\widetilde{k_1}}2^{\gamma j_1}\|\widehat{h_{k_1,j_1}^\mu}(s)\|_{L^1}\lesssim 1,
\end{split}
\end{equation}
and
\begin{equation}\label{ok114}
\begin{split}
&\phii^{(k_2)}_{j_2}\cdot P_{k_2}f_\nu(s)=(2^{\alpha k_2}+2^{10k_2})^{-1}[g_{k_2,j_2}^\nu(s)+h_{k_2,j_2}^\nu(s)],\\
&g_{k_2,j_2}^\nu(s)=g_{k_2,j_2}^\nu(s)\cdot \widetilde{\varphi}^{(k_2)}_{[j_2-2,j_2+2]},\qquad h_{k_2,j_2}^\nu(s)=h_{k_2,j_2}^\nu(s)\cdot \widetilde{\varphi}^{(k_2)}_{[j_2-2,j_2+2]},\\
&2^{(1+\be)j_2}\|g_{k_2,j_2}^\nu(s)\|_{L^2}+2^{(1/2-\beta)\widetilde{k_2}}\|\widehat{g_{k_2,j_2}^\nu}(s)\|_{L^\infty}\lesssim 1,\\
&2^{-2\beta\widetilde{k_2}}2^{(1-\be)j_2}\|h_{k_2,j_2}^\nu(s)\|_{L^2}+2^{(1/2-\beta)\widetilde{k_2}}\|\widehat{h_{k_2,j_2}^\nu}(s)\|_{L^\infty}+2^{(\gamma-\beta-5/2)\widetilde{k_2}}2^{\gamma j_2}\|\widehat{h_{k_2,j_2}^\nu}(s)\|_{L^1}\lesssim 1.
\end{split}
\end{equation}

We will prove now the $L^2$ bound
\begin{equation}\label{ba8}
(1+2^{k_2})(2^{\alpha k}+2^{10k})\cdot 2^{(2+\beta)m}2^{\widetilde{k_2}}\big\|P_k\widetilde{T}_{s}^{\sigma;\mu,\nu}(f_{k_1,j_1}^\mu(s),f_{k_2,j_2}^\nu(s))\big\|_{L^2}\lesssim 2^{-2\beta^4 m},
\end{equation}
for any $s\in[2^{m-1},2^{m+1}]$, see \eqref{nh9.2} for the definition of the bilinear operators $\widetilde{T}_{s}^{\sigma;\mu,\nu}$. In view of the assumption \eqref{ok110}) this would clearly imply the desired $L^2$ bound in \eqref{ok112}. 

Assume first that $\min(j_1,j_2)\leq m(1-9\beta)$, i.e.
\begin{equation}\label{ba9}
\min(j_1,j_2)\leq m(1-9\beta),\qquad \max(j_1,j_2)\geq (1-\beta/10)(m+\widetilde{k_2}),\qquad k_2\geq k_1\geq-5m/6.
\end{equation}
Using \eqref{mk15.6} and \eqref{mk15.65}, and recalling that $\alpha\in[0,\beta]$, we notice that
\begin{equation*}
\begin{split}
&\|Ef_{k_1,j_1}^\mu(s)\|_{L^\infty}\lesssim \min(2^{\beta k_1},2^{-6k_1})2^{-3m/2}2^{(1/2+\beta)j_1},\\
&\|Ef_{k_2,j_2}^\nu(s)\|_{L^\infty}\lesssim \min(2^{\beta k_2},2^{-6k_2})2^{-3m/2}2^{(1/2+\beta)j_2},
\end{split}
\end{equation*}
for any $s\in[2^{m-1},2^{m+1}]$. Therefore, using also \eqref{nh9.1},
\begin{equation*}
\begin{split}
\|P_k\widetilde{T}_{s}^{\sigma;\mu,\nu}&(f_{k_1,j_1}^\mu(s),f_{k_2,j_2}^\nu(s))\big\|_{L^2}\lesssim \min(\|Ef_{k_1,j_1}^\mu(s)\|_{L^\infty}\|Ef_{k_2,j_2}^\nu(s)\|_{L^2}, \|Ef_{k_1,j_1}^\mu(s)\|_{L^2}\|Ef_{k_2,j_2}^\nu(s)\|_{L^\infty})\\
&\lesssim \min(2^{\beta k_1},2^{-6k_1})\min(2^{\beta k_2},2^{-6k_2})\cdot 2^{-3m/2}2^{(1/2+\beta)\min(j_1,j_2)}2^{-(1-\beta)\max(j_1,j_2)}\\
&\lesssim (1+2^{k_2})^{-6}2^{-\widetilde{k_2}}2^{-(2+2\beta)m},
\end{split}
\end{equation*}
which suffices to prove \eqref{ba8}.

Assume now that $\min(j_1,j_2)\geq m(1-9\beta)$, i.e.
\begin{equation}\label{ba10}
\min(j_1,j_2)\geq m(1-9\beta),\qquad \max(j_1,j_2)\geq (1-\beta/10)(m+\widetilde{k_2}),\qquad k_2\geq k_1\geq-5m/6.
\end{equation}
We recall that 
\begin{equation}\label{ba11}
\begin{split}
&f_{k_1,j_1}^\mu=P_{[k_1-2,k_1+2]}(\phii^{(k_1)}_{j_1}\cdot P_{k_1}f_\mu)=(2^{\alpha k_1}+2^{10k_1})^{-1}[P_{[k_1-2,k_1+2]}g_{k_1,j_1}^\mu+P_{[k_1-2,k_1+2]}h_{k_1,j_1}^\mu],\\
&f_{k_2,j_2}^\nu=P_{[k_2-2,k_2+2]}(\phii^{(k_2)}_{j_2}\cdot P_{k_2}f_\nu)=(2^{\alpha k_2}+2^{10k_2})^{-1}[P_{[k_2-2,k_2+2]}g_{k_2,j_2}^\nu+P_{[k_2-2,k_2+2]}h_{k_2,j_2}^\nu],
\end{split}
\end{equation}
and use the decompositions \eqref{ok113}--\eqref{ok114}. Then we estimate, using also \eqref{ba10},
\begin{equation*}
\begin{split}
\big\|P_k\widetilde{T}_{s}^{\sigma;\mu,\nu}&(P_{[k_1-2,k_1+2]}h_{k_1,j_1}^\mu(s),P_{[k_2-2,k_2+2]}h_{k_2,j_2}^\nu(s))\big\|_{L^2}\\
&\lesssim \min(\|\widehat{h_{k_1,j_1}^\mu}(s)\|_{L^1}\|\widehat{h_{k_2,j_2}^\nu}(s)\|_{L^2},\|\widehat{h_{k_1,j_1}^\mu}(s)\|_{L^1}\|\widehat{h_{k_2,j_2}^\nu}(s)\|_{L^2})\\
&\lesssim 2^{-\gamma\max(j_1,j_2)}2^{-(1-\beta)\min(j_1,j_2)}2^{2\beta\widetilde{k_1}}2^{(5/2+\beta-\gamma)\widetilde{k_2}}\\
&\lesssim 2^{-m(\gamma+1-11\beta)}2^{(5/2+\beta-2\gamma)\widetilde{k_2}}2^{2\beta\widetilde{k_1}},
\end{split}
\end{equation*}
\begin{equation*}
\begin{split}
\big\|P_k\widetilde{T}_{s}^{\sigma;\mu,\nu}(P_{[k_1-2,k_1+2]}h_{k_1,j_1}^\mu(s),&P_{[k_2-2,k_2+2]}g_{k_2,j_2}^\nu(s))\big\|_{L^2}\lesssim \|\widehat{h_{k_1,j_1}^\mu}(s)\|_{L^1}\|\widehat{g_{k_2,j_2}^\nu}(s)\|_{L^2}\\
&\lesssim 2^{-\gamma j_1}2^{-(1+\beta)j_2}2^{2\beta\widetilde{k_1}}2^{(5/2+\beta-\gamma)\widetilde{k_2}}\\
&\lesssim 2^{-m(\gamma+1-11\beta)}2^{(5/2+\beta-2\gamma)\widetilde{k_2}}2^{2\beta\widetilde{k_1}},
\end{split}
\end{equation*}
\begin{equation*}
\begin{split}
\big\|P_k\widetilde{T}_{s}^{\sigma;\mu,\nu}(P_{[k_1-2,k_1+2]}g_{k_1,j_1}^\mu(s),&P_{[k_2-2,k_2+2]}h_{k_2,j_2}^\nu(s))\big\|_{L^2}\lesssim \|\widehat{g_{k_1,j_1}^\mu}(s)\|_{L^2}\|\widehat{h_{k_2,j_2}^\nu}(s)\|_{L^1}\\
&\lesssim 2^{-(1+\beta)j_1}2^{-\gamma j_2}2^{2\beta\widetilde{k_1}}2^{(5/2+\beta-\gamma)\widetilde{k_2}}\\
&\lesssim 2^{-m(\gamma+1-11\beta)}2^{(5/2+\beta-2\gamma)\widetilde{k_2}}2^{2\beta\widetilde{k_1}},
\end{split}
\end{equation*}
and, using also \eqref{cucu1} and \eqref{cucu3},
\begin{equation*}
\begin{split}
&\big\|P_k\widetilde{T}_{s}^{\sigma;\mu,\nu}(P_{[k_1-2,k_1+2]}g_{k_1,j_1}^\mu(s),P_{[k_2-2,k_2+2]}g_{k_2,j_2}^\nu(s))\big\|_{L^2}\\
&\lesssim \min\big(\|e^{-is\widetilde{\Lambda}_\mu}P_{[k_1-2,k_1+2]}(g_{k_1,j_1}^\mu(s))\|_{L^\infty}\|g_{k_2,j_2}^\nu(s)\|_{L^2},\|g_{k_1,j_1}^\mu(s)\|_{L^2}\|e^{-is\widetilde{\Lambda}_\nu}P_{[k_2-2,k_2+2]}(g_{k_2,j_2}^\nu(s))\|_{L^\infty}\big)\\
&\lesssim 2^{-(1+\beta)\max(j_1,j_2)}\cdot 2^{-3m/2}2^{(1/2-\beta)\min(j_1,j_2)}(1+2^{3k_2})\\
&\lesssim 2^{-m(2+19\beta/10)}2^{-3\widetilde{k_2}/4}(1+2^{3k_2}).
\end{split}
\end{equation*}
Therefore, using also $\al\in[0,\be/2]$ and $k_1\geq -5m/6$, the left-hand side of \eqref{ba8} is dominated by 
\begin{equation*}
C(1+2^{4k_2})2^{-\alpha k_1}2^{-9\beta m/10}\lesssim (1+2^{4k_2})2^{-29m\beta/60}. 
\end{equation*}
This completes the proof of \eqref{ba8}.

To complete the proof of \eqref{ok112} it remains to prove the $L^\infty$ bound
\begin{equation}\label{ba12}
(1+2^{k_2})(2^{\alpha k}+2^{10k})\cdot 2^{(1/2-\beta)\widetilde{k}}\big\|\mathcal{F}P_kT_m^{\sigma;\mu,\nu}(f_{k_1,j_1}^\mu,f_{k_2,j_2}^\nu)\big\|_{L^\infty}\lesssim 2^{-2\beta^4m}.
\end{equation}
If $k_2\leq -D/10$ then $\max(k,k_1)\leq-D/10+10$ and $1\lesssim |\Phi^{\sigma;\mu,\nu}(\xi,\eta)|$ whenever $|\xi|\approx 2^k,|\xi-\eta|\approx 2^{k_1},|\eta|\approx 2^{k_2}$. Therefore, we integrate by parts in $s$ and use \eqref{nh9.1} and \eqref{ok50} to estimate
\begin{equation*}
\begin{split}
\big\|\mathcal{F}P_kT_m^{\sigma;\mu,\nu}(f_{k_1,j_1}^\mu,&f_{k_2,j_2}^\nu)\big\|_{L^\infty}\lesssim\sup_{s\in[2^{m-1},2^{m+1}]}\Big[\|f_{k_1,j_1}^\mu(s)\|_{L^2}\|f_{k_2,j_2}^\nu(s)\|_{L^2}\\
&+2^m\|(\partial_sf_{k_1,j_1}^\mu)(s)\|_{L^2}\|f_{k_2,j_2}^\nu(s)\|_{L^2}+2^m\|f_{k_1,j_1}^\mu(s)\|_{L^2}\|(\partial_sf_{k_2,j_2}^\nu)(s)\|_{L^2}\Big]\\
&\lesssim 2^{-\beta m}.
\end{split}
\end{equation*}
The desired estimate \eqref{ba12} follows easily in this case.

Assume now that $k_2\geq -D/10$. For \eqref{ba12} it suffices to prove that
\begin{equation}\label{ba14}
2^{k_2}(2^{\alpha k}+2^{10k})\cdot 2^{(1/2-\beta)\widetilde{k}}2^m\big\|\mathcal{F}P_k\widetilde{T}_s^{\sigma;\mu,\nu}(f_{k_1,j_1}^\mu,f_{k_2,j_2}^\nu)\big\|_{L^\infty}\lesssim 2^{-2\beta^4m},
\end{equation}
for any $s\in[2^{m-1},2^{m+1}]$. If, in addition, $k_1\leq -2m/5$ then, as in \eqref{ba6.5},
\begin{equation*}
\big\|\mathcal{F}P_k\widetilde{T}_s^{\sigma;\mu,\nu}(f_{k_1,j_1}^\mu,f_{k_2,j_2}^\nu)\big\|_{L^\infty}\lesssim \|\widehat{f_{k_1,j_1}^\mu}(s)\|_{L^1}\|\widehat{f_{k_2,j_2}^\nu}(s)\|_{L^\infty}\lesssim 2^{(5/2-\alpha+\beta)k_1}2^{-10k_2},
\end{equation*}
and the desired bound \eqref{ba14} follows since $\alpha\in[0,\beta/2]$. 

It remains to prove the bound \eqref{ba14} in the case
\begin{equation}\label{ba13}
k_2\geq -D/10,\qquad k_1\geq -2m/5.
\end{equation}
We decompose $f_{k_1,j_1}^\mu,f_{k_2,j_2}^\nu$ as in \eqref{ok113}, \eqref{ok114}, \eqref{ba11}. If $j_1\leq j_2$ we estimate
\begin{equation*}
\begin{split}
\big\|\mathcal{F}P_k\widetilde{T}_s^{\sigma;\mu,\nu}(P_{[k_1-2,k_1+2]}(g_{k_1,j_1}^\mu(s)+h_{k_1,j_1}^\mu(s)),&P_{[k_2-2,k_2+2]}g_{k_2,j_2}^\nu(s))\big\|_{L^\infty}\\
&\lesssim \big(\|\widehat{g_{k_1,j_1}^\mu}(s)\|_{L^2}+\|\widehat{h_{k_1,j_1}^\mu}(s)\|_{L^2}\big)\|\widehat{g_{k_2,j_2}^\nu}(s)\|_{L^2}\\
&\lesssim 2^{-(1+\beta)j_2},
\end{split}
\end{equation*}
and
\begin{equation*}
\begin{split}
\big\|\mathcal{F}P_k\widetilde{T}_s^{\sigma;\mu,\nu}(P_{[k_1-2,k_1+2]}(g_{k_1,j_1}^\mu(s)+h_{k_1,j_1}^\mu(s)),&P_{[k_2-2,k_2+2]}h_{k_2,j_2}^\nu(s))\big\|_{L^\infty}\\
&\lesssim \big(\|\widehat{g_{k_1,j_1}^\mu}(s)\|_{L^\infty}+\|\widehat{h_{k_1,j_1}^\mu}(s)\|_{L^\infty}\big)\|\widehat{h_{k_2,j_2}^\nu}(s)\|_{L^1}\\
&\lesssim 2^{-(1/2-\beta)\widetilde{k_1}}2^{-\gamma j_2}.
\end{split}
\end{equation*}
Since $-\widetilde{k_1}\leq 2m/5$ and $2^{j_2}\gtrsim 2^{m(1-\beta/10)}$ it follows that
\begin{equation}\label{ba15}
\text{ if }\,\,\,j_1\leq j_2\,\,\,\text{ then }\,\,\,\big\|\mathcal{F}P_k\widetilde{T}_s^{\sigma;\mu,\nu}(f_{k_1,j_1}^\mu,f_{k_2,j_2}^\nu)\big\|_{L^\infty}\lesssim 2^{-(1+\beta)(1-\beta/10)m}\cdot (2^{\alpha k_1}+2^{10k_1})^{-1}2^{-10k_2}.
\end{equation}

Similarly, if $j_1\geq j_2$ we estimate
\begin{equation*}
\begin{split}
\big\|\mathcal{F}P_k\widetilde{T}_s^{\sigma;\mu,\nu}(P_{[k_1-2,k_1+2]}g_{k_1,j_1}^\mu(s),&P_{[k_2-2,k_2+2]}(g_{k_2,j_2}^\nu(s)+h_{k_2,j_2}^\nu(s)))\big\|_{L^\infty}\\
&\lesssim \|\widehat{g_{k_1,j_1}^\mu}(s)\|_{L^2}\big(\|\widehat{g_{k_2,j_2}^\nu}(s)\|_{L^2}+\|\widehat{h_{k_2,j_2}^\nu}(s)\|_{L^2}\big)\\
&\lesssim 2^{-(1+\beta)j_1},
\end{split}
\end{equation*}
and
\begin{equation*}
\begin{split}
\big\|\mathcal{F}P_k\widetilde{T}_s^{\sigma;\mu,\nu}(P_{[k_1-2,k_1+2]}g_{k_1,j_1}^\mu(s),&P_{[k_2-2,k_2+2]}(g_{k_2,j_2}^\nu(s)+h_{k_2,j_2}^\nu(s)))\big\|_{L^\infty}\\
&\lesssim \|\widehat{g_{k_1,j_1}^\mu}(s)\|_{L^1}\big(\|\widehat{g_{k_2,j_2}^\nu}(s)\|_{L^\infty}+\|\widehat{h_{k_2,j_2}^\nu}(s)\|_{L^\infty}\big)\\
&\lesssim 2^{-\gamma j_1},
\end{split}
\end{equation*}
Since $2^{j_1}\gtrsim 2^{m(1-\beta/10)}$ it follows that
\begin{equation}\label{ba16}
\text{ if }\,\,\,j_1\geq j_2\,\,\,\text{ then }\,\,\,\big\|\mathcal{F}P_k\widetilde{T}_s^{\sigma;\mu,\nu}(f_{k_1,j_1}^\mu,f_{k_2,j_2}^\nu)\big\|_{L^\infty}\lesssim 2^{-(1+\beta)(1-\beta/10)m}\cdot (2^{\alpha k_1}+2^{10k_1})^{-1}2^{-10k_2}.
\end{equation}

Using \eqref{ba15} and \eqref{ba16}, the left-hand side of \eqref{ba14} is dominated by
\begin{equation*}
C2^{2k_2}2^{-\alpha k_1}2^{-4\beta m/5},
\end{equation*}
which suffices. This completes the proof of the lemma.
\end{proof}

\begin{lemma}\label{BigBound6}
The bound \eqref{ok60} holds provided that \eqref{ok61} holds and, in addition,
\begin{equation}\label{ok100}
j\leq m+\max(\widetilde{k_1},\widetilde{k_2})+D,\quad \max(j_1,j_2)\leq (1-\beta/10)(m+\max(\widetilde{k_1},\widetilde{k_2})),\quad \min(k,k_1,k_2)\leq-D/10.
\end{equation}
\end{lemma}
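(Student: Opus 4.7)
My plan is to integrate by parts once in $s$, exploiting the first non-resonance condition in \eqref{bc}, $|b_{\sigma_1}+b_{\sigma_2}-b_{\sigma_3}|\geq 1/A$, to prove a quantitative lower bound on $|\Phi^{\sigma;\mu,\nu}(\xi,\eta)|$ on the relevant support. This is the analog of the ``$H$-part'' of the decomposition used in Lemma \ref{BigBound4}, now exploiting smallness of an input frequency rather than of the output frequency.

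Assume WLOG $k_1 \leq k_2$. Two regimes arise: either (i) $k_1 \leq -D/10$, so that $|\xi-\eta| \lesssim 2^{-D/10}$, forcing $\xi \approx \eta$ and $|k-k_2| \leq 10$; or (ii) $k \leq -D/10 \leq k_1$, so that $|\xi| \lesssim 2^{-D/10}$, $(k_1,k_2) \in \mathcal{X}_k^2$, $|k_1-k_2| \leq 8$, and $\xi-\eta \approx -\eta$. In either regime, a first-order expansion at the small frequency gives
\begin{equation*}
\Phi^{\sigma;\mu,\nu}(\xi,\eta) = (b_\sigma - s_1 b_{\sigma_\mu} - s_2 b_{\sigma_\nu}) + R(\zeta),
\end{equation*}
where $\mu=(\sigma_\mu,s_1)$, $\nu=(\sigma_\nu,s_2)$, and $R$ is a smooth bounded function of the larger of the frequencies $\zeta$ that vanishes at $\zeta=0$. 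The constant is bounded below by $1/A$ by the first line of \eqref{bc}.

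I would then split the Fourier-side expression as $\mathcal{F}[P_k T_m^{\sigma;\mu,\nu}(f^\mu_{k_1,j_1},f^\nu_{k_2,j_2})] = G + H$, with $G$ localized to $|\Phi| \leq 2^{-2D}$ and $H$ to its complement. For $H$, integrate by parts in $s$ once, paying $|\Phi|^{-1} \lesssim 2^{2D}$ and a factor $2^{-m}$; the boundary terms and the commutators involving $\partial_s f^\mu_{k_1,j_1}$, $\partial_s f^\nu_{k_2,j_2}$ are controlled via the bounds \eqref{ok50} and \eqref{nxc1} of Lemma \ref{ders}, yielding a net gain of at least $2^{-\beta m/3}$ over the trivial $L^2\times L^2$ bound used in Lemma \ref{BigBound5}. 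Together with the gap $j \leq m + \max(\widetilde{k_1},\widetilde{k_2}) + D$ in \eqref{ok100}, this suffices to close the $H$ contribution.

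The main obstacle is controlling $G$. For most sign choices the remainder $R$ has a fixed sign (for instance, when $s_2 = +$, $R(\xi) = \Lambda_\sigma(\xi) + \Lambda_{\sigma_\nu}(\xi) - b_\sigma - b_{\sigma_\nu} \geq 0$), so the constant cannot cancel it and $G \equiv 0$ identically. The delicate case is $s_2 = -$ with $c_\sigma = c_{\sigma_\nu}$: there $R$ is monotone and bounded, tending to $0$ at infinity, so a genuine time resonance may occur in a compact frequency band. In that band, Lemma \ref{desc1} restricts the $\eta$-support of $G$ to a thin resonance shell of thickness $\lesssim 2^{-2D}$, on which non-degeneracy (guaranteed by the third condition of \eqref{bc}) allows an integration by parts in $\eta$ analogous to the one used at the end of Lemma \ref{BigBound4}, completing the estimate.
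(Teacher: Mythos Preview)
Your decomposition into $G$ (where $|\Phi|\lesssim 2^{-2D}$) and $H$ (its complement), with integration by parts in $s$ on $H$, matches the paper's approach, and the $H$ estimate goes through essentially as you describe. There is one additional subtlety you omit: the $L^\infty$ part of the $B^1_{k,j}$ norm on $H$ does not close directly when $\max(j_1,j_2,-k,-k_1,-k_2)\leq 2\beta m$, and the paper inserts a further cut in $|\Xi^{\mu,\nu}|$ (splitting the boundary term $H_1$ into $H_2+H_3$) to handle that case.

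The genuine gap is in your treatment of $G$. Your sign analysis of the remainder $R$ is incorrect: in case (i), with $s_2=+$ one has $R\approx(\Lambda_\sigma-b_\sigma)-(\Lambda_{\sigma_\nu}-b_{\sigma_\nu})$, which has no fixed sign when $c_\sigma\neq c_{\sigma_\nu}$; with $s_2=-$ one gets $R\approx(\Lambda_\sigma-b_\sigma)+(\Lambda_{\sigma_\nu}-b_{\sigma_\nu})\geq 0$, which is unbounded, not ``tending to $0$ at infinity''. More fundamentally, $|\Phi^{\sigma;\mu,\nu}|$ \emph{can} vanish on the relevant support for essentially every sign combination, so one cannot conclude $G\equiv 0$ from the constant term alone. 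Your invocation of Lemma \ref{desc1} is also off: that lemma asserts the relevant set is \emph{empty}, not a thin shell, and it requires simultaneous smallness of $|\Xi^{\mu,\nu}|$, which you have not localized to.

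What the paper does instead is to forget the sign analysis entirely and further decompose $G$ dyadically in the size of $|\Xi^{\mu,\nu}|$. For $|\Xi^{\mu,\nu}|\gtrsim 2^{-\beta m/11}$ one integrates by parts in $\eta$ via Lemma \ref{tech5} (this is the step ``analogous to the end of Lemma \ref{BigBound4}'' that you cite, but it requires a lower bound on $|\Xi|$, not on $|\Phi|$). On the remaining piece, where both $|\Phi|\lesssim 2^{-D}$ and $|\Xi|\lesssim 2^{-\beta m/11}$, one shows the support is empty: if $k_1\leq -D/10$ then from \eqref{jb2} the second term of $\Xi^{\mu,\nu}$ dominates and $|\Xi^{\mu,\nu}|\gtrsim 2^{-D/20}$ (using that $G\neq 0$ forces $k_2\geq -D/20$); if $k\leq -D/10$ one applies Lemma \ref{desc1}(i) directly. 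This is the step where the hypothesis $\min(k,k_1,k_2)\leq -D/10$ is actually used.
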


\begin{proof}[Proof of Lemma \ref{BigBound6}] Using definition \eqref{sec5.3}, it suffices to prove that
\begin{equation}\label{ba20}
\begin{split}
&(1+2^{k_1}+2^{k_2})(2^{\alpha k}+2^{10k})\cdot 2^{(1+\beta)j}\big\|\widetilde{\varphi}^{(k)}_j\cdot P_kT_{m}^{\sigma;\mu,\nu}(f_{k_1,j_1}^\mu,f_{k_2,j_2}^\nu)\big\|_{L^2}\\
&+(1+2^{k_1}+2^{k_2})(2^{\alpha k}+2^{10k})\cdot 2^{(1/2-\beta)\widetilde{k}}\big\|\mathcal{F}[\widetilde{\varphi}^{(k)}_j\cdot P_kT_{m}^{\sigma;\mu,\nu}(f_{k_1,j_1}^\mu,f_{k_2,j_2}^\nu)]\big\|_{L^\infty}\lesssim 2^{-2\beta^4m}.
\end{split}
\end{equation}
By symmetry, we may assume $k_1\leq k_2$.

As in the proof of Lemma \ref{BigBound4} we decompose
\begin{equation*}
\begin{split}
&\mathcal{F}P_kT_{m}^{\sigma;\mu,\nu}(f_{k_1,j_1}^\mu,f_{k_2,j_2}^\nu)(\xi)=G(\xi)+H(\xi),\\
&G(\xi):=\varphi_k(\xi)\int_{\mathbb{R}}\int_{\mathbb{R}^3}e^{is\Phi^{\sigma;\mu,\nu}(\xi,\eta)}\varphi(2^D\Phi^{\sigma;\mu,\nu}(\xi,\eta))q_m(s)\widehat{f_{k_1,j_1}^\mu}(\xi-\eta,s)\widehat{f_{k_2,j_2}^\nu}(\eta,s)\,d\eta ds,\\
&H(\xi):=\varphi_k(\xi)\int_{\mathbb{R}}\int_{\mathbb{R}^3}e^{is\Phi^{\sigma;\mu,\nu}(\xi,\eta)}[1-\varphi(2^D\Phi^{\sigma;\mu,\nu}(\xi,\eta))]q_m(s)\widehat{f_{k_1,j_1}^\mu}(\xi-\eta,s)\widehat{f_{k_2,j_2}^\nu}(\eta,s)\,d\eta ds.
\end{split}
\end{equation*}

We show first that
\begin{equation}\label{b30}
(1+2^{k_2})(2^{\alpha k}+2^{10k})\cdot 2^{(1+\beta)(m+\widetilde{k_2})}\|H\|_{L^2}+(1+2^{k_2})(2^{\alpha k}+2^{10k})\cdot 2^{(1/2-\beta)\widetilde{k}}\|H\|_{L^\infty}\lesssim 2^{-2\beta^4m}.
\end{equation}
For this we integrate by parts in $s$ and use the bound \eqref{mk6.6}. It follows that
\begin{equation}\label{b31}
\begin{split}
\|H\|_{L^2}\lesssim (1+2^{3k_1})&(1+2^{3k_2})\sup_{s\in[2^{m-1},2^{m+1}]}\Big[2^m\|Ef_{k_1,j_1}^\mu(s)\|_{L^\infty}\|(\partial_sf_{k_2,j_2}^\nu)(s)\|_{L^2}\\
&+2^m\|(\partial_sf_{k_1,j_1}^\mu)(s)\|_{L^2}\|Ef_{k_2,j_2}^\nu(s)\|_{L^\infty}\\
&+\min\big(\|f_{k_1,j_1}^\mu(s)\|_{L^2}\|Ef_{k_2,j_2}^\nu(s)\|_{L^\infty},\|Ef_{k_1,j_1}^\mu(s)\|_{L^\infty}\|f_{k_2,j_2}^\nu(s)\|_{L^2}\big)\Big]
\end{split}
\end{equation}
and
\begin{equation}\label{b32}
\begin{split}
\|H\|_{L^\infty}\lesssim \sup_{s\in[2^{m-1},2^{m+1}]}&\Big[2^m\|f_{k_1,j_1}^\mu(s)\|_{L^2}\|(\partial_sf_{k_2,j_2}^\nu)(s)\|_{L^2}\\
&+2^m\|(\partial_sf_{k_1,j_1}^\mu)(s)\|_{L^2}\|f_{k_2,j_2}^\nu(s)\|_{L^2}+\|H_1(s)\|_{L^\infty}\Big],
\end{split}
\end{equation}
where
\begin{equation}\label{b32.5}
H_1(\xi,s):=\varphi_k(\xi)\int_{\mathbb{R}^3}e^{is\Phi^{\sigma;\mu,\nu}(\xi,\eta)}\frac{[1-\varphi(2^D\Phi^{\sigma;\mu,\nu}(\xi,\eta))]}{\Phi^{\sigma;\mu,\nu}(\xi,\eta)}\widehat{f_{k_1,j_1}^\mu}(\xi-\eta,s)\widehat{f_{k_2,j_2}^\nu}(\eta,s)\,d\eta.
\end{equation}

Using \eqref{nh9} and Lemma \ref{ders}, for any $s\in[2^{m-1},2^{m+1}]$
\begin{equation}\label{b33}
\begin{split}
2^m\|Ef_{k_1,j_1}^\mu(s)\|_{L^\infty}&\|(\partial_sf_{k_2,j_2}^\nu)(s)\|_{L^2}+2^m\|(\partial_sf_{k_1,j_1}^\mu)(s)\|_{L^2}\|Ef_{k_2,j_2}^\nu(s)\|_{L^\infty}\\
&\lesssim (1+2^{k_1})^{-6}(1+2^{k_2})^{-6}2^{-(1+2\beta)m}.
\end{split}
\end{equation}
Moreover, using again \eqref{nh9} and \eqref{nh9.1}, if $s\in[2^{m-1},2^{m+1}]$ and $\max(j_1,j_2)\geq 4\beta m$ then
\begin{equation*}
\min\big(\|f_{k_1,j_1}^\mu(s)\|_{L^2}\|Ef_{k_2,j_2}^\nu(s)\|_{L^\infty},\|Ef_{k_1,j_1}^\mu(s)\|_{L^\infty}\|f_{k_2,j_2}^\nu(s)\|_{L^2}\big)\lesssim (1+2^{k_1})^{-6}(1+2^{k_2})^{-6}2^{-(1+2\beta)m}.
\end{equation*}
On the other hand, using also \eqref{mk15.6}--\eqref{mk15.65}, if $s\in[2^{m-1},2^{m+1}]$ and $\max(j_1,j_2)\leq 4\beta m$ then
\begin{equation*}
\min\big(\|f_{k_1,j_1}^\mu(s)\|_{L^2}\|Ef_{k_2,j_2}^\nu(s)\|_{L^\infty},\|Ef_{k_1,j_1}^\mu(s)\|_{L^\infty}\|f_{k_2,j_2}^\nu(s)\|_{L^2}\big)\lesssim (1+2^{k_1})^{-6}(1+2^{k_2})^{-6}2^{-(1+2\beta)m}.
\end{equation*}
Therefore, using also \eqref{b31} and \eqref{b33} it follows that
\begin{equation}\label{b36}
(1+2^{k_2})(2^{\alpha k}+2^{10k})\cdot 2^{(1+\beta)(m+\widetilde{k_2})}\|H\|_{L^2}\lesssim 2^{-2\beta^4m},
\end{equation}
as desired.

To prove the $L^\infty$ bound in \eqref{b30} we use \eqref{nh9} and Lemma \ref{ders} to estimate
\begin{equation}\label{b34}
2^m\|f_{k_1,j_1}^\mu(s)\|_{L^2}\|(\partial_sf_{k_2,j_2}^\nu)(s)\|_{L^2}+2^m\|(\partial_sf_{k_1,j_1}^\mu)(s)\|_{L^2}\|f_{k_2,j_2}^\nu(s)\|_{L^2}\lesssim (1+2^{k_2})^{-6}2^{-\beta m}.
\end{equation}
Then we estimate, using \eqref{nh9.1},
\begin{equation*}
\|H_1(s)\|_{L^\infty}\lesssim \|f_{k_1,j_1}^\mu(s)\|_{L^2}\|f_{k_2,j_2}^\nu(s)\|_{L^2}\lesssim 2^{-(j_1+j_2)/2}(1+2^{k_2})^{-10}.
\end{equation*}
The desired $L^\infty$ estimate in \eqref{b30},
\begin{equation}\label{b35}
(1+2^{k_2})(2^{\alpha k}+2^{10k})\cdot 2^{(1/2-\beta)\widetilde{k}}\|H\|_{L^\infty}\lesssim 2^{-2\beta^4m}
\end{equation}
follows from \eqref{b32} unless
\begin{equation}\label{b37}
\max(j_1,j_2,-k,-k_1,-k_2)\leq 2\beta m.
\end{equation}

On the other hand, assuming \eqref{b37}, we need to improve slightly on the $L^\infty$ bound on $H_1(s)$. We decompose $H_1(\xi,s)=H_2(\xi,s)+H_3(\xi,s)$ where 
\begin{equation*}
\begin{split}
H_2(\xi,s):=\varphi_k(\xi)&\int_{\mathbb{R}^3}\varphi_{(-\infty,-(1/2-\beta^2)m]}(\Xi^{\mu,\nu}(\xi,\eta))e^{is\Phi^{\sigma;\mu,\nu}(\xi,\eta)}\\
&\frac{[1-\varphi(2^D\Phi^{\sigma;\mu,\nu}(\xi,\eta))]}{\Phi^{\sigma;\mu,\nu}(\xi,\eta)}\widehat{f_{k_1,j_1}^\mu}(\xi-\eta,s)\widehat{f_{k_2,j_2}^\nu}(\eta,s)\,d\eta,
\end{split}
\end{equation*}
and
\begin{equation*}
\begin{split}
H_3(\xi,s):=\varphi_k(\xi)&\int_{\mathbb{R}^3}[1-\varphi_{(-\infty,-(1/2-\beta^2)m]}(\Xi^{\mu,\nu}(\xi,\eta))]e^{is\Phi^{\sigma;\mu,\nu}(\xi,\eta)}\\
&\frac{[1-\varphi(2^D\Phi^{\sigma;\mu,\nu}(\xi,\eta))]}{\Phi^{\sigma;\mu,\nu}(\xi,\eta)}\widehat{f_{k_1,j_1}^\mu}(\xi-\eta,s)\widehat{f_{k_2,j_2}^\nu}(\eta,s)\,d\eta.
\end{split}
\end{equation*}
Using Lemma \ref{tech5} (with $K\approx 2^{m(1/2+\beta^2)}$, $\eps\approx 2^{-m/2}$), the restriction \eqref{b37}, and the bound \eqref{nh9}, it follows that $|H_3(\xi,s)|\lesssim 2^{-m}$. At the same time, using the explicit formula \eqref{ok47}, and the simple equality
\begin{equation*}
\vert \vec{A}-\vec{B}\vert^2=\vert \vert\vec{A}\vert-\vert\vec{B}\vert\vert^2+\vert\vec{A}\vert\cdot\vert\vec{B}\vert(1-\cos\theta),\quad \theta=\angle(\vec{A},\vec{B})
\end{equation*}
it is easy to see that if $|\xi|\approx 2^k$, $|\xi-\eta|\approx 2^{k_1}$, $|\eta|\approx 2^{k_2}$, where $\max(|k|,|k_1|,|k_2|)\leq 2\beta m$, and if $|\Xi^{\mu,\nu}(\xi,\eta)|\lesssim 2^{-m/3}$ then
\begin{equation*}
\min\Big(\big|\eta-\xi|\eta|/|\xi|\big|,\big|\eta+\xi|\eta|/|\xi|\big|\Big)\lesssim 2^{-m/4}.
\end{equation*}
Therefore, using the last bound in \eqref{nh9}, $|H_2(\xi,s)|\lesssim 2^{-m/5}$. As a result, assuming \eqref{b37}, it follows that $|H_1(\xi,s)|\lesssim 2^{-m/5}$. The desired bound \eqref{b35} follows using also \eqref{b32} and \eqref{b34}. This completes the proof of the main estimate \eqref{b30}.

We show now that
\begin{equation}\label{b40}
(1+2^{k_2})(2^{\alpha k}+2^{10k})\cdot 2^{(1+\beta)(m+\widetilde{k_2})}\|G\|_{L^2}+(1+2^{k_2})(2^{\alpha k}+2^{10k})\cdot 2^{(1/2-\beta)\widetilde{k}}\|G\|_{L^\infty}\lesssim 2^{-2\beta^4m}.
\end{equation}
Notice that $G=0$ unless
\begin{equation}\label{b41}
k_2\geq -D/20.
\end{equation}
As in the proof of Lemma \ref{BigBound4}, for any $l\in\mathbb{Z}$ we define
\begin{equation*}
\begin{split}
G_{\leq l}(\xi) :=\varphi_k(\xi)&\int_{\mathbb{R}}\int_{\mathbb{R}^3}\varphi_{(-\infty,l]}(\Xi^{\mu,\nu}(\xi,\eta))\cdot e^{is\Phi^{\sigma;\mu,\nu}(\xi,\eta)}\\
&\varphi(2^D\Phi^{\sigma;\mu,\nu}(\xi,\eta))q_m(s)\widehat{f_{k_1,j_1}^\mu}(\xi-\eta,s)\widehat{f_{k_2,j_2}^\nu}(\eta,s)\,d\eta ds.
\end{split}
\end{equation*}
Let $G_l:=G_{\leq l}-G_{\leq l-1}$. Recalling the assumption $\max(j_1,j_2)\leq(1-\beta/10)m$, we notice that if $l\geq -\beta m/11$ then we may apply Lemma \ref{tech5} (with $K\approx 2^{(1-\beta/11)m}$, $\varepsilon\approx 2^{-(1-\beta/10)m}$) and use the bounds \eqref{nh9} to conclude that
\begin{equation*}
\|G_l\|_{L^\infty}\lesssim 2^{-4m}\qquad\text{ if }\qquad l\geq l_0:=\lfloor-\beta m/11\rfloor.
\end{equation*}
On the other hand, recalling that $\min(k,k_1,k_2)\leq-D/10$ and the inequality \eqref{b41}, we notice that
\begin{equation*}
G_{\leq l_0}=0\qquad\text{ if }\qquad k_1\leq -D/10.
\end{equation*}
Finally, if $k\leq -D/10$ and $k_2\le j/N_0^\prime$ and using Lemma \ref{desc1} (i), $G_{\leq l_0}=0$. The desired estimate \eqref{b40} follows easily.
\end{proof}

\begin{lemma}\label{BigBound6.5}
The bound \eqref{ok60} holds provided that \eqref{ok61} holds and, in addition,
\begin{equation}\label{tik20}
j\leq m+\max(\widetilde{k_1},\widetilde{k_2})+D,\quad \max(j_1,j_2)\leq (1-\beta/10)(m+\max(\widetilde{k_1},\widetilde{k_2})),\quad \max(k,k_1,k_2)\geq D.
\end{equation}
\end{lemma}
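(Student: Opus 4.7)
The proof parallels that of Lemma \ref{BigBound6}: we decompose the Fourier transform of the output bilinear expression into a near-resonant piece $G$ (supported where $|\Phi^{\sigma;\mu,\nu}|\lesssim 2^{-D}$) and a non-resonant piece $H$, and treat each separately, but we exploit the high-frequency regime rather than the low-frequency one. By symmetry we may assume $k_1\leq k_2$; since $(k_1,k_2)\in\mathcal{X}_k$ and $\max(k,k_1,k_2)\geq D$, one checks that $k_2\geq D-8$, whence $\widetilde{k_2}=0$ and hypothesis \eqref{tik20} reduces to $j\leq m+D$ and $\max(j_1,j_2)\leq(1-\beta/10)m$. We write
\begin{equation*}
\mathcal{F}[P_kT_m^{\sigma;\mu,\nu}(f^\mu_{k_1,j_1},f^\nu_{k_2,j_2})](\xi)=G(\xi)+H(\xi),
\end{equation*}
with $G$ carrying $\varphi(2^D\Phi^{\sigma;\mu,\nu})$ and $H$ carrying $1-\varphi(2^D\Phi^{\sigma;\mu,\nu})$, and it suffices to control the $L^2$ and $L^\infty$-of-Fourier parts of the $B^1_{k,j}$ norm separately.

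For $H$, where $|\Phi^{\sigma;\mu,\nu}|\gtrsim 2^{-D}$, we integrate by parts in $s$ as in \eqref{b31}--\eqref{b32.5}, which produces the harmless factor $1/\Phi^{\sigma;\mu,\nu}$ of size at most $2^D$ and trades one $f_i$ for $\partial_s f_i$. The key input is the refined derivative bound $\|\partial_s f^\nu_{k_2,j_2}\|_{L^2}\lesssim (1+s)^{-1-\beta}2^{-(N_0-5)k_2}$ from \eqref{ok50}, available because $k_2\geq D-8>0$. Together with \eqref{nh9} and \eqref{nh9.1}, this yields
\begin{equation*}
\|H\|_{L^2}\lesssim 2^{-(1+2\beta)m},\qquad \|\widehat{H}\|_{L^\infty}\lesssim 2^D\,2^{-(N_0-5)k_2}\bigl(2^{-4k_2}+2^{-\beta m}\bigr).
\end{equation*}
Combining these with the factors $(2^{\alpha k}+2^{10k})2^{(1+\beta)j}$ and $(2^{\alpha k}+2^{10k})2^{(1/2-\beta)\widetilde{k}}$ and with $(1+2^{k_1}+2^{k_2})\lesssim 2^{k_2}$, and using $k_+\leq k_2+8$, $k_2\leq j/N_0^\prime$, $j\leq m+D$, all the polynomial factors in $m$ and $j$ are absorbed: since $N_0=10^4$ the ratio $10/N_0^\prime$ is negligible compared to $\beta$, so the $L^2$ contribution has a total exponent bounded by $-\beta m/2+O(D)$ (which beats $-\beta^4(m+j)$), while the $L^\infty$ contribution is bounded by $2^{D-(N_0-16)(D-8)}+2^{D-\beta m-(N_0-16)(D-8)}$, the first term suffices for bounded $m$ and the second for large $m$.

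For $G$ we decompose dyadically in the size of $\Xi^{\mu,\nu}(\xi,\eta)=\nabla_\eta\Phi^{\sigma;\mu,\nu}(\xi,\eta)$:
\begin{equation*}
G=G_{\leq l_0}+\sum_{l>l_0}G_l,\qquad l_0=\lfloor -\beta m/11\rfloor.
\end{equation*}
For each $l>l_0$, repeated integration by parts in $\eta$ via Lemma \ref{tech5} (applied with $K\approx 2^{m+l}$ and $\varepsilon\approx 2^{-(1-\beta/10)m}$) combined with the pointwise derivative estimates \eqref{nh9} gives $\|G_l\|_{L^\infty}\lesssim 2^{-10m}$, a negligible contribution after Bernstein. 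For the remaining term $G_{\leq l_0}$, we invoke Lemma \ref{desc1}: under the non-degeneracy hypothesis \eqref{bc}, the degenerate space-time resonant set $\mathcal{R}\cap\mathcal{D}$ lies inside a compact subset of $\mathbb{R}^3\times\mathbb{R}^3$ whose size is controlled solely by the parameters $b_\sigma,c_\sigma\in[1/A,A]$ (see \eqref{DefQ(XI)}). Choosing $D=D(d,A,d^\prime)$ sufficiently large, the constraint $k_2\geq D-8$ then places the support of the integrand in $G_{\leq l_0}$ above all space-time resonances, and we conclude $G_{\leq l_0}\equiv 0$.

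The principal obstacle is the vanishing $G_{\leq l_0}\equiv 0$ in the high-frequency regime; this is the high-frequency branch of Lemma \ref{desc1} and rests on the quantitative localization of $\mathcal{R}\cap\mathcal{D}$ provided by \eqref{bc} and \eqref{DefQ(XI)}. Once this vanishing is obtained, the combination of the dispersive $s$-gain $2^{-(1+2\beta)m}$ from the integration by parts in $s$ and the $H^{N_0}$-regularity gain $2^{-(N_0-5)k_2}$ from the high-frequency cutoff absorbs all the polynomial factors arising from $2^{(1+\beta)j}$, $(2^{\alpha k}+2^{10k})$, $2^{(1/2-\beta)\widetilde{k}}$ and $(1+2^{k_2})$ within the constraints of \eqref{ok61} and \eqref{tik20}, yielding the desired bound $2^{-\beta^4(m+j)}$.
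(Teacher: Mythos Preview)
Your overall strategy matches the paper's (decompose into $G+H$, integrate by parts in $s$ for $H$, in $\eta$ for $G$, then show the remaining piece of $G$ vanishes via Lemma~\ref{desc1}), but there is a genuine gap in the scale at which you split.

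You cut at $|\Phi^{\sigma;\mu,\nu}|\lesssim 2^{-D}$, whereas the paper cuts at $|\Phi^{\sigma;\mu,\nu}|\lesssim 2^{-2D-2k_2}$. This is not cosmetic. At high frequency the phase $\Phi$ restricted to the space-resonant curve $\eta=p^{\mu,\nu}(\xi)$ is only of size $\sim 2^{-k_2}$: for instance, when $c_\sigma=c_{\sigma_1}=c_{\sigma_2}$ and $\iota_1=\iota_2=1$ one computes
\[
\Psi^{\sigma;\mu,\nu}(s)=\frac{(b_\sigma-b_{\sigma_1}-b_{\sigma_2})(b_\sigma+b_{\sigma_1}+b_{\sigma_2})}{2c_\sigma s}+O(s^{-3}),
\]
so $|\Psi|\approx C_A^{-1}2^{-k_2}$. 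Since here $k_2\geq D-8$, this is $\leq 2^{-D}$, and your $G_{\leq l_0}$ is \emph{not} empty. Correspondingly, Lemma~\ref{desc1}~(ii) as stated requires $\delta_2\leq 2^{-D}2^{-\max(k_1,k_2)}$, not merely $\delta_2\leq 2^{-D}$; with your threshold its hypotheses are not met and the vanishing conclusion is unavailable. Your appeal to the compactness of $\mathcal{R}\cap\mathcal{D}$ is also off-target: what is needed is the emptiness of the \emph{approximate} space-time resonant set $\mathcal{L}^{\sigma;\mu,\nu}_{k,k_1,k_2;\delta_1,\delta_2}$ at high frequency, which is precisely Lemma~\ref{desc1}~(ii) with its $k_2$-dependent thresholds.

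The fix is exactly what the paper does: use $\varphi(2^{2D+2k_2}\Phi^{\sigma;\mu,\nu})$ in place of $\varphi(2^{D}\Phi^{\sigma;\mu,\nu})$. This makes $\delta_2\leq 2^{-2D-2k_2}\leq 2^{-D-k_2}$, so Lemma~\ref{desc1}~(ii) applies and $G_{\leq l_0}\equiv 0$. The price is that $1/\Phi$ in the $H$-analysis now has size $\lesssim 2^{2D+2k_2}$ rather than $2^{D}$, and the amplitude derivatives in the $\eta$-integration by parts pick up the same factor; both are harmlessly absorbed by the available $2^{-(N_0-5)k_2}$ from \eqref{ok50} and by $k_2\leq j/N_0'$, since $N_0$ is enormous.
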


\begin{proof}[Proof of Lemma \ref{BigBound6.5}] This is similar to the proof of Lemma \ref{BigBound6}, using Lemma \ref{desc1} (ii) instead of Lemma \ref{desc1} (i). Using definition \eqref{sec5.3}, it suffices to prove that
\begin{equation}\label{tik21}
\begin{split}
&2^{\max(k_1,k_2)}(2^{\alpha k}+2^{10k})\cdot 2^{(1+\beta)j}\big\|\widetilde{\varphi}^{(k)}_j\cdot P_kT_{m}^{\sigma;\mu,\nu}(f_{k_1,j_1}^\mu,f_{k_2,j_2}^\nu)\big\|_{L^2}\\
&+2^{\max(k_1,k_2)}(2^{\alpha k}+2^{10k})\cdot 2^{(1/2-\beta)\widetilde{k}}\big\|\mathcal{F}[\widetilde{\varphi}^{(k)}_j\cdot P_kT_{m}^{\sigma;\mu,\nu}(f_{k_1,j_1}^\mu,f_{k_2,j_2}^\nu)]\big\|_{L^\infty}\lesssim 2^{-2\beta^4m}.
\end{split}
\end{equation}
The inequalities in \eqref{tik20} show that
\begin{equation*}
\max(k_1,k_2)\geq D-10,\quad j\leq m+D,\quad \max(j_1,j_2)\leq (1-\beta/10)m.
\end{equation*}
By symmetry we may assume that $k_1\leq k_2$.

As in the proof of Lemma \ref{BigBound6} we decompose
\begin{equation*}
\begin{split}
&\mathcal{F}P_kT_{m}^{\sigma;\mu,\nu}(f_{k_1,j_1}^\mu,f_{k_2,j_2}^\nu)(\xi)=G(\xi)+H(\xi),\\
&G(\xi):=\varphi_k(\xi)\int_{\mathbb{R}}\int_{\mathbb{R}^3}e^{is\Phi^{\sigma;\mu,\nu}(\xi,\eta)}\varphi(2^{2D+2k_2}\Phi^{\sigma;\mu,\nu}(\xi,\eta))q_m(s)\widehat{f_{k_1,j_1}^\mu}(\xi-\eta,s)\widehat{f_{k_2,j_2}^\nu}(\eta,s)\,d\eta ds,\\
&H(\xi):=\varphi_k(\xi)\int_{\mathbb{R}}\int_{\mathbb{R}^3}e^{is\Phi^{\sigma;\mu,\nu}(\xi,\eta)}[1-\varphi(2^{2D+2k_2}\Phi^{\sigma;\mu,\nu}(\xi,\eta))]q_m(s)\widehat{f_{k_1,j_1}^\mu}(\xi-\eta,s)\widehat{f_{k_2,j_2}^\nu}(\eta,s)\,d\eta ds.
\end{split}
\end{equation*}

As in the proof of Lemma \ref{BigBound6} we integrate by parts in $s$ to estimate the contributions of the function $H$, and integrate by parts in $\eta$ to estimate the contributions of the function $G$. More precisely, we argue as in the proof of Lemma \ref{BigBound6}, using Lemma \ref{desc1} (ii) instead of Lemma \ref{desc1} (i), to conclude that
\begin{equation*}
2^{k_2}(2^{\alpha k}+2^{10k})\cdot 2^{(1+\beta)m}\|H\|_{L^2}+2^{k_2}(2^{\alpha k}+2^{10k})\cdot 2^{(1/2-\beta)\widetilde{k}}\|H\|_{L^\infty}+2^{2m}\|G\|_{L^{\infty}}\lesssim 2^{-2\beta^4m}.
\end{equation*}
Clearly, this suffices to prove the desired estimate \eqref{tik21}.
\end{proof}

We examine now the conclusions of Lemma \ref{BigBound3}, Lemma \ref{BigBound4}, Lemma \ref{BigBound5}, Lemma \ref{BigBound6}, and Lemma \ref{BigBound6.5}, and notice that to complete the proof of Proposition \ref{reduced2}, it suffices to prove Proposition \ref{reduced3} below. 

\begin{proposition}\label{reduced3}
With the same notation as in Proposition \ref{reduced1}, we have
\begin{equation}\label{ok90}
(1+2^{k_1}+2^{k_2})\big\|\widetilde{\varphi}^{(k)}_j\cdot P_kT_{m}^{\sigma;\mu,\nu}(f_{k_1,j_1}^\mu,f_{k_2,j_2}^\nu)\big\|_{B_{k,j}}\lesssim 2^{-2\beta^4m},
\end{equation}
for any fixed $\mu,\nu\in\mathcal{I}_d$, $(k,j),(k_1,j_1),(k_2,j_2)\in\mathcal{J}$, and $m\in[0,L+1]\cap\mathbb{Z}$, satisfying
\begin{equation}\label{ok91}
\beta m/2+D^2\leq j\leq m+D,\qquad \max(j_1,j_2)\leq (1-\beta/10)m,\qquad -D/10\leq k,k_1,k_2\leq D.
\end{equation}
\end{proposition}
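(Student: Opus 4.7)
In this core case all frequencies are of order one, so $1+2^{k_1}+2^{k_2}\lesssim 1$ and the various weights $2^{\alpha k}+2^{10k}$ are uniformly bounded. We have $j\approx m$, with $\max(j_1,j_2)\leq (1-\beta/10)m$. The plan is to decompose the bilinear output $\widetilde{\varphi}^{(k)}_j\cdot P_kT^{\sigma;\mu,\nu}_m(f^\mu_{k_1,j_1},f^\nu_{k_2,j_2})$ explicitly as a sum of a strong piece, controlled in $B^1_{k,j}$, and a weak piece, controlled in $B^2_{k,j}$, corresponding respectively to the non-resonant and the resonant contributions.

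First, I split each input via \eqref{ok36}--\eqref{ok37} into $g^*+h^*$ parts, and insert in the definition \eqref{nh5} of $T^{\sigma;\mu,\nu}_m$ a three-fold partition of the $(s,\eta)$ integration into a non-time-resonant region $\{|\Phi^{\sigma;\mu,\nu}|\geq\kappa_\Phi\}$, a non-space-resonant region $\{|\Phi^{\sigma;\mu,\nu}|\leq\kappa_\Phi,\ |\Xi^{\mu,\nu}|\geq\kappa_\Xi\}$, and a resonant region $\{|\Phi^{\sigma;\mu,\nu}|\leq\kappa_\Phi,\ |\Xi^{\mu,\nu}|\leq\kappa_\Xi\}$. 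Here I take $\kappa_\Phi=2^{-m+\beta^2m}$, while $\kappa_\Xi$ is chosen as a suitable negative power of $2^m$ comfortably larger than $2^{\max(j_1,j_2)-m}$ with an extra margin $2^{\beta^2m}$. On the non-time-resonant region I integrate by parts in $s$ as in the analysis of the term $H$ in Lemma \ref{BigBound6}, using Lemma \ref{ders} to handle the arising $\partial_s f^*_{k_i,j_i}$; on the non-space-resonant region I integrate by parts in $\eta$ via Lemma \ref{tech5}, using the symbol bounds on $D^\rho_\xi\widehat{f^*_{k_i,j_i}}$ from the last line of \eqref{nh9}. Both contributions fit comfortably into the strong $B^1_{k,j}$ piece.

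The truly difficult contribution is the residual resonant piece, supported in a small neighborhood of the space-time resonant manifold $\mathcal{R}=\{(\xi,\eta):\Phi^{\sigma;\mu,\nu}=\Xi^{\mu,\nu}=0\}$ discussed in subsection \ref{comments}. Under the non-degeneracy condition \eqref{bc}, the Hessian $\nabla^2_{\eta,\eta}\Phi^{\sigma;\mu,\nu}$ is uniformly non-singular on $\mathcal{R}$, and a Morse-type change of variables locally conjugates $\Phi^{\sigma;\mu,\nu}$ to $|\eta-p^{\mu,\nu}(\xi)|^2$ with bounded Jacobian in both directions, where $p^{\mu,\nu}$ is the smooth parametrization already used in the proof of Lemma \ref{ders}. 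I assign this residual to the weak $B^2_{k,j}$ piece: its Fourier support lies in an $O(\kappa_\Xi)$-neighborhood of the image sphere of $p^{\mu,\nu}$, so the small-ball term $2^{\gamma j}\sup_{R,\xi_0}R^{-2}\|\widehat{\cdot}\|_{L^1(B(\xi_0,R))}$ in \eqref{sec5.4} is bounded by a direct volume estimate. The $L^\infty$ term follows from stationary phase in $\eta$, giving the gain $s^{-3/2}$, and the $L^2$ term $2^{(1-\beta)j}\|\cdot\|_{L^2}$ is obtained by combining the $L^\infty$ bound with the $O(\kappa_\Xi)$-thin support volume.

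The main obstacle is the $(h,h)$ interaction inside the resonant region. Both inputs here only satisfy the weak small-ball Fourier $L^1$ bound from \eqref{sec5.815}, and one must propagate this thin, essentially 2-dimensional support through the bilinear stationary-phase interaction in order to verify the third-term bound of the $B^2_{k,j}$ norm for the output. The uniform lower bound on $\det\nabla^2_{\eta,\eta}\Phi^{\sigma;\mu,\nu}$ on $\mathcal{R}$ provided by \eqref{bc} is used at exactly this step: it yields the full $s^{-3/2}$ stationary-phase gain and ensures that the Morse change of variables has bounded Jacobian in both directions, so that thin support on the $\eta$-side transfers to thin support on the $\xi$-side. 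Without this non-degeneracy the small-ball estimate for the output would fail, consistent with the discussion of degenerate resonances in subsection \ref{comments}.
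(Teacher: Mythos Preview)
Your plan captures the right coarse structure (time/space/resonant decomposition, with the resonant contribution going to the weak norm), and for $\max(j_1,j_2)\leq m(1/2-\beta^2)$ it is close in spirit to the paper's Lemma~\ref{BigBound7}. But there is a genuine gap in the complementary regime $\max(j_1,j_2)\in[m(1/2-\beta^2),(1-\beta/10)m]$, which the paper treats separately in Lemma~\ref{BigBound8} by a quite different argument that your plan does not contain.

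The problem is your reliance on stationary phase in $\eta$ for the resonant piece. The amplitude $\widehat{f^\mu_{k_1,j_1}}(\xi-\eta)\widehat{f^\nu_{k_2,j_2}}(\eta)$ is smooth only at scale $2^{-\max(j_1,j_2)}$, so the $s^{-3/2}$ gain is available only when $\max(j_1,j_2)\lesssim m/2$. Above that threshold your $\kappa_\Xi$ is forced to be at least $2^{\max(j_1,j_2)-m}$, and for $\max(j_1,j_2)$ close to $(1-\beta/10)m$ this gives $\kappa_\Xi\gtrsim 2^{-\beta m/10}$; the crude bound $|G(\xi)|\lesssim 2^m\kappa_\Xi^3$ is then hopelessly large, and no thin-support/volume argument on the $\xi$ side recovers it. The identification of the difficulty with the $(h,h)$ interaction is also off: the roughness affects the $(g,g)$ piece equally.

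The paper's fix in Lemma~\ref{BigBound8} is an $L^2$ almost-orthogonality argument, not stationary phase. One sets $j'':=\max(j_1,j_2)+\lfloor 3\beta^2m\rfloor$, decomposes $G=\sum_{v,n}G_{v,n}$ over $\xi$-cubes of side $2^{j''-m}$ and time intervals of length $2^{j''}$, and proves $\|G\|_{L^2}^2\lesssim\sum\|G_{v,n}\|_{L^2}^2$ via the lower bound $|(\Psi^{\sigma;\mu,\nu})'|\gtrsim 1$ from Lemma~\ref{desc4}. Each $G_{v,n}$ is then bounded using \eqref{cle40} and the $L^2$ orthogonality of the localized inputs \eqref{cle36.5}, with a further $g/h$ refinement in the hardest sub-case \eqref{cle47}. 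Note also that in this regime the paper controls the resonant piece in the \emph{strong} norm $B^1_{k,j}$, not $B^2_{k,j}$; the weak norm is used only in Lemma~\ref{BigBound7}, and there only for the narrow range \eqref{cle10}. Finally, no Morse change of variables is performed; the structure is accessed directly through Lemmas~\ref{desc2} and~\ref{desc4}.
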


The most delicate part of the analysis is done to prove Proposition \ref{reduced3} and corresponds to the resonant interaction at time $T$ and at location $X\simeq T$ of inputs located at position $Y\lesssim T$. This forms the bulk of the nonlinear stationary phase argument. We separate two cases.

(i) when the inputs are located close to the origin $1\lesssim Y\lesssim T^\frac{1}{2}$. In this case, essentially no parameter in the norm can give additional control and we {\it must} understand the result of the interaction. This is what sets the ``weak norm''. On the positive side, in this case, the inputs have essentially smooth Fourier transforms and allow for efficient stationary phase analysis, which gives a good description of the output.

(ii) when at least one input is located further away from the origin $T^\frac{1}{2}\lesssim Y\lesssim T$. In this case, the stationary phase analysis gets less and less efficient as $Y$ increases and we have access to less information on the output. However, this is compensated for by the fact that the parameters in the norm (and in particular the appropriate choice of $\beta$) start to give stronger control as $Y$ increases. In our situation, this is enough and we can always control the outcome of this interaction in the strong norm.

%To summarize, the analysis of this ``worst case'' dictates the choice of our norms that have to satisfy the following constraints:
%
%$I)$ The fact that case $i)$ produces unavoidable outcomes that we have to tolerate (hence the additional component in the sum-space).
%
%$II)$ The information coming from the localization in $X$ has to be strong enough so as to compensate for the weakening of the stationary phase analysis as $X$ increases.
%
%$III)$ Each component has to yield strong enough decay in time after we take back the linear flow, so as to be able to use the energy estimates.
%

\subsection{Proof of Proposition \ref{reduced3}} In this subsection we prove Proposition \ref{reduced3}. The arguments are more complicated than before; to control some of the more difficult spacetime resonances we need to use the more refined $B_{k,j}$ norms. We also need additional $L^2$ orthogonality arguments.

\begin{lemma}\label{BigBound7}
The bound \eqref{ok90} holds provided that \eqref{ok91} holds and, in addition,
\begin{equation}\label{cle0}
\max(j_1,j_2)\leq m(1/2-\beta^2).
\end{equation}
\end{lemma}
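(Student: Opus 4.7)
The plan is to exploit the strong regularity of the inputs in this regime to control the output in the weak component $B^2_{k,j}$ of the norm. Since $\max(j_1, j_2) \leq (1/2 - \beta^2) m$, the last line of \eqref{nh9} gives $|D_\xi^\rho \widehat{f^\mu_{k_1, j_1}}(\xi, s)| + |D_\eta^\rho \widehat{f^\nu_{k_2, j_2}}(\eta, s)| \lesssim 2^{|\rho|(1/2 - \beta^2) m}$, so both inputs are essentially smooth at scale $\gg 2^{-m/2}$. By hypothesis \eqref{bc} and the discussion ending with \eqref{bnbn3}, when $k, k_1, k_2 = O(1)$ the space-time resonance set is a non-degenerate 2-sphere $\mathcal{R}_{\sigma;\mu,\nu}$, the $\eta$-critical point $p^{\mu,\nu}(\xi)$ is a smooth function of $\xi$, and the Hessian $\nabla^2_{\eta,\eta}\Phi^{\sigma;\mu,\nu}$ is non-degenerate there. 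The output will concentrate in a thin tube of width $\sim 2^{-m}$ around $\mathcal{R}_{\sigma;\mu,\nu}$, which is precisely the scenario the refined term in $B^2_{k,j}$ is designed to accommodate.

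First I would insert a smooth cutoff $\varphi_{\leq -l_0}(\Xi^{\mu,\nu}(\xi, \eta))$ with $l_0 := \lfloor m/2 - 2\beta^2 m\rfloor$ into the $\eta$-integral. On the complement $\{|\Xi^{\mu,\nu}| \gtrsim 2^{-l_0}\}$, repeated integration by parts in $\eta$ via Lemma \ref{tech5} (with $K \sim 2^{m - l_0}$ and $\varepsilon \sim 2^{-\max(j_1, j_2)}$) yields an $O(2^{-10 m})$ contribution, since $K \varepsilon \gtrsim 2^{\beta^2 m/2}$. On the main piece $\{|\Xi^{\mu,\nu}| \lesssim 2^{-l_0}\}$, Lemma \ref{desc1} restricts $\eta$ to a ball of radius $\lesssim 2^{-l_0}$ around $p^{\mu,\nu}(\xi)$, and a standard stationary phase expansion in $\eta$ reduces the $\eta$-integral to
\begin{equation*}
\hat h(\xi) \approx \varphi_k(\xi) \int q_m(s)\, s^{-3/2} e^{i s \Phi_0(\xi)}\, G(\xi, s)\, ds, \qquad \Phi_0(\xi) := \Phi^{\sigma;\mu,\nu}(\xi, p^{\mu,\nu}(\xi)),
\end{equation*}
where $G(\xi, s)$ is a smooth amplitude bounded uniformly in $s$, whose $s$-derivative is controlled by $|\partial_s \widehat{f^\mu_{k',j'}}| \lesssim (1+s)^{-1-\beta/10}$ from \eqref{nxc1}. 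Integration by parts in $s$ then yields the pointwise dispersive bound
\begin{equation*}
|\hat h(\xi)| \lesssim 2^{-m/2} \cdot \min\bigl(1,\, 2^{-m}/|\Phi_0(\xi)|\bigr) + O(2^{-10 m}).
\end{equation*}

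With this description, the three components of the $B^2_{k,j}$ norm are bounded as follows. The $L^\infty$ term is $\lesssim 2^{-m/2}$, far better than needed. For the $L^2$ term, integrating the dispersive pointwise bound gives $\|h\|_{L^2}^2 \lesssim 2^{-2 m}$, hence $2^{(1-\beta) j} \|h\|_{L^2} \lesssim 2^{-\beta m + O(D)}$. For the refined term, the intersection of the tube $\{|\Phi_0| \lesssim 2^{-m}\}$ with $B(\xi_0, R)$ has volume $\lesssim \min(R^3,\, R^2 \cdot 2^{-m})$, and the dyadic contributions from $\{|\Phi_0| \sim 2^{-l}\}$ for $l \in [0, m]$ are each $\lesssim R^2 \cdot 2^{-3 m/2}$, so
\begin{equation*}
R^{-2} \|\hat h\|_{L^1(B(\xi_0, R))} \lesssim m \cdot 2^{-3 m/2},
\end{equation*}
yielding $2^{\gamma j} R^{-2} \|\hat h\|_{L^1} \lesssim m \cdot 2^{(11/8)(m + D) - 3 m/2} \lesssim m \cdot 2^{-m/8 + O(D)}$. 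Combining, $\|h\|_{B_{k,j}} \leq \|h\|_{B^2_{k,j}} \lesssim 2^{-\beta m/2}$, much stronger than the required $2^{-2 \beta^4 m}$.

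The main obstacle is making the stationary phase expansion in $\eta$ precise and uniform across $s \in [2^{m-1}, 2^{m+1}]$, so that the subsequent $s$-integration by parts is not spoiled by error terms arising from differentiating the amplitude $G(\xi, s)$. This requires carefully combining the input bounds from \eqref{nh9} with \eqref{nxc1} and Lemma \ref{ders}, and using that $|\nabla_\xi \Phi_0|$ is bounded below on the support, which follows from the non-degeneracy $\mathcal{R} \cap \mathcal{D} = \emptyset$ established at the end of subsection \ref{comments}. This lower bound is what translates ``tube of $\Phi_0$-width $2^{-m}$'' into ``tube of $\xi$-width $2^{-m}$'' and thereby justifies the volume estimates above.
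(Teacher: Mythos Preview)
Your overall architecture matches the paper's: localize to $|\Xi^{\mu,\nu}|\lesssim 2^{-m/2+O(\beta^2 m)}$, discard the complement via Lemma \ref{tech5}, obtain a pointwise bound of the form $|G(\xi)|\lesssim 2^{-m/2+O(\beta^2 m)}\min(1,2^{-m}/|\Psi^{\sigma;\mu,\nu}(|\xi|)|)$, decompose into level sets of $\Psi^{\sigma;\mu,\nu}$, and feed the resulting thin-shell structure into the $B^2_{k,j}$ norm. Two small corrections: the lemma that localizes $\eta$ near $p^{\mu,\nu}(\xi)$ is Lemma \ref{desc2}, not Lemma \ref{desc1}; and the lower bound on $|\nabla_\xi\Phi_0|$ that converts a $\Phi_0$-tube into a $\xi$-tube is exactly Lemma \ref{desc4}.

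The one genuine difference is in how the pointwise bound is obtained. You propose a stationary phase expansion in $\eta$ first, producing an amplitude $G(\xi,s)$, and then integrate by parts in $s$. For this to give $|\hat h(\xi)|\lesssim 2^{-3m/2}/|\Phi_0(\xi)|$ you need $\|\partial_s G(\cdot,s)\|_{L^\infty}\lesssim 2^{-m}$, and you claim this follows from \eqref{nxc1}. But $G$ contains the stationary phase correction terms $s^{-n}L_n F$, where $L_n$ are differential operators of order $2n$ in $\eta$; differentiating these in $s$ produces quantities like $D^{2n}_\eta\partial_s\widehat{f^\nu_{k_2,j_2}}$, which are \emph{not} directly bounded by \eqref{nxc1}. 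One can recover the needed bound $\|D^\rho_\eta\partial_s\widehat{f^\nu_{k_2,j_2}}\|_{L^\infty}\lesssim 2^{|\rho|j_2}(1+s)^{-1-\beta/10}$ by noting that the spatial cutoff $\phii^{(k_2)}_{j_2}$ absorbs the $\eta$-derivatives (each costing $2^{j_2}$) and then invoking the proof of \eqref{nxc1}, which actually bounds $\|\partial_s\widehat{P_{k'}V_\sigma}\|_{L^\infty}$. But this step is not automatic and you should flag it.

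The paper sidesteps this entirely by reversing the order: it integrates by parts in $s$ \emph{before} doing anything in $\eta$, using that on the support of the cutoff $|\Phi^{\sigma;\mu,\nu}(\xi,\eta)-\Psi^{\sigma;\mu,\nu}(|\xi|)|\lesssim\kappa_1^2\ll|\Psi^{\sigma;\mu,\nu}(|\xi|)|$, so $|\Phi^{\sigma;\mu,\nu}(\xi,\eta)|\approx|\Psi^{\sigma;\mu,\nu}(|\xi|)|$ uniformly in $\eta$. After this, the $\eta$-integral is bounded by the crude volume estimate $\kappa_1^3\cdot\|\widehat{f^\mu}\|_{L^\infty}\|\widehat{f^\nu}\|_{L^\infty}$ (no stationary phase needed), at the cost of a harmless $2^{3\beta^2 m}$ loss. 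This requires only \eqref{nxc1} as stated, with no $\eta$-derivatives. The paper also economizes by placing most level sets $G_l$ into $B^1_{k,j}$ and reserving $B^2_{k,j}$ for the narrow window $l\in[\beta m,6\beta m]$, $j\in[m-3\beta m,m+D]$; your choice to put everything in $B^2_{k,j}$ is fine but slightly less informative.
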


\begin{proof}[Proof of Lemma \ref{BigBound7}] Let 
\begin{equation}\label{cle1}
\ka_1:=2^{-m/2}2^{\beta^2m},
\end{equation}
and decompose first
\begin{equation*}
\begin{split}
&\mathcal{F}P_kT_{m}^{\sigma;\mu,\nu}(f_{k_1,j_1}^\mu,f_{k_2,j_2}^\nu)(\xi)=G(\xi)+H(\xi),\\
&G(\xi):=\varphi_k(\xi)\int_{\mathbb{R}}\int_{\mathbb{R}^3}e^{is\Phi^{\sigma;\mu,\nu}(\xi,\eta)}\varphi_{\leq 0}(\Xi^{\mu,\nu}(\xi,\eta)/\ka_1)q_m(s)\widehat{f_{k_1,j_1}^\mu}(\xi-\eta,s)\widehat{f_{k_2,j_2}^\nu}(\eta,s)\,d\eta ds,\\
&H(\xi):=\varphi_k(\xi)\int_{\mathbb{R}}\int_{\mathbb{R}^3}e^{is\Phi^{\sigma;\mu,\nu}(\xi,\eta)}[1-\varphi_{\leq 0}(\Xi^{\mu,\nu}(\xi,\eta)/\ka_1)]q_m(s)\widehat{f_{k_1,j_1}^\mu}(\xi-\eta,s)\widehat{f_{k_2,j_2}^\nu}(\eta,s)\,d\eta ds.
\end{split}
\end{equation*}
Using Lemma \ref{tech5} (with $K\approx 2^m\ka_1$ and $\eps\approx \ka_1$) and the last bound in \eqref{nh9} it is easy to see 
that $\|H\|_{L^\infty}\lesssim 2^{-10m}$. Therefore it remains to prove that
\begin{equation}\label{cle2}
\big\|\widetilde{\varphi}^{(k)}_j\cdot \mathcal{F}^{-1}(G)\|_{B_{k,j}}\lesssim 2^{-2\beta^4m}.
\end{equation}

Using the $L^\infty$ bounds in \eqref{nh9} and Lemma \ref{desc2}, we see easily that
\begin{equation}\label{cle3}
\|G\|_{L^\infty}\lesssim \ka_1^3\cdot 2^m\lesssim 2^{-m/2}2^{3\beta^2 m}.
\end{equation}
This suffices to prove the desired bound \eqref{cle2} if, for example, $j\leq m(1/2-4\beta)$. To cover the entire range $j\leq m+D$ we need 
more refined bounds on $|G(\xi)|$, which we prove using integration by parts in $s$. 

In the argument below we may assume that $G\neq 0$; in particular this guarantees that the main assumptions \eqref{kx10} 
and \eqref{kxz10} are satisfied. With $\Psi^{\si;\mu,\nu}(|\xi|)=\Phi^{\si;\mu,\nu}(\xi,p^{\mu,\nu}(\xi))$, defined as in \eqref{kxz11}, 
assume that
\begin{equation}\label{cle6}
2^m|\Psi^{\si;\mu,\nu}(|\xi|)|\in [2^l,2^{l+1}],\,\,l\in[\beta m,\infty)\cap\mathbb{Z}.
\end{equation}
Then, using Lemma \ref{desc2}, we see that
\begin{equation*}
|\Phi^{\si;\mu,\nu}(\xi,\eta)-\Psi^{\si;\mu,\nu}(\vert\xi\vert)|\le\vert\eta-p^{\mu,\nu}(\xi)\vert\cdot\sup_{\vert \zeta-p^{\mu,\nu}(\xi)\vert\le 2^{10D}\kappa_1}\vert\Xi^{\mu,\nu}(\xi,\zeta)\vert\lesssim 2^{30D}\kappa_1\vert\eta-p^{\mu,\nu}(\xi)\vert
\end{equation*}
since $\Xi^{\mu,\nu}(\xi,p^{\mu,\nu}(\xi))=0$. Therefore
\begin{equation*}
 2^m|\Phi^{\si;\mu,\nu}(\xi,\eta)|\in[2^{l-3},2^{l+4}]\qquad\text{ if }\qquad \Xi^{\mu,\nu}(\xi,\eta)\leq 100\ka_1.
\end{equation*}
After integration by parts in $s$ it follows that
\begin{equation*}
 \begin{split}
 |G(\xi)|\lesssim 2^{m-l}|\varphi_k(\xi)|\int_{\mathbb{R}}\int_{\mathbb{R}^3}
&|\varphi_{\leq 0}(\Xi^{\mu,\nu}(\xi,\eta)/\ka_1)|\,|q'_m(s)|\,|\widehat{f_{k_1,j_1}^\mu}(\xi-\eta,s)|\,|\widehat{f_{k_2,j_2}^\nu}(\eta,s)|\\
&+|\varphi_{\leq 0}(\Xi^{\mu,\nu}(\xi,\eta)/\ka_1)|\,|q_m(s)|\,|(\partial_s\widehat{f_{k_1,j_1}^\mu})(\xi-\eta,s)|\,|\widehat{f_{k_2,j_2}^\nu}(\eta,s)|\\
&+|\varphi_{\leq 0}(\Xi^{\mu,\nu}(\xi,\eta)/\ka_1)|\,|q_m(s)|\,|\widehat{f_{k_1,j_1}^\mu}(\xi-\eta,s)|\,|(\partial_s\widehat{f_{k_2,j_2}^\nu})(\eta,s)|\,d\eta ds.
 \end{split}
\end{equation*}
We use now \eqref{nh2}, the last bound in \eqref{nh9}, \eqref{nxc1}, and Lemma \ref{desc2}. It follows that
\begin{equation}\label{cle7}
 |G(\xi)|\lesssim 2^{m-l}|\varphi_k(\xi)|\cdot \ka_1^3\lesssim |\varphi_k(\xi)|\cdot 
2^{-l}2^{-m/2}2^{3\beta^2 m}
\end{equation}
provided that \eqref{cle6} holds.

We can now prove the desired bound \eqref{cle2}. To apply \eqref{cle6}--\eqref{cle7} we need a good description of the 
level sets of the functions $\Psi^{\si;\mu,\nu}$. Let
\begin{equation*}
 \begin{split}
&l_0:=\lfloor\beta m+2\rfloor,\qquad D_{l_0}:=\{\xi\in\mathbb{R}^3:2^m|\Psi^{\si;\mu,\nu}(|\xi|)|\leq 2^{l_0}\},\\
&D_l:=\{\xi\in\mathbb{R}^3:2^m|\Psi^{\si;\mu,\nu}(|\xi|)|\in(2^{l-1},2^{l+1}]\},\qquad l\in[l_0+1,m+D]\cap\mathbb{Z},\\
&G=\sum_{l=l_0}^{m+D}G_l,\qquad G_l(\xi):=G(\xi)\cdot\mathbf{1}_{D_l}(\xi).
 \end{split}
\end{equation*}
For \eqref{cle2} it remains to prove that for any $l\in[l_0,m+D]\cap\mathbb{Z}$
\begin{equation}\label{cle8}
\big\|\widetilde{\varphi}^{(k)}_j\cdot \mathcal{F}^{-1}(G_l)\|_{B_{k,j}}\lesssim 2^{-3\beta^4m}.
\end{equation}

Using Lemma \ref{desc4}, it follows that there is $r^{\sigma;\mu,\nu}=r^{\sigma;\mu,\nu}(\mu,\nu,\sigma,k,k_1,k_2,l)\in [2^{-D},\infty)$ 
with the property that
\begin{equation}\label{cle9}
 D_l\subseteq\{\xi\in\mathbb{R}^3:\big||\xi|-r^{\sigma;\mu,\nu}\big|\lesssim 2^{l-m}\}.
\end{equation}
Therefore, using also \eqref{cle7},
\begin{equation*}
\begin{split}
 \big\|\widetilde{\varphi}^{(k)}_j\cdot \mathcal{F}^{-1}(G_l)\|_{B^1_{k,j}}
&\lesssim 2^{(1+\beta)j}\|G_l\|_{L^2}+\|G_l\|_{L^\infty}\\
&\lesssim 2^{-l}2^{-m/2}2^{3\beta^2 m}\cdot \big(2^{(1+\beta)j}2^{(l-m)/2}+1)\\
&\lesssim 2^{j-m}2^{-l/2}2^{\beta m+3\beta^2m}+2^{-l}2^{-m/2}2^{3\beta^2 m}.
\end{split}
\end{equation*}
This clearly suffices to prove \eqref{cle8} if $l\ge 6\beta m$ or $j\leq m-3\beta m$.

It remains to prove \eqref{cle8} in the remaining case
\begin{equation}\label{cle10}
l\in[\beta m, 6\beta m]\cap\mathbb{Z}\qquad\text{ and }\qquad j\in[m-3\beta m,m+D]\cap\mathbb{Z}. 
\end{equation}
For this we need to use the norms $B^2_{k,j}$ defined in \eqref{sec5.4}. Assume first that $l\geq l_0+1$. As before we estimate easily
\begin{equation*}
\begin{split}
 2^{(1-\beta)j}\|G_l\|_{L^2}+\|G_l\|_{L^\infty}&\lesssim 2^{-l}2^{-m/2}2^{3\beta^2 m}\cdot \big(2^{(1-\beta)m}2^{(l-m)/2}+1)\\
&\lesssim 2^{-l/2}2^{-\beta m+3\beta^2m}+2^{-l}2^{-m/2}2^{3\beta^2 m}.
\end{split}
\end{equation*}
Therefore, for \eqref{cle8} it suffices to prove that
\begin{equation}\label{cle13}
2^{\gamma j}\sup_{R\in[2^{-j},2^{k}],\,\xi_0\in\mathbb{R}^3}R^{-2}
\big\|\mathcal{F}\big[\widetilde{\varphi}^{(k)}_j\cdot \mathcal{F}^{-1}(G_l)\big]\big\|_{L^1(B(\xi_0,R))}\lesssim 2^{-3\beta^4m}.
\end{equation}
Since $\big|\mathcal{F}(\widetilde{\varphi}^{(k)}_j)(\xi)\big|\lesssim 2^{3j}(1+2^j|\xi|)^{-6}$, it follows from \eqref{cle7} that
\begin{equation*}
\begin{split}
\big|\mathcal{F}\big[\widetilde{\varphi}^{(k)}_j\cdot \mathcal{F}^{-1}(G_l)\big](\xi)\big|&\lesssim 
\int_{\mathbb{R}^3}|G_l(\xi-\eta)|\cdot 2^{3j}(1+2^j|\eta|)^{-6}\,d\eta\\
&\lesssim 2^{-l}2^{-m/2}2^{3\beta^2 m}\int_{\mathbb{R}^3}\mathbf{1}_{D_l}(\xi-\eta)\cdot 2^{3j}(1+2^j|\eta|)^{-6}\,d\eta.
\end{split}
\end{equation*}
Therefore, using now \eqref{cle9}, for any $R\in[2^{-j},2^k]$ and $\xi_0\in\mathbb{R}^3$,
\begin{equation*}
R^{-2}\big\|\mathcal{F}\big[\widetilde{\varphi}^{(k)}_j\cdot \mathcal{F}^{-1}(G_l)\big]\big\|_{L^1(B(\xi_0,R))}\lesssim 2^{-l}2^{-m/2}2^{3\beta^2 m}\cdot 2^{l-m}\lesssim 2^{-3m/2}2^{3\beta^2 m}.
\end{equation*}

Similarly, using \eqref{cle3} and \eqref{cle9},
\begin{equation*}
2^{(1-\beta)j}\Vert G_{l_0}\Vert_{L^2}+\Vert G_{l_0}\Vert_{L^\infty} \lesssim 2^{(1-\beta)(j-m)}2^{-\beta m+l_0/2+3\beta^2m}+2^{-m/4}\lesssim 2^{-3\beta^4m}
\end{equation*}
and
\begin{equation*}
\begin{split}
\big|\mathcal{F}\big[\widetilde{\varphi}^{(k)}_j\cdot \mathcal{F}^{-1}(G_{l_0})\big](\xi)\big|&\lesssim 
\int_{\mathbb{R}^3}|G_{l_0}(\xi-\eta)|\cdot 2^{3j}(1+2^j|\eta|)^{-6}\,d\eta\\
&\lesssim 2^{-m/2}2^{3\beta^2 m}\int_{\mathbb{R}^3}\mathbf{1}_{D_{l_0}}(\xi-\eta)\cdot 2^{3j}(1+2^j|\eta|)^{-6}\,d\eta
\end{split}
\end{equation*}
from where we conclude that, for any $R\in[2^{-j},2^k]$ and $\xi_0\in\mathbb{R}^3$,
\begin{equation*}
R^{-2}\big\|\mathcal{F}\big[\widetilde{\varphi}^{(k)}_j\cdot \mathcal{F}^{-1}(G_{l_0})\big]\big\|_{L^1(B(\xi_0,R))}\lesssim 2^{-m/2}2^{3\beta^2 m}\cdot 2^{l_0-m}\lesssim 2^{-3m/2}2^{2\beta m}.
\end{equation*}
The desired bound \eqref{cle13} follows, which completes the proof of the lemma.
\end{proof}

\begin{lemma}\label{BigBound8}
The bound \eqref{ok90} holds provided that \eqref{ok91} holds and, in addition,
\begin{equation}\label{cle20}
\max(j_1,j_2)\geq m(1/2-\beta^2).
\end{equation}
\end{lemma}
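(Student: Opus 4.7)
By symmetry assume $j_1\le j_2$, so that $j_2\in[(1/2-\beta^2)m,(1-\beta/10)m]$. In contrast to Lemma \ref{BigBound7}, the stronger spatial localization of $f^\nu_{k_2,j_2}$ gives us a genuinely useful $L^2$-estimate $\|f^\nu_{k_2,j_2}\|_{L^2}\lesssim 2^{-(1-\beta)j_2}$ (from \eqref{nh9.1}). The plan is to exploit this smallness together with integration by parts in both $\eta$ and $s$, and close the estimate entirely in the strong norm $B^1_{k,j}$, never invoking the weak component $B^2_{k,j}$. (This matches the authors' dichotomy announced just before Proposition \ref{reduced3}: large $j_2$ is the regime where the norm already does most of the work.)

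First I would set $\kappa:=2^{j_2-m+\beta^2 m}$ -- larger than the $2^{-m/2+\beta^2 m}$ used in Lemma \ref{BigBound7}, since $\widehat{f^\nu_{k_2,j_2}}$ is smooth only at scale $2^{-j_2}$ -- and write
$$\mathcal{F}[P_k T_m^{\sigma;\mu,\nu}(f^\mu_{k_1,j_1},f^\nu_{k_2,j_2})]=G+H,$$
with $H$ localizing to $|\Xi^{\mu,\nu}(\xi,\eta)|\gtrsim\kappa$ and $G$ to $|\Xi^{\mu,\nu}|\lesssim\kappa$. Applying Lemma \ref{tech5} with $K\approx 2^m\kappa$ and smoothness scale $\epsilon\approx 2^{-j_2}$ (so $K\epsilon=2^{\beta^2 m}\gg 1$), and using the derivative bounds in \eqref{nh9}, repeated integration by parts in $\eta$ yields $\|H\|_{L^\infty}\lesssim 2^{-10m}$, which is harmless.

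For $G$, Lemma \ref{desc2} confines $\eta$ to a ball of radius $\lesssim\kappa$ around the stationary point $p^{\mu,\nu}(\xi)$, and gives $\Phi^{\sigma;\mu,\nu}(\xi,\eta)=\Psi^{\sigma;\mu,\nu}(|\xi|)+O(\kappa^2)$, where $\Psi^{\sigma;\mu,\nu}(|\xi|):=\Phi^{\sigma;\mu,\nu}(\xi,p^{\mu,\nu}(\xi))$ as in the proof of Lemma \ref{BigBound7}. I would decompose $G=G_{\le l_0}+\sum_{l>l_0}G_l$ dyadically by $2^m|\Psi(|\xi|)|\in[2^l,2^{l+1}]$, with $l_0:=\lfloor\beta m\rfloor$, and observe via Lemma \ref{desc4} that each level set $D_l$ lies in a spherical shell of thickness $\lesssim 2^{l-m}$. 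On each $G_l$ with $l\ge l_0$, integration by parts in $s$ gains the factor $|\Phi|^{-1}\lesssim 2^{m-l}$; the boundary/derivative terms involving $q'_m$ (with $\int|q'_m|\,ds\lesssim 1$) and $\partial_s\widehat{f^\bullet_{k_i,j_i}}$ (bounded via Lemma \ref{ders}, contributing an additional decay $\lesssim 2^{-\beta m/10}$) combine with Cauchy--Schwarz, using $\|\widehat{f^\mu_{k_1,j_1}}\|_{L^\infty}\lesssim 1$ and $\|f^\nu_{k_2,j_2}\|_{L^2}\lesssim 2^{-(1-\beta)j_2}$, to give a pointwise bound of the form
$$|G_l(\xi)|\lesssim 2^{-l}\cdot 2^{-\beta m/10}\cdot\kappa^{3/2}\cdot 2^{-(1-\beta)j_2}\cdot\mathbf{1}_{D_l}(\xi).$$
The shell confinement then converts this into $\|G_l\|_{L^2}\lesssim 2^{(l-m)/2}\|G_l\|_{L^\infty}$, which is the crucial gain that lets the $2^{(1+\beta)j}$ weight in $B^1_{k,j}$ be absorbed.

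To conclude, I would sum the $L^\infty$ and $L^2$ contributions over $l\ge l_0$ (geometric in $l$), add the base piece $G_{\le l_0}$ (estimated directly from $|G_{\le l_0}|\lesssim 2^m\kappa^3$ on a shell of volume $\lesssim 2^{l_0-m}$), and verify that $\|\widetilde{\varphi}^{(k)}_j\cdot\mathcal{F}^{-1}(G)\|_{B^1_{k,j}}\lesssim 2^{-2\beta^4 m}$ using $j\le m+D$ and $k\in[-D/10,D]$. The main obstacle will be the endpoint regime $j_2\approx(1-\beta/10)m$ with $j\approx m$, where none of the three mechanisms is individually sufficient: the estimate closes only because the dispersive $L^2$-smallness of $f^\nu_{k_2,j_2}$, the $s$-integration by parts with the Lemma \ref{ders} improvement, and the effective dimensional reduction from Lemma \ref{desc4} all combine, with the choice of $\kappa=2^{j_2-m+\beta^2 m}$ tuned so the three gains balance precisely.
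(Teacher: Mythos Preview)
Your approach has a genuine gap. The pointwise bound you write for $G_l$ is off by a factor of $2^m$ (since $l$ is defined by $2^m|\Psi|\in[2^l,2^{l+1}]$, one has $|\Phi|^{-1}\approx 2^{m-l}$, not $2^{-l}$), but the real obstruction is the term where $\partial_s$ lands on $\widehat{f^\nu_{k_2,j_2}}$: Lemma~\ref{ders} provides only $\|\partial_s f^\nu_{k_2,j_2}\|_{L^2}\lesssim 2^{-(1+\beta)m}$ with no $j_2$-dependence, so the crucial smallness $2^{-(1-\beta)j_2}$ is lost entirely. That term alone contributes $2^{m-l}\cdot 2^m\cdot\kappa^{3/2}\cdot 2^{-(1+\beta)m}$ to $|G_l|$, and at $j_2\approx(1-\beta/10)m$, $l\approx\beta m$, $j\approx m$ the resulting $B^1_{k,j}$ contribution is of order $2^{3m/2}$; even the $q_m'$ term by itself yields order $2^{m/2}$. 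The level-set mechanism of Lemma~\ref{BigBound7} worked because $\max(j_1,j_2)\le m/2$ kept $\kappa\approx 2^{-m/2}$ genuinely small; here $\kappa$ can be as large as $2^{-\beta m/10}$, so neither the volume factor $\kappa^{3/2}$ nor the shell confinement provides enough gain. There is also a secondary problem: on the support of $G$ one has $|\Phi-\Psi|=O(\kappa^2)$, and when $j_2$ is close to $m$ this error dominates $|\Psi|$ on all low level sets, so the identification of $|\Phi|$ with $|\Psi|$ underlying your $s$-integration is not valid there.

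The paper does close entirely in $B^1_{k,j}$, as you anticipate, but the $L^2$ bound on $G$ requires a completely different mechanism: an $L^2$-orthogonality argument. After first splitting off the non-time-resonant piece $H_2$ (where $|\Phi|\gtrsim 1$, handled as in Lemma~\ref{BigBound6}), one writes $G=\sum_{v\in\mathbb{Z}^3,\,n\in\mathbb{Z}}G_{v,n}$, with $v$ indexing $\xi$-boxes of side $\approx\kappa$ and $n$ indexing time intervals of length $\approx 2^{j''}$, and proves the $G_{v,n}$ are almost orthogonal (trivially in $v$; in $n$ via a finite-speed-of-propagation argument using Lemma~\ref{desc4}). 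The key gain is that on each $\xi$-box the $\eta$-integration is confined near $p^{\mu,\nu}(\xi)$, and by \eqref{r1to1} these balls are essentially disjoint as $v$ varies, so $\sum_v\|f_2^{v,n}\|_{L^2}^2\lesssim\|f^\nu_{k_2,j_2}\|_{L^2}^2$. Each $G_{v,n}$ is then controlled via a bilinear $L^p\times L^q\to L^2$ estimate with one factor placed in $L^\infty$ by dispersion; in the hardest sub-range (both $j_1,j_2\gtrsim(3/5)m$ and comparable) one must additionally invoke the refined $g+h$ decomposition \eqref{sec5.8}--\eqref{sec5.815}. This orthogonality saves exactly the factor $\approx 2^{m-j''}$ over the naive time integration that your level-set approach cannot recover.
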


\begin{proof}[Proof of Lemma \ref{BigBound8}] Using definition \eqref{sec5.3}, it suffices to prove that
\begin{equation}\label{cle21}
2^{(1+\beta)j}\big\|\widetilde{\varphi}^{(k)}_j\cdot P_kT_{m}^{\sigma;\mu,\nu}(f_{k_1,j_1}^\mu,f_{k_2,j_2}^\nu)\big\|_{L^2}+\big\|\mathcal{F}[\widetilde{\varphi}^{(k)}_j\cdot P_kT_{m}^{\sigma;\mu,\nu}(f_{k_1,j_1}^\mu,f_{k_2,j_2}^\nu)]\big\|_{L^\infty}\lesssim 2^{-2\beta^4m}.
\end{equation}
Let 
\begin{equation}\label{cle22}
j'':=\max(j_1,j_2)+\lfloor3\beta^2m\rfloor\in[m(1/2+\beta^2),m(1-\beta/20)],
\end{equation}
and decompose
\begin{equation*}
\mathcal{F}P_kT_{m}^{\sigma;\mu,\nu}(f_{k_1,j_1}^\mu,f_{k_2,j_2}^\nu)(\xi)=G(\xi)+H_1(\xi)+H_2(\xi),
\end{equation*}
where
\begin{equation*}
H_2(\xi):=\varphi_k(\xi)\int_{\mathbb{R}}\int_{\mathbb{R}^3}e^{is\Phi^{\sigma;\mu,\nu}(\xi,\eta)}[1-\varphi(2^{30D}\Phi^{\sigma;\mu,\nu}(\xi,\eta))]q_m(s)\widehat{f_{k_1,j_1}^\mu}(\xi-\eta,s)\widehat{f_{k_2,j_2}^\nu}(\eta,s)\,d\eta ds,
\end{equation*}
\begin{equation*}
\begin{split}
G(\xi):=\varphi_k(\xi)\int_{\mathbb{R}}\int_{\mathbb{R}^3}&e^{is\Phi^{\sigma;\mu,\nu}(\xi,\eta)}\varphi(2^{30D}\Phi^{\sigma;\mu,\nu}(\xi,\eta))\varphi_{\leq 0}
(2^{m-j''}\Xi^{\mu,\nu}(\xi,\eta))\\
&\times q_m(s)\widehat{f_{k_1,j_1}^\mu}(\xi-\eta,s)\widehat{f_{k_2,j_2}^\nu}(\eta,s)\,d\eta ds,
\end{split}
\end{equation*}
and
\begin{equation*}
\begin{split}
H_1(\xi):=\varphi_k(\xi)\int_{\mathbb{R}}\int_{\mathbb{R}^3}&e^{is\Phi^{\sigma;\mu,\nu}(\xi,\eta)}\varphi(2^{30D}\Phi^{\sigma;\mu,\nu}(\xi,\eta))[1-\varphi_{\leq 0}
(2^{m-j''}\Xi^{\mu,\nu}(\xi,\eta))]\\
&\times q_m(s)\widehat{f_{k_1,j_1}^\mu}(\xi-\eta,s)\widehat{f_{k_2,j_2}^\nu}(\eta,s)\,d\eta ds.
\end{split}
\end{equation*}

Using Lemma \ref{tech5} (with $K\approx 2^{j''}$ and $\eps\approx 2^{-\max(j_1,j_2)}$) and the last bound in \eqref{nh9} it is easy to see 
that $\|H_1\|_{L^\infty}\lesssim 2^{-10m}$. Moreover, the same argument as in the first part of the proof of Lemma \ref{BigBound6} (which does not use the assumption $\min(k,k_1,k_2)\leq-D/10$) shows that
\begin{equation*}
2^{(1+\beta)m}\|H_2\|_{L^2}+\|H_2\|_{L^\infty}\lesssim 2^{-2\beta^4 m}.
\end{equation*}
Therefore it remains to prove that
\begin{equation}\label{cle23}
2^{(1+\beta)m}\big\|G\big\|_{L^2}+\|G\|_{L^\infty}\lesssim 2^{-2\beta^4m}.
\end{equation}

In proving \eqref{cle23} we may assume that $G\neq 0$; in particular this guarantees that the main assumption \eqref{kx10} is satisfied. 
We prove first the $L^\infty$ bound in \eqref{cle23}. Assume that $j_1\leq j_2$ (the case $j_1\geq j_2$ is similar). Then, see \eqref{nh9} and \eqref{sec5.8}--\eqref{sec5.815},
\begin{equation*}
 \begin{split}
&\|\widehat{f_{k_1,j_1}^\mu}(s)\|_{L^\infty}\lesssim 1,\\
&\sup_{\xi_0\in\mathbb{R}^3}\|\widehat{f_{k_2,j_2}^\nu}(s)\|_{L^1(B(\xi_0,R))}\lesssim 2^{-(1+\beta)j_2}R^{3/2},\qquad\text{ for any }R\leq 1.
 \end{split}
\end{equation*}
Using Lemma \ref{desc2} and \eqref{cle22} it follows that
\begin{equation*}
\|G\|_{L^\infty}\lesssim 2^m\cdot 2^{-(1+\beta)j_2}(2^{j''-m})^{3/2}\lesssim 2^{-m/2}2^{4\beta^2 m}2^{(1/2-\beta)j''}\lesssim 2^{-2\beta^4m},
\end{equation*}
as desired.

To prove the $L^2$ bound in \eqref{cle23} it suffices to show that
\begin{equation}\label{cle24}
2^{(2+2\beta)m}\|G\|^2_{L^2}\lesssim 2^{-4\beta^4m}.
\end{equation}
To prove this we need first an orthogonality argument. Let $\chi:\mathbb{R}\to[0,1]$ denote a smooth function supported in the interval $[-2,2]$ with the 
property that
\begin{equation*}
\sum_{n\in\mathbb{Z}}\chi(x-n)=1\qquad\text{ for any }x\in\mathbb{R}.
\end{equation*}
We define the smooth function $\chi':\mathbb{R}^3\to[0,1]$, $\chi'(x,y,z):=\chi(x)\chi(y)\chi(z)$. Recall the functions $\Psi^{\sigma;\mu,\nu}$
defined in \eqref{kxz11}. We define, for any $v\in\mathbb{Z}^3$ and $n\in\mathbb{Z}$,
\begin{equation}\label{cle25}
\begin{split}
G_{v,n}(\xi):=\chi'(2^{m-j''}\xi-v)\cdot \varphi_k(\xi)\int_{\mathbb{R}}\int_{\mathbb{R}^3}&e^{is\Phi^{\sigma;\mu,\nu}(\xi,\eta)}
\varphi(2^{30D}\Phi^{\sigma;\mu,\nu}(\xi,\eta))\varphi_{\leq 0}(2^{m-j''}\Xi^{\mu,\nu}(\xi,\eta))\\
&\times\chi(2^{-j''}s-n)q_m(s)\widehat{f_{k_1,j_1}^\mu}(\xi-\eta,s)\widehat{f_{k_2,j_2}^\nu}(\eta,s)\,d\eta ds.
\end{split}
\end{equation}
and notice that $G=\sum_{v\in\mathbb{Z}^3}\sum_{n\in\mathbb{Z}}G_{v,n}$. 

We show now that
\begin{equation}\label{cle26}
 \|G\|^2_{L^2}\lesssim \sum_{v\in\mathbb{Z}^3}\sum_{n\in\mathbb{Z}}\|G_{v,n}\|_{L^2}^2+2^{-10m}.
\end{equation}
Indeed, we clearly have
\begin{equation*}
\|G\|^2_{L^2}\lesssim \sum_{v\in\mathbb{Z}^3}\Big\|\sum_{n\in\mathbb{Z}}G_{v,n}\Big\|_{L^2}^2
\lesssim \sum_{v\in\mathbb{Z}^3}\sum_{n_1,n_2\in\mathbb{Z}}|\langle G_{v,n_1},G_{v,n_2}\rangle|.
\end{equation*}
Therefore, for \eqref{cle26} it suffices to prove that
\begin{equation}\label{cle27}
|\langle G_{v,n_1},G_{v,n_2}\rangle|\lesssim 2^{-20m}\qquad \text{ if }v\in\mathbb{Z}^3\text{ and }|n_1-n_2|\geq 2^{100D}.
\end{equation}
To prove this, we notice that, since $\vert\nabla_\eta\Phi^{\sigma;\mu,\nu}\vert\le 2^{j^{\prime\prime}-m}$ and $\vert\partial^\rho\Phi^{\sigma;\mu,\nu}\vert\lesssim 1$ for $\vert\rho\vert=2$, after repeated integration by parts in $\xi$, for any $n\in\mathbb{Z}$,
\begin{equation*}
\begin{split}
&|\mathcal{F}^{-1}(G_{v,n})(x)|\lesssim |x+w_n|^{-200}\qquad\text{ if }|x+w_n|\geq 2^{50D}2^{j''},\\
&w_n:=n2^{j''}(\Psi^{\sigma;\mu,\nu})'(2^{j''-m}|v|)\cdot v/|v|.
\end{split}
\end{equation*}
Moreover, $G_{v,n}$ is nontrivial only if $|\Psi^{\sigma;\mu,\nu}(2^{j''-m}|v|)|\leq 2^{-25D}$. We can therefore apply Lemma \ref{desc4} to conclude that $|(\Psi^{\sigma;\mu,\nu})'(2^{j''-m}|v|)|\geq 2^{-20D}$. Therefore if $|n_1-n_2|\geq 2^{100D}$ then $|w_{n_1}-w_{n_2}|\geq 2^{70D}2^{j''}$ and the desired bound \eqref{cle27} follows. This completes the proof of \eqref{cle26}.

In view of \eqref{cle26}, for \eqref{cle24} it remains to prove that
\begin{equation}\label{cle35}
2^{(2+2\beta)m}\sum_{2^{-k}|v|,\,n\in[2^{m-j''-4},2^{m-j''+4}]}\|G_{v,n}\|^2_{L^2}\lesssim 2^{-4\beta^4m}.
\end{equation}
Assuming $v,n$ fixed, the variables in the definition of the function $G_{v,n}$ are naturally restricted as follows:
\begin{equation*}
|\xi-2^{j''-m}v|\lesssim 2^{j''-m},\qquad |\eta-p^{\mu,\nu}(2^{j''-m}v)|\lesssim 2^{j''-m},\qquad |s-2^{j''}n|\lesssim 2^{j''},
\end{equation*}
where $p^{\mu,\nu}$ is defined as in Lemma \ref{desc2}. More precisely, we define the functions $f_1^{v,n}$ and $f_2^{v,n}$ by the formulas
\begin{equation}\label{cle36}
\begin{split}
&\widehat{f_1^{v,n}}(\theta,s):=\mathbf{1}_{[n-4,n+4]}(2^{-j''}s)\varphi_{\leq 0}[2^{-50D}2^{m-j''}(\theta-2^{j''-m}v+p^{\mu,\nu}(2^{j''-m}v))]\cdot \widehat{f_{k_1,j_1}^\mu}(\theta,s),\\
&\widehat{f_2^{v,n}}(\theta,s):=\mathbf{1}_{[n-4,n+4]}(2^{-j''}s)\varphi_{\leq 0}[2^{-50D}2^{m-j''}(\theta-p^{\mu,\nu}(2^{j''-m}v))]\cdot \widehat{f_{k_2,j_2}^\nu}(\theta,s).
\end{split}
\end{equation}
Since $|p^{\mu,\nu}(2^{j''-m}v_1)-p^{\mu,\nu}(2^{j''-m}v_2)|\geq 2^{80D}2^{j''-m}$ and $\big|[2^{j''-m}v_1-p^{\mu,\nu}(2^{j''-m}v_1)]-[2^{j''-m}v_2-p^{\mu,\nu}(2^{j''-m}v_2)]\big|\geq 2^{80D}2^{j''-m}$ whenever $|v_1-v_2|\gtrsim 1$ (these inequalities are consequences of the lower bounds in the first line of \eqref{r1to1}), it follows by orthogonality that, for any $s\in\mathbb{R}$,
\begin{equation}\label{cle36.5}
\begin{split}
&\sum_{2^{-k}|v|\in[2^{m-j''-4},2^{m-j''+4}]}\|f_1^{v,n}(s)\|_{L^2}^2\lesssim \|f_{k_1,j_1}^\mu(s)\|_{L^2}^2\lesssim 2^{-2j_1+2\beta j_1},\\
&\sum_{2^{-k}|v|\in[2^{m-j''-4},2^{m-j''+4}]}\|f_2^{v,n}(s)\|_{L^2}^2\lesssim \|f_{k_2,j_2}^\nu(s)\|_{L^2}^2\lesssim 2^{-2j_2+2\beta j_2}.
\end{split}
\end{equation}

Using the definition \eqref{cle25} and Lemma \ref{desc2} we notice that, for any $(v,n)\in\mathbb{Z}^3\times\mathbb{Z}$,
\begin{equation}\label{cle37}
\begin{split}
G_{v,n}(\xi)=\chi'(2^{m-j''}\xi-v)\cdot \varphi_k(\xi)\int_{\mathbb{R}}\int_{\mathbb{R}^3}&e^{is\Phi^{\sigma;\mu,\nu}(\xi,\eta)}
\varphi(2^{30D}\Phi^{\sigma;\mu,\nu}(\xi,\eta))\varphi_{\leq 0}(2^{m-j''}\Xi^{\mu,\nu}(\xi,\eta))\\
&\times\chi(2^{-j''}s-n)q_m(s)\widehat{f_1^{v,n}}(\xi-\eta,s)\widehat{f_2^{v,n}}(\eta,s)\,d\eta ds.
\end{split}
\end{equation}
Letting, as in \eqref{nh5.6}, $(Ef_1^{v,n})(s):=e^{-is\widetilde{\Lambda}_\mu}(f_1^{v,n}(s))$ and $(Ef_2^{v,n})(s):=e^{-is\widetilde{\Lambda}_\nu}(f_2^{v,n}(s))$, it follows that
\begin{equation*}
\|G_{v,n}\|_{L^2}\lesssim \int_{\mathbb{R}}\chi(2^{-j''}s-n)q_m(s)\|A_{v}(Ef_1^{v,n}(s),Ef_2^{v,n}(s))\|_{L^2}\,ds,
\end{equation*}
where, by definition,
\begin{equation}\label{cle38}
\begin{split}
A_{v}(g_1,g_2)(\xi):=\chi'(2^{m-j''}\xi-v)\varphi_k(\xi)\int_{\mathbb{R}^3}&\varphi(2^{30D}\Phi^{\sigma;\mu,\nu}(\xi,\eta))\varphi_{\leq 0}(2^{m-j''}\Xi^{\mu,\nu}(\xi,\eta))\\
&\times \mathcal{F}(P_{[k_1-4,k_1+4]}g_1)(\xi-\eta)\mathcal{F}(P_{[k_2-4,k_2+4]}g_2)(\eta)\,d\eta.
\end{split}
\end{equation}
Therefore
\begin{equation*}
\|G_{v,n}\|_{L^2}^2\lesssim 2^{j''}\int_{\mathbb{R}}q_m(s)\|A_{v}(Ef_1^{v,n}(s),Ef_2^{v,n}(s))\|^2_{L^2}\,ds,
\end{equation*}
and for \eqref{cle35} it suffices to prove that
\begin{equation}\label{cle39}
2^{2m+2\beta m}2^{j''}\sum_{2^{-k}|v|,\,n\in[2^{m-j''-4},2^{m-j''+4}]}\int_{\mathbb{R}}\|A_{v}(Ef_1^{v,n}(s),Ef_2^{v,n}(s))\|^2_{L^2}\,ds\lesssim 2^{-4\beta^4m}.
\end{equation}

We notice now that if $p,q\in[2,\infty]$, $1/p+1/q=1/2$, then
\begin{equation}\label{cle40}
\|A_{v}(g_1,g_2)\|_{L^2}\lesssim \|g_1\|_{L^p}\|g_2\|_{L^q}.
\end{equation}
Indeed, as in the proof of Lemma \ref{tech2}, we write
\begin{equation*}
\mathcal{F}^{-1}(A_{v}(g_1,g_2))(x)=c\int_{\mathbb{R}^3\times\mathbb{R}^3}g_1(y)g_2(z)K_v(x;y,z)\,dydz,
\end{equation*}
where
\begin{equation*}
\begin{split}
K_v(x;y,z):=\int_{\mathbb{R}^3\times\mathbb{R}^3}&e^{i(x-y)\cdot \xi}e^{i(y-z)\cdot \eta}\chi'(2^{m-j''}\xi-v)\varphi_{\leq 0}(2^{m-j''}\Xi^{\mu,\nu}(\xi,\eta))\\
&\times \varphi_k(\xi)\varphi(2^{30D}\Phi^{\sigma;\mu,\nu}(\xi,\eta))\varphi_{[k_1-4,k_1+4]}(\xi-\eta)\varphi_{[k_2-4,k_2+4]}(\eta)\,d\xi d\eta.
\end{split}
\end{equation*}
We recall that $k,k_1,k_2\in[-D/10,D]$ and integrate by parts in $\xi$ and $\eta$. Using also Lemma \ref{desc2}, it follows that
\begin{equation*}
|K_v(x;y,z)|\lesssim 2^{3(j''-m)}(1+2^{j''-m}|x-y|)^{-4}\cdot 2^{3(j''-m)}(1+2^{j''-m}|y-z|)^{-4},
\end{equation*}
and the desired estimate \eqref{cle40} follows.

We can now prove the main estimate \eqref{cle39}. Assume first that
\begin{equation}\label{cle41}
\max(j_1,j_2)-\min(j_1,j_2)\geq 10\beta m.
\end{equation}
By symmetry, we may assume that $j_1\leq j_2$ and estimate, using \eqref{mk15.6}--\eqref{mk15.65},
\begin{equation*}
\sup_{s\in\mathbb{R}}\|Ef_1^{v,n}(s)\|_{L^\infty}\lesssim 2^{-3m/2}2^{(1/2+\beta)j_1}.
\end{equation*}
Therefore, using \eqref{cle40} and \eqref{cle36.5}, the left-hand side of \eqref{cle39} is dominated by
\begin{equation*}
\begin{split}
C2^{2m+2\beta m}2^{j''}&\sum_{2^{-k}|v|,\,n\in[2^{m-j''-4},2^{m-j''+4}]}2^{-3m}2^{(1+2\beta)j_1}\int_{\mathbb{R}}\|Ef_2^{v,n}(s)\|^2_{L^2}\,ds\\
&\lesssim C2^{2m+2\beta m}2^{j''}\cdot 2^{-3m}2^{(1+2\beta)j_1}\cdot 2^{-2j_2+2\beta j_2}\cdot 2^m\\
&\lesssim 2^{j_1-j_2}2^{2\beta m}2^{2\beta j_1}2^{2\beta j_2}2^{j''-j_2},
\end{split}
\end{equation*}
and the desired bound \eqref{cle39} follows provided that \eqref{cle41} holds.

Assume now that
\begin{equation}\label{cle45}
\max(j_1,j_2)\leq (3/5-2\beta) m.
\end{equation}
By symmetry, we may assume again that $j_1\leq j_2$ and estimate 
\begin{equation*}
\sup_{s\in\mathbb{R}}\|Ef_1^{v,n}(s)\|_{L^\infty}\lesssim \sup_{s\in\mathbb{R}}\|\widehat{f_1^{v,n}}(s)\|_{L^1}\lesssim 2^{3j''-3m}.
\end{equation*}
Therefore, using \eqref{cle40} and \eqref{cle36.5}, the left-hand side of \eqref{cle39} is dominated by
\begin{equation*}
\begin{split}
C2^{2m+2\beta m}2^{j''}&\sum_{2^{-k}|v|,\,n\in[2^{m-j''-4},2^{m-j''+4}]}2^{-6m}2^{6j''}\int_{\mathbb{R}}\|Ef_2^{v,n}(s)\|^2_{L^2}\,ds\\
&\lesssim C2^{2m+2\beta m}2^{j''}\cdot 2^{-6m}2^{6j''}\cdot 2^{-2j_2+2\beta j_2}\cdot 2^m\\
&\lesssim 2^{-3m}2^{5j_2}2^{7(j''-j_2)}2^{2\beta m}2^{2\beta j_2},
\end{split}
\end{equation*}
and the desired bound \eqref{cle39} follows provided that \eqref{cle45} holds.

Finally, assume that
\begin{equation}\label{cle47}
\max(j_1,j_2)-\min(j_1,j_2)\leq 10\beta m\qquad\text{ and }\qquad\max(j_1,j_2)\geq (3/5-2\beta) m.
\end{equation}
In this case we need the more refined decomposition in \eqref{sec5.8}--\eqref{sec5.815}. More precisely, using the definitions we decompose
\begin{equation*}
f^\mu_{k_1,j_1}(s)=P_{[k_1-2,k_1+2]}(g_1(s)+h_1(s)),\qquad f^\nu_{k_2,j_2}(s)=P_{[k_2-2,k_2+2]}(g_2(s)+h_2(s)),
\end{equation*}
where\footnote{The decomposition in \eqref{sec5.8}--\eqref{sec5.815} provides some more information about the functions $g_1,h_1,g_2,h_2$, but only \eqref{cle48} and \eqref{cle49} are being used in the proof.}
\begin{equation}\label{cle48}
g_1(s)=g_1(s)\cdot \widetilde{\varphi}^{(k_1)}_{[j_1-2,j_1+2]},\quad g_2(s)=g_2(s)\cdot \widetilde{\varphi}^{(k_2)}_{[j_2-2,j_2+2]},
\end{equation}
and
\begin{equation}\label{cle49}
\begin{split}
&2^{(1+\beta)j_1}\|g_1(s)\|_{L^2}+2^{(1-\beta)j_1}\|h_1(s)\|_{L^2}+2^{\gamma j_1}\sup_{R\in[2^{-j_1},2^{k_1}],\theta_0\in\mathbb{R}^3}R^{-2}\|\widehat{h_1}(s)\|_{L^1(B(\theta_0,R))}\lesssim 1,\\
&2^{(1+\beta)j_2}\|g_2(s)\|_{L^2}+2^{(1-\beta)j_2}\|h_2(s)\|_{L^2}+2^{\gamma j_2}\sup_{R\in[2^{-j_2},2^{k_2}],\theta_0\in\mathbb{R}^3}R^{-2}\|\widehat{h_2}(s)\|_{L^1(B(\theta_0,R))}\lesssim 1.
\end{split}
\end{equation}
Then, we define the functions $g_1^{v,n}, h_1^{v,n}, g_2^{v,n}, h_2^{v,n}$ by the formulas (compare with \eqref{cle36}),
\begin{equation}\label{cle50}
\begin{split}
&\widehat{g_1^{v,n}}(\theta,s):=\mathbf{1}_{[n-4,n+4]}(2^{-j''}s)\varphi_{\leq 0}[2^{-50D}2^{m-j''}(\theta-2^{j''-m}v+p^{\mu,\nu}(2^{j''-m}v))]\cdot \mathcal{F}(P_{[k_1-2,k_1+2]}g_1)(\theta,s),\\
&\widehat{h_1^{v,n}}(\theta,s):=\mathbf{1}_{[n-4,n+4]}(2^{-j''}s)\varphi_{\leq 0}[2^{-50D}2^{m-j''}(\theta-2^{j''-m}v+p^{\mu,\nu}(2^{j''-m}v))]\cdot \mathcal{F}(P_{[k_1-2,k_1+2]}h_1)(\theta,s),\\
&\widehat{g_2^{v,n}}(\theta,s):=\mathbf{1}_{[n-4,n+4]}(2^{-j''}s)\varphi_{\leq 0}[2^{-50D}2^{m-j''}(\theta-p^{\mu,\nu}(2^{j''-m}v))]\cdot \mathcal{F}(P_{[k_2-2,k_2+2]}g_2)(\theta,s),\\
&\widehat{h_2^{v,n}}(\theta,s):=\mathbf{1}_{[n-4,n+4]}(2^{-j''}s)\varphi_{\leq 0}[2^{-50D}2^{m-j''}(\theta-p^{\mu,\nu}(2^{j''-m}v))]\cdot \mathcal{F}(P_{[k_2-2,k_2+2]}h_2)(\theta,s).
\end{split}
\end{equation}
As in \eqref{cle36.5}, using $L^2$ orthogonality and \eqref{cle49}, for any $s\in\mathbb{R}$ we have
\begin{equation}\label{cle50.5}
\begin{split}
&\sum_{2^{-k}|v|\in[2^{m-j''-4},2^{m-j''+4}]}\|g_1^{v,n}(s)\|_{L^2}^2\lesssim 2^{-2j_1-2\beta j_1},\quad \sum_{2^{-k}|v|\in[2^{m-j''-4},2^{m-j''+4}]}\|h_1^{v,n}(s)\|_{L^2}^2\lesssim 2^{-2j_1+2\beta j_1},\\
&\sum_{2^{-k}|v|\in[2^{m-j''-4},2^{m-j''+4}]}\|g_2^{v,n}(s)\|_{L^2}^2\lesssim 2^{-2j_2-2\beta j_2}, \quad \sum_{2^{-k}|v|\in[2^{m-j''-4},2^{m-j''+4}]}\|h_2^{v,n}(s)\|_{L^2}^2\lesssim 2^{-2j_2+2\beta j_2}.
\end{split}
\end{equation}

Using \eqref{mk15} and \eqref{cle48}--\eqref{cle49}, we derive the $L^\infty$ bounds
\begin{equation}\label{cle51}
\begin{split}
&\|Eg_1^{v,n}(s)\|_{L^\infty}\lesssim 2^{-3m/2}\|g_1(s)\|_{L^1}\lesssim 2^{-3m/2}2^{(1/2-\beta)j_1},\\
&\|Eh_1^{v,n}(s)\|_{L^\infty}\lesssim \|\widehat{h^{v,n}_1}(s)\|_{L^1}\lesssim 2^{2j''-2m}2^{-\gamma j_1},\\
&\|Eg_2^{v,n}(s)\|_{L^\infty}\lesssim 2^{-3m/2}\|g_2(s)\|_{L^1}\lesssim 2^{-3m/2}2^{(1/2-\beta)j_2},\\
&\|Eh_2^{v,n}(s)\|_{L^\infty}\lesssim \|\widehat{h^{v,n}_2}(s)\|_{L^1}\lesssim 2^{2j''-2m}2^{-\gamma j_2},
\end{split}
\end{equation}
for any $v,n,s$. Using \eqref{cle40} and \eqref{cle50.5}--\eqref{cle51}, we estimate, assuming $j_1\leq j_2$,
\begin{equation*}
\begin{split}
& 2^{2m+2\beta m}2^{j''}\sum_{2^{-k}|v|,\,n\in[2^{m-j''-4},2^{m-j''+4}]}\int_{\mathbb{R}}\|A_{v}(Ef_1^{v,n}(s),Eg_2^{v,n}(s))\|^2_{L^2}\,ds\\
&\lesssim 2^{2m+2\beta m}2^{j''}\sum_{2^{-k}|v|,\,n\in[2^{m-j''-4},2^{m-j''+4}]}\int_{\mathbb{R}}\|g_2^{v,n}(s)\|^2_{L^2}(\|Eg_1^{v,n}(s)\|_{L^\infty}^2+\|Eh_1^{v,n}(s)\|_{L^\infty}^2)\,ds\\
&\lesssim 2^{2m+2\beta m}2^{j''}\cdot 2^{m}2^{-2j_2-2\beta j_2}\cdot [2^{-3m}2^{(1-2\beta)j_1}+2^{4j''-4m}2^{-2\gamma j_1}]\\
&\lesssim 2^{2\beta m}2^{j''}2^{-(1+4\beta) j_2}+2^{3\beta m}2^{2j_2}2^{-2\gamma j_1}.
\end{split}
\end{equation*}
Similarly, we estimate
\begin{equation*}
\begin{split}
& 2^{2m+2\beta m}2^{j''}\sum_{2^{-k}|v|,\,n\in[2^{m-j''-4},2^{m-j''+4}]}\int_{\mathbb{R}}\|A_{v}(Ef_1^{v,n}(s),Eh_2^{v,n}(s))\|^2_{L^2}\,ds\\
&\lesssim 2^{2m+2\beta m}2^{j''}\sum_{2^{-k}|v|,\,n\in[2^{m-j''-4},2^{m-j''+4}]}\int_{\mathbb{R}}(\|g_1^{v,n}(s)\|_{L^2}^2+\|h_1^{v,n}(s)\|_{L^2}^2)\|Eh_2^{v,n}(s)\|^2_{L^\infty}\,ds\\
&\lesssim 2^{2m+2\beta m}2^{j''}\cdot 2^m2^{-2 j_1+2\beta j_1}\cdot 2^{4j''-4m}2^{-2\gamma j_2}\\
&\lesssim 2^{5\beta m}2^{-2j_1}2^{(4-2\gamma)j_2}.
\end{split}
\end{equation*}
The desired estimate \eqref{cle39} follows from the last two bounds and the restriction \eqref{cle47}. This completes the proof of the lemma.
\end{proof}

\section{Technical estimates}\label{technical}

In this section we collect several technical estimates that are used at various stages of the argument. 

\subsection{Linear and bilinear estimates}

We prove now some important linear and bilinear estimates, which are used repeatedly in the paper. We show first that our main spaces constructed in Definition \ref{MainDef} are 
compatible with normalized Calderon--Zygmund operators.

\begin{lemma}\label{tech3}
If $Q$ is a normalized Calderon--Zygmund operator (see \eqref{symb}--\eqref{CZ}) then
\begin{equation}\label{compat}
\|Qf\|_{Z}\lesssim \|f\|_{Z},\qquad\text{ for any }f\in Z.
\end{equation}
\end{lemma}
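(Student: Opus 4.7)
My approach is to fix $(k,j)\in\mathcal{J}$ and estimate $\|\widetilde\varphi_j^{(k)}\cdot P_kQf\|_{B_{k,j}}$ by decomposing $f$ atomically and exploiting the kernel decay of $QP_k$. Since $Q$ commutes with $P_k$, writing $P_kf=\sum_{j'\geq\max(-k,0)}f_{k,j'}$ with $f_{k,j'}=\widetilde\varphi_{j'}^{(k)}P_kf$ and then splitting each atom $f_{k,j'}=(2^{\alpha k}+2^{10k})^{-1}(g_{j'}+h_{j'})$ as in \eqref{sec5.8}--\eqref{sec5.815} (with $g_{j'},h_{j'}$ supported in $\widetilde\varphi_{[j'-2,j'+2]}^{(k)}$ and with controlled $B^1_{k,j'}$, $B^2_{k,j'}$ norms respectively), I would route $\sum_{j'}\widetilde\varphi_j^{(k)}Qg_{j'}$ into the strong slot and $\sum_{j'}\widetilde\varphi_j^{(k)}Qh_{j'}$ into the weak slot of the $B_{k,j}$ infimum decomposition. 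By Minkowski, the whole proof reduces to the almost-orthogonality estimate
\begin{equation}\label{lemtech3pfkey}
\|\widetilde\varphi_j^{(k)}\cdot Qu\|_{B^i_{k,j}}\lesssim 2^{-10|j-j'|}\,\|u\|_{B^i_{k,j'}},\qquad i\in\{1,2\},
\end{equation}
valid for every $u$ supported where $\widetilde\varphi_{[j'-2,j'+2]}^{(k)}$ is nontrivial.

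The main tool is the kernel bound $|K_{q,k}(x)|\lesssim 2^{3k}(1+2^k|x|)^{-N}$ for $N\leq 100$, where $K_{q,k}:=\mathcal{F}^{-1}(q\,\varphi_{[k-2,k+2]})$ is the kernel of $QP_k$; this follows from $\|q\|_{\mathcal{S}^{100}}\leq 1$ by repeated integration by parts. In particular $\|K_{q,k}\|_{L^1}\lesssim 1$, so $QP_k$ is bounded on every $L^p$. I would also use that $\|\widehat{\widetilde\varphi_j^{(k)}}\|_{L^1}\lesssim 1$ uniformly in $(k,j)\in\mathcal{J}$, since $\widetilde\varphi_j^{(k)}$ is a fixed Schwartz bump rescaled to scale $2^j$ (or, at the boundary, a bump at the origin). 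In the near-diagonal regime $|j-j'|\leq 10$, each component of the $B^i$ norm then transfers by soft arguments: the $L^2$ piece by Young and $|\widetilde\varphi_j^{(k)}|\leq 1$; the $\|\widehat{\,\cdot\,}\|_{L^\infty}$ piece via the identity $\mathcal F(\widetilde\varphi_j^{(k)}Qu)=\widehat{\widetilde\varphi_j^{(k)}}\ast(q\widehat u)$ together with $|q|\leq 1$; and the $\|\widehat{\,\cdot\,}\|_{L^1(B(\xi_0,R))}$ piece because $\widehat{\widetilde\varphi_j^{(k)}}$ is concentrated at scale $2^{-j}\leq R$, so convolution by it only smears the ball $B(\xi_0,R)$ into a comparable one (with rapidly decaying tails).

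In the off-diagonal regime $|j-j'|\geq 10$, the supports of $\widetilde\varphi_j^{(k)}$ and $u$ are separated by at least $2^{\max(j,j')-1}$, so the kernel bound yields the pointwise estimate
\begin{equation*}
|\widetilde\varphi_j^{(k)}(x)\cdot Qu(x)|\lesssim 2^{3k}(1+2^{k+\max(j,j')})^{-N}\|u\|_{L^1}\,\mathbf 1_{|x|\sim 2^j}.
\end{equation*}
Since $(k,j),(k,j')\in\mathcal J$ force $k+\min(j,j')\geq 0$, one has $k+\max(j,j')\geq|j-j'|$, producing $2^{-N|j-j'|}$ decay for any $N\leq 100$; translating this bound to $L^2$ via the volume factor $|\operatorname{supp}\widetilde\varphi_j^{(k)}|^{1/2}$ and to the Fourier-side norms via Bernstein-type inequalities (bounding $\|\widehat{\widetilde\varphi_j^{(k)}Qu}\|_{L^\infty}\leq \|\widetilde\varphi_j^{(k)}Qu\|_{L^1}$ and treating localized $L^1$ norms of $\widehat{\widetilde\varphi_j^{(k)}Qu}$ likewise) gives \eqref{lemtech3pfkey} in this regime with room to spare. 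The main technical obstacle lies in the $\|\widehat{\,\cdot\,}\|_{L^1(B(\xi_0,R))}$ term of $B^2_{k,j}$: convolution with $\widehat{\widetilde\varphi_j^{(k)}}$ smears Fourier mass on scale $2^{-j}$, which would be fatal were it not for the built-in lower bound $R\geq 2^{-j}$ in the definition of $B^2_{k,j}$ that exactly tames this smearing and allows one to absorb the convolution into a slightly larger ball (plus a rapidly decaying tail). Summing \eqref{lemtech3pfkey} over $j'$ then yields $\|\widetilde\varphi_j^{(k)}\cdot P_kQf\|_{B_{k,j}}\lesssim \|f\|_Z$, completing the proof.
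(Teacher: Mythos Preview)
Your proof is correct and follows essentially the same approach as the paper: atomic decomposition of $P_kf$ in $j'$, the kernel bound $|K_{q,k}(x)|\lesssim 2^{3k}(1+2^k|x|)^{-N}$, and the observation that convolution with $\widehat{\widetilde\varphi_j^{(k)}}$ smears Fourier balls only on scale $2^{-j}\leq R$. The one minor difference is the routing of the off-diagonal weak pieces: the paper sends $\widetilde\varphi_j^{(k)}\cdot(h_{j'}\ast K_k)$ with $|j-j'|\geq 4$ into the \emph{strong} slot $B^1_{k,j}$ (so the localized-$L^1$ component of $B^2$ is only checked near the diagonal), whereas you keep all $h_{j'}$ in the weak slot $B^2_{k,j}$ and rely on the kernel decay to dominate the $2^{\gamma j}$ weight off-diagonally---both choices work for the same reason, namely $k+\max(j,j')\geq |j-j'|$.
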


\begin{proof}[Proof of Lemma \ref{tech3}] We may assume that $\|f\|_{Z}\leq 1$ and it suffices to prove that
\begin{equation}\label{compat2}
\|\widetilde{\varphi}^{(k)}_j\cdot P_kQf\|_{B_{k,j}}\lesssim 1,
\end{equation}
for any $(k,j)\in\mathcal{J}$ fixed.

We have
\begin{equation}\label{compat2.1}
\widetilde{\varphi}^{(k)}_j(x)\cdot P_kQf(x)=\widetilde{\varphi}^{(k)}_j(x)\int_{\mathbb{R}^3}P_kf(y)\cdot K_k(x-y)\,dy,
\end{equation}
where
\begin{equation*}
K_k(z)=c\int_{\mathbb{R}^3}e^{iz\cdot\xi}q(\xi)\varphi_{[k-1,k+1]}(\xi)\,d\xi.
\end{equation*}
Clearly,
\begin{equation}\label{compat2.5}
|K_k(z)|\lesssim 2^{3k}(1+2^k|z|)^{-6}.
\end{equation}

As before, let $\widetilde{k}=\min(k,0)$, $k_+=\max(k,0)$. Since $\|\widetilde{\varphi}^{(k)}_{j'}\cdot P_kf\|_{B_{k,j'}}\leq 1$ for any $j'\geq-\widetilde{k}$, we can 
decompose, as in \eqref{sec5.8}--\eqref{sec5.82},
\begin{equation}\label{compat6}
\begin{split}
&\widetilde{\varphi}^{(k)}_{j'}\cdot P_kf=g_{1,j'}+g_{2,j'},\qquad g_{1,j'}=g_{1,j'}\cdot \widetilde{\varphi}^{(k)}_{[j'-2,j'+2]},\qquad g_{2,j'}=g_{2,j'}\cdot \widetilde{\varphi}^{(k)}_{[j'-2,j'+2]},\\
&2^{(1+\be)j'}\|g_{1,j'}\|_{L^2}+2^{(1/2-\beta)\widetilde{k}}\|\widehat{g_{1,j'}}\|_{L^\infty}\lesssim (2^{\alpha k}+2^{10k})^{-1},\\
&2^{-2\beta\widetilde{k}}2^{(1-\be)j'}\|g_{2,j'}\|_{L^2}+2^{(1/2-\beta)\widetilde{k}}\|\widehat{g_{2,j'}}\|_{L^\infty}
+2^{(\gamma-\beta-5/2)\widetilde{k}}2^{\gamma j'}\|\widehat{g_{2,j'}}\|_{L^1}\lesssim (2^{\alpha k}+2^{10k})^{-1},
\end{split}
\end{equation}
and, moreover,
\begin{equation}\label{compa6}
2^{(\gamma-\beta-1/2)\widetilde{k}}2^{2k_+}2^{\gamma j'}\sup_{R\in[2^{-j'},2^k],\,\xi_0\in\mathbb{R}^3}
R^{-2}\|\widehat{g_{2,j'}}\|_{L^1(B(\xi_0,R))}\lesssim (2^{\alpha k}+2^{10k})^{-1}.
\end{equation}

Then we decompose, using the formulas \eqref{compat2.1} and \eqref{compat6}, 
\begin{equation}\label{compat6.1}
\begin{split}
&\widetilde{\varphi}^{(k)}_j(x)\cdot P_kQf(x)=G_1+G_2,\\
&G_1(x):=\sum_{j'\geq -\widetilde{k}}\widetilde{\varphi}^{(k)}_j(x)\cdot (g_{1,j'}\ast K_k)(x)+\sum_{j'\geq-\widetilde{k},\,|j'-j|\geq 4}\widetilde{\varphi}^{(k)}_j(x)\cdot (g_{2,j'}\ast K_k)(x),\\
&G_2(x):=\sum_{j'\geq-\widetilde{k},\,|j'-j|\leq 3}\widetilde{\varphi}^{(k)}_j(x)\cdot (g_{2,j'}\ast K_k)(x).
\end{split}
\end{equation}
In view of the definitions, for \eqref{compat2} it suffices to prove that
\begin{equation}\label{compat6.2}
\|G_1\|_{B^1_{k,j}}+\|G_2\|_{B^2_{k,j}}\lesssim 1.
\end{equation}

To prove the bound $\|G_1\|_{B^1_{k,j}}\lesssim 1$ we notice first that
\begin{equation*}
\begin{split}
&\sum_{j'\geq-\widetilde{k},\,|j'-j|\leq 3}\|\widetilde{\varphi}^{(k)}_j\cdot (g_{1,j'}\ast K_k)\|_{L^2}
\lesssim \sum_{j'\geq-\widetilde{k},\,|j'-j|\leq 3}\|g_{1,j'}\|_{L^2}\lesssim (2^{\alpha k}+2^{10k})^{-1}2^{-(1+\beta)j},\\
&\sum_{j'\geq-\widetilde{k},\,|j'-j|\leq 3}\|\mathcal{F}[\widetilde{\varphi}_j^{(k)}\cdot (g_{1,j'}\ast K_k)]\|_{L^\infty}
\lesssim \sum_{j'\geq-\widetilde{k},\,|j'-j|\leq 3}\|\widehat{g_{1,j'}}\|_{L^\infty}\lesssim (2^{\alpha k}+2^{10k})^{-1}2^{-(1/2-\beta)\widetilde{k}}.
\end{split}
\end{equation*}
Therefore it remains to prove that
\begin{equation}\label{compat6.3}
 \begin{split}
&\sum_{j'\geq-\widetilde{k},\,|j'-j|\geq 4}\Big[\|\widetilde{\varphi}_j^{(k)}\cdot (g_{1,j'}\ast K_k)\|_{L^2}+\|\widetilde{\varphi}^{(k)}_j\cdot (g_{2,j'}\ast K_k)\|_{L^2}\Big]\lesssim (2^{\alpha k}+2^{10k})^{-1}2^{-(1+\beta)j},\\
&\sum_{j'\geq-\widetilde{k},\,|j'-j|\geq 4}\Big[\|\mathcal{F}[\widetilde{\varphi}^{(k)}_j\cdot (g_{1,j'}\ast K_k)]\|_{L^\infty}+\|\mathcal{F}[\widetilde{\varphi}^{(k)}_j\cdot (g_{2,j'}\ast K_k)]\|_{L^\infty}\Big]
\lesssim (2^{\alpha k}+2^{10k})^{-1}2^{-(1/2-\beta)\widetilde{k}}.
 \end{split}
\end{equation}
Since
\begin{equation*}
\|\mathcal{F}[\widetilde{\varphi}^{(k)}_j\cdot h]\|_{L^\infty}\lesssim \|\widetilde{\varphi}^{(k)}_j\cdot h\|_{L^1}
\lesssim 2^{3j/2}\|\widetilde{\varphi}^{(k)}_j\cdot h\|_{L^2},
\end{equation*}
for \eqref{compat6.3} it suffices to prove that 
\begin{equation}\label{compat6.4}
\sum_{j'\geq-\widetilde{k},\,|j'-j|\geq 4}\Big[\|\widetilde{\varphi}_j^{(k)}\cdot (g_{1,j'}\ast K_k)\|_{L^2}+\|\widetilde{\varphi}_j^{(k)}\cdot (g_{2,j'}\ast K_k)\|_{L^2}\Big]
\lesssim (2^{\alpha k}+2^{10k})^{-1}2^{-3j/2}2^{-(1/2-\beta)\widetilde{k}}.
\end{equation}
Notice that if $|j-j'|\geq 4$ and $\mu\in\{1,2\}$ then
\begin{equation*}
\widetilde{\varphi}^{(k)}_j(x)\cdot (g_{\mu,j'}\ast K_k)(x)=\widetilde{\varphi}_j^{(k)}(x)\cdot (g_{\mu,j'}\ast K_{k,j,j'})(x)\quad\text{ where }
\quad K_{k,j,j'}(z):=K_k(z)\cdot \varphi_{[\max(j,j')-10,\infty)}(z).
\end{equation*}
Therefore, using \eqref{compat2.5}, 
\begin{equation*}
\begin{split}
 \sum_{j'\geq-\widetilde{k},\,|j'-j|\geq 4}&\Big[\|\widetilde{\varphi}^{(k)}_j\cdot (g_{1,j'}\ast K_k)\|_{L^2}+\|\widetilde{\varphi}^{(k)}_j\cdot (g_{2,j'}\ast K_k)\|_{L^2}\Big]\\
 &\lesssim 2^{3j/2}\sum_{j'\geq-\widetilde{k},\,|j'-j|\geq 4}\Big[(\|g_{1,j'}\|_{L^1}+\|g_{2,j'}\|_{L^1})\|K_{k,j,j'}\|_{L^\infty}\Big]\\
&\lesssim 2^{3j/2}\sum_{j'\geq-\widetilde{k}}2^{3j'/2}(2^{\alpha k}+2^{10k})^{-1}\cdot 2^{-(1-\beta)j'}2^{2\beta\widetilde{k}}\cdot 2^{3k}(1+2^k2^{\max(j,j')})^{-6}\\
&\lesssim (2^{\alpha k}+2^{10k})^{-1}2^{-3j/2}2^{-(1/2-\beta)\widetilde{k}}\cdot 2^{-|k+j|},
\end{split}
\end{equation*}
which suffices to prove the desired bound \eqref{compat6.4}.

To prove the bound $\|G_2\|_{B^2_{k,j}}\lesssim 1$ in \eqref{compat6.2} we notice first the
\begin{equation*}
\begin{split}
&\|G_2\|_{L^2}\lesssim \sum_{j'\geq-\widetilde{k},\,|j'-j|\leq 3}\|g_{2,j'}\|_{L^2}\lesssim (2^{\alpha k}+2^{10k})^{-1}2^{-(1-\beta)j}2^{2\beta\widetilde{k}},\\
&\|\widehat{G_2}\|_{L^\infty}\lesssim \sum_{j'\geq-\widetilde{k},\,|j'-j|\leq 3}\|\widehat{g_{2,j'}}\|_{L^\infty}
\lesssim (2^{\alpha k}+2^{10k})^{-1}2^{-(1/2-\beta)\widetilde{k}},
\end{split}
\end{equation*}
using the assumptions on $g_{2,j'}$ in \eqref{compat6}. Therefore it remains to prove that
\begin{equation}\label{compa7}
 2^{(\gamma-\beta-1/2)\widetilde{k}}2^{2k_+}2^{\gamma j'}R^{-2}\|\mathcal{F}[\widetilde{\varphi}^{(k)}_j\cdot (g_{2,j'}\ast K_k)]\|_{L^1(B(\xi_0,R))}\lesssim (2^{\alpha k}+2^{10k})^{-1}    
\end{equation}
for any $R\in[2^{-j},2^k]$, $\xi_0\in\mathbb{R}^3$, and $j'\in[j-3,j+3]\cap\mathbb{Z}$.

To prove \eqref{compa7} we notice that, for any $\xi\in B(\xi_0,R)$,
\begin{equation*}
\Big|\mathcal{F}[\widetilde{\varphi}^{(k)}_j\cdot (g_{2,j'}\ast K_k)](\xi)\Big|
\lesssim \int_{\mathbb{R}^3}|\widehat{g_{2,j'}}(\xi-\eta)|\,|\mathcal{F}(\widetilde{\varphi}^{(k)}_j)(\eta)|\,d\eta
\lesssim \int_{\mathbb{R}^3}|\widehat{g_{2,j'}}(\xi-\eta)|2^{3j}(1+2^j|\eta|)^{-6}\,d\eta.
\end{equation*}
Therefore
\begin{equation*}
\|\mathcal{F}[\widetilde{\varphi}^{(k)}_j\cdot (g_{2,j'}\ast K_k)]\|_{L^1(B(\xi_0,R))}\lesssim \sup_{\xi_1\in\mathbb{R}^3}
\|\widehat{g_{2,j'}}\|_{L^1(B(\xi_1,R))},
\end{equation*}
and the desired bound \eqref{compa7} follows from \eqref{compa6}.
\end{proof}

We prove now several dispersive estimates.

\begin{lemma}\label{tech1.5} (i) For any $k\in\mathbb{Z}$, $t\in\mathbb{R}$, $\sigma\in\{1,\ldots,d\}$, and $g\in L^1(\mathbb{R}^3)$ we have
\begin{equation}\label{mk15}
\|P_{(-\infty,k]}e^{it\Lambda_\sigma}g\|_{L^\infty}\lesssim (1+|t|)^{-3/2}2^{3k_+}\|g\|_{L^1}.
\end{equation}

(ii) Assume $\|f\|_{Z}\leq 1$, $t\in\mathbb{R}$, $(k,j)\in\mathcal{J}$, and let $\widetilde{k}=\min(k,0)$ and
\begin{equation*}
f_{k,j}:=P_{[k-2,k+2]}[\widetilde{\varphi}^{(k)}_j\cdot P_kf].
\end{equation*}
Then
\begin{equation}\label{mk15.5}
\|f_{k,j}\|_{L^2}\lesssim (2^{\alpha k}+2^{10k})^{-1}\cdot 2^{2\beta\widetilde{k}}2^{-(1-\be)j}
\end{equation}
and
\begin{equation}\label{mk15.55}
\sup_{\xi\in\mathbb{R}^3}\big|D^\rho_\xi\widehat{f_{k,j}}(\xi)\big|\lesssim_{|\rho|} (2^{\alpha k}+2^{10k})^{-1}\cdot 2^{-(1/2-\beta)\widetilde{k}}2^{|\rho|j}.
\end{equation}
Moreover, for $\sigma\in\{1,\ldots,d\}$, if $k\leq 0$ then
\begin{equation}\label{mk15.6}
\begin{split}
\big\|e^{it\Lambda_\sigma}f_{k,j}\big\|_{L^\infty}&\lesssim 2^{-\al k}\min(2^{-(1+\be)j}2^{3k/2},(1+|t|)^{-3/2}2^{(1/2-\beta)j})\\
&+2^{-\al k}\min(2^{(-\gamma+\beta+5/2)k}2^{-\gamma j},(1+|t|)^{-3/2}2^{(1/2+\beta)j}2^{2\beta k}).
\end{split}
\end{equation}
If $k\geq 0$ then
\begin{equation}\label{mk15.65}
\big\|e^{it\Lambda_\sigma}f_{k,j}\big\|_{L^\infty}\lesssim 2^{-6k}\min(2^{-(1+\be)j},(1+|t|)^{-3/2}2^{(1/2-\beta)j})+2^{-6k}\min(2^{-\gamma j},(1+|t|)^{-3/2}2^{(1/2+\beta)j}).
\end{equation}

(iii) As a consequence
\begin{equation}\label{ok9}
\sum_{j\geq \max(-k,0)}\|f_{k,j}\|_{L^2}\lesssim \min(2^{(1+\beta-\alpha)k},2^{-10k})
\end{equation}
and\footnote{In many places we will be able to use the simpler bound \eqref{ok10}, instead of the more precise bounds \eqref{mk15.6} and \eqref{mk15.65}.}
\begin{equation}\label{ok10}
\sum_{j\geq \max(-k,0)}\big\|e^{it\Lambda_\sigma}f_{k,j}\big\|_{L^\infty}\lesssim \min(2^{(1/2-\beta-\alpha)k},2^{-6k})(1+|t|)^{-1-\beta}.
\end{equation}
\end{lemma}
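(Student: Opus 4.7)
The plan is to treat parts (i), (ii), (iii) in order, with each reducing to a short computation that draws on the $Z$-norm decomposition \eqref{sec5.8}--\eqref{sec5.815} and elementary stationary phase.

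For part (i), I would establish the kernel bound $|K_{\leq k}(x,t)| \lesssim (1+|t|)^{-3/2} 2^{3k_+}$ for the convolution kernel $K_{\leq k}(x,t) = \int e^{i(x\cdot\xi + t\Lambda_\sigma(\xi))}\varphi_{(-\infty,k]}(\xi)\,d\xi$ and then deduce the lemma by Young's inequality. For $|t| \leq 1$ this is Bernstein. For $|t| \geq 1$, I would decompose dyadically $K_{\leq k} = K_{\leq 0} + \sum_{0 < k' \leq k} K_{k'}$. On $|\xi| \lesssim 1$ the Hessian $\nabla^2\Lambda_\sigma$ is non-degenerate (of size $c_\sigma^2/b_\sigma$), so three-dimensional stationary phase yields $|K_{\leq 0}| \lesssim |t|^{-3/2}$. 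On $|\xi| \sim 2^{k'}$ with $k' \geq 0$, the eigenvalues of $\nabla^2\Lambda_\sigma$ are of sizes $2^{-k'}$ (tangential, two directions) and $2^{-3k'}$ (radial), so stationary phase gives $|K_{k'}| \lesssim |t|^{-3/2} 2^{5k'/2}$; summing in $k'$ and noting $2^{5k/2} \leq 2^{3k}$ for $k \geq 0$ closes the estimate.

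For (ii), I start from the decomposition $\widetilde{\varphi}^{(k)}_j \cdot P_k f = (2^{\alpha k}+2^{10k})^{-1}(g+h)$, with $g,h$ satisfying \eqref{sec5.815}. The $L^2$ bound \eqref{mk15.5} is immediate since both $\|g\|_{L^2}$ and $\|h\|_{L^2}$ are controlled by the (weaker) $B^2_{k,j}$-constant; one notes that $2^{-(1+\beta)j} \leq 2^{2\beta \widetilde{k}} 2^{-(1-\beta)j}$ whenever $j + \widetilde{k} \geq 0$, which holds throughout $\mathcal{J}$. The Fourier-derivative bound \eqref{mk15.55} uses the identity $D^\rho_\xi \widehat{g_i} = \mathcal{F}(x^\rho g_i) = \mathcal{F}(x^\rho \widetilde{\varphi}^{(k)}_{[j-2,j+2]}) \ast \widehat{g_i}$ combined with Young's inequality; a direct scaling computation shows $\|\mathcal{F}(x^\rho \widetilde{\varphi}^{(k)}_{[j-2,j+2]})\|_{L^1} \lesssim 2^{|\rho|j}$, and the derivatives falling on $\varphi_{[k-2,k+2]}$ cost at most $2^{-|\rho|k} \leq 2^{|\rho|j}$ (since $j + k \geq 0$ in $\mathcal{J}$). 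The dispersive bounds \eqref{mk15.6}--\eqref{mk15.65} follow by taking the minimum of two estimates for each of $g$ and $h$: the $t$-independent bound $\|e^{it\Lambda_\sigma}g_i\|_{L^\infty} \leq \|\widehat{g_i}\|_{L^1}$ (where for $g$ one Cauchy-Schwarzes on the $|\xi| \sim 2^k$ support and for $h$ one uses the explicit $\|\widehat{h}\|_{L^1}$ bound from \eqref{sec5.82} with $R = 2^k$), versus the $t^{-3/2}$ decay from part (i) applied with $\|g_i\|_{L^1} \lesssim 2^{3j/2}\|g_i\|_{L^2}$. The four resulting terms reorganize into the two lines of \eqref{mk15.6} for $k \leq 0$ and \eqref{mk15.65} for $k \geq 0$, after a routine manipulation of the weights $(2^{\alpha k}+2^{10k})^{-1}$.

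Finally, for (iii) the summation in $j$ is geometric: \eqref{ok9} follows because $\sum_{j \geq \max(-k,0)} 2^{-(1-\beta)j}$ is comparable to $2^{(1-\beta)\widetilde{k}}$ (for $k \leq 0$) or bounded (for $k \geq 0$), giving after simplification the stated $\min(2^{(1+\beta-\alpha)k}, 2^{-10k})$. The decay estimate \eqref{ok10} uses the standard $\min$-trick $\sum_j \min(A\cdot 2^{-aj}, Bt^{-3/2}2^{bj}) \lesssim A^{b/(a+b)}(B t^{-3/2})^{a/(a+b)}$ applied separately to each of the four $(g/h,$ low/high $k)$ subcases of \eqref{mk15.6}--\eqref{mk15.65}; in every case the optimal exponent balances to yield decay of at least $(1+|t|)^{-1-\beta}$. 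The whole argument is a bookkeeping exercise; the only point requiring any care is the bookkeeping of the two weight regimes $2^{\alpha k}$ (for $k \leq 0$) versus $2^{10k}$ (for $k \geq 0$) and the different role played by $\widetilde{k}$ and $k_+$, which I would write out explicitly case by case.
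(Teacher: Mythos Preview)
Your proposal is correct and follows essentially the same route as the paper. The paper dispatches (i) in one line as ``well-known'' where you spell out the dyadic stationary-phase argument; for (ii) both you and the paper invoke the $g+h$ decomposition \eqref{sec5.8}--\eqref{sec5.82}, pair the Bernstein/Hausdorff--Young bound against the $t^{-3/2}$ dispersive bound (with $\|g_i\|_{L^1}\lesssim 2^{3j/2}\|g_i\|_{L^2}$), and handle \eqref{mk15.55} via the convolution identity $D^\rho_\xi\widehat{g_i}=\widehat{g_i}\ast\mathcal{F}(x^\rho\widetilde{\varphi}^{(k)}_{[j-2,j+2]})$ together with the $L^\infty$ control on $\widehat{g_i}$; your remark that the Leibniz terms from $\varphi_{[k-2,k+2]}$ cost only $2^{-|\rho|k}\leq 2^{|\rho|j}$ is a detail the paper leaves implicit. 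Part (iii) is geometric summation in both accounts.
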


\begin{proof}[Proof of Lemma \ref{tech1.5}] The dispersive bound \eqref{mk15} is well-known. To prove the bounds in (ii), 
we start by decomposing, as in \eqref{sec5.8}--\eqref{sec5.82},
\begin{equation}\label{compat70}
\begin{split}
&\widetilde{\varphi}^{(k)}_{j}\cdot P_kf=g_{1,j}+g_{2,j},\qquad g_{1,j}=g_{1,j}\cdot \widetilde{\varphi}^{(k)}_{[j-2,j+2]},\qquad g_{2,j}=g_{2,j}\cdot \widetilde{\varphi}^{(k)}_{[j-2,j+2]},\\
&2^{(1+\be)j}\|g_{1,j}\|_{L^2}+2^{(1/2-\beta)\widetilde{k}}\|\widehat{g_{1,j}}\|_{L^\infty}\lesssim (2^{\alpha k}+2^{10k})^{-1},\\
&2^{-2\beta\widetilde{k}}2^{(1-\be)j}\|g_{2,j}\|_{L^2}+2^{(1/2-\beta)\widetilde{k}}\|\widehat{g_{2,j}}\|_{L^\infty}+2^{(\gamma-\beta-5/2)\widetilde{k}}2^{\gamma j}\|\widehat{g_{2,j}}\|_{L^1}\lesssim (2^{\alpha k}+2^{10k})^{-1}.
\end{split}
\end{equation}
The bound \eqref{mk15.5} follows easily. To prove \eqref{mk15.55} we use the formulas in the first line of \eqref{compat70} to write, for $\mu=1,2$,
\begin{equation*}
\widehat{g_{\mu,j}}(\xi)=c\int_{\mathbb{R}^3}\widehat{g_{\mu,j}}(\eta)\mathcal{F}(\widetilde{\varphi}^{(k)}_{[j-2,j+2]})(\xi-\eta)\,d\eta.
\end{equation*}
Therefore
\begin{equation*}
D^\rho_\xi\widehat{g_{\mu,j}}(\xi)=c\int_{\mathbb{R}^3}\widehat{g_{\mu,j}}(\eta)\mathcal{F}(x^\rho\cdot\widetilde{\varphi}^{(k)}_{[j-2,j+2]})(\xi-\eta)\,d\eta.
\end{equation*}
The desired bounds \eqref{mk15.55} follow using the bounds $\|\widehat{g_{\mu,j}}\|_{L^\infty}\lesssim(2^{\alpha k}+2^{10k})^{-1}2^{-(1/2-\beta)\widetilde{k}}$, see \eqref{compat70}.

We prove now the bounds \eqref{mk15.6}. Assuming $k\leq 0$ we estimate
\begin{equation*}
\big\|e^{it\Lambda_\sigma}P_{[k-2,k+2]}g_{1,j}\big\|_{L^\infty}\lesssim 2^{3k/2}\|g_{1,j}\|_{L^2}\lesssim 2^{3k/2}\cdot 2^{-\alpha k}2^{-(1+\be)j},
\end{equation*}
and, using \eqref{mk15},
\begin{equation*}
\begin{split}
\big\|e^{it\Lambda_\sigma}P_{[k-2,k+2]}g_{1,j}\big\|_{L^\infty}&\lesssim (1+|t|)^{-3/2}\|g_{1,j}\|_{L^1}\\
&\lesssim (1+|t|)^{-3/2}2^{3j/2}\|g_{1,j}\|_{L^2}\\
&\lesssim (1+|t|)^{-3/2}2^{3j/2}\cdot 2^{-\alpha k}2^{-(1+\be)j}.
\end{split}
\end{equation*}
Therefore
\begin{equation}\label{cucu1}
\big\|e^{it\Lambda_\sigma}P_{[k-2,k+2]}g_{1,j}\big\|_{L^\infty}\lesssim 2^{-\al k}\min(2^{-(1+\be)j}2^{3k/2},(1+|t|)^{-3/2}2^{(1/2-\beta)j}).
\end{equation}
Similarly,
\begin{equation*}
\big\|e^{it\Lambda_\sigma}P_{[k-2,k+2]}g_{2,j}\big\|_{L^\infty}\lesssim \|\widehat{g_{2,j}}\|_{L^1}\lesssim 2^{-\al k}2^{(-\gamma+\beta+5/2)k}2^{-\gamma j},
\end{equation*}
and, using \eqref{mk15},
\begin{equation*}
\begin{split}
\big\|e^{it\Lambda_\sigma}P_{[k-2,k+2]}g_{2,j}\big\|_{L^\infty}&\lesssim (1+|t|)^{-3/2}\|g_{2,j}\|_{L^1}\\
&\lesssim (1+|t|)^{-3/2}2^{3j/2}\|g_{2,j}\|_{L^2}\\
&\lesssim (1+|t|)^{-3/2}2^{3j/2}\cdot 2^{-\alpha k}2^{2\beta k}2^{-(1-\be)j}.
\end{split}
\end{equation*}
Therefore
\begin{equation}\label{cucu2}
\big\|e^{it\Lambda_\sigma}P_{[k-2,k+2]}g_{2,j}\big\|_{L^\infty}\lesssim 2^{-\al k}\min(2^{(-\gamma+\beta+5/2)k}2^{-\gamma j},(1+|t|)^{-3/2}2^{(1/2+\beta)j}2^{2\beta k}).
\end{equation}

Similarly, if $k\geq 0$ then we estimate
\begin{equation*}
\big\|e^{it\Lambda_\sigma}P_{[k-2,k+2]}g_{1,j}\big\|_{L^\infty}\lesssim 2^{3k/2}\|g_{1,j}\|_{L^2}\lesssim 2^{3k/2}\cdot 2^{-10 k}2^{-(1+\be)j},
\end{equation*}
and, using \eqref{mk15},
\begin{equation*}
\begin{split}
\big\|e^{it\Lambda_\sigma}P_{[k-2,k+2]}g_{1,j}\big\|_{L^\infty}&\lesssim (1+|t|)^{-3/2}2^{3k}\|g_{1,j}\|_{L^1}\\
&\lesssim (1+|t|)^{-3/2}2^{3k}2^{3j/2}\|g_{1,j}\|_{L^2}\\
&\lesssim (1+|t|)^{-3/2}2^{3k}2^{3j/2}\cdot 2^{-10 k}2^{-(1+\be)j}.
\end{split}
\end{equation*}
Therefore,
\begin{equation}\label{cucu3}
\big\|e^{it\Lambda_\sigma}P_{[k-2,k+2]}g_{1,j}\big\|_{L^\infty}\lesssim 2^{-6k}\min(2^{-(1+\be)j},(1+|t|)^{-3/2}2^{(1/2-\beta)j}).
\end{equation}
Similarly,
\begin{equation*}
\big\|e^{it\Lambda_\sigma}P_{[k-2,k+2]}g_{2,j}\big\|_{L^\infty}\lesssim \|\widehat{g_{2,j}}\|_{L^1}\lesssim 2^{-10 k}2^{-\gamma j},
\end{equation*}
and, using \eqref{mk15},
\begin{equation*}
\begin{split}
\big\|e^{it\Lambda_\sigma}P_{[k-2,k+2]}g_{2,j}\big\|_{L^\infty}&\lesssim (1+|t|)^{-3/2}2^{3k}\|g_{2,j}\|_{L^1}\\
&\lesssim (1+|t|)^{-3/2}2^{3k}2^{3j/2}\|g_{2,j}\|_{L^2}\\
&\lesssim (1+|t|)^{-3/2}2^{3k}2^{3j/2}\cdot 2^{-10k}2^{-(1-\be)j}.
\end{split}
\end{equation*}
Therefore
\begin{equation}\label{cucu4}
\big\|e^{it\Lambda_\sigma}P_{[k-2,k+2]}g_{2,j}\big\|_{L^\infty}\lesssim 2^{-6k}\min(2^{-\gamma j},(1+|t|)^{-3/2}2^{(1/2+\beta)j}).
\end{equation}
The last bound in \eqref{mk15.6} follows from \eqref{cucu3} and \eqref{cucu4}.

(iii) The desired bounds follow directly from \eqref{mk15.5}, \eqref{mk15.6}, and \eqref{mk15.65}, by summation over $j$. 
\end{proof}

\begin{lemma}\label{tech2}
Assume that $k,k_1,k_2\in\mathbb{Z}$, and $p,q\in[2,\infty]$ satisfy $1/p+1/q=1/2$. Then
\begin{equation}\label{mk6}
\Big\|\int_{\mathbb{R}^3}\varphi_k(\xi)\varphi_{k_1}(\xi-\eta)\varphi_{k_2}(\eta)\cdot\widehat{f}(\xi-\eta)\widehat{g}(\eta)\,d\eta\Big\|_{L^2_\xi}\lesssim\|f\|_{L^p}\|g\|_{L^q}.
\end{equation}
More generally, if $k_1\leq k_2$ and $A_{k;k_1,k_2}:\mathbb{R}\times\mathbb{R}\to\mathbb{C}$ satisfies
\begin{equation}\label{mk6.3}
\sup_{|x|\in[2^{k-1},2^{k+1}],\,|y|\in[2^{k_1-1},2^{k_1+1}]}\sup_{|\rho|,|\sigma|\in[0,4]}\lambda^{-|\rho|}\lambda_1^{-|\sigma|}|D^\rho_xD^\sigma_yA_{k;k_1,k_2}(x,y)|\leq 1,
\end{equation}
for some $\lambda,\lambda_1\in(0,\infty)$, then
\begin{equation}\label{mk6.6}
\begin{split}
\Big\|\int_{\mathbb{R}^3}A_{k;k_1,k_2}(\xi,\xi-\eta)\varphi_k(\xi)&\varphi_{k_1}(\xi-\eta)\varphi_{k_2}(\eta)\cdot\widehat{f}(\xi-\eta)\widehat{g}(\eta)\,d\eta\Big\|_{L^2_\xi}\\
&\lesssim(1+2^{3k}\lambda^3)(1+2^{3k_1}\lambda_1^3)\|f\|_{L^p}\|g\|_{L^q}.
\end{split}
\end{equation}
\end{lemma}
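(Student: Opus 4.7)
\medskip
\noindent\textbf{Proof plan for Lemma \ref{tech2}.} For the first inequality \eqref{mk6}, the key observation is that the left-hand side is simply a frequency-localized product. Indeed,
\[
\int_{\mathbb{R}^3}\varphi_k(\xi)\varphi_{k_1}(\xi-\eta)\varphi_{k_2}(\eta)\widehat{f}(\xi-\eta)\widehat{g}(\eta)\,d\eta
=\varphi_k(\xi)\,\widehat{(P_{k_1}f)\cdot(P_{k_2}g)}(\xi),
\]
so the operator is (a constant multiple of) $P_k\bigl((P_{k_1}f)(P_{k_2}g)\bigr)$. Hence by Plancherel, H\"older with $1/p+1/q=1/2$, and the uniform boundedness of the Littlewood--Paley projections on $L^r$ for $r\in[2,\infty]$ (via Young's inequality, since their convolution kernels lie in $L^1$ with norm $\lesssim 1$), one obtains \eqref{mk6}.

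For \eqref{mk6.6}, the plan is to expand the symbol $A_{k;k_1,k_2}$ in a Fourier series on a box and reduce to the previous case. I would fix cutoffs $\widetilde{\varphi}_k(x),\widetilde{\varphi}_{k_1}(y)$ equal to $1$ on $\{|x|\in[2^{k-1},2^{k+1}]\}$ and $\{|y|\in[2^{k_1-1},2^{k_1+1}]\}$ respectively and supported in slightly enlarged annuli, then periodize $A\cdot\widetilde{\varphi}_k\widetilde{\varphi}_{k_1}$ on a cube of side $L_k\sim 2^{k}$ in $x$ and $L_{k_1}\sim 2^{k_1}$ in $y$ to obtain
\[
A(x,y)\widetilde{\varphi}_k(x)\widetilde{\varphi}_{k_1}(y)
=\sum_{n,\,m\in\mathbb{Z}^3}c_{n,m}\,e^{2\pi i\,n\cdot x/L_k}\,e^{2\pi i\,m\cdot y/L_{k_1}}\,\widetilde{\varphi}_k(x)\widetilde{\varphi}_{k_1}(y).
\]
Repeated integration by parts in the formula defining $c_{n,m}$, using the derivative bounds \eqref{mk6.3}, yields
\[
|c_{n,m}|\lesssim \bigl(1+|n|/\max(1,2^k\lambda)\bigr)^{-4}\bigl(1+|m|/\max(1,2^{k_1}\lambda_1)\bigr)^{-4},
\]
and summing in $n,m$ gives $\sum_{n,m}|c_{n,m}|\lesssim(1+2^{3k}\lambda^3)(1+2^{3k_1}\lambda_1^3)$, which is precisely the correction factor appearing in \eqref{mk6.6}.

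Substituting this expansion back into the operator, the factor $e^{2\pi i n\cdot \xi/L_k}$ translates the output in physical space while $e^{2\pi i m\cdot(\xi-\eta)/L_{k_1}}$ translates the first input; neither affects $L^2$, $L^p$, or $L^q$ norms. Each term in the expansion is therefore (up to a translation) an operator of the form covered by part~(i), bounded by $\|f\|_{L^p}\|g\|_{L^q}$. Summing against the coefficient bound yields \eqref{mk6.6}. The main technical point — and the only real obstacle — is calibrating the Fourier expansion so that the hypothesis \eqref{mk6.3}, which controls only four derivatives on the natural annular supports rather than on all of $\mathbb{R}^3$, actually produces the claimed decay of $c_{n,m}$ uniformly in the regime $2^k\lambda\lesssim 1$ (where the symbol is already essentially constant on scale $2^k$) as well as $2^k\lambda\gg 1$; this is handled by choosing the cutoffs $\widetilde{\varphi}_k,\widetilde{\varphi}_{k_1}$ to be smooth with derivatives adapted to the respective scales $2^{-k},2^{-k_1}$, so that the product $A\cdot\widetilde{\varphi}_k\widetilde{\varphi}_{k_1}$ satisfies the same differential inequalities (with possibly worse but still admissible constants).
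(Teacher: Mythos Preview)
Your argument is correct. For \eqref{mk6} you do exactly what the paper does (Plancherel plus H\"older). For \eqref{mk6.6}, however, you take a genuinely different route from the paper.

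The paper works on the physical side: it writes $\mathcal{F}^{-1}F(x)=\int f(y)g(z)K(x;y,z)\,dy\,dz$, computes the kernel
\[
K(x;y,z)=\int_{\mathbb{R}^3\times\mathbb{R}^3}e^{i(x-z)\cdot\xi}e^{i(z-y)\cdot\eta}A(\xi,\eta)\varphi_k(\xi)\varphi_{k_1}(\eta)\varphi_{k_2}(\xi-\eta)\,d\xi\,d\eta,
\]
and integrates by parts four times in each variable to get
\[
|K(x;y,z)|\lesssim 2^{3k}\Big(1+\frac{|x-z|}{2^{-k}+\lambda}\Big)^{-4}\cdot 2^{3k_1}\Big(1+\frac{|z-y|}{2^{-k_1}+\lambda_1}\Big)^{-4}.
\]
The bound \eqref{mk6.6} then follows from Young's inequality, since the $L^1$ norms of the two kernel factors are $\lesssim(1+2^{3k}\lambda^3)$ and $\lesssim(1+2^{3k_1}\lambda_1^3)$ respectively.

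Your Fourier--series expansion of the symbol is an equally standard and equally valid alternative; it has the advantage of reducing cleanly to part~(i) and of making the role of the factor $(1+2^{3k}\lambda^3)(1+2^{3k_1}\lambda_1^3)$ transparent as a coefficient sum. The paper's kernel approach is a bit more direct (one pointwise estimate rather than an infinite decomposition), and it also yields a physical-space kernel bound that the paper reuses elsewhere (e.g.\ in the proof of \eqref{cle40}). Either method gives the result with the same loss, since both hinge on the same four-derivative control in \eqref{mk6.3}.
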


\begin{proof}[Proof of Lemma \ref{tech2}] The bound \eqref{mk6} follows from Plancherel theorem. To prove \eqref{mk6.6}, letting
\begin{equation*}
F(\xi):=\int_{\mathbb{R}^3}A_{k;k_1,k_2}(\xi,\xi-\eta)\varphi_k(\xi)\varphi_{k_1}(\xi-\eta)\varphi_{k_2}(\eta)\cdot\widehat{f}(\xi-\eta)\widehat{g}(\eta)\,d\eta
\end{equation*}
we calculate
\begin{equation*}
(\mathcal{F}^{-1}F)(x)=c\int_{\mathbb{R}^3\times\mathbb{R}^3}f(y)g(z)K_{k;k_1,k_2}(x;y,z)\,dydz,
\end{equation*}
where
\begin{equation*}
K_{k;k_1,k_2}(x;y,z):=\int_{\mathbb{R}^3\times\mathbb{R}^3}e^{i(x-z)\cdot\xi}e^{i(z-y)\cdot\eta}A(\xi,\eta)\varphi_k(\xi)\varphi_{k_1}(\eta)\varphi_{k_2}(\xi-\eta)\,d\xi d\eta.
\end{equation*}
By integration by parts, using \eqref{mk6.3},
\begin{equation*}
|K_{k;k_1,k_2}(x;y,z)|\lesssim 2^{3k}\Big(1+\frac{|x-z|}{2^{-k}+\lambda}\Big)^{-4}\cdot 2^{3k_1}\Big(1+\frac{|z-y|}{2^{-k_1}+\lambda_1}\Big)^{-4},
\end{equation*}
and the desired bound \eqref{mk6.6} follows.
\end{proof}

The following general oscillatory integral estimate is used repeatedly in the proofs. 

\begin{lemma}\label{tech5} Assume that $0<\eps\leq 1/\eps\leq K$, $N\geq 1$ is an integer, and $f,g\in C^N(\mathbb{R}^n)$. Then
\begin{equation}\label{ln1}
\Big|\int_{\mathbb{R}^n}e^{iKf}g\,dx\Big|\lesssim_N (K\eps)^{-N}\big[\sum_{|\rho|\leq N}\eps^{|\rho|}\|D^\rho_xg\|_{L^1}\big],
\end{equation}
provided that $f$ is real-valued, 
\begin{equation}\label{ln2}
|\nabla_x f|\geq \mathbf{1}_{{\mathrm{supp}}\,g},\quad\text{ and }\quad\|D_x^\rho f \cdot\mathbf{1}_{{\mathrm{supp}}\,g}\|_{L^\infty}\lesssim_N\eps^{1-|\rho|},\,2\leq |\rho|\leq N.
\end{equation}
\end{lemma}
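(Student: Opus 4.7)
The plan is to prove the lemma by the standard non-stationary phase argument: integration by parts against the oscillating factor $e^{iKf}$. Because $|\nabla f|\geq 1$ on the support of $g$, we may define the first-order differential operator
\begin{equation*}
L:=\frac{1}{iK|\nabla f|^2}\,\nabla f\cdot\nabla,\qquad L(e^{iKf})=e^{iKf}\quad\text{on }\mathrm{supp}\,g,
\end{equation*}
and its formal $L^2$-adjoint
\begin{equation*}
L^\ast h=\frac{i}{K}\sum_{j=1}^n\partial_j\!\left(\frac{\partial_j f}{|\nabla f|^2}\,h\right).
\end{equation*}
Since $L^\ast$ preserves the support of its argument, iterating
\begin{equation*}
\int_{\mathbb{R}^n}e^{iKf}g\,dx=\int_{\mathbb{R}^n}L^N(e^{iKf})\,g\,dx=\int_{\mathbb{R}^n}e^{iKf}(L^\ast)^Ng\,dx
\end{equation*}
$N$ times is legitimate, and reduces matters to the $L^1$ estimate
\begin{equation*}
\|(L^\ast)^Ng\|_{L^1}\lesssim_N(K\varepsilon)^{-N}\sum_{|\rho|\le N}\varepsilon^{|\rho|}\|D^\rho g\|_{L^1}.
\end{equation*}

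The core of the proof is to establish this bound by induction on $N$, and the combinatorics is the only part requiring care. One checks inductively that $(L^\ast)^N g$ is a finite linear combination of terms of the shape
\begin{equation*}
K^{-N}\cdot \frac{P(\partial^{\alpha_1}f,\ldots,\partial^{\alpha_p}f)}{|\nabla f|^{2q}}\cdot D^\rho g,
\end{equation*}
where $|\rho|\le N$, each $|\alpha_i|\ge 1$, and a weight counting argument gives the balance
\begin{equation*}
\sum_{i=1}^p(|\alpha_i|-1)+|\rho|=N,\qquad p\le 2(N-|\rho|),\qquad q\le 2(N-|\rho|).
\end{equation*}
This follows from the base case $N=1$ (computed directly from $L^\ast g$, which has exactly the terms $K^{-1}\partial_j f\cdot|\nabla f|^{-2}\partial_j g$ and $K^{-1}\partial^2 f\cdot|\nabla f|^{-2}g$, $K^{-1}(\partial f)^2\partial^2f\cdot|\nabla f|^{-4}g$), together with the product and chain rule applied to each such monomial when $L^\ast$ is applied once more: each additional $\partial_j$ either raises some $|\alpha_i|$ by $1$, creates a new factor with $|\alpha_{p+1}|=1$, or lowers $|\rho|$ by one spot while raising a derivative on $f$ — in every case the weighted degree identity is preserved.

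Given this structure, the hypothesis \eqref{ln2} yields, on $\mathrm{supp}\,g$, the pointwise amplitude bound
\begin{equation*}
\Bigl|\frac{P(\partial^{\alpha_1}f,\ldots,\partial^{\alpha_p}f)}{|\nabla f|^{2q}}\Bigr|\lesssim_N\prod_{i=1}^p\varepsilon^{1-|\alpha_i|}=\varepsilon^{p-\sum|\alpha_i|}=\varepsilon^{-(N-|\rho|)},
\end{equation*}
since $|\nabla f|\geq 1$ makes the denominator harmless. Summing over the finitely many terms produced by the induction yields the asserted $L^1$ bound on $(L^\ast)^Ng$, and the triangle inequality then gives \eqref{ln1}. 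The hypothesis $K\varepsilon\ge 1$ ensures each integration by parts actually improves the estimate, so no term is lost. There is no genuine analytic obstacle here; the only delicate point is the bookkeeping that matches the $\varepsilon^{-(N-|\rho|)}$ produced by derivatives of $f$ with the weight $\varepsilon^{|\rho|}$ that appears on the right-hand side of \eqref{ln1}.
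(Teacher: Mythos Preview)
Your global integration-by-parts via the vector field $\nabla f/|\nabla f|^{2}$ is a standard and essentially correct route, but it differs from the paper's and carries one technical oversight.

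The paper first localizes $g$ to balls of radius $\approx\varepsilon$; on each such ball a single coordinate direction has $|\partial_{1}f|\gtrsim 1$, and one then integrates by parts $N$ times in $x_{1}$ only. This single-variable reduction is cleaner than your approach: the iterated adjoint $\bigl(\tfrac{i}{K}\partial_{1}\circ\tfrac{1}{\partial_{1}f}\bigr)^{N}$ never places first-order factors $\partial_{j}f$ in the numerator, so every numerator factor is a derivative of order $\geq 2$ and the bound $\varepsilon^{1-|\alpha|}$ from \eqref{ln2} applies directly. Your route trades the localization step for heavier combinatorics in the expansion of $(L^{\ast})^{N}$.

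The oversight in your write-up is that the amplitude bound $\bigl|\prod_{i}\partial^{\alpha_{i}}f\bigr|\lesssim\prod_{i}\varepsilon^{1-|\alpha_{i}|}$ tacitly uses $|\partial_{j}f|\lesssim 1$ for the first-order factors, but the hypothesis only gives a \emph{lower} bound on $|\nabla f|$ (the upper bounds in \eqref{ln2} start at $|\rho|=2$). Relatedly, the side condition $p\leq 2(N-|\rho|)$ is already false in your own base-case term $(\partial f)^{2}\partial^{2}f\cdot|\nabla f|^{-4}g$, where $p=3$; fortunately you never use it. To close the argument as you have set it up, you need one more invariant through the induction: the number $p_{1}$ of first-order factors always satisfies $p_{1}\leq 2q$, so that $|\nabla f|^{p_{1}-2q}\leq 1$ absorbs those factors into the denominator. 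This is straightforward to propagate (each application of $L^{\ast}$ either adds one first-order factor and raises $q$ by $1$, or adds two and raises $q$ by $2$, etc.), and with it your estimate goes through.
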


\begin{proof}[Proof of Lemma \ref{tech5}] We localize first to balls of size $\approx \eps$. Using the assumptions in \eqref{ln2} we may assume that inside each small ball, one of the directional derivatives of $f$ is bounded away from $0$, say $|\partial_1f|\gtrsim_N 1$. Then we integrate by parts $N$ times in $x_1$, and the desired bound \eqref{ln1} follows.  
\end{proof}

\subsection{Analysis of the functions $\Phi^{\sigma;\mu,\nu}$ and $\Xi^{\mu,\nu}$}\label{ra}
For $\sigma\in\{1,\ldots,d\}$ and $\mu,\nu\in\mathcal{I}_d$, 
\begin{equation}\label{jb1}
\mu=(\sigma_1\iota_1),\qquad \nu=(\sigma_2\iota_2),\qquad \sigma_1,\sigma_2\in\{1,\ldots,d\},\qquad \iota_1,\iota_2\in\{+,-\},
\end{equation}
recall the definitions of the smooth functions $\Lambda_\sigma:\mathbb{R}^3\to(0,\infty)$, $\Phi^{\sigma;\mu,\nu}:\mathbb{R}^3\times\mathbb{R}^3\to\mathbb{R}$ and $\Xi^{\mu,\nu}:\mathbb{R}^3\times\mathbb{R}^3\to\mathbb{R}^3$,
\begin{equation}\label{jb2}
\begin{split}
&\Lambda_\sigma(\xi)=(b_\sigma^2+c_\sigma^2|\xi|^2)^{1/2},\\
&\Phi^{\sigma;\mu,\nu}(\xi,\eta)=\Lambda_\sigma(\xi)-\widetilde{\Lambda}_{\mu}(\xi-\eta)-\widetilde{\Lambda}_{\nu}(\eta),\\
&\Xi^{\mu,\nu}(\xi,\eta)=(\nabla_\eta\Phi^{\sigma;\mu,\nu})(\xi,\eta)=-\iota_1\frac{c_{\sigma_1}^2(\eta-\xi)}{(b_{\sigma_1}^2+c_{\sigma_1}^2|\eta-\xi|^2)^{1/2}}-\iota_2\frac{c_{\sigma_2}^2\eta}{(b_{\sigma_2}^2+c^2_{\sigma_2}|\eta|^2)^{1/2}}.
\end{split}
\end{equation}
In this subsection we prove several lemmas describing the structure of almost resonant sets, which are the sets where both $|\Phi^{\sigma;\mu,\nu}(\xi,\eta)|$ and $|\Xi^{\mu,\nu}(\xi,\eta)|$ are small. 
These lemmas are used at several key places in the proof of Proposition \ref{reduced1}. Recall the sets
\begin{equation}\label{jb3}
\begin{split}
\mathcal{L}^{\sigma;\mu,\nu}_{k,k_1,k_2;\delta_1,\delta_2}=\{(\xi,\eta)\in\mathbb{R}^3\times\mathbb{R}^3:&\,|\xi|\in[2^{k-4},2^{k+4}],\,|\xi-\eta|\in[2^{k_1-4},2^{k_1+4}],\,|\eta|\in[2^{k_2-4},2^{k_2+4}],\\
&\,|\Xi^{\mu,\nu}(\xi,\eta)|\leq\delta_1,\,|\Phi^{\sigma;\mu,\nu}(\xi,\eta)|\leq\delta_2\}.
\end{split}
\end{equation}
defined for $\sigma\in\{1,\ldots,d\}$, $\mu,\nu\in\mathcal{I}_d$, $k,k_1,k_2\in\mathbb{Z}$, $\delta_1,\delta_2\in(0,\infty)$.

\begin{lemma}\label{desc1}
(i) Assume that 
\begin{equation}\label{jb3.5}
k\leq-D/100,\qquad \delta_12^{k_2}\leq 2^{-D/100},\qquad\delta_2\leq 2^{-D/100}.
\end{equation}
Then 
\begin{equation}\label{jb4}
\mathcal{L}^{\sigma;\mu,\nu}_{k,k_1,k_2;\delta_1,\delta_2}=\emptyset.
\end{equation}

(ii) Alternatively, assume that 
\begin{equation}\label{tik1}
\max(k_1,k_2)\geq D/2,\qquad \delta_1\leq 2^{-D}2^{-4\max(k_1,k_2)},\qquad\delta_2\leq 2^{-D}2^{-\max(k_1,k_2)}.
\end{equation}
Then 
\begin{equation}\label{tik2}
\mathcal{L}^{\sigma;\mu,\nu}_{k,k_1,k_2;\delta_1,\delta_2}=\emptyset.
\end{equation}
\end{lemma}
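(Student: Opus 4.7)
The proof of both parts proceeds by contradiction: given $(\xi,\eta)\in\mathcal{L}^{\sigma;\mu,\nu}_{k,k_1,k_2;\delta_1,\delta_2}$, I will extract a lower bound for either $|\Phi^{\sigma;\mu,\nu}(\xi,\eta)|$ or $|\Xi^{\mu,\nu}(\xi,\eta)|$ that exceeds the corresponding $\delta_j$. In each part I split into cases according to the sign pair $(\iota_1,\iota_2)$, with a further dichotomy on whether $\sigma_1=\sigma_2$ when $\iota_1=-\iota_2$, and I systematically invoke the non-resonance conditions \eqref{bc}.

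For part (i), the hypothesis $k\le-D/100$ gives $|\xi|\le 2^{-D/100+4}$, and since the maps $\xi\mapsto\Phi^{\sigma;\mu,\nu}(\xi,\eta)$ and $\xi\mapsto\Xi^{\mu,\nu}(\xi,\eta)$ are Lipschitz with constants depending only on $A$, it suffices to work at $\xi=0$. When $\iota_1=\iota_2$ the two terms in
\[
\Xi^{\mu,\nu}(0,\eta)=-\iota_1\eta\left[\frac{c_{\sigma_1}^2}{\Lambda_{\sigma_1}(\eta)}+\frac{c_{\sigma_2}^2}{\Lambda_{\sigma_2}(\eta)}\right]
\]
add constructively, yielding $|\Xi^{\mu,\nu}(0,\eta)|\gtrsim_A|\eta|/(1+|\eta|)$. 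Combining with $|\Xi^{\mu,\nu}|\le\delta_1$ and the hypothesis $\delta_1 2^{k_2}\le 2^{-D/100}$, a short algebraic manipulation forces $|\eta|\lesssim_A 2^{-D/200}$, hence also $|\xi-\eta|\lesssim_A 2^{-D/200}$, so that $\Phi^{\sigma;\mu,\nu}(\xi,\eta)=b_\sigma-\iota_1 b_{\sigma_1}-\iota_2 b_{\sigma_2}+O_A(2^{-D/100})$; the first inequality in \eqref{bc} then bounds this below by $1/(2A)>\delta_2$. If $\iota_1=-\iota_2$ and $\sigma_1=\sigma_2$, there is cancellation already in the phase: $\Phi^{\sigma;\mu,\nu}(\xi,\eta)=\Lambda_\sigma(\xi)+\iota_1[\Lambda_{\sigma_1}(\eta)-\Lambda_{\sigma_1}(\xi-\eta)]=b_\sigma+O_A(|\xi|)$, which exceeds $\delta_2$. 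If $\iota_1=-\iota_2$ and $\sigma_1\ne\sigma_2$, the scalar function $h(s):=c_{\sigma_1}^2/\Lambda_{\sigma_1}(s)-c_{\sigma_2}^2/\Lambda_{\sigma_2}(s)$ governs $\Xi^{\mu,\nu}(0,\eta)$ via $|\Xi^{\mu,\nu}(0,\eta)|=|\eta|\,|h(|\eta|)|$; if $h(0)=(c_{\sigma_1}^2 b_{\sigma_2}-c_{\sigma_2}^2 b_{\sigma_1})/(b_{\sigma_1}b_{\sigma_2})\ne 0$ one argues as in the same-sign case, and otherwise $h(0)=0$ combined with the second and third inequalities in \eqref{bc} forces $c_{\sigma_1}=c_{\sigma_2}$ and $b_{\sigma_1}=b_{\sigma_2}$, reducing to the previous sub-case.

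For part (ii), WLOG $k_2\ge D/2$, so $|\eta|\ge 2^{D/2-4}$, and by the triangle inequality at least one of $|\xi-\eta|,|\xi|$ is comparable to $|\eta|$. Using the asymptotic $c_\sigma^2 v/\Lambda_\sigma(v)=c_\sigma\widehat v+O_A(|v|^{-2})$ for large $|v|$, one obtains
\[
\Xi^{\mu,\nu}(\xi,\eta)=\iota_1 c_{\sigma_1}\widehat{\xi-\eta}-\iota_2 c_{\sigma_2}\widehat\eta+O_A(2^{-2\max(k_1,k_2)})
\]
(with a symmetric expansion in the sub-case $|\xi|\approx|\eta|\gg|\xi-\eta|$). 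The constraint $|\Xi^{\mu,\nu}|\le\delta_1\le 2^{-D-4\max(k_1,k_2)}$ forces the leading-order identity $\iota_1 c_{\sigma_1}\widehat{\xi-\eta}=\iota_2 c_{\sigma_2}\widehat\eta$ up to $O_A(2^{-2\max(k_1,k_2)})$; matching magnitudes gives $|c_{\sigma_1}-c_{\sigma_2}|\le 1/(2A)$, so by the second inequality in \eqref{bc} we conclude $c_{\sigma_1}=c_{\sigma_2}$ exactly, and the unit vectors are (anti)parallel with controlled error. Substituting this alignment into the refined expansion $\Lambda_\sigma(v)=c_\sigma|v|+b_\sigma^2/(2c_\sigma|v|)+O_A(|v|^{-3})$ reduces $\Phi^{\sigma;\mu,\nu}$ to a linear combination of $b_\sigma,b_{\sigma_1},b_{\sigma_2}$ (with signs determined by $(\iota_1,\iota_2)$ and the alignment) modulo $O_A(2^{-\max(k_1,k_2)})$. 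The first inequality in \eqref{bc} bounds this combination below by $1/A$, which exceeds $\delta_2\le 2^{-D-\max(k_1,k_2)}$ for $D$ large.

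The main obstacle is the delicate case-splitting in the sub-case $\iota_1=-\iota_2$, $\sigma_1\ne\sigma_2$ of part (i), where the third inequality in \eqref{bc} must be invoked precisely to rule out a low-order degeneracy of the scalar function $h(s)$ at $s=0$; a parallel difficulty appears in part (ii), where one must track the $O(|v|^{-1})$ correction in $\Lambda_\sigma$ carefully enough to produce the $b_\sigma$-linear combination that is ultimately controlled by the first line of \eqref{bc}.
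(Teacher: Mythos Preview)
Your part (i) argument has a genuine gap in the sub-case $\iota_1=-\iota_2$, $\sigma_1\ne\sigma_2$. The claim that $h(0)=0$ together with the second and third conditions in \eqref{bc} forces $c_{\sigma_1}=c_{\sigma_2}$ and $b_{\sigma_1}=b_{\sigma_2}$ is false: take $c_{\sigma_1}=2$, $b_{\sigma_1}=4$, $c_{\sigma_2}=1$, $b_{\sigma_2}=1$, which satisfies every line of \eqref{bc} (for $A\ge 4$) yet has $c_{\sigma_1}^2b_{\sigma_2}-c_{\sigma_2}^2b_{\sigma_1}=0$. Even when $h(0)\ne 0$, ``arguing as in the same-sign case'' would require a uniform bound $|h(s)|\gtrsim_A 1/(1+s)$, and this fails whenever $c_{\sigma_1}=c_{\sigma_2}$ but $b_{\sigma_1}\ne b_{\sigma_2}$: then $s\,h(s)=O_A(s^{-2})$ as $s\to\infty$, so for large $|\eta|$ the inequality $|\Xi^{\mu,\nu}(0,\eta)|\gtrsim_A |\eta|/(1+|\eta|)$ is simply not true, and the $\Xi$-constraint alone cannot force $|\eta|$ small.

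The paper organizes the argument differently and avoids this trap. It first uses the $\Phi$-constraint together with the first line of \eqref{bc} to force $k_1,k_2\geq -C_A$ (so $|\eta|\gtrsim_A 1$ from the outset), then rewrites $|\Xi^{\mu,\nu}|\le\delta_1$ at $\xi\approx 0$ as
\[
\big|(c_{\sigma_2}^4c_{\sigma_1}^2-c_{\sigma_1}^4c_{\sigma_2}^2)|\eta|^2+(b_{\sigma_1}^2c_{\sigma_2}^4-b_{\sigma_2}^2c_{\sigma_1}^4)\big|\le C_A(\delta_12^{2k_2}+2^{k+k_2}),
\]
and invokes the sign condition (third line of \eqref{bc}) to see that the two terms on the left have the \emph{same} sign, hence each is individually small. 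This gives $|c_{\sigma_1}-c_{\sigma_2}|,|b_{\sigma_1}-b_{\sigma_2}|\le C_A(\delta_12^{k_2}+2^k)$, whence $\Lambda_{\sigma_1}(\eta)\approx\Lambda_{\sigma_2}(\eta)$ and $|\Phi|\approx b_\sigma$, a contradiction. Your part (ii) sketch also underestimates the work: the case $\min(k_1,k_2)\le\max(k_1,k_2)-D/10$ is not covered by the leading-order expansion you wrote (one of $|\xi-\eta|,|\eta|$ is bounded, so the corresponding term is not $c\,\widehat v+O(|v|^{-2})$), and in the ``both large'' case $\Phi$ does not reduce to $b_\sigma\pm b_{\sigma_1}\pm b_{\sigma_2}$ directly; the paper has to first deduce $c_\sigma=c_{\sigma_1}=c_{\sigma_2}$ and then extract a common ratio $\lambda$ so that $b_\sigma\approx\lambda|s|$, $b_{\sigma_1}\approx\lambda|r-s|$, $b_{\sigma_2}\approx\lambda r$, before the first line of \eqref{bc} can be applied.
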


\begin{proof}[Proof of Lemma \ref{desc1}] (i) Assume that there is a point $(\xi,\eta)\in \mathcal{L}^{\sigma;\mu,\nu}_{k,k_1,k_2;\delta_1,\delta_2}$. 
Since $k\leq -D/100$ and $|\Phi^{\sigma;\mu,\nu}(\xi,\eta)|\leq 2^{-D/100}$, using the assumption $|b_{\sigma}\pm b_{\sigma_1}\pm b_{\sigma_2}|\geq 1/A$ 
(see \eqref{abcond2}) it follows that
\begin{equation}\label{jb6.5}
k_1,k_2\geq -C_A,
\end{equation}
where, in this proof, we let $C_A$ denote constants in $[1,\infty)$ that may depend only on $A$. Moreover,
\begin{equation*}
\big|(b^2_\sigma+c^2_\sigma|\xi|^2)^{1/2}-\iota_1(b^2_{\sigma_1}+c^2_{\sigma_1}|\eta-\xi|^2)^{1/2}-\iota_2(b^2_{\sigma_2}+c^2_{\sigma_2}|\eta|^2)^{1/2}\big|\leq 2^{-D/100}.
\end{equation*}
Since 
\begin{equation*}
\big|(b^2_\sigma+c^2_\sigma|\xi|^2)^{1/2}-b_\sigma\big|+\big|(b^2_{\sigma_1}+c^2_{\sigma_1}|\eta-\xi|^2)^{1/2}-(b^2_{\sigma_1}+c^2_{\sigma_1}|\eta|^2)^{1/2}\big|\leq C_A2^{-D/100},
\end{equation*}
it follows that
\begin{equation}\label{jb7}
\big|-b_{\sigma}+\iota_1(b^2_{\sigma_1}+c^2_{\sigma_1}|\eta|^2)^{1/2}+\iota_2(b_{\sigma_2}^2+c^2_{\sigma_2}|\eta|^2)^{1/2}\big|\leq C_A2^{-D/100}.
\end{equation}

Using the definitions \eqref{jb1}--\eqref{jb3}, we see that 
\begin{equation*}
\Big|\iota_1\frac{c^2_{\sigma_1}(\eta-\xi)}{(b^2_{\sigma_1}+c^2_{\sigma_1}|\eta-\xi|^2)^{1/2}}+\iota_2\frac{c^2_{\sigma_2}\eta}{(b^2_{\sigma_2}+c^2_{\sigma_2}|\eta|^2)^{1/2}}\Big|\leq C_A \delta_1.
\end{equation*}
Since
\begin{equation*}
\Big|\frac{c^2_{\sigma_1}(\eta-\xi)}{(b^2_{\sigma_1}+c^2_{\sigma_1}|\eta-\xi|^2)^{1/2}}-\frac{c^2_{\sigma_1}\eta}{(b^2_{\sigma_1}+c^2_{\sigma_1}|\eta|^2)^{1/2}}\Big|\leq C_A 2^{k-k_2},
\end{equation*}
it follows that $\iota_1\cdot\iota_2=-1$ and
\begin{equation*}
\Big|\frac{c^2_{\sigma_1}\eta}{(b^2_{\sigma_1}+c^2_{\sigma_1}|\eta|^2)^{1/2}}-\frac{c^2_{\sigma_2}\eta}{(b^2_{\sigma_2}+c^2_{\sigma_2}|\eta|^2)^{1/2}}\Big|\leq C_A (\delta_1+2^{k-k_2}).
\end{equation*}
Therefore
\begin{equation*}
\big|(c_{\sigma_2}^4c^2_{\sigma_1}-c_{\sigma_1}^4c^2_{\sigma_2})|\eta|^2+(b_{\sigma_1}^2c_{\sigma_2}^4-b_{\sigma_2}^2c_{\sigma_1}^4)\big|\leq C_A(\delta_12^{2k_2}+2^{k+k_2}).
\end{equation*}
In view of the assumption in the second line of \eqref{abcond2}, this implies that
\begin{equation*}
\begin{split}
&\big|c_{\sigma_2}^4c^2_{\sigma_1}-c_{\sigma_1}^4c^2_{\sigma_2}\big||\eta|^2\leq C_A(\delta_12^{2k_2}+2^{k+k_2}),\\
&\big|b_{\sigma_1}^2c_{\sigma_2}^4-b_{\sigma_2}^2c_{\sigma_1}^4\big|\leq C_A(\delta_12^{2k_2}+2^{k+k_2}).
\end{split}
\end{equation*}
Therefore
\begin{equation*}
|c_{\sigma_1}-c_{\sigma_2}|\leq C_A(\delta_1+2^{k-k_2})\,\,\,\text{ and }\,\,\, |b_{\sigma_1}-b_{\sigma_2}|\leq C_A(\delta_12^{2k_2}+2^{k+k_2}),
\end{equation*}
which shows that
\begin{equation*}
\big|(b^2_{\sigma_1}+c^2_{\sigma_1}|\eta|^2)^{1/2}-(b_{\sigma_2}^2+c^2_{\sigma_2}|\eta|^2)^{1/2}\big|\leq C_A(\delta_12^{k_2}+2^{k}).
\end{equation*}
This is in contradiction with \eqref{jb7}, since $\iota_1\cdot\iota_2=-1$ and $2^k+\delta_12^{k_2}\leq C_A2^{-D/100}$.

(ii) As before, assume that there is a point $(\xi,\eta)\in \mathcal{L}^{\sigma;\mu,\nu}_{k,k_1,k_2;\delta_1,\delta_2}$. Assume that $\eta=re$, $\xi=se+v$, $r\in[2^{k_2-4},2^{k_2+4}]$, $e\in\mathbb{S}^2$, $s\in\mathbb{R}$, $v\cdot e=0$. The condition $|\Xi(\xi,\eta)|\leq\delta_1$ gives
\begin{equation*}
\Big|\iota_1\frac{c^2_{\sigma_1}(r-s)}{(b^2_{\sigma_1}+c^2_{\sigma_1}((r-s)^2+|v|^2)^{1/2}}+\iota_2\frac{c^2_{\sigma_2}r}{(b^2_{\sigma_2}+c^2_{\sigma_2}r^2)^{1/2}}\Big|+\frac{c_{\sigma_1}^2|v|}{(b^2_{\sigma_1}+c^2_{\sigma_1}((r-s)^2+|v|^2)^{1/2}}\leq C_A \delta_1.
\end{equation*}
Therefore 
\begin{equation}\label{tik3}
\begin{split}
&2^{\min(k_1,k_2)}\geq C_A^{-1},\quad |v|\leq C_A\delta_12^{\max(k_1,k_2)},\\
&r\in[2^{k_2-6},2^{k_2+6}],\quad |s|\in [2^{k-6},2^{k+6}],\quad |r-s|\in [2^{k_1-6},2^{k_1+6}],
\end{split}
\end{equation}
and
\begin{equation}\label{tik4}
\Big|\iota_1\frac{c^2_{\sigma_1}(r-s)}{(b^2_{\sigma_1}+c^2_{\sigma_1}(r-s)^2)^{1/2}}+\iota_2\frac{c^2_{\sigma_2}r}{(b^2_{\sigma_2}+c^2_{\sigma_2}r^2)^{1/2}}\Big|\leq C_A \delta_1.
\end{equation}

Assume first that 
\begin{equation}\label{tik5}
\min(k_1,k_2)\geq \max(k_1,k_2)-D/10.
\end{equation}
Using \eqref{tik3}--\eqref{tik4} and the assumption \eqref{tik1}, and recalling that $|c_{\sigma_1}-c_{\sigma_2}|\in\{0\}\cup [1/A,\infty)$, see \eqref{abcond2}, it follows that
\begin{equation}\label{tik6}
c_{\sigma_1}=c_{\sigma_2},\qquad\iota_1\iota_2(r-s)<0,\qquad \big|b_{\sigma_2}|r-s|-b_{\sigma_1}r\big|\leq C_A\delta_12^{3\max(k_1,k_2)}.
\end{equation}
As a consequence of the last inequality and the assumption $|b_{\sigma_1}-b_{\sigma_2}|\in\{0\}\cup [1/A,\infty)$, 
\begin{equation}\label{tik7}
\text{ either }|s|\geq 2^{\max(k_1,k_2)-D/10}\qquad\text{ or }\qquad b_{\sigma_1}=b_{\sigma_2}\text{ and }|s|\leq C_A\delta_12^{3\max(k_1,k_2)}. 
\end{equation}
To use the condition $|\Phi^{\sigma;\mu,\nu}(\xi,\eta)|\leq\delta_2$, we estimate first, using \eqref{tik3} and \eqref{tik5},
\begin{equation*}
\begin{split}
&\sqrt{b_{\sigma_1}^2+c_{\sigma_1}^2|\eta-\xi|^2}=c_{\sigma_1}|r-s|+\frac{b_{\sigma_1}^2}{2c_{\sigma_1}|r-s|}+O_A(2^{-3\min(k_1,k_2)}),\\
&\sqrt{b_{\sigma_2}^2+c_{\sigma_2}^2|\eta|^2}=c_{\sigma_2}r+\frac{b_{\sigma_2}^2}{2c_{\sigma_2}r}+O_A(2^{-3\min(k_1,k_2)}).
\end{split}
\end{equation*}
Therefore, using again \eqref{tik3} and \eqref{tik6},
\begin{equation}\label{tik9}
\begin{split}
|\Phi^{\sigma;\mu,\nu}(\xi,\eta)|&=\big|\sqrt{b_{\sigma}^2+c_{\sigma}^2|\xi|^2}-\iota_1\sqrt{b_{\sigma_1}^2+c_{\sigma_1}^2|\eta-\xi|^2}-\iota_2\sqrt{b_{\sigma_2}^2+c_{\sigma_2}^2|\eta|^2}\big|\\
&=\Big|\iota_2\sqrt{b_{\sigma}^2+c_{\sigma}^2s^2}-\iota_1\iota_2\Big(c_{\sigma_1}|r-s|+\frac{b_{\sigma_1}^2}{2c_{\sigma_1}|r-s|}\Big)-\Big(c_{\sigma_2}r+\frac{b_{\sigma_2}^2}{2c_{\sigma_2}r}\Big)\Big|+O_A(2^{-3\min(k_1,k_2)})\\
&=\Big|\iota_2\sqrt{b_{\sigma}^2+c_{\sigma}^2s^2}-c_{\sigma_1}s+\frac{b_{\sigma_1}^2}{2c_{\sigma_1}(r-s)}-\frac{b_{\sigma_2}^2}{2c_{\sigma_1}r}\Big|+O_A(2^{-3\min(k_1,k_2)}).
\end{split}
\end{equation}
We examine now the alternatives in \eqref{tik7}. Clearly, if $|s|\leq C_A\delta_12^{3\max(k_1,k_2)}$ then $|\Phi^{\sigma;\mu,\nu}(\xi,\eta)|\geq C_A^{-1}$, in contradictions with the assumption $|\Phi^{\sigma;\mu,\nu}(\xi,\eta)|\leq\delta_2$. On the other hand, if $|s|\geq 2^{\max(k_1,k_2)-D/10}$, the using \eqref{tik9} and the assumption $|\Phi^{\sigma;\mu,\nu}(\xi,\eta)|\leq\delta_2$, it follows that
\begin{equation}\label{tik10}
c_{\sigma}=c_{\sigma_1},\qquad\iota_2|s|=s,\qquad\Big|\frac{b_{\sigma}^2}{s}+\frac{b_{\sigma_1}^2}{r-s}-\frac{b_{\sigma_2}^2}{r}\Big|\leq C_A2^{-D}2^{-\max(k_1,k_2)}.
\end{equation}

We compare now with the last inequality in \eqref{tik6}, written in the form 
\begin{equation*}
\Big|\frac{b_{\sigma_2}}{r}-\frac{b_{\sigma_1}}{|r-s|}\Big|\leq C_A2^{-D}2^{-\max(k_1,k_2)}.
\end{equation*}
Letting $\lambda:=b_{\sigma_2}/r\in[C_A^{-1}2^{-k_2},C_A2^{-k_2}]$, it follows that $|b_{\sigma_1}-\lambda|r-s||\leq C_A2^{-D}$. Using the last inequality in \eqref{tik10} it follows that $|b_{\sigma}^2-\lambda^2s^2|\leq C_A2^{-D}$. Therefore
\begin{equation*}
\big|b_{\sigma_2}-\lambda r\big|+\big|b_{\sigma_1}-\lambda|r-s|\big|+\big|b_{\sigma}-\lambda|s|\big|\leq C_A2^{-D},
\end{equation*}
which is in contradiction with the assumption in the first line of \eqref{abcond}.

Assume now that 
\begin{equation}\label{tik15}
\min(k_1,k_2)\leq \max(k_1,k_2)-D/10\qquad \text{ and }\qquad k_1\leq k_2.
\end{equation}
Using \eqref{tik3}--\eqref{tik4} and the assumption \eqref{tik1} it follows that
\begin{equation}\label{tik16}
\iota_1\iota_2(r-s)<0,\qquad \Big|\frac{c_{\sigma_1}^2|r-s|}{\sqrt{b_{\sigma_1}^2+c_{\sigma_1}^2|r-s|^2}}-c_{\sigma_2}\Big|\leq C_A2^{-2\max(k_1,k_2)}.
\end{equation}
Since $|r-s|\leq 2^{k_1+6}\leq C_A2^{-D/10}2^{\max(k_1,k_2)}$ it follows from the inequality above that $c_{\sigma_1}>c_{\sigma_2}$, therefore $c_{\sigma_1}\geq c_{\sigma_2}+1/A$. Using again the last inequality in \eqref{tik6}, it follows that $|r-s|\leq C_A$ and $s\geq 2^{k_2-10}$. Therefore we can write
\begin{equation}\label{tik17}
\begin{split}
|\Phi^{\sigma;\mu,\nu}(\xi,\eta)|&=\big|\sqrt{b_{\sigma}^2+c_{\sigma}^2|\xi|^2}-\iota_1\sqrt{b_{\sigma_1}^2+c_{\sigma_1}^2|\eta-\xi|^2}-\iota_2\sqrt{b_{\sigma_2}^2+c_{\sigma_2}^2|\eta|^2}\big|\\
&=\big|c_{\sigma}s-\iota_1\sqrt{b_{\sigma_1}^2+c_{\sigma_1}^2|r-s|^2}-\iota_2c_{\sigma_2}r\big|+O_A(2^{-k_2}).
\end{split}
\end{equation}
Using the assumption $|\Phi^{\sigma;\mu,\nu}(\xi,\eta)|\leq \delta_2$ and the inequalities $|r-s|\leq C_A$ and $s,r\geq 2^{k_2-10}$ proved earlier, it follows that $c_{\sigma}=c_{\sigma_2}$, $\iota_2=1$, and
\begin{equation*}
\big|c_{\sigma_2}|r-s|-\sqrt{b_{\sigma_1}^2+c_{\sigma_1}^2|r-s|^2}\big|\leq C_A2^{-k_2}.
\end{equation*}
It is easy to see that this is in contradiction with the last inequality in \eqref{tik16} and the inequality $c_{\sigma_1}\geq c_{\sigma_2}+1/A$ proved earlier.

The proof in the remaining case 
\begin{equation*}
\min(k_1,k_2)\leq \max(k_1,k_2)-D/10\qquad \text{ and }\qquad k_1\geq k_2
\end{equation*}
is similar. This completes the proof of the lemma.
\end{proof}

To deal with the spacetime resonant region we need a more precise description of the sub-level sets of the functions $\Phi^{\sigma;\mu,\nu}$ and $|\Xi^{\mu,\nu}|$. The estimates in Lemma \ref{desc2} and Lemma \ref{desc4} below are used only in the proof of Proposition \ref{reduced3}. 

We define the functions $r^{\mu,\nu}:(0,\infty)\to\mathbb{R}$, $\mu=(\sigma_1\iota_1)$, $\nu=(\sigma_2\iota_2)$, in the following way:

(a) if $\iota_1\cdot\iota_2=1$ then $r^{\mu,\nu}(s)$ is defined, for any $s>0$, as the unique solution $r\in [0,s]$ of the equation 
\begin{equation}\label{kx12}
\frac{c_{\sigma_1}^4(s-r)^2}{b_{\sigma_1}^2+c^2_{\sigma_1}(s-r)^2}
-\frac{c_{\sigma_2}^4r^2}{b_{\sigma_2}^2+c^2_{\sigma_2}r^2}=0.
\end{equation}

(b) if $\{\iota_1\cdot\iota_2=-1,\,c_{\sigma_1}> c_{\sigma_2}\}$ or if $\{\iota_1\cdot\iota_2=-1, c_{\sigma_1}=c_{\sigma_2},\,b_{\sigma_2}>b_{\sigma_1}\}$ then $r^{\mu,\nu}(s)$ is defined, for any $s>0$, as the unique solution $r\in [s,\infty)$ of the equation 
\begin{equation}\label{kx13}
(c_{\sigma_1}^4c^2_{\sigma_2}-c_{\sigma_2}^4c^2_{\sigma_1})(r-s)^2+c_{\sigma_1}^4b_{\sigma_2}^2(1-s/r)^2-c_{\sigma_2}^4b_{\sigma_1}^2=0.
\end{equation}

(c) if $\{\iota_1\cdot\iota_2=-1,\,c_{\sigma_1}< c_{\sigma_2}\}$ or if $\{\iota_1\cdot\iota_2=-1,\,c_{\sigma_1}=c_{\sigma_2},\,b_{\sigma_1}>b_{\sigma_2}\}$ then $r^{\mu,\nu}(s)$ is defined, for any $s>0$, as the unique solution $r\in (-\infty,0]$ of the equation 
\begin{equation}\label{kx14}
(c_{\sigma_2}^4c^2_{\sigma_1}-c^4_{\sigma_1}c^2_{\sigma_2})r^2+c_{\sigma_2}^4b_{\sigma_1}^2r^2/(r-s)^2-c_{\sigma_1}^4b_{\sigma_2}^2=0.
\end{equation}

The function $r^{\mu,\nu}$ is not defined (nor needed) when $\{\iota_1\cdot\iota_2=-1,\,c_{\sigma_1}=c_{\sigma_2},\,b_{\sigma_1}=b_{\sigma_2}\}$. Notice that $r^{\mu,\nu}$ is well-defined since the functions in \eqref{kx12}--\eqref{kx14} are strictly monotonic (as functions in $r$) and change sign in the respective ranges.

\begin{lemma}\label{desc2}
Assume that $\sigma\in\{1,\ldots,d\}$, $\mu=(\sigma_1\iota_1),\nu=(\sigma_2\iota_2)\in\mathcal{I}_d$, $k,k_1,k_2\in [-D,2D]\cap\mathbb{Z}$, $\delta\in[0,2^{-10D}]$, and assume that there is a point $(\xi,\eta)\in\mathbb{R}^3\times\mathbb{R}^3$ satisfying 
\begin{equation}\label{kx10}
|\xi|\in[2^{k-4},2^{k+4}],\quad|\eta|\in[2^{k_2-4},2^{k_2+4}],\quad|\xi-\eta|\in[2^{k_1-4},2^{k_1+4}],\quad|\Xi^{\mu,\nu}(\xi,\eta)|\leq\delta.
\end{equation}
Then, with $r^{\mu,\nu}$ defined as above and letting $p^{\mu,\nu}(\xi):=r^{\mu,\nu}(|\xi|)\xi/|\xi|$,
\begin{equation}\label{kx11}
\big|\eta-p^{\mu,\nu}(\xi)\big|\leq 2^{8D}\delta\quad\text{ and }\quad\Xi^{\mu,\nu}(\xi,p^{\mu,\nu}(\xi))=0.
\end{equation}
Moreover, for any $s\in[2^{k-6},2^{k+6}]$,
\begin{equation}\label{r1to1}
\begin{split}
&\min\big(|(\partial_sr^{\mu,\nu})(s)|,|1-(\partial_sr^{\mu,\nu})(s)|\big)\geq 2^{-4D},\\
&|(D^\rho_sr^{\mu,\nu})(s)|\leq 2^{20D},\qquad \rho=0,1,\ldots 4.
\end{split}
\end{equation}
\end{lemma}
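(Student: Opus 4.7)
The plan is to show first that the defining equations \eqref{kx12}--\eqref{kx14} genuinely determine $r^{\mu,\nu}(s)$ as a smooth function on the range of interest, then to identify $(\xi, p^{\mu,\nu}(\xi))$ as the unique space-resonant point on the ray through $\xi$ from the origin, and finally to use quantitative non-degeneracy of $\nabla_\eta \Xi^{\mu,\nu}$ at that point to extract the linear bound $|\eta - p^{\mu,\nu}(\xi)|\le 2^{8D}\delta$. The derivative estimates in \eqref{r1to1} will then follow from implicit differentiation of \eqref{kx12}--\eqref{kx14}.

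First I would observe that $\Xi^{\mu,\nu}(\xi,\eta)=0$ forces the vectors $\iota_1 c_{\sigma_1}^2(\eta-\xi)/\Lambda_{\sigma_1}(\eta-\xi)$ and $-\iota_2 c_{\sigma_2}^2\eta/\Lambda_{\sigma_2}(\eta)$ to coincide, so $\eta$, $\eta-\xi$, and $\xi$ are collinear. Writing $\eta=re$, $\xi=se$ with $e=\xi/|\xi|$ and $s=|\xi|$, this reduces to a scalar equation in $r$. Depending on the sign of $\iota_1\iota_2$ and a short case analysis on which of $r$, $r-s$ must be positive, squaring this scalar identity yields exactly one of \eqref{kx12}, \eqref{kx13}, \eqref{kx14}. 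Strict monotonicity (and opposite-sign behavior at the endpoints) of the left-hand side in each case gives a unique solution $r^{\mu,\nu}(s)$ in the prescribed interval, and hence $p^{\mu,\nu}(\xi)$ is a well-defined smooth function of $\xi$ with $\Xi^{\mu,\nu}(\xi,p^{\mu,\nu}(\xi))=0$.

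Next I would prove the quantitative non-degeneracy of the Jacobian $(\nabla_\eta\Xi^{\mu,\nu})(\xi,p^{\mu,\nu}(\xi))=(\nabla^2_{\eta,\eta}\Phi^{\sigma;\mu,\nu})(\xi,p^{\mu,\nu}(\xi))$. Using $\nabla^2\Lambda_\sigma(\eta)=c_\sigma^2\Lambda_\sigma^{-1}\mathrm{Id}-c_\sigma^4\Lambda_\sigma^{-3}\eta\otimes\eta$, at the collinear point $\eta=re$ both Hessians decompose in the frame $\{e\}\cup e^{\perp}$: the radial eigenvalue is $-\iota_1 c_{\sigma_1}^2 b_{\sigma_1}^2/\Lambda_{\sigma_1}(r-s)^3-\iota_2 c_{\sigma_2}^2 b_{\sigma_2}^2/\Lambda_{\sigma_2}(r)^3$ and the tangential eigenvalue is $-\iota_1 c_{\sigma_1}^2/\Lambda_{\sigma_1}(r-s)-\iota_2 c_{\sigma_2}^2/\Lambda_{\sigma_2}(r)$. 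Exactly as in the computation \eqref{bnbn}--\eqref{bnbn3} carried out in the introduction, the first two conditions of \eqref{abcond2} give $2^{-O(D)}$ lower bounds when $\iota_1\iota_2=+1$ (sum of positive terms), while the third hyperbolicity condition $(c_{\sigma_1}-c_{\sigma_2})(c_{\sigma_1}^2 b_{\sigma_2}-c_{\sigma_2}^2 b_{\sigma_1})\ge 0$ combined with the constraint \eqref{kx13} or \eqref{kx14} prevents cancellation in the mixed-sign case. Since also $|\nabla^3_\eta\Phi^{\sigma;\mu,\nu}|\le 2^{O(D)}$ throughout the compact region $k,k_1,k_2\in[-D,2D]$, a Taylor expansion of $\Xi^{\mu,\nu}(\xi,\cdot)$ around $p^{\mu,\nu}(\xi)$ converts the assumption $|\Xi^{\mu,\nu}(\xi,\eta)|\le\delta$ into the desired estimate $|\eta-p^{\mu,\nu}(\xi)|\le 2^{8D}\delta$, provided $\delta\le 2^{-10D}$.

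Finally, differentiating \eqref{kx12}--\eqref{kx14} implicitly in $s$ expresses $\partial_s r^{\mu,\nu}$ as a rational function of $s$, $r^{\mu,\nu}(s)$, and the parameters $b_\sigma,c_\sigma$; iterated differentiation gives the same for higher derivatives. The upper bounds in \eqref{r1to1} follow immediately from the fact that all denominators stay in $[2^{-O(D)},2^{O(D)}]$ on the relevant interval, while the lower bounds on $|\partial_s r^{\mu,\nu}|$ and $|1-\partial_s r^{\mu,\nu}|$ reduce algebraically to the non-vanishing of the same radial Hessian entry examined above, and hence to \eqref{abcond2} once more. The main obstacle will be the quantitative non-degeneracy in the second step: the mixed-sign case $\iota_1\iota_2=-1$ with close velocities requires carefully threading the algebraic identity forced by \eqref{kx13} or \eqref{kx14} through the hyperbolicity condition to rule out cancellation in both the radial and the tangential eigenvalues simultaneously.
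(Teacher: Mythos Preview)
Your overall strategy is correct, but there is a genuine gap in the second step, and the paper's proof is organized precisely so as to avoid it. You propose to show $\nabla_\eta\Xi^{\mu,\nu}(\xi,p^{\mu,\nu}(\xi))$ is nondegenerate and then Taylor-expand $\Xi^{\mu,\nu}(\xi,\cdot)$ around $p^{\mu,\nu}(\xi)$. That only yields $|\eta-p^{\mu,\nu}(\xi)|\le 2^{O(D)}\delta$ for $\eta$ already in a ball of radius $2^{-O(D)}$ around $p^{\mu,\nu}(\xi)$; it does not exclude another approximate zero elsewhere in the box $|\eta|\approx 2^{k_2}$, $|\xi-\eta|\approx 2^{k_1}$. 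In the mixed-sign case $\iota_1\iota_2=-1$ the Hessian $\nabla^2_{\eta}\Phi^{\sigma;\mu,\nu}$ is indefinite, so neither convexity nor a straight path-integration argument gives global injectivity, and your appeal to \eqref{bnbn}--\eqref{bnbn3} does not help either, since that computation in the introduction only asserts nondegeneracy on the \emph{time-resonant} set.

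The paper sidesteps this by working at the actual point $\eta$ rather than at $p^{\mu,\nu}(\xi)$. Writing $\xi=se$ and $\eta=\rho e+v$ with $v\perp e$, the $e$-component of $|\Xi^{\mu,\nu}(\xi,\eta)|\le\delta$ reads $\big|(\iota_1 c_{\sigma_1}^2/\Lambda_{\sigma_1}+\iota_2 c_{\sigma_2}^2/\Lambda_{\sigma_2})\rho-\iota_1 c_{\sigma_1}^2|\xi|/\Lambda_{\sigma_1}\big|\le\delta$, which forces the scalar coefficient $\iota_1 c_{\sigma_1}^2/\Lambda_{\sigma_1}+\iota_2 c_{\sigma_2}^2/\Lambda_{\sigma_2}$ (this is exactly your tangential eigenvalue, but evaluated at $\eta$) to be bounded below by $\approx 2^{k-k_1-k_2}$. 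The $v$-component of $|\Xi|\le\delta$ is this same coefficient times $v$, so $|v|\lesssim 2^{k_1+k_2-k}\delta$ immediately; one then reduces to the one-variable inequality \eqref{kx4}, where strict monotonicity of the functions in \eqref{kx12}--\eqref{kx14} plus the sign condition in \eqref{abcond2} (to select the branch $\rho>s$ versus $\rho<0$) give $|\rho-r^{\mu,\nu}(s)|\lesssim\delta$. For \eqref{r1to1} the paper differentiates \eqref{kx12}--\eqref{kx14} and obtains closed-form expressions for $r'(s)$, from which one reads off directly that $r'(s)\in(0,1)$ in case (a), $r'(s)>1$ in case (b), and $r'(s)<0$ in case (c); this gives both lower bounds $|r'|,|1-r'|\ge 2^{-4D}$ simultaneously, with no separate appeal to Hessian eigenvalues.
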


\begin{proof}[Proof of Lemma \ref{desc2}] We remark first that the existence of a point $(\xi,\eta)$ satisfying \eqref{kx10} implies nontrivial assumptions on $k,k_1,k_2$ and the coefficients $\iota_1,\iota_2,c_{\sigma_1},c_{\sigma_2},b_{\sigma_1},b_{\sigma_2}$. The conclusions of the lemma depend, of course, on the existence of a point $(\xi,\eta)$ satisfying \eqref{kx10}.

We examine the formula \eqref{jb2} and assume that $\xi=|\xi|e$ for some unit 
vector $e\in\mathbb{S}^2$. If $\eta=\rho e+v$ with $\rho\in\mathbb{R}$, $v\in\mathbb{R}^3$, and $v\cdot e=0$, then the condition $|\Xi^{\mu,\nu}(\xi,\eta)|\leq\delta$ shows that
\begin{equation}\label{kx2}
\begin{split}
&\Big|\frac{\iota_1c^2_{\sigma_1}v}{\sqrt{b^2_{\sigma_1}+c^2_{\sigma_1}(|v|^2+(\rho-|\xi|)^2)}}+\frac{\iota_2c^2_{\sigma_2}v}{\sqrt{b^2_{\sigma_2}+c^2_{\sigma_2}(|v|^2+\rho^2)}}\Big|\leq\delta,\\
&\Big|\frac{\iota_1c^2_{\sigma_1}(\rho-|\xi|)}{\sqrt{b^2_{\sigma_1}+c^2_{\sigma_1}(|v|^2+(\rho-|\xi|)^2)}}+\frac{\iota_2c^2_{\sigma_2}\rho}{\sqrt{b^2_{\sigma_2}+c^2_{\sigma_2}(|v|^2+\rho^2)}}\Big|\leq\delta.
\end{split}
\end{equation}
In particular, using the second equation in \eqref{kx2},
\begin{equation*}
\Big|\frac{\iota_1c^2_{\sigma_1}}{\sqrt{b^2_{\sigma_1}+c^2_{\sigma_1}(|v|^2+(\rho-|\xi|)^2)}}+
\frac{\iota_2c^2_{\sigma_2}}{\sqrt{b^2_{\sigma_2}+c^2_{\sigma_2}(|v|^2+\rho^2)}}\Big||\rho|\geq C_A^{-1}2^{k-k_1},
\end{equation*}
where, in this proof, the constants $C_A\in[1,\infty)$ may depend only on the parameter $A$. Since $|\rho|\leq C_A2^{k_2}$ it follows that
\begin{equation*}
\Big|\frac{\iota_1c^2_{\sigma_1}}{\sqrt{b^2_{\sigma_1}+c^2_{\sigma_1}(|v|^2+(\rho-|\xi|)^2)}}+
\frac{\iota_2c^2_{\sigma_2}}{\sqrt{b^2_{\sigma_2}+c^2_{\sigma_2}(|v|^2+\rho^2)}}\Big|\geq C_A^{-1}2^{k-k_1-k_2}.
\end{equation*}
Using now the inequality in the first line of \eqref{kx2} it follows that
\begin{equation}\label{kx3}
 |v|\leq C_A2^{k_1+k_2-k}\delta,\qquad |\rho|\in [2^{k_2-6},2^{k_2+6}],\qquad \big|\rho-|\xi|\big|\in [2^{k_1-6},2^{k_1+6}].
\end{equation}

We analyze now more carefully the inequality in the second line of \eqref{kx2}. Using \eqref{kx3} we see that 
\begin{equation*}
\Big|\frac{c^2_{\sigma_1}(\rho-|\xi|)}{\sqrt{b^2_{\sigma_1}+c^2_{\sigma_1}(|v|^2+(\rho-|\xi|)^2)}}-\frac{c^2_{\sigma_1}(\rho-|\xi|)}{\sqrt{b^2_{\sigma_1}+c^2_{\sigma_1}(\rho-|\xi|)^2}}\Big|
+\Big|\frac{c^2_{\sigma_2}\rho}{\sqrt{b^2_{\sigma_2}+c^2_{\sigma_2}(|v|^2+\rho^2)}}-\frac{c^2_{\sigma_2}\rho}{\sqrt{b^2_{\sigma_2}+c^2_{\sigma_2}\rho^2}}\Big|\leq \delta.
\end{equation*}
Therefore
\begin{equation}\label{kx4}
\Big|\frac{\iota_1c^2_{\sigma_1}(\rho-|\xi|)}{\sqrt{b^2_{\sigma_1}+c^2_{\sigma_1}(\rho-|\xi|)^2}}
+\frac{\iota_2c^2_{\sigma_2}\rho}{\sqrt{b^2_{\sigma_2}+c^2_{\sigma_2}\rho^2}}\Big|\leq 4\delta.
\end{equation}

We consider two cases: if $\iota_1\cdot\iota_2=1$ then $\rho\in[0,|\xi|]$ and the equation \eqref{kx4} shows that
\begin{equation*}
\Big|\frac{c^4_{\sigma_1}(|\xi|-\rho)^2}{b^2_{\sigma_1}+c^2_{\sigma_1}(|\xi|-\rho)^2}
-\frac{c^4_{\sigma_2}\rho^2}{b^2_{\sigma_2}+c^2_{\sigma_2}\rho^2}\Big|\leq C_A\delta.
\end{equation*}
In this case we let $s:=|\xi|$ and use the definition \eqref{kx12}. Using also \eqref{kx3} it follows that $|\rho-r^{\mu,\nu}(s)|\leq C_A2^{6D}\delta$, and the desired conclusion \eqref{kx11} follows in this case.

Assume now $\iota_1\cdot\iota_2=-1$ and either $c_{\sigma_1}> c_{\sigma_2}$ or $\{c_{\sigma_1}=c_{\sigma_2},\,b_{\sigma_2}>b_{\sigma_1}\}$. Using \eqref{kx3}, \eqref{kx4}, 
and the assumption \eqref{abcond2}, it follows that $c_{\sigma_1}^4b_{\sigma_2}^2\geq c_{\sigma_2}^4b_{\sigma_1}^2$, $\rho\in[|\xi|,\infty)$, 
$k_1\leq k_2+10$, and
\begin{equation*}
 \Big|\frac{c^4_{\sigma_1}(\rho-|\xi|)^2}{b^2_{\sigma_1}+c^2_{\sigma_1}(\rho-|\xi|)^2}
-\frac{c_{\sigma_2}^4\rho^2}{b^2_{\sigma_2}+c^2_{\sigma_2}\rho^2}\Big|\leq C_A\delta.
\end{equation*}
Therefore
\begin{equation*}
\big|(c_{\sigma_1}^4c^2_{\sigma_2}-c_{\sigma_2}^4c^2_{\sigma_1})(\rho-|\xi|)^2+c_{\sigma_1}^4b_{\sigma_2}^2(1-|\xi|/\rho)^2-
c_{\sigma_2}^4b_{\sigma_1}^2\big|\leq C_A\delta (1+2^{2k_1}).
\end{equation*}
Recall that either $(c^2_{\sigma_1}-c^2_{\sigma_2})\geq C_A^{-1}$ or $c_{\sigma_1}^4b_{\sigma_2}^2-
c_{\sigma_2}^4b_{\sigma_1}^2\geq C_A^{-1}$. Then we let, as before, $s:=|\xi|$ and use the definition \eqref{kx13}. The conclusion \eqref{kx11} follows, using also \eqref{kx3}. 

The argument is similar if $\iota_1\cdot\iota_2=-1$ and either $c_{\sigma_1}<c_{\sigma_2}$ or $\{c_{\sigma_1}=c_{\sigma_2},\,b_{\sigma_2}<b_{\sigma_1}\}$. Using \eqref{kx3}, \eqref{kx4}, 
and the assumption \eqref{abcond2}, it follows that $c_{\sigma_2}^4b_{\sigma_1}^2\geq c_{\sigma_1}^4b_{\sigma_2}^2$, $\rho\in(-\infty,0]$, 
$k_2\leq k_1+10$, and
\begin{equation*}
 \Big|\frac{c^4_{\sigma_1}(\rho-|\xi|)^2}{b^2_{\sigma_1}+c^2_{\sigma_1}(\rho-|\xi|)^2}
-\frac{c_{\sigma_2}^4\rho^2}{b^2_{\sigma_2}+c^2_{\sigma_2}\rho^2}\Big|\leq C_A\delta.
\end{equation*}
Therefore
\begin{equation*}
\big|(c_{\sigma_2}^4c^2_{\sigma_1}-c_{\sigma_1}^4c^2_{\sigma_2})\rho^2+c_{\sigma_2}^4b_{\sigma_1}^2\rho^2/(\rho-|\xi|)^2-
c_{\sigma_1}^4b_{\sigma_2}^2\big|\leq C_A\delta (1+2^{2k_2}).
\end{equation*}
Then we let $s:=|\xi|$ and use the definition \eqref{kx14}. The conclusion \eqref{kx11} follows, using also \eqref{kx3} and the fact that either $(c^2_{\sigma_2}-c^2_{\sigma_1})\geq C_A^{-1}$ or $c_{\sigma_2}^4b_{\sigma_1}^2-
c_{\sigma_1}^4b_{\sigma_2}^2\geq C_A^{-1}$.

To prove \eqref{r1to1} we let, for simplicity of notation, $r(s)=r^{\mu,\nu}(s)$. We differentiate \eqref{kx12}, so
\begin{equation*}
\left[\frac{c_{\sigma_1}^4b_{\sigma_1}^2(s-r)}{\left[b_{\sigma_1}^2+c_{\sigma_1}^2(s-r)^2\right]^2}+\frac{c_{\sigma_2}^4b_{\sigma_2}^2r}{\left[b_{\sigma_2}^2+c_{\sigma_2}^2r^2\right]^2}\right]\cdot r'(s)=\frac{c_{\sigma_1}^4b_{\sigma_1}^2(s-r)}{\left[b_{\sigma_1}^2+c_{\sigma_1}^2(s-r)^2\right]^2}.
\end{equation*}
Using again the equation \eqref{kx12} it follows that
\begin{equation*}
r'(s)=\frac{b_{\sigma_1}^2c_{\sigma_2}^4r^3}{b_{\sigma_1}^2c_{\sigma_2}^4r^3+b_{\sigma_2}^2c_{\sigma_1}^4(s-r)^3}.
\end{equation*}
The desired bounds in \eqref{r1to1} follow easily in this case since $r(s)\approx 2^{k_2}$, $s-r(s)\approx 2^{k_1}$.

Similarly, we differentiate \eqref{kx13} to get
\begin{equation*}
\big[r^2(c_{\sigma_1}^4c_{\sigma_2}^2-c_{\sigma_2}^4c_{\sigma_1}^2)+c_{\sigma_1}^4b_{\sigma_2}^2s/r\big]\cdot r'(s)=\big[r^2(c_{\sigma_1}^4c_{\sigma_2}^2-c_{\sigma_2}^4c_{\sigma_1}^2)+c_{\sigma_1}^4b_{\sigma_2}^2\big],
\end{equation*}
which gives 
\begin{equation*}
r'(s)=1+\frac{c_{\sigma_1}^4b_{\sigma_2}^2(r-s)}{(c_{\sigma_1}^4c_{\sigma_2}^2-c_{\sigma_2}^4c_{\sigma_1}^2)r^3+c_{\sigma_1}^4b_{\sigma_2}^2s}.
\end{equation*}
The desired bounds in \eqref{r1to1} follow easily in this case as well.

Finally, we differentiate \eqref{kx14} to get
\begin{equation*}
\big[(c_{\sigma_2}^4c_{\sigma_1}^2-c_{\sigma_1}^4c_{\sigma_2}^2)(s-r)^3+c_{\sigma_2}^4b_{\sigma_1}^2s\big]\cdot r'(s)=c_{\sigma_2}^4b_{\sigma_1}^2r,
\end{equation*}
which gives 
\begin{equation*}
r'(s)=\frac{c_{\sigma_2}^4b_{\sigma_1}^2r}{(c_{\sigma_2}^4c_{\sigma_1}^2-c_{\sigma_1}^4c_{\sigma_2}^2)(s-r)^3+c_{\sigma_2}^4b_{\sigma_1}^2s},
\end{equation*}
and the desired bounds in \eqref{r1to1} follow easily.
\end{proof}

\begin{remark}\label{tik100}
The conclusions of Lemma \ref{desc2} hold, in a suitable sense, without making the assumption $k,k_1,k_2\leq 2D$. More precisely, to prove the bound \eqref{nxc8}, we need the following slightly stronger version: assume that $\sigma\in\{1,\ldots,d\}$, $\mu=(\sigma_1\iota_1),\nu=(\sigma_2\iota_2)\in\mathcal{I}_d$, $k,k_1,k_2\in [-D,\infty)\cap\mathbb{Z}$, $\delta\in[0,2^{-8D}2^{-4\max(k_1,k_2)}]$, and assume that there is a point $(\xi,\eta)\in\mathbb{R}^3\times\mathbb{R}^3$ satisfying 
\begin{equation}\label{tik101}
|\xi|\in[2^{k-4},2^{k+4}],\quad|\eta|\in[2^{k_2-4},2^{k_2+4}],\quad|\xi-\eta|\in[2^{k_1-4},2^{k_1+4}],\quad|\Xi^{\mu,\nu}(\xi,\eta)|\leq\delta.
\end{equation}
Then, with $r^{\mu,\nu}$ defined as in \eqref{kx12}--\eqref{kx14}, and letting $p^{\mu,\nu}(\xi)=r^{\mu,\nu}(|\xi|)\xi/|\xi|$,
\begin{equation}\label{tik102}
\big|\eta-p^{\mu,\nu}(\xi)\big|\lesssim 2^{4\max(k_1,k_2)}\delta\quad\text{ and }\quad\Xi^{\mu,\nu}(\xi,p^{\mu,\nu}(\xi))=0.
\end{equation}
The proof of \eqref{tik102} is similar to the proof of \eqref{kx11} given above.
\end{remark}

\begin{lemma}\label{desc4}
As in Lemma \ref{desc2}, assume that $\sigma\in\{1,\ldots,d\}$, $\mu=(\sigma_1\iota_1),\nu=(\sigma_2\iota_2)\in\mathcal{I}_d$, $k,k_1,k_2\in [-D,2D]\cap\mathbb{Z}$, and assume that there is a point $(\xi,\eta)\in\mathbb{R}^3\times\mathbb{R}^3$ satisfying 
\begin{equation}\label{kxz10}
|\xi|\in[2^{k-4},2^{k+4}],\quad|\eta|\in[2^{k_2-4},2^{k_2+4}],\quad|\xi-\eta|\in[2^{k_1-4},2^{k_1+4}],\quad|\Xi^{\mu,\nu}(\xi,\eta)|\leq 2^{-10D}.
\end{equation}
We define the function $\Psi^{\si;\mu,\nu}:[2^{k-4},2^{k+4}]\to\mathbb{R}$
\begin{equation}\label{kxz11}
\begin{split}
\Psi^{\si;\mu,\nu}(s)&:=\Phi^{\sigma;\mu,\nu}(se,r^{\mu,\nu}(s)e)\\
&=(b_\sigma^2+c^2_\sigma s^2)^{1/2}-\iota_1\big[b_{\sigma_1}^2+c^2_{\sigma_1}(r^{\mu,\nu}(s)-s)^2\big]^{1/2}
-\iota_2\big[b_{\sigma_2}^2+c^2_{\sigma_2}r^{\mu,\nu}(s)^2\big]^{1/2},
\end{split}
\end{equation}
for some $e\in\mathbb{S}^2$ (the definition, of course, does not depend on the choice of $e$). Then there is some constant $\widetilde{c}=\widetilde{c}(\iota_1,\iota_2,c_{\sigma},b_{\sigma},c_{\sigma_1},b_{\sigma_1},c_{\sigma_2},b_{\sigma_2})\in\{-1,1\}$ with the property that
\begin{equation}\label{kx20}
\text{ if }\,s\in[2^{k-4},2^{k+4}]\,\text{ and }\,|\Psi^{\si;\mu,\nu}(s)|\leq 2^{-20D}\,\text{ then }\,\widetilde{c}(\partial_s\Psi^{\si;\mu,\nu})(s)\geq 2^{-20D}.
\end{equation}
\end{lemma}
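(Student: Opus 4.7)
The plan is an envelope-theorem computation for $(\partial_s\Psi^{\sigma;\mu,\nu})(s)$ followed by an algebraic contradiction argument and a compactness/continuity step. First, since $r(s):=r^{\mu,\nu}(s)$ is defined by $(\nabla_\eta\Phi^{\sigma;\mu,\nu})(se,r(s)e)=0$, differentiating $\Psi(s)=\Phi^{\sigma;\mu,\nu}(se,r(s)e)$ in $s$ makes the $r'(s)$ contribution drop out, yielding
\begin{equation*}
(\partial_s\Psi^{\sigma;\mu,\nu})(s)=(\nabla_\xi\Phi^{\sigma;\mu,\nu})(se,r(s)e)\cdot e=\frac{c_\sigma^2 s}{\Lambda_\sigma(s)}-\iota_1\frac{c_{\sigma_1}^2(s-r)}{\Lambda_{\sigma_1}(s-r)},
\end{equation*}
which, by the scalar critical-point relation $\iota_1 c_{\sigma_1}^2(s-r)/\Lambda_{\sigma_1}(s-r)=\iota_2 c_{\sigma_2}^2 r/\Lambda_{\sigma_2}(r)$ obtained from $\Xi^{\mu,\nu}(se,re)\cdot e=0$, also equals $c_\sigma^2 s/\Lambda_\sigma(s)-\iota_2 c_{\sigma_2}^2 r/\Lambda_{\sigma_2}(r)$.

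Next I would rule out $\Psi(s_*)=(\partial_s\Psi)(s_*)=0$. If both vanish, then $\alpha:=c_\sigma^2 s_*/\Lambda_\sigma(s_*)\in(0,\min_i c_i)$ is a common slowness also equal to $\iota_1 c_{\sigma_1}^2(s_*-r)/\Lambda_{\sigma_1}(s_*-r)$ and to $\iota_2 c_{\sigma_2}^2 r/\Lambda_{\sigma_2}(r)$. Inverting gives $s_*=\alpha b_\sigma/(c_\sigma\sqrt{c_\sigma^2-\alpha^2})$, $\iota_1(s_*-r)=\alpha b_{\sigma_1}/(c_{\sigma_1}\sqrt{c_{\sigma_1}^2-\alpha^2})$, $\iota_2 r=\alpha b_{\sigma_2}/(c_{\sigma_2}\sqrt{c_{\sigma_2}^2-\alpha^2})$, and $\Lambda_i(u_i)=b_i c_i/\sqrt{c_i^2-\alpha^2}$; the identities $s_*=(s_*-r)+r$ and $\Psi(s_*)=0$ then translate into
\begin{align*}
\frac{b_\sigma}{c_\sigma\sqrt{c_\sigma^2-\alpha^2}}&=\iota_1\frac{b_{\sigma_1}}{c_{\sigma_1}\sqrt{c_{\sigma_1}^2-\alpha^2}}+\iota_2\frac{b_{\sigma_2}}{c_{\sigma_2}\sqrt{c_{\sigma_2}^2-\alpha^2}}\qquad(\mathrm{I}),\\
\frac{b_\sigma c_\sigma}{\sqrt{c_\sigma^2-\alpha^2}}&=\iota_1\frac{b_{\sigma_1}c_{\sigma_1}}{\sqrt{c_{\sigma_1}^2-\alpha^2}}+\iota_2\frac{b_{\sigma_2}c_{\sigma_2}}{\sqrt{c_{\sigma_2}^2-\alpha^2}}\qquad(\mathrm{II}).
\end{align*}
The combination $(\mathrm{II})-c_\sigma^2(\mathrm{I})$ eliminates the $\sigma$-term and produces
\begin{equation*}
\iota_1\frac{b_{\sigma_1}(c_{\sigma_1}^2-c_\sigma^2)}{c_{\sigma_1}\sqrt{c_{\sigma_1}^2-\alpha^2}}=-\iota_2\frac{b_{\sigma_2}(c_{\sigma_2}^2-c_\sigma^2)}{c_{\sigma_2}\sqrt{c_{\sigma_2}^2-\alpha^2}}.
\end{equation*}
If any two of $c_\sigma,c_{\sigma_1},c_{\sigma_2}$ coincide, the second line of \eqref{abcond2} forces them to be exactly equal; the equation above then forces the third to coincide as well, and (II) collapses to $b_\sigma=\iota_1 b_{\sigma_1}+\iota_2 b_{\sigma_2}$, contradicting the first line of \eqref{abcond2}. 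If the three $c_i$ are distinct, the analogous combination $(\mathrm{II})-c_{\sigma_1}^2(\mathrm{I})$ provides a second independent relation; eliminating $\alpha$ between the two yields a parameter identity whose failure under the third line of \eqref{abcond2} mirrors the derivation of \eqref{bnbn2}--\eqref{bnbn3} in the introduction.

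Finally I would upgrade to \eqref{kx20} by compactness. Since $(b_i,c_i)\in[1/A,A]$ and the non-degeneracy gaps in \eqref{abcond2} are uniformly $\geq 1/A$, the contradiction above is quantitative: there exists $c_A>0$ depending only on $A$ with $|(\partial_s\Psi)(s)|\geq c_A$ at every zero $s$ of $\Psi$ in $[2^{k-5},2^{k+5}]$. The bounds \eqref{r1to1} make $\Psi$ and $\partial_s\Psi$ Lipschitz in $s$ with constants depending only on $A$ and $D$, so this propagates to $|(\partial_s\Psi)(s)|\geq c_A/2$ on $\{|\Psi|\leq c_A^2/C_0\}$ for some $C_0=C_0(A,D)$; choosing $D$ large enough that $2^{-20D}\leq\min(c_A/2,c_A^2/C_0)$ gives the quantitative inequality. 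For the sign, the same non-degeneracy shows that every critical point of $\Psi$ in $[2^{k-4},2^{k+4}]$ has $|\Psi|\geq c_A\gg 2^{-20D}$, so the sub-level set $\{|\Psi|\leq 2^{-20D}\}$ contains no critical point of $\Psi$; on each connected component $\partial_s\Psi$ has a definite sign, and this sign is a continuous function of the parameters $(\iota_1,\iota_2,b_\sigma,c_\sigma,b_{\sigma_1},c_{\sigma_1},b_{\sigma_2},c_{\sigma_2})$, giving the globally defined $\widetilde{c}\in\{-1,1\}$.

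The hardest part will be the algebraic elimination in the second step: carefully tracking the signs $\iota_1,\iota_2$ and the three branches (a)--(c) of $r^{\mu,\nu}$ (which dictate whether $r\in[0,s]$, $r\geq s$, or $r\leq 0$), and extracting from (I) and (II) the precise parameter identity whose failure under the third line of \eqref{abcond2} reproduces the non-degeneracy computation \eqref{bnbn2}--\eqref{bnbn3}.
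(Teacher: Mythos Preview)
Your envelope computation and the equal-speed reduction to $b_\sigma=\iota_1 b_{\sigma_1}+\iota_2 b_{\sigma_2}$ are correct and match the paper's first step. But there are two genuine gaps.

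\textbf{The distinct-speed elimination is not carried out, and your reference to \eqref{bnbn2}--\eqref{bnbn3} is misleading.} Those identities in the introduction establish that $\det(\nabla^2_{\eta\eta}\Phi)\neq 0$ on the space-time resonant set, whereas what you need here is $\nabla_\xi\Phi\neq 0$ there (equivalently $\Psi'\neq 0$). These are distinct non-degeneracy statements; both are consequences of \eqref{abcond2}, but the derivations are different and your system (I)--(II) does not collapse to the computation in the introduction. You would have to actually analyze (I)--(II) under the monotonicity constraint $c_i>c_j\Rightarrow b_j/c_j^2\geq b_i/c_i^2$ coming from the third line of \eqref{abcond2}, and this case work is exactly what the paper does directly on $\Psi'$ rather than via your parametrization.

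\textbf{The compactness step does not deliver a single sign $\widetilde c$.} Your argument yields $|\Psi'(s)|\geq 2^{-20D}$ on $\{|\Psi|\leq 2^{-20D}\}$, but nothing prevents this sublevel set from having several components on which $\Psi'$ takes opposite signs: between two such components $|\Psi|>2^{-20D}$ and $\Psi'$ may well vanish there, which you have not excluded. The statement of the lemma asks for one $\widetilde c=\widetilde c(\iota_1,\iota_2,b_\sigma,c_\sigma,b_{\sigma_1},c_{\sigma_1},b_{\sigma_2},c_{\sigma_2})$ valid for all $s$, and the paper obtains it by an explicit case split. Concretely, the paper writes $\Psi'(s)$ in the two equivalent forms
\[
\Psi'(s)=\frac{c_\sigma^2 s}{\Lambda_\sigma(s)}-\iota_1\frac{c_{\sigma_1}^2(s-r)}{\Lambda_{\sigma_1}(s-r)}=\frac{c_\sigma^2 s}{\Lambda_\sigma(s)}-\iota_2\frac{c_{\sigma_2}^2 r}{\Lambda_{\sigma_2}(r)},
\]
lists the five cases \eqref{kx41}--\eqref{kx45}, disposes of \eqref{kx43} and \eqref{kx45} immediately with $\widetilde c=+1$ since both terms have the same sign, and in the remaining cases compares $c_\sigma$ to $c_{\sigma_1},c_{\sigma_2}$: when $c_\sigma$ dominates one of them the third line of \eqref{abcond2} gives $\widetilde c=+1$ directly; when $c_\sigma\leq\min(c_{\sigma_1},c_{\sigma_2})$ with a strict inequality, a short algebraic manipulation using $|\Psi|\leq 2^{-20D}$ gives $\widetilde c=-1$; and when all three speeds coincide the first line of \eqref{abcond2} forces $|\Psi|\geq 2^{-10D}$, so the hypothesis of \eqref{kx20} is vacuous. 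This is how the global sign is pinned down, and your compactness argument does not replace it.
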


\begin{proof}[Proof of Lemma \ref{desc4}] For simplicity of notation, let $\Psi(s):=\Psi^{\si;\mu,\nu}(s)$ and $r(s):=r^{\mu,\nu}(s)$ in the rest of the proof. Recalling that $\Xi^{\mu,\nu}(\xi,r^{\mu,\nu}(|\xi|)\xi/|\xi|)=0$, it follows that
\begin{equation}\label{kx19}
\Psi'(s)=\frac{c_{\sigma}^2s}{\sqrt{b_{\sigma}^2+c_{\sigma}^2s^2}}+\frac{\iota_1c_{\sigma_1}^2(r(s)-s)}{\sqrt{b_{\sigma_1}^2+c_{\sigma_1}^2(r(s)-s)^2}}.
\end{equation}

Recall the identity, see \eqref{kx4}, 
\begin{equation}\label{kx24}
\frac{\iota_1c^2_{\sigma_1}(r(s)-s)}{\sqrt{b^2_{\sigma_1}+c^2_{\sigma_1}(r(s)-s)^2}}
+\frac{\iota_2c^2_{\sigma_2}r(s)}{\sqrt{b^2_{\sigma_2}+c^2_{\sigma_2}r(s)^2}}=0.
\end{equation}
Recalling \eqref{kx12}--\eqref{kx14}, in proving \eqref{kx20} we need to consider five cases,
\begin{equation}\label{kx41}
(\iota_1,\iota_2)=(1,1)\quad\text{ and }\quad r(s)\in[0,s],
\end{equation}
or
\begin{equation}\label{kx42}
(\iota_1,\iota_2)=(-1,1),\quad c_{\sigma_1}\geq c_{\sigma_2},\quad \text{ and }\quad r(s)\in[s,\infty),
\end{equation}
or
\begin{equation}\label{kx43}
(\iota_1,\iota_2)=(1,-1),\quad c_{\sigma_1}\geq c_{\sigma_2},\quad \text{ and }\quad r(s)\in[s,\infty),
\end{equation}
or
\begin{equation}\label{kx44}
(\iota_1,\iota_2)=(1,-1),\quad c_{\sigma_1}\leq c_{\sigma_2},\quad \text{ and }\quad r(s)\in(-\infty,0],
\end{equation}
or
\begin{equation}\label{kx45}
(\iota_1,\iota_2)=(-1,1),\quad c_{\sigma_1}\leq c_{\sigma_2},\quad \text{ and }\quad r(s)\in(-\infty,0].
\end{equation}
The desired lower bound in \eqref{kx20} follows easily from the identities \eqref{kx19} and \eqref{kx24}, with $\widetilde{c}:=1$, in the cases \eqref{kx43} and \eqref{kx45}.

We consider now the case described in \eqref{kx41} and rewite, using \eqref{kx19} and \eqref{kx24},
\begin{equation}\label{kx51}
\begin{split}
&\Psi'(s)=\frac{c_{\sigma}^2s}{\sqrt{b_{\sigma}^2+c_{\sigma}^2s^2}}-\frac{c_{\sigma_1}^2(s-r(s))}{\sqrt{b_{\sigma_1}^2+c_{\sigma_1}^2(s-r(s))^2}}=\frac{c_{\sigma}^2s}{\sqrt{b_{\sigma}^2+c_{\sigma}^2s^2}}-\frac{c^2_{\sigma_2}r(s)}{\sqrt{b^2_{\sigma_2}+c^2_{\sigma_2}r(s)^2}},\\
&\Psi(s)=\sqrt{b_\sigma^2+c^2_\sigma s^2}-\sqrt{b_{\sigma_1}^2+c^2_{\sigma_1}(s-r(s))^2}
-\sqrt{b_{\sigma_2}^2+c^2_{\sigma_2}r(s)^2}.
\end{split}
\end{equation}
If $c_{\sigma}> c_{\sigma_1}$ then 
$c_{\sigma}^2b_{\sigma_1}\geq c_{\sigma_1}^2b_{\sigma}$ (see \eqref{abcond2}) and the inequality $\Psi'(s)\geq 2^{-10D}$ follows 
easily from \eqref{kx51}, since $|s|\approx_A 2^{k},|r(s)|\approx_A 2^{k_2},|s-r(s)|\approx_A 2^{k_1}$. Similarly, if $c_{\sigma}> c_{\sigma_2}$ then 
$c_{\sigma}^2b_{\sigma_2}\geq c_{\sigma_2}^2b_{\sigma}$ (see \eqref{abcond2}) and the inequality $\Psi'(s)\gtrsim 2^{-10D}$ follows 
easily from \eqref{kx51}. 

On the other hand, if $c_\sigma\leq\min(c_{\sigma_1},c_{\sigma_2})$, we consider two cases: assume first that
\begin{equation*}
\max(c_{\sigma_1},c_{\sigma_2})\geq c_\sigma+1/A,\qquad\min(c_{\sigma_1},c_{\sigma_2})\geq c_\sigma.
\end{equation*}
In this case we estimate, using \eqref{kx51} and the assumption $|\Psi(s)|\leq 2^{-20D}$,
\begin{equation*}
\begin{split}
-\Psi'(s)&=\frac{c_{\sigma_1}^2(s-r(s))+c^2_{\sigma_2}r(s)}{\sqrt{b_{\sigma_1}^2+c_{\sigma_1}^2(s-r(s))^2}+\sqrt{b^2_{\sigma_2}+c^2_{\sigma_2}r(s)^2}}-\frac{c_{\sigma}^2s}{\sqrt{b_{\sigma}^2+c_{\sigma}^2s^2}}\\
&\geq \frac{c_{\sigma_1}^2(s-r(s))+c^2_{\sigma_2}r(s)-c_{\sigma}^2s}{\sqrt{b_{\sigma}^2+c_{\sigma}^2s^2}}-2^{-10D}\\
&\geq 2^{-10D}.
\end{split}
\end{equation*}
The desired bound \eqref{kx20} follows. 

In the remaining case
\begin{equation*}
c_\sigma=c_{\sigma_1}=c_{\sigma_2},
\end{equation*}
we show that $|\Psi(s)|\geq 2^{-10D}$, which would suffice to prove \eqref{kx20} (since the hypothesis in \eqref{kx20} does not hold). Indeed, the identity \eqref{kx24} shows that
\begin{equation*}
b_{\sigma_1}^2r(s)^2-b_{\sigma_2}^2(s-r(s))^2=0.
\end{equation*}
Letting $\kappa:=b_{\sigma_1}/b_{\sigma_2}=(s-r(s))/r(s)\in[1/A^2,A^2]$ and using also the assumption $|b_{\sigma}-b_{\sigma_1}-b_{\sigma_2}|\geq 1/A$ (see \eqref{abcond2}), we estimate
\begin{equation*}
\begin{split}
|\Psi(s)|&=\Big|\sqrt{b_\sigma^2+c^2_\sigma s^2}-\sqrt{\kappa^2b_{\sigma_2}^2+c^2_{\sigma}\kappa^2r(s)^2}
-\sqrt{b_{\sigma_2}^2+c^2_{\sigma}r(s)^2}\Big|\\
&\geq 2^{-3D}\Big|(b_\sigma^2+c^2_\sigma s^2)-(\kappa+1)^2(b_{\sigma_2}^2+c^2_{\sigma}r(s)^2)\Big|\\
&\geq 2^{-3D}C_A^{-1}|b_\sigma-(\ka+1)b_{\sigma_2}|\\
&\geq 2^{-3D}C_A^{-1},
\end{split}
\end{equation*}
as desired.

We consider now the case described in \eqref{kx42} and rewite, using \eqref{kx19} and \eqref{kx24},
\begin{equation}\label{kx55}
\begin{split}
&\Psi'(s)=\frac{c_{\sigma}^2s}{\sqrt{b_{\sigma}^2+c_{\sigma}^2s^2}}-\frac{c_{\sigma_1}^2(r(s)-s)}{\sqrt{b_{\sigma_1}^2+c_{\sigma_1}^2(r(s)-s)^2}}=\frac{c_{\sigma}^2s}{\sqrt{b_{\sigma}^2+c_{\sigma}^2s^2}}-\frac{c^2_{\sigma_2}r(s)}{\sqrt{b^2_{\sigma_2}+c^2_{\sigma_2}r(s)^2}},\\
&\Psi(s)=\sqrt{b_\sigma^2+c^2_\sigma s^2}+\sqrt{b_{\sigma_1}^2+c^2_{\sigma_1}(r(s)-s)^2}
-\sqrt{b_{\sigma_2}^2+c^2_{\sigma_2}(r(s)^2)}.
\end{split}
\end{equation}
If $c_{\sigma_2}> c_{\sigma}$ then 
$c_{\sigma_2}^2b_{\sigma}\geq c_{\sigma}^2b_{\sigma_2}$ (see \eqref{abcond2}) and the inequality $-\Psi'(s)\geq 2^{-10D}$ follows 
easily from \eqref{kx55}, since $|s|\approx_A 2^{k},|r(s)|\approx_A 2^{k_2},|s-r(s)|\approx_A 2^{k_1}$. On the other hand, if $c_{\sigma_2}\leq \min(c_{\sigma},c_{\sigma_1})$ then, as before, we consider two cases. If
\begin{equation*}
\max(c_{\sigma},c_{\sigma_1})\geq c_{\sigma_2}+1/A,\qquad\min(c_{\sigma},c_{\sigma_1})\geq c_{\sigma_2},
\end{equation*}
then, using \eqref{kx55} and the assumption $|\Psi(s)|\leq 2^{-20D}$, we estimate
\begin{equation*}
\begin{split}
\Psi'(s)&=\frac{c_{\sigma}^2s}{\sqrt{b_{\sigma}^2+c_{\sigma}^2s^2}}-\frac{c^2_{\sigma_2}r(s)-c_{\sigma_1}^2(r(s)-s)}{\sqrt{b^2_{\sigma_2}+c^2_{\sigma_2}r(s)^2}-\sqrt{b_{\sigma_1}^2+c_{\sigma_1}^2(r(s)-s)^2}}\\
&\geq \frac{c_{\sigma}^2s-c^2_{\sigma_2}r(s)+c_{\sigma_1}^2(r(s)-s)}{\sqrt{b_{\sigma}^2+c_{\sigma}^2s^2}}-2^{-10D}\\
&\geq 2^{-10 D},
\end{split}
\end{equation*}
as desired. 

On the other hand, if
\begin{equation*}
c_{\sigma_2}=c_{\sigma}=c_{\sigma_1}
\end{equation*}
we show that $|\Psi(s)|\geq 2^{-10D}$, which would suffice to prove \eqref{kx20} (since the hypothesis in \eqref{kx20} does not hold). Indeed, arguing as before, the identity \eqref{kx24} shows that
\begin{equation*}
b_{\sigma_1}^2r(s)^2-b_{\sigma_2}^2(r(s)-s)^2=0.
\end{equation*}
Letting $\kappa:=b_{\sigma_1}/b_{\sigma_2}=(r(s)-s)/r(s)\in[1/A^2,1]$ and using the assumption $|b_{\sigma}+b_{\sigma_1}-b_{\sigma_2}|\geq 1/A$ (see \eqref{abcond2}), we estimate
\begin{equation*}
\begin{split}
|\Psi(s)|&=\Big|\sqrt{b_\sigma^2+c^2_{\sigma} s^2}+\sqrt{\kappa^2b_{\sigma_2}^2+c^2_{\sigma}\kappa^2r(s)^2}
-\sqrt{b_{\sigma_2}^2+c^2_{\sigma}r(s)^2}\Big|\\
&\geq 2^{-3D}\Big|(b_\sigma^2+c^2_{\sigma} s^2)-(1-\kappa)^2(b_{\sigma_2}^2+c^2_{\sigma}r(s)^2)\Big|\\
&\geq 2^{-3D}C_A^{-1}|b_\sigma-(1-\ka)b_{\sigma_2}|\\
&\geq 2^{-3D}C_A^{-1},
\end{split}
\end{equation*}
as desired.

The analysis in the case described in \eqref{kx44} is similar. This completes the proof of the lemma. 
\end{proof}

\end{document}